\documentclass[11pt,letterpaper]{amsart}
\usepackage{amssymb,latexsym,amsmath,amsbsy}
\usepackage[mathscr]{eucal}
\usepackage[margin=1in]{geometry}

\AtBeginDocument{%
   \def\MR#1{}
}

\newtheorem{theorem}{Theorem}[section]
\newtheorem{lemma}[theorem]{Lemma}
\newtheorem{proposition}[theorem]{Proposition}
\newtheorem{corollary}[theorem]{Corollary}
\theoremstyle{definition}
\newtheorem{remark}[theorem]{Remark}

\numberwithin{equation}{section}

\newcommand{\RR}{\ensuremath{\mathbb{R}}}
\newcommand{\CC}{\ensuremath{\mathbb{C}}}
\newcommand{\TT}{\ensuremath{\mathbb{T}}}
\newcommand{\sph}{\ensuremath{\mathbb{S}}}
\newcommand{\prtl}{\ensuremath{\partial}}

\newcommand{\veps}{\ensuremath{\varepsilon}}

\newcommand{\sqrtl}{\sqrt{\Delta_g}}

\newcommand{\covt}{\nabla_{t}}

\newcommand{\expy}{\exp_{x_t}^{-1}(y)}
\newcommand{\expz}{\exp_{x_t}^{-1}(z)}
\newcommand{\expyno}{\exp_{x}^{-1}(y)}
\newcommand{\expynoz}{\exp_{x}^{-1}(z)}
\newcommand{\expynobz}{\exp_{z}^{-1}(y)}

\newcommand{\gammadot}{\dot{\gamma}}
\newcommand{\detexp}{\det\!}

\newcommand{\supp}{{\text{\rm supp}}}

\newcommand{\diag}{{\text{\rm diag}}}

\newcommand{\tr}{{\text{\rm tr}}}
\renewcommand{\Re}{\text{\rm Re}\,}
\renewcommand{\Im}{\text{\rm Im}\,}
\newcommand{\spec}{{\text{\rm spec}}}

\newcommand{\Op}{\text{\rm Op}}

\newcommand{\bg}{\bar{g}}
\newcommand{\Eb}{\mathbf{E}}
\newcommand{\Fb}{\mathbf{F}}
\newcommand{\Gb}{\mathbf{G}}
\newcommand{\Hb}{\mathbf{H}}

\newcommand{\Fsc}{\mathscr{F}}
\newcommand{\Gsc}{\mathscr{G}}
\newcommand{\Hsc}{\mathscr{H}}
\newcommand{\Ksc}{\mathscr{K}}
\newcommand{\Ssc}{\mathscr{S}}
\newcommand{\Sscwt}{\widetilde{\mathscr{S}}}
\newcommand{\Usc}{\mathscr{U}}
\newcommand{\Vsc}{\mathscr{V}}

\newcommand{\Jb}{\mathbb{J}}

\newcommand{\E}{\mathcal{E}}
\newcommand{\I}{\mathcal{I}}
\newcommand{\J}{\mathcal{J}}
\newcommand{\K}{\mathcal{K}}

\newcommand{\Lsc}{\mathscr{L}}
\newcommand{\Mno}{\mathscr{M}_{N_0}}
\newcommand{\Mnominus}{\mathscr{M}_{-N_0}}

\newcommand{\R}{\mathcal{R}}
\newcommand{\V}{\mathcal{V}}
\newcommand{\W}{\mathcal{W}}

\newcommand{\Rgt}{\mathcal{R}_{\dot\gamma(t)}}

\newcommand{\Cf}{\mathfrak{C}}
\newcommand{\Gf}{\mathfrak{G}}

\newcommand{\Spdr}{{\text{\rm \bf Sp}}(2(d-1))}
\newcommand{\Odr}{{\text{\rm \bf O}}(2(d-1))}
\newcommand{\Udr}{{\text{\rm \bf U}}(d-1)}

\newcommand{\pbf}{\mathbf{p}}

\renewcommand{\d}{d}

\newcommand{\vertiii}[1]{{\left\vert\kern-0.25ex\left\vert\kern-0.25ex\left\vert #1
    \right\vert\kern-0.25ex\right\vert\kern-0.25ex\right\vert}}
\newcommand{\vertiiinosize}[1]{{\vert\kern-0.25ex\vert\kern-0.25ex\vert #1
    \vert\kern-0.25ex\vert\kern-0.25ex\vert}}
\newcommand{\vertii}[1]{{\left\vert\kern-0.20ex\left\vert #1
    \right\vert\kern-0.20ex\right\vert}}

\title[Nonconcentration of eigenfunctions in Microlocal Kakeya-Nikodym norms]{Nonconcentration of eigenfunctions in Microlocal Kakeya-Nikodym norms: a phase space approach }

\author[M. D. Blair]{Matthew D. Blair}
\address{Department of Mathematics and Statistics, University of New Mexico, Albuquerque, NM, USA}
\email{blair@math.unm.edu}

\begin{document}
\begin{abstract}
Previous works of the author and Sogge \cite{BlairSoggeRefinedCMP}, \cite{BlairSoggeToponogov} showed the significance of microlocal Kakeya-Nikodym averages in improving $L^p$ bounds on (approximate) eigenfunctions of the Laplacian in the high frequency limit.  These averages are formed by taking the $L^2$ norm of an eigenfunction when localized in phase space to a small, frequency-dependent tube about a geodesic segment via a pseudodifferential operator. The former work showed that for values of $p$ beneath the Stein-Tomas exponent, $L^p$ norms are controlled by a supremum over these averages.  The latter work then showed that when $(M,g)$ has nonpositive sectional curvatures, there is a logarithmic gain in the averages. In combination, these two works improved the $L^p$ theory for eigenfunctions over the universal bounds of Sogge in this geometric setting.   

In the present work, we develop sufficient conditions for improving these averages which are more general than nonpositive curvature.  Instead our sufficient conditions are rooted in the dynamics of the geodesic flow on the tangent bundle, considering cases where the flow expands and contracts tangent vectors in at least some directions, e.g. partially hyperbolic flows.  We make use of Gaussian wave packet (phase space) transforms on the manifold in order to fully appreciate the gain these hypotheses impart on the microlocal averages.  In the process, we further develop Gaussian beam approximations to the wave equation in a coordinate invariant manner.

\end{abstract}
\maketitle
\centerline{ \bf In memoriam: David E. Blair (1940-2026)}
\section{Introduction}
Throughout this work, $(M,g)$ is a compact, closed, $C^\infty$ Riemannian manifold, of dimension $\d = \dim(M) \geq 2$. Let $\Delta_g$ denote the associated \emph{nonnegative} Laplace operator and let $L^p(M)$ denote the $L^p$ spaces on $M$ defined with respect to Riemannian measure.  It is well known that the compactness of $M$ means that the spectrum of $\Delta_g$ is discrete and nonnegative as a densely defined operator on $L^2(M)$.  In this work, we use $\varphi_\lambda$ to denote any $L^2$ normalized eigenfunction with  $\lambda \in \spec(\sqrtl )$, namely $\|\varphi_\lambda\|_{L^2(M)} = 1$ and 
\begin{equation*}
\Delta_g \varphi_\lambda = \lambda^2\varphi_\lambda, \quad \text{ or equivalently, } \quad \sqrtl \varphi_\lambda = \lambda \varphi_\lambda.
\end{equation*}

A celebrated result of Sogge \cite{Sogge88} showed that for $2 \leq p \leq \infty$, there is always an upper bound on the growth of the $L^p$ norms of eigenfunctions of the form
\begin{equation}\label{soggeefcn}
\|\varphi_\lambda\|_{L^p(M)} \lesssim_{M,p} \lambda^{\delta(p,\d)}, \quad
\delta(p,\d) = \begin{cases}
\frac{\d-1}{2}-\frac{\d}{p}, & \frac{2(d+1)}{d-1} \leq p \leq \infty, \\
\frac{\d-1}{2}(\frac 12- \frac 1p), & 2 \leq p \leq \frac{2(d+1)}{d-1}. 
\end{cases}
\end{equation}
These estimates result from much more general bound
\begin{equation}\label{soggecluster}
	\|\mathbf{1}_{[\lambda,\lambda +1]}(\sqrtl)\|_{L^2(M) \to L^p(M)} \lesssim \lambda^{\delta(p,\d)}
\end{equation}
where $\mathbf{1}_{[\lambda,\lambda +1]}(\sqrtl)$ projects functions $f$ onto the eigenspaces of $\sqrtl$ where the eigenvalue lies in the band $[\lambda,\lambda+1]$.  Indeed, since the operator is a projection, we have $\mathbf{1}_{[\lambda,\lambda +1]}(\sqrtl)\varphi_\lambda = \varphi_\lambda$ so \eqref{soggeefcn} follows from \eqref{soggecluster}.

In the same work \cite{Sogge88}, Sogge showed that exponent $\delta(p,\d)$ in \eqref{soggecluster} is sharp: there are families of nontrivial functions $f_\lambda$ such that $\mathbf{1}_{[\lambda,\lambda +1]}(\sqrtl) f_\lambda = f_\lambda$ which saturate the bound in that
\begin{equation}\label{qmsaturate}
	\|f_\lambda\|_{L^2(M)}=1 \quad \text{ and }\quad \|f_\lambda\|_{L^p(M)} \gtrsim_{M,p} \lambda^{\delta(p,\d)}.
\end{equation}
When $\frac{2(d+1)}{d-1} \leq p \leq \infty$, the family $f_\lambda$ has a profile very similar to the zonal functions on the canonical sphere $\sph^\d$.  In the other range $2 \leq p \leq \frac{2(d+1)}{d-1}$, the family $f_\lambda$ is highly concentrated in a $\lambda^{-\frac 12}$-neighborhood of any geodesic segment\footnote{We use $\tilde\gamma$ to denote geodesic segments, reserving $\gamma$ for the parametrized curve which expresses a geodesic.} $\tilde\gamma$ of sufficiently short length
\begin{equation}\label{tubeconc}
|f_\lambda(x)| \lesssim_N \lambda^{\frac{d-1}{4}}\big(1+\lambda^{\frac 12}d(x,\tilde\gamma)\big)^{-N}\quad \text{ and } \quad |f_\lambda(x)| \gtrsim \lambda^{\frac{d-1}{4}} \text { when } d(x,\tilde\gamma)\ll \lambda^{-\frac 12}.
\end{equation}
Indeed, this function can be taken so that $\|f_\lambda\|_{L^2(M)} = 1$ and $\|f_\lambda\|_{L^p(M)} \approx \lambda^{\frac{\d-1}{2}(\frac 12-\frac 1p)}$.

On the canonical sphere $\sph^\d$, $\spec(\sqrtl)= \{\sqrt{k(k+d-1)}: k = 0,1,2,\dots\}$; this is nearly an arithmetic progression in that $\sqrt{k(k+n-1)} = k +\mathcal{O}(\frac 1k)$ as $k \to \infty$.  Hence the projection onto unit width bands of frequencies $\mathbf{1}_{[\lambda,\lambda +1]}(\sqrtl)$ is essentially an eigenspace projection.  In this case, there are families of exact eigenfunctions (spherical harmonics) which saturate \eqref{soggeefcn}.  The zonal harmonics saturate the bound when $\frac{2(d+1)}{d-1} \leq p \leq \infty$.  If $\tilde\gamma$ is replaced by a great circle, the highest weight harmonics satisfy \eqref{qmsaturate} and saturate the bound when $2 \leq p \leq \frac{2(d+1)}{d-1}$.

However, it is somewhat uncommon for Riemannian manifolds to admit exact eigenfunctions which saturate the bounds \eqref{soggeefcn}. A series of works by Sogge and Zelditch \cite{SoggeZelditchMaximal}, \cite{SoggeZelditchFocal}, \cite{SoggeTothZelditch}, the latter also with Toth, characterized geometries for which the $p= \infty$ case is saturated.  In short, a necessary condition is that there is a full measure set of looping directions, a family of unit speed geodesics emanating from a point $x_0 \in M$ which return to $x_0$ at a common time.  On the other hand, if $(M,g)$ admits an elliptic closed geodesic\footnote{This means the eigenvalues of the Poincar\'e map lie on the unit circle in $\CC$, so the flow near the orbit is stable.}, then Gaussian beam constructions of Babi\v{c} and Buldyrev \cite{BabicBuldyrevBook} and Ralston \cite{RalstonQuasimode} show that there are highly accurate quasimodes $f_\lambda$ which satisfy \eqref{tubeconc} and $\|(\sqrtl-\lambda)f_\lambda\|_{L^2} = \mathcal{O}(\lambda^{-\infty}\|f_\lambda\|_{L^2})$.  Consequently, if a manifold does not admit such an orbit, then \eqref{soggeefcn} does not expect to be saturated for $2 \leq p \leq \frac{2(d+1)}{d-1}$.

In light of this, it it interesting to consider sufficient conditions which imply a gain in the bounds \eqref{soggeefcn} and generalizations of \eqref{soggecluster} which consider spectral windows of shrinking width.

\subsection{$L^p$ bounds for manifolds of nonpositive curvature}\label{SS:nonposintro} In the past 10-20 years there have many results improving upon \eqref{soggeefcn} and \eqref{soggecluster} assuming $(M,g)$ is a manifold with nonpositive sectional curvature.   Much of it has been built on the pioneering work of B\'erard \cite{Berard77}, which considered the remainder in pointwise Weyl law on such manifolds.   The estimates are of the form
\begin{equation}\label{loggainclus}
	\|\mathbf{1}_{[\lambda,\lambda +(\log\lambda)^{-1}]}(\sqrtl)\|_{L^2(M) \to L^p(M)} \lesssim_{p,M} \lambda^{\delta(p,\d)}(\log\lambda)^{-\varsigma(p,d)}, \quad \lambda > 1,
\end{equation}
where the operator here now projects on to a band of frequencies $[\lambda,\lambda +(\log\lambda)^{-1}]$ where the width shrinks logarithmically as $\lambda \to \infty$.  Since $\mathbf{1}_{[\lambda,\lambda +(\log\lambda)^{-1}]}(\sqrtl)\varphi_\lambda = \varphi_\lambda$, this implies a corresponding logarithmic gain in \eqref{soggeefcn}.  The $p=\infty$ case of \eqref{loggainclus} already is a consequence of the results in \cite{Berard77}.  Hassell and Tacy \cite{HassellTacyNonpos} then used B\'erard's methods to show that for any $\frac{2(d+1)}{d-1} < p < \infty$, the bound \eqref{loggainclus} is satisfied with with $\varsigma = \frac 12$.  For these cases, one can even relax the nonpositive curvature hypothesis to merely assume that $(M,g)$ has no conjugate points.  When $\d=2$, this was already observed by B\'erard, with the crucial estimates when $d \geq 3$ appearing in a comparatively recent work of Bonthonneau \cite{BonthonneauTheta}.

A more recent set of results concerns the bounds \eqref{loggainclus} when $p = \frac{2(d+1)}{d-1}$ and $(M,g)$ has negative or nonpositive curvature.  The first work in this direction is due to the author and Sogge \cite{BlairSoggeCriticalExp}, though this was influenced by an earlier work of Sogge \cite{sogge2015improved} (this had a weaker gain where the $\log\lambda$ in \eqref{loggainclus} was replaced by $\log\log\lambda$).  The exponent \cite{BlairSoggeCriticalExp} was somewhat unsatisfactory and it was later improved by the author with Huang and Sogge \cite{BHSimproved} and then even further by the last two authors \cite{HuangSoggeInventiones}.  In particular, this last work shows that if the curvatures are negative, then one can take $\varsigma = \frac 12$ in \eqref{loggainclus}.  

The case $p = \frac{2(d+1)}{d-1}$ is subtle since there are two families of quasimodes in \eqref{qmsaturate} which saturate that bound: those with profiles similar the zonal harmonics and those satisfying \eqref{tubeconc}.  Consequently, the key development there was to formulate strategies which rule out both types of concentration at the same time.  What preceded those results were bounds of the form \eqref{loggainclus} in the cases where $2<p<\frac{2(d+1)}{d-1}$, which used a different set of strategies when compared to \cite{HassellTacyNonpos}, but nevertheless used elements of B\'erard's approach \cite{Berard77}. 

To detail these strategies,  consider the family $\varPi$ of all geodesic segments of some fixed, sufficiently small length. Given $\tilde\gamma \in \varPi$, take Fermi coordinates\footnote{Fermi coordinates are reviewed in \S\ref{SS:singval} below.  They are a canonical way of flattening $\tilde\gamma$ as an embedded submanifold so that $d(y,\tilde\gamma) = |y'|$.} $(y',y_d) \in \RR^{d-1}\times \RR$ so that $\tilde\gamma$ is identified with 
$
\{ y' = 0, |y_\d| \ll 1\}.
$
In \cite{BlairSoggeRefinedCMP}, the author and Sogge constructed a family of pseudodifferential operators (PDOs) $Q_{\lambda,\tilde\gamma}$ defined by symbols $q_{\lambda,\tilde\gamma}$ which vanish unless $|y_\d| \ll 1$ and $|\zeta -\lambda e_d| \ll \lambda $ (where $e_d = (0,\dots,0,1)$ denotes the $d$-th standard basis vector in $\RR^d$), with 
\begin{equation}\label{symbolhypintro}
	\big| \prtl_{y'}^\alpha \prtl_{y_\d}^j \prtl_{\zeta'}^\beta \prtl_{\zeta_\d}^k q_{\lambda,\tilde\gamma}(y,\zeta) \big| \lesssim_{\alpha, j, \beta,k, N}  \lambda^{\frac{|\alpha|-|\beta|}{2}-k}\big(1 + \lambda^{\frac 12}|y'| + \lambda^{-\frac 12}|\zeta'| \big)^{-N}.
\end{equation}
These PDOs were then used to define a \emph{microlocal Kakeya-Nikodym norm} at frequency $\lambda$
\begin{equation*}
	\|f\|_{MKN(\lambda)} := \sup_{\tilde\gamma \in \varPi} \|Q_{\lambda,\tilde\gamma} f\|_{L^2(M)}.
\end{equation*}
The main result in \cite{BlairSoggeRefinedCMP} was that if 
\begin{equation}\label{unitqmcond}
	\|f_\lambda\|_{L^2(M)}=1 \quad \text{ and } \quad \|(\sqrtl - \lambda)^j f_\lambda\| \lesssim 1 \text{ for } j=1,\dots, \left\lceil \frac{d}{d+1}+\frac{d-1}{2}\right\rceil,
\end{equation}
then
\begin{equation}\label{LptoMKN}
	\|f_\lambda\|_{L^p(M)}  \lesssim_{p,M}    \lambda^{\frac{d-1}{2}(\frac 12-\frac 1p)} \|f_\lambda\|_{MKN(\lambda)}^{\frac{2(d+1)}{p(d-1)}-1} \quad \text{ for any } \quad \frac{2(d+2)}{d} <p <\frac{2(d+1)}{d-1}.
\end{equation}
The condition \eqref{unitqmcond} is technical, but is easily seen to be verified for any normalized eigenfunction $f_\lambda = \varphi_\lambda$ or any function $f_\lambda$ in the 
range of $\mathbf{1}_{[\lambda,\lambda +1]}(\sqrtl)$, or smaller window.  The significance of \eqref{LptoMKN} is that it means that if the $MKN$ bounds on a family of quasimodes $f_\lambda$ are $o(1)$ as $\lambda \to \infty$ or better, then this implies a corresponding bound on the $\|f_\lambda\|_{L^p(M)}$.

Estimates of the form \eqref{loggainclus} followed by proving that the right hand side of \eqref{LptoMKN} satisfies a logarithmic gain relative to the trivial $\|f_\lambda\|_{MKN(\lambda)} \lesssim 1$.  The author and Sogge \cite{BlairSoggeToponogov} showed that for $(M,g)$ of negative curvature, this actually followed by showing \emph{Kakeya-Nikodym} bounds
\begin{equation}\label{KNgain}
	\left(\int_{	\mathcal{T}_{\lambda^{-1/2}}(\tilde\gamma)} |f_\lambda(x)|^2dv_g(x) \right)^{\frac 12} \lesssim (\log\lambda)^{-\frac 12}, \qquad \mathcal{T}_{\lambda^{-1/2}}(\tilde\gamma) = \{x \in M: d_g(x,\tilde\gamma) \lesssim  \lambda^{-\frac 12} \},
\end{equation}
when $f_\lambda = \mathbf{1}_{[\lambda,\lambda +(\log\lambda)^{-1}]}(\sqrtl)f_\lambda $, $\|f_\lambda\|_{L^2(M)}=1$.
When $(M,g)$ is of nonpositive curvature, the same bounds hold when $d \geq 4$, but there are weaker right hand sides of $(\log\lambda)^{-\frac 14}$, $(\log\lambda)^{-\frac 12}\log\log\lambda$ when $d=2,3$ respectively.  As noted above, the approach of B\'erard was used to obtain these bounds.  Since it was seen in \cite{BlairSoggeRefinedCMP} that 
\begin{equation*}
	 \|f_\lambda\|_{MKN(\lambda)} \lesssim \sup_{\tilde\gamma \in\varPi} \left(\int_{	\mathcal{T}_{\lambda^{-1/2}}(\tilde\gamma)} |f_\lambda(x)|^2dv_g(x) \right)^{\frac 12},
\end{equation*}
this does indeed imply a gain of the form \eqref{loggainclus}.  This approach motivates the terminology ``Kakeya-Nikodym'' bounds as they are reminiscent of the one used by Bourgain \cite{BourgainBesicovitch} in his influential work on the Fourier restriction conjecture.

The set of works leading to of bound \eqref{LptoMKN} actually characterized the right hand side in terms of Kakeya-Nikodym averages alone rather than their microlocal counterparts.  The first work in this direction is due to Sogge \cite{SoggeKaknik}, with very similar results appearing at roughly the same time due to \cite{BourgainRestr}.  The author and Sogge further generalized these bounds in \cite{BlairSoggeKaknik} and \cite{BlairSoggeRefined}, the latter being the first to employ the microlocal variation.  Weaker versions of the Kakeya-Nikodym nonconcentration bounds \eqref{KNgain} were shown in \cite{SoggeZelditchL4}, \cite{BlairSoggeKaknik}.

\subsection{More general conditions} Recently, Canzani and Galkowski \cite{CanzaniGalkowkiLinfinity}, \cite{CanzaniGalkowskiLp}, have shown bounds of the form \eqref{loggainclus} when $\frac{2(d+1)}{d-1} < p \leq \infty$ under conditions which are more general than nonpositive curvature or no conjugate points.  We omit a precise statement of their hypotheses, but in short, it assumes that points in $M$ are never \emph{maximally conjugate} to one another. This essentially means that whenever $\gamma:\RR\to M$ is a nonconstant geodesic, then for each $t \neq 0$, there exists at least one Jacobi field $J$ along $\gamma$ such that $J(0)=0$, but $J(t) \neq 0$, even if other Jacobi fields may vanish at the two points.  Put another way, for any $t \neq 0$, the family of normal Jacobi fields $\tilde J$ for which $\tilde J(0) =0$ and $\tilde J(t) = 0$ is at most $n-2$ dimensional, strictly less than the maximal dimension of $n-1$.  A much earlier work on the remainder in the Weyl law which uses conditions similar to theirs is due to Volovoy \cite{Volovoy}.

The methods and assumptions in the work of Canzani and Galkowski are very effective in improving $L^p$ norms when $\frac{2(d+1)}{d-1} < p \leq \infty$, but do not extend to values of $p$ with $2 < p \leq \frac{2(d+1)}{d-1}$.  In short, it is unclear that the maximal conjugacy conditions rule out the possibility of elliptic orbits which admit highly accurate quasimodes satisfying \eqref{tubeconc}.  Indeed, the ellipticity of a closed geodesic means that the eigenvalues of its linear Poincar\'e map defined in phase space $T^*M$, lie on the unit circle within in the complex plane.  It is unclear how this relates to the conjugate point condition\footnote{ Put another way, if the differential of the flow is put into block matrix form as in \eqref{blocksymplectic}, then it is unclear that the condition on the eigenvalues relates to the invertibility of the  $B$ block (determined by $V\mapsto\exp_p(V)$).}. The author is unaware of any examples of Riemannian manifolds which possess an elliptic closed geodesic but do not the maximal conjugacy condition; this seems to be an interesting open problem.

The purpose of this work is to develop sufficient conditions, more general than nonpositive curvature, which imply nonconcentration bounds of the form 
\begin{equation}\label{MKNloggain}
	\|Q_{\lambda,\tilde\gamma} f_\lambda \|_{L^2(M)} \lesssim_M (\log \lambda)^{-\tilde\varsigma}
\end{equation}
for some $\tilde\varsigma>0$ when $\lambda >2$, which again is a marked improvement on the trivial $\mathcal{O}(1)$ bound as $\lambda \to\infty$.  
Given \eqref{LptoMKN}, this in turn yield improvements of the form \eqref{loggainclus} in the range $2 < p < \frac{2(d+1)}{d-1}$.  Our sufficient conditions use the full strength of the symbols $q_{\lambda,\tilde\gamma}$ defined in \eqref{symbolhypintro} being concentrated in a \emph{phase space} tube about the geodesic segment, in that there is localization in the spatial and momentum variables in $T^*M$ simultaneously.  This is in contrast to \eqref{KNgain} which instead shows nonconcentration in spatial tubes $\mathcal{T}_{\lambda^{-1/2}}(\tilde\gamma)$.  In this sense the bounds in this work are truly those on \emph{microlocal} Kakeya-Nikodym averages rather than their spatial counterparts.

Our sufficient conditions essentially assume that the differential of the geodesic flow\footnote{Even though microlocal analysis is largely rooted in the cotangent bundle $T^*M$, much of our treatment considers the geodesic flow on the tangent bundle $TM$.  The musical isomorphisms mean that we can equivalently work in either domain, but we prefer $TM$ in light of the history of the Sasaki metric as introduced in \S\ref{S:Sasaki}.} $\kappa_t: TM \to TM$ always has one or more directions which expand the length of tangent vectors in a uniform way.  If $\tilde\gamma$ was a closed geodesic instead of a geodesic segment, then one might expect the eigenvalues of the linear Poincar\'e map to be pertinent, and assume that there is at least one pair of them which does not lie on the unit circle.  However, since we do consider geodesic segments, we instead characterize this in terms of the singular values of $\kappa_t$.  

In \S\ref{S:examples}, we tour some examples of geometries $(M,g)$ which satisfy our condition.  To briefly preview one of them, we will consider manifolds with \emph{partially hyperbolic} geodesic flows.  In contrast to hyperbolic/Anosov flows, the differential of these flows are not assumed to exponentially expand or contract all possible tangent vectors, but instead only do so on invariant subspaces which are not of the maximal dimension.  Consequently, the differential of the geodesic flow expands the length of some tangent vectors, but not all the ones normal to the flow direction.
 
 Between this work and \cite{CanzaniGalkowskiLp}, it now remains to develop sufficient conditions which yield \eqref{loggainclus} at the critical exponent $p = \frac{2(d+1)}{d-1}$.  While this is still uncertain, the progress made for manifolds of nonpositive curvature suggests this is an important first step.
 
\subsection{Narrow spectral multipliers and the transition to semiclassical formalism}
Before stating our main results, we introduce the spectral multipliers under consideration.  We want these to favor the part of the spectrum localized to a $1/T$ neighborhood of $\lambda$, but instead of taking a rigid cutoff such as $\mathbf{1}_{[\lambda,\lambda+T^{-1}]}(\sqrtl)$, we regularize this and instead consider Schwartz class functions $\chi \in \mathcal{S}(\RR)$ such that $\supp(\hat{\chi}) \subset (-\frac 12,\frac 12)$.  We then take $\chi(T(\lambda-\sqrtl))$ as the regularized multiplier.

If we further assume that $\chi(\tau) >0$ for all $\tau \in [-\veps,\veps]$, then $\chi(T(\lambda-\sqrtl))$ is invertible on the range of $\mathbf{1}_{[\lambda,\lambda+\veps T^{-1}]}(\sqrtl)$ and Parseval's identity implies that 
\begin{equation*}
	\|\chi(T(\lambda-\sqrtl))^{-1}\circ \mathbf{1}_{[\lambda,\lambda+\veps T^{-1}]}(\sqrtl)\|_{L^2(M) \to L^2(M)} \lesssim 1.
\end{equation*}
Consequently, the following allows us to reduce to regularized multipliers, 
\begin{equation*}
	\begin{split}
	\|\mathbf{1}_{[\lambda,\lambda +\veps T^{-1}]}(\sqrtl)\|_{L^2(M) \to L^p(M)} &\lesssim_{p,M} \big\|\chi(T(\lambda-\sqrtl))\big\|_{L^2(M) \to L^p(M)} ,\\
	\|Q_{\lambda,\tilde\gamma} \circ \mathbf{1}_{[\lambda,\lambda +\veps T^{-1}]}(\sqrtl)\|_{L^2(M) \to L^2(M)} &\lesssim_{M} \big\|Q_{\lambda,\tilde\gamma} \circ \chi(T(\lambda-\sqrtl))\big\|_{L^2(M) \to L^2(M)} .
	\end{split}
\end{equation*}

In the rest of the work, we use semiclassical formalism.  For $T=T(h)$, to be defined later, we relate $h = \frac 1\lambda$ and consider $h>0$ sufficiently small.  The following characterizes the spectral multiplier in terms of the semiclassical operator $h\sqrtl$: 
\begin{equation*}
\chi(T(\lambda-\sqrtl)) = \chi(Th^{-1}(1-h\sqrtl)) = \chi_h(h\sqrtl), \quad \text{ where } \quad \chi_h(\tau) := \chi(Th^{-1}(1-\tau)).
\end{equation*}

Let $\Usc_t = e^{-it \sqrtl}$ be the half-wave evolution, that is, the unitary map on $L^2(M)$ defined by $\Usc_t f = u(t,\cdot)$, where $u:\RR \times M \to \CC$ is the unique solution to the pseudodifferential equation
\begin{equation}\label{halfwaveeqn}
	(D_t + \sqrtl) u = 0, \quad \text{ equivalently, } \quad (hD_t + h\sqrtl) u =0, \qquad \text{ where }u(0,\cdot) = f .
\end{equation}
This allows us to write $\chi_h(h\sqrtl)$ as the operator-valued integral
\begin{equation}\label{fourierintegralintro}
	\chi_h (h\sqrtl) = \frac{1}{2\pi T} \int_{-\frac T2}^{\frac T2} e^{\frac ih t} \Usc_t \,\widehat{\chi }\Big( \frac tT\Big)\,dt.
\end{equation}
This expression is the foundation of the previous progress for manifolds with no conjugate points recalled in \S\ref{SS:nonposintro} above.  These works made effective use of the method of B\'erard, which uses that the universal cover of $M$ is diffeomorphic to $\RR^d$. Therefore, the metric $g$ can be pulled back to $\RR^d$.  The evolution $\Usc_t$ on $M$ can thus be realized\footnote{Strictly speaking, the propagator $\cos(t\sqrtl)$ is used instead of $\Usc_t$ to take advantage of finite speed of propagation.} by periodizing the related corresponding operator on $\RR^d$ with respect to the group of deck transformations of the covering map, in the same way as the Poisson summation formula.  Moreover, the corresponding Riemannian wave equation on $\RR^d$ can be studied by taking a classical Hadamard parametrix.

A crucial feature of nonpositive curvature is that comparison theory for normal Jacobi fields along a geodesic $J(t)$ means that they generally satisfy good lower bounds.  Namely, Jacobi fields with $J(0)=0$ and $|\nabla_tJ(0)|_g =1$ satisfy $|J(t)| \geq |t|$ in the case of nonpositive curvature and $|J(t)| \gtrsim e^{c|t|}$ in the case of negative curvature.  It is well known that these Jacobi fields determine the differential of the exponential map $V \mapsto \exp_p V$.  This in turn has implications for the classical comparison theorems in Riemannian geometry, such as the Bishop-G\"unther volume comparison theorem.  A key observation in \cite{SoggeZelditchL4} and subsequent works is that such volume comparison means that the leading coefficient in the Hadamard parametrix decays either at a polynomial or exponential rate.  This decay played a crucial role in the author's aforementioned work with Sogge \cite{BlairSoggeToponogov} which led to the nonconcentration estimates \eqref{KNgain}.

Since we do not assume $(M,g)$ has no conjugate points, we instead consider a parametrix for $\Usc_t$ rooted in a wave packet transform on $(M,g)$ (sometimes \emph{FBI} or \emph{phase space} transform is used instead).  This will allow us to realize $\Usc_t$ as a superposition of Gaussian wave packets which are highly concentrated on scales saturating the uncertainty principle.  Since these concentration scales are the same as those for $(y',\zeta')$ in \eqref{symbolhypintro}, the wave packet analysis is well-suited for our purpose.

Given that the 2 equations in \eqref{halfwaveeqn} are identical, the operator $\Usc_t$ is not inherently a semiclassical one.  Hence the use of $h\sqrtl$ signals that we will use semiclassically defined operators moving forward.  In particular, from here on, we replace the symbols $q_{\lambda,\tilde\gamma}$ in \eqref{symbolhypintro} by ones obtained by rescaling $\eta = \zeta/h$.  We thus redefine $q_{h,\tilde\gamma}$ as symbols which satisfy 
\begin{equation}\label{hsymbolhypintro}
	\begin{split}
		\big| \prtl_{y'}^\alpha \prtl_{y_\d}^j \prtl_{\eta'}^\beta \prtl_{\eta_\d}^k q_{h,\tilde\gamma}(y,\eta) \big| 
		&\lesssim_{\alpha, j, \beta,k, N}  h^{-\frac{|\alpha|+|\beta|}{2}}\big(1 + h^{-\frac 12}|y'| + h^{-\frac 12}|\eta'| \big)^{-N},\\
		\supp(q_{h,\tilde\gamma}) &\subset \{(y,\eta): |y| \ll1, |\eta-e_d| \ll 1\}
	\end{split}	
\end{equation}
Now define the corresponding PDO as the operator $Q_{h,\tilde\gamma}$ given by the integral kernel
\begin{equation}\label{standardquantization}
	(Q_{h,\tilde\gamma} f)(y)  = \frac{1}{(2\pi h)^d} \iint e^{\frac ih (y-x)\cdot \eta} q_{h,\tilde\gamma}(y,\eta) f(x) \,d\eta \,dx.
\end{equation}
Reversing the change of variables $h\eta =\zeta$, we obtain the classically defined operator given by \eqref{symbolhypintro}.

The semiclassical formalism does have some advantages for us.  For one, our wave packet transform is rooted the work of Wunsch and Zworski \cite{WunschZworskiFBI}.  Moreover, it functions as a grand rescaling of $T^*M$ which carries regions $|\zeta | \approx \lambda$ to $|\zeta| \approx 1$.  This makes it easier to express our hypotheses as a dynamical condition satisfied in a neighborhood of the unit tangent bundle as well as apply stationary phase asymptotics.

\subsection{Hypotheses and main theorems}
Let $\kappa_t: TM \to TM$ denote the geodesic flow on $TM$, that is, $\kappa_t(p,V) = (\gamma_{p,V}(t), \dot\gamma_{p,V}(t))$, where $\gamma_{p,V}(t)$ is the geodesic determined by $(\gamma_{p,V}(0), \dot\gamma_{p,V}(0))=(p,V)$.  It is well-known that $\kappa_t$ preserves the fiber bundles\footnote{The $r=1$ case here is the \emph{unit tangent bundle.}} 
\begin{equation}\label{tgtspheresdef}
	S^{(r)}M := \{(p,V) \in TM: |V|_{g(p)} = r\}.
\end{equation} 
In Corollary \ref{C:singularvalues} below, we will show if we restrict $\kappa_t$ to any tangent sphere $S^{(r)}M$, then the differential of this restricted map $\kappa_t|_{S^{(r)}M}: S^{(r)}M \to S^{(r)}M$, at any point $(p,V)$ has singular values\footnote{For simplicity, we omit the dependence of $\sigma_j(t)$ on $(p,V)$ in the notation.} $\{\sigma_1 (t), \dots,  \sigma_{2d-2} (t), 1 \}$.  The $\sigma_j$ here are determined by tangent vectors which are normal to the flow and the singular value of $1$ not expressed by any $\sigma_j$ results from the vector field determining the flow direction (though it is possible that some of the $\sigma_j(t)$ happen to equal 1).  Singular values are well-defined for any linear map between finite dimensional inner product spaces; we use the Sasaki metric for this purpose, see \S\ref{S:Sasaki}. We always assume that the $\sigma_j(t)$ are arranged so that $\sigma_j(t) \geq \sigma_{j+1}(t)$, so in particular $\sigma_1(t)$ is the largest singular value, and hence gives the norm of the differential $\|d\kappa_t|_{S^{(r)}M}\|$ at any point.  Moreover, as shown in Proposition \ref{P:SVD} below, since $\kappa_t$ preserves the symplectic form on $TM$ (cf. \eqref{symptgtbundle}),  whenever $\sigma_j(t)$ is a singular value, then so is its reciprocal $\frac{1}{\sigma_j(t)}$.  Hence $\sigma_1(t) \geq 1$ for all $t \in \RR$.

Let $\Omega \subset TM$ be a flow-invariant subset: $\kappa_t(\Omega) \subset \Omega$ for all $t \in \RR$.  Suppose $0<\veps_0<1$ and set 
\begin{equation*}
	\Omega_0 = \Omega \cap \{(p,V) \in TM: |V|_{g(p)} \in [1-\veps_0 ,1+\veps_0 ] \}.
\end{equation*}
Expressing our results in terms of a flow-invariant subset allows for the greatest degree of generality, but we emphasize that $\Omega = TM$ is an important special case, and may be necessary to consider in ergodic and other cases.  We now define $\mu(t)$ and $\vartheta(t)$ be \emph{even}, positive, continuous functions on $\RR$ as follows. The function $\mu$ is increasing on $[0,\infty)$ and is an upper bound on the singular values, uniform for all points in $(p,V) \in \Omega_0$
\begin{equation}\label{muupperhyp}
	1 \leq \sigma_1(t) \leq \mu(t). 
\end{equation}
We emphasize that since $M$ is compact, there always exists some $\Lambda$ such that $\mu$ satisfies $\mu(t) \leq e^{\Lambda|t|}$ for all $t$, see Proposition \ref{P:gronwall} below. 
The function $\vartheta$ is defined to be a lower bound on the following product determined by the  singular values, uniform for all points $(p,V) \in \Omega_0$
\begin{equation}\label{varrholowerhyp}
1 \leq \vartheta(t) \leq \prod_{j=1}^{\d-1} \Big(\sigma_j (t)+ \frac{1}{\sigma_j(t)}\Big). 
\end{equation}
Since we assume that both $\mu,\vartheta$ are even, the inequalities in \eqref{muupperhyp}, \eqref{varrholowerhyp} are satisfied when the $\sigma_j(t)$ are replaced by $\sigma_j(-t)$.

Using $\vartheta$, we now define for $T \geq 0$
\begin{equation}\label{PTdef}
	\Theta(T) = \int_0^T \frac{1}{\sqrt{\vartheta(t)}} \,dt .
\end{equation}
Note that since $d\kappa_{t}|_{t=0}$ is the identity, $\sigma_j(t) \approx 1$ for $|t| \leq 1$, so $\Theta(T) \gtrsim 1$ for $|T| \geq 1$.  We always assume that $\Theta(T) = o(T)$.

\begin{theorem}\label{T:mainthmexp}
Suppose $\mu(t) \leq e^{\Lambda |t|}$ for all $t\in \RR$ and that $K \subset \Omega_0$ is a compact subset.  Then for any $0<c_0< \frac{1}{2\Lambda(2d+9)}$,
\begin{equation}\label{mainthmexp}
\|Q_{h,\tilde\gamma}  \circ \chi_h(h\sqrtl) \|_{L^2(M) \to L^2(M)} \lesssim \sqrt{\frac{\Theta(c_0|\log h|)}{|\log h|} } ,
\end{equation}
provided $(\tilde\gamma(t),\dot{\tilde\gamma}(t)) \subset K$ for all $t$ in the domain of the geodesic segment $\tilde\gamma$ (parameterized as unit speed) and the implicit constant can be taken to depend only on $K$.
\end{theorem}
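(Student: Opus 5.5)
Set $T = c_0|\log h|$, so that $\chi_h(h\sqrtl)=\chi(Th^{-1}(1-h\sqrtl))$. The plan is the standard $TT^*$ argument, except that the half-wave propagator is realized through Gaussian wave packets rather than a Hadamard parametrix.

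By $TT^*$ it suffices to bound
\begin{equation*}
\big\|Q_{h,\tilde\gamma}\,|\chi_h|^2(h\sqrtl)\,Q_{h,\tilde\gamma}^*\big\|_{L^2\to L^2}.
\end{equation*}
We replace $|\chi|^2$ by a function $\rho$ with $\supp\hat\rho\subset(-1,1)$. Then \eqref{fourierintegralintro} gives
\begin{equation*}
Q\,\rho_h(h\sqrtl)\,Q^* = \frac{1}{2\pi T}\int_{-T}^{T} e^{\frac ih t}\,\hat\rho\Big(\frac tT\Big)\,Q\,\Usc_t\, Q^*\,dt .
\end{equation*}
The bound \eqref{mainthmexp} then follows once we show
\begin{equation*}
\|Q\,\Usc_t\,Q^*\|_{L^2\to L^2}\lesssim \vartheta(t)^{-1/2}+h^{\delta}, \qquad |t|\le T ,
\end{equation*}
for some $\delta>0$. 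Indeed, integrating this bound gives $T^{-1}\big(\Theta(T)+Th^\delta\big)\lesssim \Theta(T)/T$, and taking square roots yields the claim.

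To prove the pointwise-in-$t$ bound, write $\Usc_t$ through the wave packet (FBI) transform in the style of Wunsch--Zworski. A Gaussian packet centered at $(x,\xi)$ near the unit sphere bundle is propagated by a Gaussian beam. Its center moves by $\kappa_t$, and its complex Hessian (a Lagrangian subspace in the Siegel half space) moves by $d\kappa_t$. The composition $Q\,\Usc_t\,Q^*$ then becomes an integral over packets in the $h^{1/2}$ phase-space tube about $\tilde\gamma$. Its kernel is essentially the overlap of two $h^{1/2}$-scale Gaussians on the transverse $(2d-2)$-dimensional symplectic space: one is the standard Gaussian, and the other is its image under the linear symplectic map $d\kappa_t$ restricted normal to the flow.

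Using the symplectic SVD (Proposition \ref{P:SVD}, Corollary \ref{C:singularvalues}), we write $d\kappa_t=U_1\,\diag(\sigma_j,\sigma_j^{-1})\,U_2$ with $U_1,U_2$ unitary (orthogonal and symplectic). These unitary factors preserve the standard Gaussian. The overlap of a standard Gaussian with a squeezed one factorizes into one-dimensional pieces, each of size $\big(2/(\sigma_j+\sigma_j^{-1})\big)^{1/2}$. This produces the factor $\vartheta(t)^{-1/2}$ from \eqref{varrholowerhyp}, uniformly for base points in $K$ by compactness and flow invariance of $\Omega_0$. The flow direction contributes no gain, which is consistent with the singular value $1$. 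Schur's test on the resulting kernel, together with the $h^{1/2}$ localization of the symbol \eqref{hsymbolhypintro}, converts the overlap estimate into the operator norm bound.

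The main obstacle is justifying the beam parametrix up to the Ehrenfest-type time $|t|\le c_0|\log h|$. Remainders in the Gaussian beam construction grow like powers of $\mu(t)\le e^{\Lambda|t|}$ times $h^{1/2}$: cubic Taylor errors in the phase and amplitude contribute roughly $h^{1/2}\mu(t)^{k}$ for some $k$ tied to the dimension. The condition $c_0<\frac{1}{2\Lambda(2d+9)}$ is exactly what makes $e^{\Lambda(2d+9)T}\le h^{-1/2+\epsilon}$, so these errors are $O(h^{\delta})$. We would try first to build the beams invariantly (via the Sasaki metric and parallel transport along $\gamma$), track every derivative of $\kappa_t$ by Gronwall (Proposition \ref{P:gronwall}), and count the derivatives needed in the stationary phase and Schur arguments so that the total loss is at most $\mu^{2d+9}$.
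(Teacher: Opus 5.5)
Your outline is correct and follows the paper's route essentially step for step: $TT^*$ with the Fourier representation \eqref{fourierintegral}, the wave packet resolution $\beta^2(h\sqrtl)\approx\Ssc^*\Ssc P$, a Gaussian beam parametrix $\Vsc_t$ (the complex Riccati equation, i.e.\ the $d\kappa_t$-action on the Siegel half plane) with Duhamel error $O(h^{1/2}\mu(t)^{2d+9})$, and a phase-space kernel bound $h^{-d}\vartheta(t)^{-1/2}$ from the symplectic SVD (Corollary \ref{C:alphaactionsvd}), fed into Schur's test together with the $Q$-localization. The paper makes explicit some points you leave implicit, and one of them differs slightly from your plan. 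Each packet is propagated separately, so only $\|d\kappa_t\|\le\mu$ is needed, not higher flow derivatives; the exponent $2d+9$ comes from $(h^{1/2}\mu^3)^3$ Taylor errors times a crude $\mu^{2d}$ loss in a Young-inequality argument; and the $(x,\xi)$-side Schur integral needs the Jacobian bound \eqref{dmatrixbd} for $(x_d,\xi_d)\mapsto((x_t)_d,(\xi_t)_d)$.
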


 The best estimates in Theorem \ref{T:mainthmexp} are obtained when the function $\Theta$ in \eqref{PTdef} is uniformly bounded: $\sup_{T\geq 0} \Theta(T) <\infty$. 	One sufficient condition of interest for this is that the limit infimum at both $\pm \infty$ of the largest singular value of $d\kappa_{ t}$ satisfies 
	$
		\liminf\limits_{t\to \pm\infty }\frac{\log \sigma_1(t)}{|t|} >0.
	$
	In this case, $\sigma_1(t) \gtrsim e^{c|t|}$ for some $c>0$ and since the remaining singular values satisfy $\sigma_j(t) \geq 1$, we can take $\vartheta(t) \approx e^{c|t|}$.  As shown in \S\ref{SS:phyp}, this occurs in the case of hyperbolic and partially hyperbolic geodesic flows.
	
	Another circumstance is when the singular values satisfy $\sigma_j(t) \gtrsim 1+|t|$ for $j=1,\dots, d-1$, which means we can take $\vartheta(t) \approx (1+|t|)^{d-1}$.  This circumstance appeared in the prior work \cite{BlairSoggeToponogov} in the case of nonpositive curvature and will appear again in the example we present for integrable systems in Theorem \ref{T:integrable} below.  In this case, we have
	\begin{equation}\label{polycond}
		T \gg 1 \implies
		\frac{\Theta(T)}{T} \approx \frac 1T\int_1^T |t|^{-\frac{d-1}{2}}\,dt \lesssim
		\begin{cases}
			T^{-\frac 12}, & d=2,\\
			T^{-1}(\log T), & d=3,\\
			T^{-1}, & d \geq 4.
		\end{cases}
	\end{equation}
	
	Our next theorem allows for cases where $\mu(t)$ exhibits slower growth than exponential, such as polynomial growth.  As of now the only example we are able to provide of this is in the context of integrable systems (which is inclusive of the flat torus), see \S\ref{SS:integrable} below.  In these cases, it is possible to take $T(h)$ to be much larger than a logarithmic time scale, which leads to better estimates.
\begin{theorem}\label{T:mainthmpoly} For $h>0$ sufficiently small, small let $T = T(h)$ be the unique solution to 
	\begin{equation}\label{bigTeqn}
		\frac{\Theta(T)}{T} = h^{\frac 12}\mu(T)^{2d+9}.
	\end{equation}
	Let $K \subset \Omega_0$ be a compact subset. Then if $(\tilde\gamma(t),\dot{\tilde\gamma}(t)) \subset K$ for all $t$ in the domain of  $\tilde\gamma$ (parameterized as unit speed)
	\begin{equation}\label{mainthmpoly}
		\|Q_{h,\tilde\gamma}  \circ \chi_h(h\sqrtl) \|_{L^2(M) \to L^2(M)} \lesssim \sqrt{\frac{\Theta(T)}{T} },
	\end{equation}
	and the implicit constant can be taken to depend only on $K$.
\end{theorem}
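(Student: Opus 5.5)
The plan is a $TT^*$ argument. The norm squared equals $\|Q_{h,\tilde\gamma}\chi_h(h\sqrtl)^2Q_{h,\tilde\gamma}^*\|_{L^2\to L^2}$, and $\chi^2$ is again Schwartz with $\widehat{\chi^2}=\hat\chi*\hat\chi$ supported in $(-1,1)$. Using \eqref{fourierintegralintro} with $\chi^2$ in place of $\chi$, we write
\begin{equation*}
Q_{h,\tilde\gamma}\chi_h(h\sqrtl)^2Q_{h,\tilde\gamma}^* = \frac{1}{2\pi T}\int_{-T}^{T} e^{\frac ih t}\, \widehat{\chi^2}\Big(\frac tT\Big)\, Q_{h,\tilde\gamma}\,\Usc_t\,Q_{h,\tilde\gamma}^*\,dt .
\end{equation*}
It then suffices to show $\|Q_{h,\tilde\gamma}\Usc_tQ_{h,\tilde\gamma}^*\|_{L^2\to L^2}\lesssim \vartheta(t)^{-1/2}+\mathcal{E}(t,h)$, where $\mathcal{E}$ is an error from the parametrix. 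Integrating over $|t|\le T$ gives the bound $T^{-1}\big(\Theta(T)+\int_0^T\mathcal{E}\big)$. Under \eqref{bigTeqn} the error integral should be dominated by $h^{1/2}\mu(T)^{2d+9}T\approx\Theta(T)$, which yields \eqref{mainthmpoly} after taking square roots.

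To estimate $Q_{h,\tilde\gamma}\Usc_tQ_{h,\tilde\gamma}^*$ I would use the Wunsch–Zworski type wave packet transform mentioned above. Write $\Usc_t$, microlocally near $S^{(1)}M\cap\Omega_0$, as a superposition of Gaussian beams. A packet centered at $(p,V)$ evolves to a Gaussian centered at $\kappa_t(p,V)$, with complex quadratic phase given by the action of $d\kappa_t$ on a Lagrangian (Siegel upper half space) parameter.

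The operator $Q_{h,\tilde\gamma}$ localizes to an $h^{1/2}$ phase space tube around the lift of $\tilde\gamma$. Its adjoint composed with the transform is therefore essentially a superposition of packets centered along the tube, each of unit $h^{1/2}$ scale in the $2d-2$ transverse directions. The kernel of $Q\Usc_tQ^*$ thus reduces to overlaps
\begin{equation*}
\langle \Usc_t\,\phi_{(p,V)},\ \phi_{(q,W)}\rangle ,
\end{equation*}
where $(p,V)$ and $(q,W)$ both lie in the tube. The overlap of two normalized Gaussians, one of which is deformed by the symplectic map $d\kappa_t$, is $\det\big(\tfrac12(A+A^{-T})\big)^{-1/2}$ up to phase. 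Here $A$ is the transverse block of $d\kappa_t$ expressed in Sasaki-orthonormal frames, as given by Corollary \ref{C:singularvalues} and Proposition \ref{P:SVD}. By the singular value decomposition this modulus is $\prod_{j=1}^{d-1}\big(\tfrac12(\sigma_j+\sigma_j^{-1})\big)^{-1/2}\lesssim\vartheta(t)^{-1/2}$ by \eqref{varrholowerhyp}. A Schur test in the tube variables, with Gaussian decay off-diagonal, then gives the claimed operator bound. The tangential direction contributes nothing, because the $y_d$ localization is $O(1)$ and the flow direction has singular value $1$.

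The main obstacle is controlling the Gaussian beam parametrix uniformly up to times $|t|\le T(h)$. Taylor remainders of the phase and amplitude involve higher derivatives of $\kappa_t$. Each derivative costs powers of $\mu(t)$, through Proposition \ref{P:gronwall} style Gronwall bounds on the linearized and second-order variational flows, while gaining $h^{1/2}$ per order. A coordinate-invariant construction along $\kappa_t$ using normal coordinates and Sasaki frames should give an error of $\mathcal{O}(h^{1/2}\mu(t)^{2d+9})$ in operator norm; the exponent $2d+9$ comes from counting these losses.

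For Theorem \ref{T:mainthmexp} the same computation applies with $T=c_0|\log h|$. The condition $c_0<\frac{1}{2\Lambda(2d+9)}$ gives $h^{1/2}\mu(T)^{2d+9}\le h^{1/2-c_0\Lambda(2d+9)}$, which is $o(T^{-1})$, so this error is negligible. One must also check that the packets stay in the region where $Q_{h,\tilde\gamma}$ and $K\subset\Omega_0$ keep the dynamics controlled; this is guaranteed by the flow invariance of $\Omega$ and the compactness of $K$.
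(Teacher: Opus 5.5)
This is essentially the paper's proof. It has the same ingredients: $TT^*$ with the Fourier representation of $|\chi_h|^2(h\sqrtl)$; a Gaussian-beam parametrix $\Vsc_t$ for $\Usc_t\Ssc^*$ built on the wave packet transform, with residual $\|(hD_t+h\sqrtl)\Vsc_t\|\lesssim h^{3/2}\mu(t)^{2d+9}$; a pointwise bound $h^{-d}\prod_{j=1}^{d-1}(\sigma_j+\sigma_j^{-1})^{-1/2}\lesssim h^{-d}\vartheta(t)^{-1/2}$ on the phase-space kernel (the paper obtains your overlap factor as $|\det Y_t|^{-1/2}$ from the singular value decomposition in the Siegel disk); a Schur test over the $h^{1/2}$ tube using the change of variables $(x_d,\xi_d)\mapsto((x_t)_d,(\xi_t)_d)$ with Jacobian $\approx 1$; and averaging in $t$.

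One caveat, which you inherit from the paper's own write-up. Duhamel's formula turns that residual into $\|\Usc_t\Ssc^*-\Vsc_t\|\lesssim |t|\,h^{1/2}\mu(t)^{2d+9}$, not $h^{1/2}\mu(t)^{2d+9}$. So with $T$ chosen by \eqref{bigTeqn}, your term $T^{-1}\int_0^T\mathcal{E}\,dt$ is only bounded by $T\,h^{1/2}\mu(T)^{2d+9}=\Theta(T)$, not by $\Theta(T)/T$. The balancing should therefore read $\Theta(T)/T=T\,h^{1/2}\mu(T)^{2d+9}$; this correction is harmless in the logarithmic Theorem~\ref{T:mainthmexp}.
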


Note that \eqref{bigTeqn} does indeed have a solution for $h>0$ sufficiently small due to the hypothesis that $\Theta(T) = o(T)$  and that $\mu(t)$ is increasing.

\begin{remark}\label{R:polyimprovement}
	To demonstrate why Theorem \ref{T:mainthmpoly} leads to better estimates, suppose we can take a bound of the form $\vartheta(t) \approx (1+|t|)^{d-1}$ as before in \eqref{polycond}, but this time assume the much stronger bound $\mu(t) \approx 1+|t|$.  This occurs in the case of integrable systems in Theorem \ref{T:integrable} below.  For simplicity, suppose $d=2$, limiting us to the first case in \eqref{polycond}.  For sufficiently small $h>0$ (leading to sufficiently large $T$), we will have that $T$ satisfies $T^{-\frac 12} \approx h^{\frac 12}T^{13}$ so that $T \approx h^{-\frac{1}{27}}$.  Hence the first case of \eqref{polycond} implies that the right hand side of \eqref{mainthmpoly} is $h^{\frac{1}{108}}$.  Similarly when $d\geq 3$, the right hand side of \eqref{mainthmpoly} is seen to be a power of $h$ depending on $d$.   When combined with \eqref{LptoMKN}, this yields a power improvement in the $L^p$ estimates relative to the universal bounds of Sogge.
\end{remark}

\begin{remark} 
An interesting recent work of Gao, Wu, and Xi \cite{gao2025sharp} extended the bound \eqref{LptoMKN} to the range $\frac{2(3d+1)}{3d-3}\leq p <\frac{2(d+1)}{d-1}$ in dimensions $d \geq 3$.  Their work also includes applications in the case of Hecke-Maass forms, which we do not consider here.  In cases where there is a logarithmic bound such as \eqref{KNgain} or  \eqref{MKNloggain}, this improves the exponent of $\log \lambda$ one obtains for values of $p$ with $\frac{2(3d+1)}{3d-3}\leq p \leq \frac{2(d+2)}{d}$.  However, this yields more meaningful gains in cases such as the one in Remark \ref{R:polyimprovement} as it further improves the power of $\lambda = h^{-1}$ over the universal bounds.
\end{remark}

An outline of the work will appear in \S\ref{SS:outline} below, after we have introduced the proof strategy.
\subsection{Notation}\label{SS:notation}. The notation $A \lesssim B$ means that $A \leq CB$ for some sufficiently large uniform implicit constant $C$ and $A \approx B$ means that $A \lesssim B$ and $B \lesssim A$.  On the other hand $A \ll B$ means that $A \leq \veps B$ for some uniform, sufficiently small $\veps>0$.  Throughout the work, we consistently use Einstein summation notation, that a sum should be taken over any repeated index.  However, in many cases we do not use subscripts and superscripts to indicate this repetition.  

We use $\langle \cdot,\cdot \rangle_g$ or $\langle \cdot,\cdot \rangle_{g(p)}$ to denote the inner product determined by the metric $g$, the latter emphasizing the dependence on the point $p\in M$.  Similarly, $|\cdot|_g$ denotes the length of a tangent vector (resp. covector) with respect to the metric (resp. cometric); if the subscript is omitted it should be interpreted as an absolute value or Euclidean length. As is typical, $\nabla$ denotes the Levi-Civita connection on $(M,g)$ with $\Gamma_{jk}^\ell$ typically denoting Christoffel symbols (of the second kind) determined by a coordinate frame.  We have many instances in which we take a covariant derivative along a curve $t \mapsto x(t) \in M$, which case, we use $\nabla_t = \nabla_{\dot x(t)}$ as short hand. 
We also take the convention that the Riemann curvature tensor is defined along vector fields $X,Y,Z$ as
\begin{equation*}
	R(X,Y)Z = \nabla_X \nabla_Y Z - \nabla_Y \nabla_X Z-\nabla_{[X,Y]}Z.
\end{equation*}
This defines a $(1,3)$-tensor, but by lowering an index, we can also define the $(0,4)$-tensor $Rm$ as
\begin{equation*}
	Rm(X,Y,Z,W) = \langle R(X,Y)Z, W\rangle_g.
\end{equation*}
In many cases, we use the symmetries of the latter, see e.g. \cite[Proposition 7.12]{LeeRiemannian}.

Given a square matrix $\omega$, we abbreviate $\omega+iI$ as $\omega+i$.

For pseudodifferential operators, we use standard quantization as in \eqref{standardquantization} in all sections except for \S\ref{S:WPansatz}, where we opt for Weyl quantization instead.  We also use Zworski's notation for $S(1)$ symbols \cite[(4.4.4)]{ZworskiSemiclassicalAnalysis} that for a symbol $a(y,\eta)$, we have $a \in S(1)$ if $|\prtl_{y,\eta}^\alpha a| \lesssim_\alpha 1$.  In cases such as \eqref{hsymbolhypintro} we typically treat support as the same as essential support (as defined in \cite{ZworskiSemiclassicalAnalysis}) because in all cases, the $\mathcal{O}(h^\infty)$ differences which arise are negliglible.

\subsection*{Acknowledgement} It is with deep gratitude that I dedicate this work in loving memory of my father David E. Blair. He was an exemplary role model throughout my life and gave me valuable insight on many aspects of the present work.  When I was stuck on proving Proposition \ref{P:covariantlogexpn}, he suggested the classical approach to normal coordinates appearing in Eisenhart's text.  He also provided many references on the Sasaki metric, including his own text \cite{BlairRiemannianBook}, which were invaluable in preparing \S\ref{S:Sasaki}.

\section{Outline of the proofs of the main theorems}\label{S:Outline}
We begin by defining $T=T(h)$ which governs the time scale over which we will propagate the half-wave evolution $\Usc_t$.  In Theorem \ref{T:mainthmpoly}, we define $T$ as in \eqref{bigTeqn}.  In Theorem \ref{T:mainthmexp}, it is easier to let
\begin{equation*}
	T = c_0|\log h|, \text{ for some } 0<c_0< \frac{1}{2\Lambda(2d+9)} .
\end{equation*}
We emphasize that $\mu(t)$ is always bounded by a small power of $h^{-1}$, namely,
\begin{equation}\label{mubddh}
	1 \leq \mu(t) \leq h^{-\frac{1}{26}} \qquad \text{ for all } 0 \leq  t \leq T.
\end{equation}
This follows from the given definitions of $T$, using that $d \geq 2$ and that $\Theta(T) = o(T)$ in the case of Theorem \ref{T:mainthmpoly}.

A routine ``$TT^*$" duality argument reduces \eqref{mainthmexp}, \eqref{mainthmpoly} to showing that 
\begin{equation}\label{initialTT}
	\|Q_{h,\tilde\gamma} \circ |\chi_h|^2(h\sqrtl)  \circ Q_{h,\tilde\gamma}^*\|_{L^2(M) \to L^2(M)} \lesssim \frac{\Theta(T)}{T} ,
\end{equation}
since the functional calculus gives that $|\chi_h|^2(h\sqrtl) = \chi_h(h\sqrtl) \circ  \chi_h(h\sqrtl)^*$.  Instead of \eqref{fourierintegralintro}, we thus make use of
\begin{equation}\label{fourierintegral}
	|\chi_h|^2 (h\sqrtl) = \frac{1}{2\pi T} \int_{-T}^T e^{\frac ih t} \Usc_t \,\widehat{|\chi|^2 }\Big( \frac tT\Big)\,dt.
\end{equation}
Since $\widehat{|\chi |^2 } (s)= (\widehat{\chi } * \widehat{\bar\chi })(s)$, this function is supported in $ (-1,1)$, only slightly larger than the support of $\widehat\chi$ which we took to be contained in $(-\frac 12, \frac 12)$.

Throughout this section we routinely (and implicitly) use the following uniform bounds:
\begin{equation*}
	\|Q_{h,\tilde\gamma}\|_{L^2(M)\to L^2(M)} = \|Q_{h,\tilde\gamma}^*\|_{L^2(M)\to L^2(M)} \lesssim 1.
\end{equation*}
Let $\beta \in C_c^\infty (0,\infty)$ be identically 1 in $[1-\frac{\veps_0}{4}, 1+ \frac{\veps_0}{4}]$ and vanishing outside of $(1-\frac{\veps_0}{2}, 1+ \frac{\veps_0}{2})$.  Consequently, 
\begin{equation*}
	(1-\beta^4)(\tau) |\chi_h|^2(\tau) \lesssim_N (1-\beta^4)(\tau)(1+Th^{-1}|\tau-1|)^{-N} \lesssim h^NT^{-N} 
\end{equation*}
and hence by Parseval's identity
\begin{equation*}
	\|(1-\beta^4)(h \sqrtl) |\chi_h|^2(h\sqrtl)\|_{L^2(M) \to L^2(M)} \lesssim_N h^NT^{-N} .
\end{equation*}
It thus suffices to show that
\begin{equation*}
	\|Q_{h,\tilde\gamma} \circ \beta^2(h\sqrtl) \circ |\chi_h|^2(h \sqrtl)  \circ \beta^2(h\sqrtl) \circ Q_{h,\tilde\gamma}^*\|_{L^2(M) \to L^2(M)} \lesssim \frac{\Theta(T)}{T} ,
\end{equation*}
as the error between this operator and the one in \eqref{initialTT} satisfies $\mathcal{O}((hT^{-1})^{\infty})$ bounds on $L^2(M)$.

Given a bump function $\tilde\beta \in C_c^\infty(0,\infty)$, we define a \emph{wave packet transform adapted to }$\tilde\beta$ as the integral operator $\Ssc: C^\infty(M)\to C^\infty (T^*M)$ defined at each $ (x,\xi) \in T^*M$ by
\begin{equation}\label{wptdef1}
	\Ssc f (x,\xi)= 2^{-\frac d2} (\pi h)^{-\frac{3d}{4}}\frac{\tilde\beta\big(|\xi|_{g(x)}\big) }{\detexp^{\frac 14}(g_{jk}(x))}\int_{M} e^{\frac ih(- \xi(\expyno) + \frac i2 d^2(x,y))} \psi(x,y)  f(y)\,dv_g(y), 
\end{equation}
where $dv_g$ denotes Riemannian volume.  Here $\expyno$ is the \emph{Riemannian logarithm}, the locally defined inverse of the exponential map $V\mapsto \exp_x(V)$ (cf. \S3 below) and $\xi(\expyno)$ expresses the value obtained by pairing the covector $\xi \in T_x^*M$ with $\expyno$.  Here $\psi$ is a bump function identically one in a neighborhood of the diagonal $x=y$, supported where $\expyno$ is well-defined.  Note that \eqref{wptdef1} is a coordinate invariant expression.  Transforms of this type on $C^\infty$ Riemannian manifolds\footnote{The $C^\infty$ category is signficant here as many works on the FBI transform assume $M$ is analytic, defining the transformation in terms of a holomorphic continuation of the heat kernel: see \cite[\S1]{WunschZworskiFBI} and references therein. } were studied by Wunsch and Zworski \cite{WunschZworskiFBI}.

In what follows, we take $\tilde\beta$ in \eqref{wptdef1} so that it is identically 1 on $\supp(\beta)$ as defined above, but vanishing outside of $(1-\veps_0,1+\veps_0)$. 	By \cite[Proposition 3.1]{WunschZworskiFBI}, $\Ssc^* \circ \Ssc$ is a semiclassical PDO whose symbol is in $S(1)$ (uniformly bounded in $C^\infty$) and the principal symbol of $\Ssc^* \circ \Ssc$ is  $\tilde\beta^2(|\xi|_{g})$.  We shall give a complete proof of this in Appendix \ref{A:wptpdo} below, though the only small difference is that the compact support of $\tilde\beta$ simplifies matters.  Consequently, $\Ssc$ extends to a bounded linear map $L^2(M) \to L^2(T^*M)$ (where $T^*M$ is endowed with Liouville measure) such that 
	\begin{equation}\label{wptL2bds}
		\| \Ssc \|_{L^2(M) \to L^2(T^*M)} = \| \Ssc^* \|_{L^2(T^*M) \to L^2(M) }\lesssim 1.
	\end{equation}

In \S\ref{S:wpt} below, we will show the following\footnote{The proof will reveal why we want the bump function in the definition of $\Ssc$ to be $\tilde\beta$ and not $\beta$.}:
\begin{proposition}\label{P:invwpt1}
	There exists a semiclassical PDO $P$ such that $\beta^2(h\sqrtl) = (\Ssc^* \circ \Ssc) \circ P + \mathcal{O}(h^\infty)$, that is, for any $N$
	\begin{equation*}
		\big\|\beta^2(h\sqrtl)-(\Ssc^* \circ \Ssc) \circ P\big\|_{L^2(M) \to L^2(M)} \lesssim_N h^N
	\end{equation*}
	The symbol of $p$ can be taken to satisfy $\supp(p) \subset \{(x,\xi) \in T^*M: |\xi|_{g(x)} \in \supp(\beta)\}$ and lies $S(1)$.
\end{proposition}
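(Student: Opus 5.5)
The approach is to invert $\Ssc^*\circ\Ssc$ microlocally, via a symbolic parametrix, on the region where its principal symbol $\tilde\beta^2(|\xi|_g)$ equals one. By \cite[Proposition 3.1]{WunschZworskiFBI} (reproved in Appendix \ref{A:wptpdo}), $A := \Ssc^*\circ\Ssc$ is a semiclassical PDO with symbol $a \in S(1)$ and principal symbol $a_0 = \tilde\beta^2(|\xi|_{g(x)})$. Since $\tilde\beta$ is compactly supported, the full symbol is supported, modulo $\mathcal{O}(h^\infty)$, in $\{|\xi|_g \in (1-\veps_0,1+\veps_0)\}$. Since $\tilde\beta \equiv 1$ on a neighborhood of $\supp(\beta)$, we have $a_0 \equiv 1$ on a neighborhood $U$ of $\{|\xi|_g\in\supp(\beta)\}$. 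This is exactly why $\Ssc$ uses $\tilde\beta$ and not $\beta$: on $\supp(\beta)$ the operator $A$ is elliptic, with principal symbol exactly $1$.

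The first step is to identify $\beta^2(h\sqrtl)$ as a semiclassical PDO. By the Helffer--Sjöstrand functional calculus (e.g.\ \cite{ZworskiSemiclassicalAnalysis}), it has a symbol $b \in S(1)$ with principal part $\beta^2(|\xi|_g)$. All lower order terms are supported in $\{|\xi|_g \in \supp\beta\}$ modulo $\mathcal{O}(h^\infty)$, since they are built from derivatives of $\beta^2$ composed with $|\xi|_g$.

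The second step constructs $P = \Op_h(p)$ with $p \sim \sum_j h^j p_j$ and each $p_j$ supported in $\{|\xi|_g\in\supp\beta\}$. Start with $p_0 = \beta^2(|\xi|_g)$, so that $A\circ \Op_h(p_0) = \beta^2(h\sqrtl) + h\Op_h(r_1)$. By the composition formula, $r_1 \in S(1)$, and it is supported in $\supp(p_0)\cup\supp(b)$ up to $\mathcal{O}(h^\infty)$, because every term in the expansion of $a\#p_0$ involves derivatives of $p_0$ or $p_0$ itself. Inductively, suppose the error after step $j$ is $h^{j+1}\Op_h(r_{j+1})$ with $r_{j+1}$ supported in $\{|\xi|_g\in\supp\beta\}\subset U$. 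Set $p_{j+1} = -r_{j+1}$, which is legitimate because $a_0 \equiv 1$ there. Then $a\#(h^{j+1}p_{j+1}) = -h^{j+1}r_{j+1} + \mathcal{O}(h^{j+2})$, with the new remainder again supported in the same set. Borel summation gives $p \in S(1)$ with the stated support, and $A\circ P - \beta^2(h\sqrtl) = \Op_h(e)$ with $e = \mathcal{O}(h^\infty)$ in $S(1)$. Calderón--Vaillancourt then converts this into the claimed $L^2\to L^2$ bound $\lesssim_N h^N$.

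The main obstacle is bookkeeping rather than ellipticity. Symbol calculus on the manifold is only invariant modulo lower order terms, so one should work in coordinate charts with a partition of unity, and check that the support claim survives the chart changes. The same issue arises for the claim that the remainder terms stay supported in $\{|\xi|_g\in\supp\beta\}$ and not merely in $U$. I would handle this by noting that every remainder symbol in each chart is a finite sum of products involving derivatives of $p_j$ or of $b$. The off-diagonal part of $A$ (from the cutoff $\psi$ in \eqref{wptdef1}) is $\mathcal{O}(h^\infty)$, so it does not affect the support.
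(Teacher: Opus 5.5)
Your proposal is correct and follows essentially the same route as the paper. The paper also identifies $\beta^2(h\sqrtl)$ as a PDO whose full symbol is supported in $\{|\xi|_g\in\supp(\beta)\}$, though it cites Strichartz and Taylor's stationary phase argument rather than Helffer--Sj\"ostrand. It then builds $p\sim\sum_j h^j p_j$ recursively from $p_0=\beta^2(|\xi|_g)$, using that $\tilde\beta^2\equiv 1$ on $\supp(\beta)$ and that every term of the composition expansion involves some earlier $p_k$ or its derivatives.

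The paper's recursion $p_m=a_m-b_m$ is written at the level of formal series and is exactly your $p_{j+1}=-r_{j+1}$. Doing it formally, with $r_{j+1}$ taken as the $h^{j+1}$ coefficient rather than the exact remainder, is the cleanest way to obtain the support claim exactly rather than only modulo $\mathcal{O}(h^\infty)$.
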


Note  that since $\beta^2(h\sqrtl)$ is self-adjoint, $\beta^2(h\sqrtl) =  P^* \circ (\Ssc^* \circ \Ssc) + \mathcal{O}(h^\infty)$ as well.  Finally, we are reduced to 
\begin{equation}\label{intromainreduction}
	\|Q_{h,\tilde\gamma} \circ P^* \circ \Ssc^* \circ (\Ssc \circ |\chi_h|^2(h \sqrtl) \circ  \Ssc^*) \circ \Ssc \circ P\circ Q_{h,\tilde\gamma}^*\|_{L^2(M) \to L^2(M)} \lesssim \frac{\Theta(T)}{T} .
\end{equation}
The expression in parentheses leads us to consider
\begin{equation}\label{Uintegral}
	\frac{1}{2\pi T} \int_{-T}^T e^{\frac ih t} \big(\Ssc \circ \Usc_t \circ \Ssc^*\big) \, \widehat{|\chi|^2 }\Big( \frac tT\Big)\,dt .
\end{equation}

In \S\ref{SS:Uint} below, we give a more detailed discussion of the signficance of the integral in \eqref{Uintegral}.  For now, we continue outlining our approach to Theorems \ref{T:mainthmexp} and \ref{T:mainthmpoly}.  

\begin{theorem}\label{T:Vtthm}
	Suppose $\Ssc$ is a wave packet transform as defined in \eqref{wptdef1}. There exists an operator $\Vsc_t:L^2(T^* M) \to L^2(M) $ defined for $|t| \leq T$ such that for some implicit constant independent of $t$,
	\begin{equation}\label{Vproperties}
		\Vsc_0 = \Ssc^* \quad \text{ and }\quad \|(hD_t + h\sqrtl) \Vsc_t\|_{L^2(T^* M) \to L^2(M) } \lesssim h^{\frac 32} \mu(t)^{2d+9} \quad\text{ for all } |t|\leq T.
	\end{equation}
\end{theorem}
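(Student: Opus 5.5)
Write $\Ssc^*F(y)=\int_{T^*M}F(x,\xi)\,\phi_{(x,\xi)}(y)\,dx\,d\xi$, where $\phi_{(x,\xi)}$ is the Gaussian wave packet read off from the adjoint of the kernel in \eqref{wptdef1}. I will define $\Vsc_t$ by transporting each packet along the geodesic flow and replacing it with a Gaussian beam:
\begin{equation*}
	\Vsc_t F(y)=\int_{T^*M}F(x,\xi)\,\Phi_t^{(x,\xi)}(y)\,dx\,d\xi .
\end{equation*}
Here $\Phi_t^{(x,\xi)}$ is a Gaussian beam centered on the flowed point $(x_t,\xi_t)=\kappa_t(x,\xi)$, with $\xi$ identified with a vector through the metric. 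It has the coordinate-invariant form
\begin{equation*}
	\Phi_t^{(x,\xi)}(y)=A_t\,\tilde\beta(|\xi|_g)\,\psi(x_t,y)\,
	e^{\frac ih\left(\xi_t(\exp_{x_t}^{-1}y)+\frac12\langle \Gamma_t \exp_{x_t}^{-1}y,\exp_{x_t}^{-1}y\rangle_g\right)},
\end{equation*}
where $\Gamma_t$ is a complex symmetric endomorphism of $T_{x_t}M$ with $\Im\Gamma_t>0$, $\Gamma_0=iI$, and $A_t$ is a scalar amplitude with $A_0$ the normalization in \eqref{wptdef1}. This choice gives $\Vsc_0=\Ssc^*$ immediately.

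\textbf{Step 1: expand the symbol of $h\sqrtl$.} Compute $(hD_t+h\sqrtl)\Phi_t$ by applying $h\sqrtl$ to the oscillatory function. Taylor expand the symbol $|\eta|_g$ about $(x_t,\xi_t)$ in the Riemannian normal variable $v=\exp_{x_t}^{-1}y$, using the covariant expansion of the log map (Proposition~\ref{P:covariantlogexpn}) to express $d_y$ of the phase invariantly.
\begin{itemize}
	\item The $O(|v|^0)$ terms cancel because the phase is transported by the flow.
	\item The $O(|v|^1)$ terms cancel because $(x_t,\xi_t)$ solves Hamilton's equations, i.e. it is the geodesic.
	\item The $O(|v|^2)$ terms cancel exactly when $\Gamma_t$ solves a matrix Riccati equation involving the curvature term $Rm(\cdot,\dot\gamma,\dot\gamma,\cdot)$. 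Writing $\Gamma_t=(\nabla_t Y)Y^{-1}$ with $Y$ a complex Jacobi-field matrix along $\gamma$ turns this into a linear equation, solved by the pair $Y,\nabla_tY$ obtained from $d\kappa_t$ acting on complexified initial data.
	\item The $O(h)$ transport term fixes $A_t\propto \det(Y)^{-1/2}$.
\end{itemize}
Everything that is left over is cubic in $v$ or of size $h|v|$, plus subprincipal corrections coming from the symbol of $h\sqrtl$.

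\textbf{Step 2: control $\Gamma_t$ and $A_t$ by singular values.} Using the block form \eqref{blocksymplectic} of $d\kappa_t$ and the symplectic SVD (Proposition~\ref{P:SVD}, Corollary~\ref{C:singularvalues}), I expect to show three things.
\begin{itemize}
	\item $Y$ is invertible with $\|Y\|,\|Y^{-1}\|\lesssim\mu(t)$.
	\item $\|\Gamma_t\|\lesssim\mu(t)^2$ and $(\Im\Gamma_t)^{-1}\lesssim\mu(t)^2$.
	\item $|\det Y|\approx\prod_j\bigl(\sigma_j+\sigma_j^{-1}\bigr)$ up to constants, so $|A_t|\lesssim h^{-3d/4}$ times a bounded factor.
\end{itemize}
The beam then has Gaussian width at most $h^{1/2}\mu(t)$. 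Consequently each cubic remainder $h^{-1}|v|^3 e^{-c|v|^2/(h\mu^2)}$ contributes $O(h^{1/2}\mu^{C})$ in weighted sup norm. Multiplying by the overall factor $h$ gives $h^{3/2}\mu^C$ for the error per packet. I would track the powers of $\mu$ from derivatives of $\Gamma_t$, $A_t$ and the curvature terms, and aim for $C=2d+9$. The bound \eqref{mubddh} keeps the beam inside the injectivity radius, so the cutoff $\psi$ only generates $O(h^\infty)$ errors.

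\textbf{Step 3: sum over $(x,\xi)$.} The error operator $E_t=(hD_t+h\sqrtl)\Vsc_t$ has kernel $\int F(x,\xi)\,e_t^{(x,\xi)}(y)$, where each $e_t^{(x,\xi)}$ is a Gaussian-localized packet of amplitude $h^{3/2}\mu^C$. I bound $E_tE_t^*$ by a Schur test in $(x,\xi)$ on $T^*M$. The inner products of two error packets decay like Gaussians in the Sasaki distance between $\kappa_t(x,\xi)$ and $\kappa_t(x',\xi')$, measured at scale $h^{1/2}$. Pulling this back by $\kappa_t$, and using that $\kappa_t$ preserves Liouville measure, the decay holds at scale $h^{1/2}\mu(t)$ in the original variables. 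The Schur integral therefore costs only a further factor of $\mu^{O(d)}$, which is absorbed into the exponent.

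\textbf{Main obstacle.} The hardest part is Step 2 together with its bookkeeping in Step 1. The Riccati solution must stay nondegenerate ($\Im\Gamma_t>0$, $\det Y\neq0$), and every remainder must be bounded purely through the singular values $\sigma_j(t)$ with explicit powers of $\mu$. This must be done coordinate-invariantly, which requires the Sasaki-metric identification of $d\kappa_t$ with Jacobi fields.
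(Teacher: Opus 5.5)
Your Steps 1--2 reproduce the paper's construction. You use the same invariant phase $\xi_t(\exp_{x_t}^{-1}y)+\frac12\omega_t(\exp_{x_t}^{-1}y,\exp_{x_t}^{-1}y)$, where your $\Gamma_t$ is $\omega_t^\#$, coming from a Jacobi tensor with initial data $(I,iI)$. The amplitude $\detexp^{-1/2}(A_s+iB_s)$ is the same, and so is the per-packet residual of Theorem \ref{T:halfwaveu}, whose worst (cubic) term has $L^2$ size $\mu^6(h^{\frac12}\mu)^3=h^{\frac32}\mu^9$.

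\textbf{The gap is in Step 3: the localization you assert is false.} For the exact evolution, $\langle\Usc_t g_{x,\xi},\Usc_t g_{x',\xi'}\rangle=\langle g_{x,\xi},g_{x',\xi'}\rangle$. So packets starting $h^{\frac12}$ apart along the most expanded direction keep an overlap of fixed size, while their centers at time $t$ are about $h^{\frac12}\sigma_1$ apart. Correspondingly, $\Im\tilde\omega_t=(A_sA_s^T+B_sB_s^T)^{-1}$ can have eigenvalues as small as $\mu^{-2}$. The beams, and the error packets that share their envelopes, therefore have position width up to $h^{\frac12}\mu$. There is no decay at scale $h^{\frac12}$ in the time-$t$ distance. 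Since the statement fixes the exponent $2d+9$, the ``further $\mu^{O(d)}$'' has to be computed, not absorbed.

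\textbf{What you can prove directly, and how to close the argument.} Position decay holds at scale $h^{\frac12}\mu$ in $|x_t-x_t'|$, by \eqref{siegelbd5}. Momentum decay holds at scale $h^{\frac12}\mu^3$ in $|\xi_t-\xi_t'|$: run nonstationary phase in $y$ for the two beam phases $\phi,\phi'$, using $d_y(\phi-\bar\phi')=\xi_t-\xi_t'+O\big(\mu^2(|y-x_t|+|y-x_t'|)\big)$, which follows from $\|\omega_t\|\lesssim\mu^2$. Pulling this box back with $\|d\tilde\kappa_{-t}\|\le\mu$ would give $\mu^{4d+9}$. The Liouville invariance you mention rescues the exponent:
\begin{itemize}
\item the set of $(x,\xi)$ with $\tilde\kappa_t(x,\xi)$ in the box has measure $\approx(h^{\frac12}\mu)^d(h^{\frac12}\mu^3)^d$;
\item by Cauchy--Schwarz, the overlap kernel is $\lesssim h^{-d}(h^{\frac32}\mu^9)^2$ (it is $E_t^*E_t$, not $E_tE_t^*$, whose kernel is this Gram matrix);
\item Schur's test then gives exactly $h^{\frac32}\mu^{2d+9}$.
\end{itemize}
The accounting is $\mu^9$ per packet times $\mu^{2d}$ from almost orthogonality, not $\mu^{2d+9}$ per packet.

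\textbf{Comparison with the paper.} With that repair, your Step 3 is a genuinely different route. The paper never forms overlaps of two beams in $L^2(M)$. Instead it:
\begin{itemize}
\item composes with a wave packet transform $\Sscwt$ and evaluates the kernels $\Gsc_\gamma$ of $\Sscwt\circ\E_t$ by one-center complex stationary phase (Theorem \ref{T:matrixeltasymp});
\item uses the Siegel-disk description of $(\tilde\omega_t+i)^{-1}$ to get the envelope \eqref{crudemearbcoord} at scale $h^{\frac12}\mu$ in both variables, and controls overlaps of two envelopes by the triangle inequality;
\item pulls back by the operator norm to a ball of radius $h^{\frac12}\mu^2$, which is the source of its $\mu^{2d}$;
\item removes $\Sscwt$ via $\Sscwt^*\Sscwt=\tilde{\tilde\beta}^2(h\sqrtl)+hP$ and an iteration.
\end{itemize}
The paper's route avoids two-center overlap integrals between beams with different $\omega_t$ and phases built from Riemannian logarithms at different centers. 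It also reuses $\Gsc_0$, which is needed anyway for Theorem \ref{T:tdepreduction}. Your route avoids the removal iteration, and your Liouville observation applied to \eqref{crudemearbcoord} would even improve the paper's $\mu^{2d}$ to $\mu^{d}$.

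\textbf{A smaller error in Step 2.} For $Y=A_s+iB_s$ one has $|\det Y|^2=\det(A_sA_s^T+B_sB_s^T)$. This can be as small as $\mu^{-2(d-1)}$, for instance if the normal block of $d\kappa$ is $\diag(\sigma^{-1}I,\sigma I)$. So it is not comparable to $\prod_j(\sigma_j+\sigma_j^{-1})=|\det(A_s+D_s+i(B_s-C_s))|$, which is the determinant that governs Theorem \ref{T:tdepreduction}. It follows that $|A_t|$ is not $O(1)$ times the normalization. This is harmless here: $|a(t)|^4=\det(\Im\tilde\omega_t)$ makes each beam exactly $L^2$-normalized (Proposition \ref{P:uL2norm}), and that is all Step 3 needs.
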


We are thus led to examine the error resulting from replacing $\Usc_t \circ  \Ssc^*$ in \eqref{Uintegral} by $\Vsc_t$,
\begin{equation*}
	\frac{1}{2\pi T} \int_{-T}^T e^{\frac ih t} \Ssc \circ (\Usc_t \circ \Ssc^* -\Vsc_t) \,\widehat{|\chi|^2 }\Big( \frac tT\Big)\,dt .
\end{equation*}
Standard $L^2$ bounds for inhomogeneous equations in $hD_t + h\sqrtl$ and \eqref{Vproperties} imply that
\begin{equation*}
\|\Usc_t \circ \Ssc^* -\Vsc_t\|_{L^2(T^*M) \to L^2(M)} \lesssim h^{\frac 12} \mu(t)^{2d+9}  \quad\text{ for all } \quad|t|\leq T.
\end{equation*}
Consequently, since $\mu$ is increasing
\begin{equation*}
	\begin{split}
	\left\|\frac{1}{2\pi T} \int_{-T}^T e^{\frac ih t}  (\Usc_t\circ\Ssc^* -\Vsc_t) \,\widehat{|\chi|^2 }\Big( \frac tT\Big)\,dt \right\|_{L^2(T^* M) \to L^2(M)} 
	&\lesssim  \frac{h^{\frac 12}}{T} \int_{-T}^T  \mu(t)^{2d+9} \widehat{|\chi|^2 }\Big( \frac tT\Big) \,dt \\& \lesssim h^{\frac 12} \mu(T)^{2d+9}. 
	\end{split}
\end{equation*}
Given the $L^2$ bounds \eqref{wptL2bds}, we now see that it suffices to replace $\Usc_t$ by $\Vsc_t$ in \eqref{Uintegral}.  In the case of Theorem \ref{T:mainthmpoly}, $T$ was defined so that the right hand side is comparable to the desired right hand side in \eqref{initialTT}.  Similarly, in the case of Theorem \ref{T:mainthmexp}, the right hand side here is at least $\mathcal{O}(h^{\frac 12(1-c_0)})$, which is stronger than needed.

\begin{theorem}\label{T:tdepreduction}
For $|t| \leq T$, we have the following bound
\begin{equation}\label{tdepreduction}
	\|Q_{h,\tilde\gamma} \circ P^* \circ \Ssc^* \circ (\Ssc \circ \Vsc_t) \circ \Ssc \circ P\circ Q_{h,\tilde\gamma}^*\|_{L^2(M) \to L^2(M)} \lesssim \frac{1}{\sqrt{\vartheta(t)}} .
\end{equation}
\end{theorem}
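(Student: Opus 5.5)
The plan is to write $\Ssc\circ\Vsc_t$ as an explicit integral operator on $L^2(T^*M)$ and estimate the sandwiched operator through its kernel. Theorem \ref{T:Vtthm} will be built as a Gaussian beam superposition. That is, $\Vsc_t F(y)=\int_{T^*M} F(x,\xi)\, w_t^{(x,\xi)}(y)\,dx\,d\xi$, where $w_t^{(x,\xi)}$ is a Gaussian wave packet centred on $\kappa_t(x,\xi)$ (transported to $T^*M$). Its phase is $\xi_t(\exp^{-1}_{x_t}(y))+\frac12\langle\Gamma_t\exp^{-1}_{x_t}(y),\exp^{-1}_{x_t}(y)\rangle$, with complex Hessian $\Gamma_t$ solving the matrix Riccati equation. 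Its amplitude carries the factor $\det(A_t+iB_t)^{-1/2}$, where $\begin{pmatrix}A_t&B_t\\C_t&D_t\end{pmatrix}$ is the block form of $d\kappa_t$ in a parallel orthonormal frame normal to the flow, as in \eqref{blocksymplectic}.

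Composing with $\Ssc$ and applying stationary phase in $y$ gives a kernel $K_t((x',\xi'),(x,\xi))$ on $T^*M\times T^*M$. I expect it to have the form of a Gaussian in the deviation of $(x',\xi')$ from $\kappa_t(x,\xi)$. The quadratic form should be roughly $h^{-1}\langle (I - Z_t)^{\dots}\rangle$ after diagonalising. The prefactor should be $h^{-(d-1)/2}\cdot h^{-1/2}$ (flow direction) times $\det\big(\tfrac12(A_t+D_t)+\tfrac i2(B_t-C_t)\big)^{-1/2}$, up to $\mathcal O(h^{1/2}\mu^{N})$ errors that are harmless by \eqref{mubddh}. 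The key linear-algebra step uses the singular value decomposition of Proposition \ref{P:SVD}. Symplectic orthogonality lets $d\kappa_t$ be written as $U\,\mathrm{diag}(\Sigma,\Sigma^{-1})\,V$ with $U,V\in\Odr\cap\Spdr\cong\Udr$. Then the modulus of that determinant equals $\prod_{j\le d-1}\frac12(\sigma_j+\sigma_j^{-1})$, so by \eqref{varrholowerhyp} it is bounded below by $2^{1-d}\vartheta(t)$.

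Next I sandwich with $Q_{h,\tilde\gamma}$. The operator $\Ssc\circ P\circ Q_{h,\tilde\gamma}^*$ maps $L^2(M)$ into functions on $T^*M$ that are essentially concentrated, in the Sasaki metric, in an $h^{1/2}$-tube $\mathcal T$ about the lift of $\tilde\gamma$. Off $\mathcal T$ they decay like $(1+h^{-1/2}\mathrm{dist})^{-N}$, with an $\mathcal O(h^\infty)$ error; this follows from \eqref{hsymbolhypintro} and the Gaussian localisation of $\Ssc$. It therefore suffices to bound $\mathbf 1_{\mathcal T}K_t\mathbf 1_{\mathcal T}$, with polynomial tails, on $L^2(T^*M)$, and I will use Schur's test. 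Fix $(x,\xi)\in\mathcal T$ and integrate $|K_t|$ over $(x',\xi')\in\mathcal T$. The tube has width $h^{1/2}$ in the $2(d-1)$ transverse directions and length $O(1)$ along the flow. In the flow direction the kernel behaves like a one-dimensional $h^{1/2}$-Gaussian, and that integral is $O(1)$. In the transverse directions the integral of the Gaussian against a set of volume $h^{d-1}$ is at most $h^{d-1}\cdot h^{-(d-1)}\,|\det(\cdots)|^{-1/2}$. The determinant factor is $\lesssim\vartheta(t)^{-1/2}$. The same bound holds with the roles of the variables swapped, so Schur's test yields \eqref{tdepreduction}. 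Note that no decay is gained from the Gaussian itself; the decay comes entirely from the amplitude normalization, which is exactly what the $\vartheta$ hypothesis controls.

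The main obstacle is deriving the kernel formula with the determinant $\det(A_t+iB_t)$ (or its symmetrised version) uniformly for $|t|\le T$. This needs $\Gamma_t$ to stay uniformly in the Siegel upper half space, and every error term must be bounded by powers of $\mu(t)$ rather than by $e^{C|t|}$ with a bad constant. I would first establish the identity $\Im\Gamma_t=((A_t+iB_t)(A_t+iB_t)^*)^{-1}$, which is where the symplectic structure enters. I would then check that the stationary-phase Hessian in the $y$-integral is $\Gamma_t+i$, whose inverse has norm $\lesssim\mu(t)^2$. The final bookkeeping, that the determinant of the composed Gaussian reproduces the product $\prod(\sigma_j+\sigma_j^{-1})$, is a finite-dimensional computation with the $\Udr$-factors of the SVD, and I expect it to go through.
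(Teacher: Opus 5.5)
Your route is essentially the paper's. You reduce, via the weighted bounds \eqref{multbds} of Theorem \ref{T:multipliers} (applied to $Q_{h,\tilde\gamma}\circ P^*$), to a weighted Schur test for $\Mnominus\circ(\Ssc\circ\Vsc_t)\circ\Mnominus$. You bound the kernel $\Gsc_0$ pointwise as in \eqref{crudeme}, with amplitude $\lesssim h^{-d}|\det Y_t|^{-1/2}\lesssim h^{-d}\vartheta(t)^{-1/2}$; your $\det(\frac12(A_t+D_t)+\frac i2(B_t-C_t))$ is exactly $\det Y_t$, handled by Corollary \ref{C:alphaactionsvd}. And you take the factor $h^{d-1}$ from the transverse weights rather than from transverse Gaussian decay.

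Your $h$-count needs fixing, though. The prefactor of $\Gsc_0$ is $h^{-d}$, not $h^{-(d-1)/2}h^{-1/2}$; at $t=0$ it is the kernel of $\Ssc\circ\Ssc^*$, which has norm $\approx 1$. Also, the region where the weights are $\approx 1$ is $O(1)$ in two directions, $x_d$ and $\xi_d$, not one. After the transverse integration you are left with $h^{-1}\vartheta^{-1/2}$. This is cancelled only by Gaussian decay at scale $h^{1/2}$ in both $z_d-(x_t)_d$ and $\zeta_d-(\xi_t)_d$. The paper gets this from the invariant quantities $\xi_t(\exp_{x_t}^{-1}(z))$ and $\langle \xi_t,\zeta-\xi_t\rangle_g$ via Proposition \ref{P:edtransfers}. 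It first discards the region $|x_t'|+|\xi_t'|\ge h^{1/4}$, using the transverse decay at scale $h^{1/2}\mu$ together with \eqref{mubddh}.

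The genuine gap is the sentence ``the same bound holds with the roles of the variables swapped.'' The kernel is not symmetric: it decays in $(z,\zeta)$ relative to $\tilde\kappa_t(x,\xi)$. For the $(x,\xi)$-integral you must show, for fixed $(z,\zeta)$ and fixed $(x',\xi')$ in the tube, that $\int(1+h^{-1/2}|z_d-(x_t)_d|+h^{-1/2}|\zeta_d-(\xi_t)_d|)^{-N}\,dx_d\,d\xi_d\lesssim h$. This must hold uniformly for $|t|\le T$, even though $\|d\tilde\kappa_t\|$ may be as large as $\mu(t)$. It requires the map $(x_d,\xi_d)\mapsto((x_t)_d,(\xi_t)_d)$ to be a change of variables with Jacobian $\gtrsim 1$, which is \eqref{dmatrixbd}.

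The paper proves this as follows. Homogeneity gives $\frac{\prtl x_t}{\prtl \xi}\xi=0$ and $\frac{\prtl \xi_t}{\prtl\xi}\xi=\xi_t$, and the symplectic relation gives $(\frac{\prtl x_t}{\prtl x})^T\xi_t=\xi$. Together these make the compression of $d\tilde\kappa_t$ onto the directions $\xi/|\xi|$, $\xi_t/|\xi_t|$ lower triangular with determinant $1$. The paper then replaces both unit vectors by $e_d$ at a cost $O(h^{1/4}\mu)\ll 1$, using $|\xi'|\le h^{3/8}$ and $|\xi_t'|\le h^{1/4}$. The underlying reason is that $d\tilde\kappa_t$ maps the Hamiltonian field and the radial field $\xi\cdot\prtl_\xi$ to the corresponding fields at $\tilde\kappa_t(x,\xi)$, and inside the thin tube these span nearly the same plane as $\prtl_{x_d},\prtl_{\xi_d}$. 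Without an argument of this kind, the expanding and contracting directions could shear the flow and energy directions, and the $(x,\xi)$ half of Schur's test is unjustified.
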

Given this theorem, the proof of \eqref{intromainreduction} and hence the proofs of Theorems \ref{T:mainthmexp}, \ref{T:mainthmpoly} then follow. 
Indeed, given \eqref{tdepreduction}, then by Minkowski's inequality for integrals, it follows that
\begin{multline*}
	\left\|\frac{1}{2\pi T} \int_{-T}^T  e^{\frac ih t} Q_{h,\tilde\gamma} \circ P^* \circ \Ssc^* \circ (\Ssc \circ \Vsc_t) \circ \Ssc \circ P\circ Q_{h,\tilde\gamma}^* \,\widehat{|\chi|^2 }\Big( \frac tT\Big)\,dt \right\|_{L^2(M) \to L^2(M)} 
	\\
	\lesssim  \frac 1T \int_{-T}^T \frac{dt}{\sqrt{\vartheta(t)}} = \frac{\Theta(T)}{T}.
\end{multline*}

\subsection{The significance of the integral \eqref{Uintegral}: Gaussian beams and phase space kernels}\label{SS:Uint}
The operator valued integral \eqref{Uintegral} is fundamental in this work. The role of the wave packet transform is that it allows us to view functions as a superposition of generalized Gaussians of the form
\begin{equation}\label{gaussianinvariant}
	g_{x,\xi}(y) = (\pi h)^{-\frac d4} \detexp^{-\frac 14}(g_{jk}(x)) e^{\frac ih(\xi(\expyno) + \frac i2 d^2(x,y))}\psi(x,y),
\end{equation}  
for each $(x,\xi) \in T^*M$ with $\tilde\beta(|\xi|_{g(x)})\neq 0$.  Indeed, in \S\ref{S:RiemLog} we shall see that in coordinates we have $\expyno = (y-x) + \mathcal{O}(|y-x|^2)$. Hence this behaves similarly to the usual Gaussian $e^{\frac ih (\xi\cdot(y-x)+\frac i2|y-x|^2)}$ which are ``centered'' at $(x,\xi)\in T^*M$ in that they are concentrated in $h^{\frac 12}$-neighborhoods of $x,\xi$ in space and momentum respectively (the latter defined with respect to the semiclassical Fourier transform).  In particular, $\Ssc^*$ acts on functions $G(x,\xi) \in L^2(T^*M)$, via 
\begin{equation*}
	(\Ssc^*G)(y) = 2^{-\frac d2} (\pi h)^{-\frac{3d}{4}}\int_{T^* M} e^{\frac ih(\xi(\expyno) + \frac i2 d^2(x,y))} \frac{ \psi(x,y) \tilde\beta\big(|\xi|_{g(x)}\big) }{\detexp^{\frac 14}(g_{jk}(x))}G(x,\xi) \,dx d\xi,
\end{equation*}
where $dx d\xi$ is Liouville measure on $T^*M$.  The integral \eqref{Uintegral} thus allows us to construct a parametrix for $\Usc_t \circ \Ssc^*$ by approximating the $\Usc_t$ evolution of Gaussians in \eqref{gaussianinvariant}, then taking the corresponding integral superposition of them.

The construction of approximate solutions to hyperbolic PDE such as \eqref{halfwaveeqn} with Gaussian initial data has a rich history, typically under the name of ``Gaussian beams''.  We cannot mention all the work done here, though Babi\v{c} and Buldyrev \cite{BabicBuldyrevBook} and Ralston \cite{RalstonGaussianBeamsSurvey} are significant milestones in consolidating the theory.  These references (and others) show how to construct highly accurate approximations ($\mathcal{O}(h^N)$ on $L^2$ for any $N$) over bounded time intervals which are independent of in the frequency parameter.  For semiclassical Schr\"odinger equations, there is also the analogous work of Hagedorn \cite{HagedornI}, \cite{HagedornIII}, \cite{HagedornIV}, \cite{HagedornRaisingLowering}.  Work of Combescure and Robert \cite{CombescureRobertWavePackets} (see also their book \cite{CombescureRobertBook}) and also Hagedorn and Joye \cite{HagedornJoyeCMP}, \cite{HagedornJoyeChapter}, \cite{HagedornJoye} consider these approximations over the ``Ehrenfest'' time intervals $T = T(h)$ considered here.  

The references here take $\RR^\d$ as the configuration space instead of Riemannian manifolds $(M,g)$ considered here.  One of our objectives is to show that the $\Usc_t$-evolution of the Gaussians \eqref{gaussianinvariant} can be approximated by transformed Gaussians with phases that are also coordinate invariant.  This is done in the same spirit as in the Euclidean case, instead considering Gaussians with phases
\begin{equation}\label{phaseintro}
	\xi_t(\expy) + \frac i2\omega_t(\expy,\expy),
\end{equation}
where $\omega$ is a $t$-dependent symmetric $(0,2)$ tensor on the complexification of $T_{x_t}M$.   Here $(x_t,\xi_t)$ are integral curves of the Hamiltonian flow generated by the cometric $|\xi|_{g}$ with $(x_t,\xi_t)|_{t=0} = (x,\xi)$.  By taking $-i\omega_0 $ to be the Riemannian metric at $t=0$, the second term is just $\frac i2 |\expyno |_g^2 =\frac i2 d^2(x,y)$, recovering the phase of the original Gaussian.  After raising an index to form a $(1,1)$ tensor, $\omega_t$ solves a \emph{complex Riccati equation} (cf. \S\ref{S:riccati}) analogous to the classical Gaussian beams constructions.  Previous work on coordinate-free interpretations of the Gaussian beam phase is due to Katchalov and Lassas \cite{KatchalovLassas} (see also \cite{KatchalovKurylevLassasBook}) as well as Dahl \cite{DahlGeometricInterpretation}, \cite{DahlLeadingTerm} (see \S\ref{SSS:classicalbeams} below for more).  Related work on wave packet propagation in a Riemannian manifold is due to Paul and Uribe \cite{PaulUribePointwise}; Guillemin, Uribe, and Wang \cite{GuilleminUribeWang}; and Eswarathasan and Nonnenmacher \cite{EswarathasanNonnenmacher}.  However, none of the works referenced here appear to use the phase \eqref{phaseintro}.

It is also noteworthy that our parametrix $\Vsc_t$ is reminiscent of those in the work of Laptev, Safarov, Vassiliev \cite{LaptevSafarovVassiliev} and Capoferri, Levitin, and Vassiliev \cite{CapoferriLevitinVassiliev} which use FIOs with complex phase.  Indeed, both works seek to use complex phases to formulate parametrices given by a single, coordinate invariant integral.  However, their work is not rooted in Gaussian wave packets in the same way ours is, leading to differences in the oscillatory integrals.  

\begin{remark}\label{R:baderror}
The aforementioned works  \cite{CombescureRobertWavePackets}, \cite{HagedornJoyeCMP}, \cite{HagedornJoyeChapter}, \cite{HagedornJoye}, suggest that we should be able to obtain Gaussian beam solutions which are $\mathcal{O}(h^N)$ for any fixed $N$ provided $\mu(t) \leq h^{\frac 16-\veps}$ for some $\veps>0$. This would yield error estimates for $\Vsc_t$ that are much stronger than \eqref{Vproperties} (even with the somewhat crude approach to error bounds in \S\ref{SS:errorestimates}).  However, since these works consider propagation in $\RR^d$, their arguments crucially use that the phase of the Gaussian is quadratic. In contrast, the use of $\expy$ in \eqref{phaseintro} means that our phases cannot be treated as quadratic in coordinate systems: even in a normal coordinate system, the $t$ derivatives do not behave in the same way as the experience in $\RR^d$ suggests as per \eqref{firsttderiv} in Lemma \ref{L:firsttderiv} below.  This presents a technical hurdle in reiterating their arguments, which we do not try to resolve here.  Improving the error bound in \eqref{Vproperties} would translate to notable improvements in the bound in Theorem \ref{T:mainthmpoly}, but the implications for Theorem \ref{T:mainthmexp} do not appear to be significant.
\end{remark}

A second aspect of \eqref{Uintegral} is that it involves the conjugation $ \Ssc \circ \Usc_t \circ \Ssc^*$.  In the words of Tataru \cite{TataruParametricesSchrod} its kernel is the \emph{phase space kernel} of the evolution $\Usc_t$.  This can also be viewed as the function of $(z,\zeta,t,x,\xi)$ determined by the matrix elements 
\begin{equation*}
\int_M (\Usc_t g_{x,\xi})(y) \overline{ g_{z,\zeta}(y)}\,dv_g(y) ,
\end{equation*}
up to smooth cutoffs determined by $\tilde\beta$.  The influential work of Cord\'oba and Fefferman \cite{CordobaFeffermanWP} studied the virtues of this approach, examining how operators such as $\Usc_t$ behave under the conjugation.  In his work on rough metrics, Tataru \cite{TataruPhaseSpaceTransforms} made similar observations that the phase space kernel of $ \Ssc \circ \Usc_t \circ \Ssc^*$, denoted $\K(t, z,\zeta;x,\xi) : T^*M \times T^*M \to \CC$, satisfies the following bounds for $t$ in  \emph{uniformly bounded, $h$-independent}, time intervals $I$
\begin{equation}\label{shorttimepskernel}
	|\K(t, z,\zeta;x,\xi)| \lesssim_{N,I} h^{-\d}\big( 1 + h^{-\frac 12}|z-x_t| + h^{-\frac 12}|\zeta-\xi_t| \big)^{-N},
\end{equation}
where again $(x_t,\xi_t)$ is the image of $(x,\xi)$ under the bicharacteristic flow.  One of the features of the present work is to characterize this bound over the intervals $[-T(h),T(h)]$ considered above.  Indeed, over these time scales, the expansion and contraction effects of the flow significantly alters the estimates \eqref{shorttimepskernel}, but by considering the singular values of the differential, we shall be able to give a precise description of this in \S\ref{S:matrixelements} below.   This in turn relies an a precise analysis of the tensor $\omega_t$ which is presented in \S\ref{S:riccati}, in particular \S\ref{SS:SiegelPlaneDisk}.

Moreover, as we shall see in \S\ref{S:wpt}, the phase space kernel of the PDO $Q_{h,\tilde\gamma}$ is concentrated in a region where $|(x',\xi')| \lesssim h^{\frac 12}$, $|(x_d,\xi_d)| \ll 1$ in Fermi coordinates, using that the Gaussians decay on the same scale as the symbol of $Q_{h,\tilde\gamma}$.  This essentially allows us to limit attention to Gaussians centered at points $(x,\xi)$ emanating from this region, and also times $t$ for which $(x_t,\xi_t)$ returns to the same region.  This will play a key role in proving \eqref{tdepreduction}.  

All together, our approach makes full use of the microlocal Kakeya-Nikodym norms as opposed to their spatial counterparts such as \eqref{KNgain}.  The use of phase space kernels allow us to see how the expansion and contraction effects of the singular values of the differential of the geodesic flow lead to the decay estimate in Theorem \ref{T:tdepreduction}.  In contrast, the previous work \cite{BlairSoggeToponogov} which developed \eqref{KNgain} used expansion effects of the geodesic flow in the spatial domain only\footnote{Put another way, viewing the differential as a block matrix as in \eqref{blocksymplectic}, \cite{BlairSoggeToponogov} and its predecessors used that nonpositive curvature means that the growth of the determinant of the ``$B$" block of this matrix can be quantified, implying decay in the leading coefficient of the Hadamard parametrix, an inverse power of this determinant.  The present work instead seeks to appreciate the structure of this differential in full.}.

\subsection{Outline of the work}\label{SS:outline} Before we can get to the heart of our work, we have several preliminary matters to handle. In \S\ref{S:RiemLog}, we discuss the Riemannian logarithm $\expyno$ in detail as we shall need it both to define the wave packet transform and to be able to discuss \eqref{phaseintro}.  In \S\ref{S:Sasaki}, we introduce the Sasaki metric on the manifold $TM$, which turns the canonical projection from $TM$ to $M$ into a Riemannian submersion.  This section also discusses the differential of the geodesic flow on $TM$, and defines its singular values.  In \S\ref{S:examples}, we take a short pause from preliminary matters to consider examples of geometries where the hypotheses of Theorems \ref{T:mainthmexp} and \ref{T:mainthmpoly}; the preceding section will have set the stage for discussing singular values in detail. In \S\ref{S:riccati}, we introduce and solve the complex Riccati equation for the tensor $\omega_t$ in \eqref{phaseintro}.  In particular, \S\ref{SS:SiegelPlaneDisk} will prepare us to study the phase space kernel of our parametrix while \S\ref{SS:RiccatiReg} establishes the needed bounds on the tensor $\omega_t$.  One last technical matter to handle is introduce some needed stationary phase arguments for complex phases, which is done in \S\ref{S:statphase}.  

The heart of the work begins in \S\ref{S:wpt}, where we examine the wave packet transform, in particular proving Proposition \ref{P:invwpt1}. In \S\ref{S:WPansatz}, we construct the parametrix $\Vsc_t$ in Theorem \ref{T:Vtthm} and consider its error bounds.  The conclusion of the proof, namely Theorem \ref{T:tdepreduction} and some remaining error analysis, is finally handled in \S\ref{S:matrixelements}, which entails a thorough treatment of the phase space kernel of our approximation to $ \Ssc \circ \Usc_t \circ \Ssc^*$.

\section{The Riemannian Logarithm}\label{S:RiemLog}
Given a point $(p,V) \in TM$, expressed in a local trivialization, the map $(p,V) \mapsto \exp_p V$ yields a diffeomorphism from a neighborhood of the zero section in $T^*M$ to a neighborhood of the diagonal in $M \times M$ (cf. \cite[p.33]{SakaiRiemannianGeometry}). Wherever an inverse is defined, we denote it by $(x,y) \mapsto\expyno$, so that $\expyno$ is the tangent vector in $T_xM$ whose image under $\exp_x$ is $y$.  

Our wave packet ansatz entails a somewhat tricky calculation involving $\expyno$.  We need to calculate covariant derivatives of the vector field $x \to \expyno$ at a point $x_0$, then examine the Taylor expansion of $\nabla\expyno|_{x_0}$ with respect to $y$.  Choosing coordinates is a challenge as we are pulled between taking normal coordinates centered at $y$ and at $x_0$.  All the while the calculations in our ansatz in \S\ref{S:WPansatz} below benefits from the latter.  We benefit from a classical approach to normal coordinates we learned of in \cite[\S17-18]{EisenhartBook}.  The expansion \eqref{logexpn} below is already known, but we do prove it here as the work involved sets the stage for \eqref{covariantlogexpn}, which we have not seen elsewhere.

\begin{proposition}\label{P:covariantlogexpn}
	Let $x,y$ denote variables in a convex coordinate chart close enough so that $\expyno =V^j(x,y)\frac{\prtl }{\prtl x_j}|_x \in T_xM$ is defined.
	\begin{enumerate}
		\item The quadratic Taylor approximation of $V^j(x,\cdot)$ takes the form
		\begin{equation}\label{logexpn}
			V^j(x,y) = (y-x)^j + \frac 12 \Gamma_{k\ell}^j(x)(y-x)^k (y-x)^\ell + \mathcal{O}(|y-x|^3).
		\end{equation}
		\item Suppose further that all Christoffel symbols vanish at some $x_0$ in the chart: $\Gamma^k_{ij}(x_0) =0$, but not necessarily elsewhere.  The total covariant derivative\footnote{In this case, the total covariant derivative defines a $(1,1)$ tensor, cf. \cite[Proposition 4.17]{LeeRiemannian}.} of the vector field $x\mapsto \expyno$ at $x_0$ denoted as $\nabla \expyno|_{x_0}=V^j_{;m}(y)\frac{\prtl}{\prtl x_j}\Big|_{x_0}dx_m|_{x_0}$ admits a quadratic approximation
			 \begin{equation}\label{covariantlogexpn}
			V^j_{;m}(y) = - \delta_m^j + \frac{1}{6}\big(R_{mk\ell}^j (x_0) +R_{m\ell k}^j (x_0)\big)y_ky_\ell+ \mathcal{O}(|y|^3).
		\end{equation}
		where $R_{mk\ell}^j $ denote the components of the Riemann curvature tensor with respect to the coordinate frame.  Consequently, if $W \in T_{x_0}M$,
		\begin{equation}\label{covariantlogexpndir}
			\nabla_W \expyno|_{x_0} = \big( -W^j + \frac{1}{6}\big(R_{mk\ell}^j (x_0) +R_{m\ell k}^j (x_0)\big)y_ky_\ell  W^m + \mathcal{O}(|y|^3)\big)\frac{\prtl}{\prtl x_j}\Big|_{x_0}
		\end{equation}

	\end{enumerate}
\end{proposition}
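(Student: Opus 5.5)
The plan for part (1) is to invert the Taylor expansion of the exponential map. For fixed $x$ and $V\in T_xM$, the geodesic equation $\ddot\gamma^j=-\Gamma^j_{k\ell}(\gamma)\dot\gamma^k\dot\gamma^\ell$ with $\gamma(0)=x$, $\dot\gamma(0)=V$ gives
\begin{equation*}
(\exp_xV)^j = x^j + V^j - \tfrac12\Gamma^j_{k\ell}(x)V^kV^\ell - \tfrac16 A^j_{k\ell m}(x)V^kV^\ell V^m + \mathcal{O}(|V|^4).
\end{equation*}
Here $A^j_{k\ell m}$ is the symmetrization in $(k,\ell,m)$ of $\prtl_m\Gamma^j_{k\ell} - 2\Gamma^j_{ip}\Gamma^p_{k\ell}$. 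This coefficient comes from differentiating the geodesic equation once more, so that $\dddot\gamma(0) = -\prtl_m\Gamma^j_{k\ell}V^mV^kV^\ell + 2\Gamma\Gamma VVV$. We set $w=y-x$ and solve $\exp_x V(x,y)=y$ for $V$ by fixed-point iteration:
\begin{equation*}
V^j = w^j + \tfrac12\Gamma^j_{k\ell}(x)w^kw^\ell + C^j_{k\ell m}(x)w^kw^\ell w^m + \mathcal{O}(|w|^4).
\end{equation*}
The first two terms already give \eqref{logexpn}. For part (2), however, we retain the cubic coefficient. It has the form $C = \tfrac16\,\mathrm{Sym}(\prtl\Gamma) + (\text{terms quadratic in }\Gamma)$, and the $\Gamma\Gamma$ terms will not matter.

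For part (2), we place $x_0$ at the coordinate origin, so that $y-x_0=y$. At $x_0$ we have $\Gamma(x_0)=0$, hence $V^j_{;m}=\prtl_{x_m}V^j+\Gamma^j_{mk}(x_0)V^k=\prtl_{x_m}V^j(x,y)|_{x=x_0}$. We differentiate the expansion above in $x$, keeping in mind that both the coefficients and $w=y-x$ depend on $x$:
\begin{itemize}
\item The linear term contributes $-\delta^j_m$.
\item The quadratic term contributes $\tfrac12\prtl_m\Gamma^j_{k\ell}(x_0)y^ky^\ell$. The piece from differentiating $w$ is proportional to $\Gamma(x_0)=0$.
\item The cubic term contributes $-3C^j_{mk\ell}(x_0)y^ky^\ell$ when $w$ is differentiated, plus $\mathcal{O}(|y|^3)$ from differentiating the coefficient. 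At $x_0$ the $\Gamma\Gamma$ parts vanish, so this is $-\tfrac12\,\mathrm{Sym}_{(mk\ell)}\prtl\Gamma^j\, y^ky^\ell$.
\end{itemize}
The error terms stay $\mathcal{O}(|y|^3)$ after differentiation because all functions involved are smooth in $(x,w)$.

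The key step is then the algebraic identity showing that
\begin{equation*}
\tfrac12\prtl_m\Gamma^j_{k\ell}y^ky^\ell - \tfrac12\,\mathrm{Sym}_{(mk\ell)}(\prtl\Gamma^j)\,y^ky^\ell
\end{equation*}
is expressible purely through curvature. We write the symmetrization as $\tfrac13\bigl(\prtl_m\Gamma^j_{k\ell}+\prtl_k\Gamma^j_{m\ell}+\prtl_\ell\Gamma^j_{mk}\bigr)$, which is legitimate after contracting with the symmetric $y^ky^\ell$. The difference then becomes
\begin{equation*}
\tfrac16\Bigl[(\prtl_m\Gamma^j_{k\ell}-\prtl_k\Gamma^j_{m\ell}) + (\prtl_m\Gamma^j_{k\ell}-\prtl_\ell\Gamma^j_{mk})\Bigr]y^ky^\ell.
\end{equation*}
Since $\Gamma(x_0)=0$, each bracketed difference equals a curvature component at $x_0$: with the paper's sign convention, $R^j_{\ell mk}=\prtl_m\Gamma^j_{k\ell}-\prtl_k\Gamma^j_{m\ell}$ there. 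Using the symmetry of $y^ky^\ell$ and the antisymmetry of $R$ in its middle pair, we match the result to $\tfrac16(R^j_{mk\ell}+R^j_{m\ell k})y^ky^\ell$. Finally, \eqref{covariantlogexpndir} follows by contracting with $W^m$.

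I expect the main obstacles to be two pieces of bookkeeping. The first is getting the cubic inversion coefficient exactly right, in particular the factor $\tfrac16$ and the correct symmetrization. The second is the index and sign conventions in matching $\prtl\Gamma$ differences to $R^j_{mk\ell}$. I would check both against the known normal-coordinate expansion, in the spirit of Eisenhart, as a sanity check.
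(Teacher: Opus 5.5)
Your proposal is correct and follows essentially the paper's route: the paper likewise expands $\exp_x V$ to third order, but instead of inverting the series explicitly it differentiates the implicit relation $F(x,y,V)=0$ along the diagonal, obtaining $\partial^3 V^j/\partial y_\ell\partial y_k\partial x_m(x_0,x_0)=\partial_m\Gamma^j_{k\ell}(x_0)+S^j_{k\ell m}(x_0)$, which is exactly twice the quadratic coefficient you compute. One small remark on your final matching step: with the curvature convention the paper cites (Lee, Prop.~7.4), your two differences $\partial_m\Gamma^j_{k\ell}-\partial_k\Gamma^j_{m\ell}$ and $\partial_m\Gamma^j_{k\ell}-\partial_\ell\Gamma^j_{mk}$ are literally $R^j_{mk\ell}(x_0)$ and $R^j_{m\ell k}(x_0)$, so no appeal to antisymmetry is needed.
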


\begin{proof}
	Let $x(t)$ parameterize a geodesic in the coordinates such that $x(0) = x$ and $\frac{dx}{dt}(0)=V^j \prtl_{x_j}|_x$.
	Recall that
	\begin{equation*}
		\frac{d^2 x_j}{dt^2} (t) = -\Gamma_{k\ell}^j(x(t)) \frac{dx_k}{dt}(t)\frac{dx_\ell}{dt}(t).
	\end{equation*}
	Differentiating both sides yields and evaluating at $t=0$ yields
	\begin{equation}\label{geodthird}
		\frac{d^3 x_j}{dt^3} (0)= 
		\big(2\Gamma_{kn}^j(x)\Gamma_{\ell m}^n(x(t)) -\prtl_m \Gamma_{k\ell}^j(x(t)) \big) V^kV^\ell V^m.
	\end{equation}
	To simplify the notation, we introduce $S_{k\ell m}^j(x)$ defined by
	\begin{equation*}
		S_{k\ell m}^j := \frac 23 \big(\Gamma_{kn}^j(x)\Gamma_{\ell m}^n(x) + \Gamma_{\ell n}^j(x)\Gamma_{ m k}^n(x) + \Gamma_{mn}^j(x)\Gamma_{k\ell}^n(x)\big)-\frac 13\big(\prtl_m \Gamma_{k\ell}^j(x) +\prtl_k \Gamma_{\ell m}^j(x) +\prtl_\ell \Gamma_{mk}^j(x) \big) .
	\end{equation*}
	Since $\Gamma_{k\ell}^j = \Gamma_{\ell k}^j$ this gives the symmetrization of the expression in parentheses in \eqref{geodthird} with respect to all 3 lower indices.  Consequently, near $V=0$, the $j$-th component of $\exp_x(V)$ is
	\begin{equation*}
		x_j(1) = x_j + V^j  - \frac 12 \Gamma_{k\ell}^j(x) V^k V^\ell + \frac 16 S_{k\ell m}^j (x)V^kV^\ell V^m + \mathcal{O}(|V|^4).
	\end{equation*}
	
	We now define $F_1, \dots, F_\d$ so that $F_j +y_j$ is the $j$-th coordinate of $\exp_x(V)$.  In other words, with the same quartic term,
	\begin{equation*}
		F_j(x,y,V) := -y_j + x_j + V^j  - \frac 12 \Gamma_{k\ell}^j(x) V^k V^\ell + \frac 16 S_{k\ell m}^j (x)V^kV^\ell V^m + \mathcal{O}(|V|^4) 
	\end{equation*}
	so that locally $y = \exp_x(V)$ if and only if $F_j(x,y,V)=0$ for all $j=1,\dots, \d$.  Since $\frac{\prtl F_j}{\prtl V^k}|_{V=0} = \delta^j_k$, the implicit function theorem implies that near points where $x=y$ and $V=0$, the zero set of $F$ is locally parameterized as the graph of $(x,y)\mapsto V(x,y)$ where $V(x,x)=0$ and $V^j(x,y)$ is the $j$-th component of $\expyno$.  
	
	The approximations \eqref{logexpn}, \eqref{covariantlogexpn} result from the usual method for calculating partials of $V$ along the diagonal $x=y$ and in the latter case $x=y=x_0$. The chain rule gives
	\begin{equation}\label{chained}
		0=\frac{\prtl F_j}{\prtl x_m } + \frac{\prtl F_j}{\prtl V^a } \frac{\prtl V^a}{\prtl x_m} \quad \text{ and } \quad 
		0=\frac{\prtl F_j}{\prtl y_k } + \frac{\prtl F_j}{\prtl V^a } \frac{\prtl V^a}{\prtl y_k} . 
	\end{equation}
	Restricting these to $x=y$ (so that $V=0$) we obtain  
	\begin{equation*}
		\frac{\prtl V^j}{\prtl x_m}\Big|_{y=x} = \delta^j_m \quad \text{ and } \quad \frac{\prtl V^j}{\prtl y_k}\Big|_{y=x} =- \delta^j_k
	\end{equation*}
	
	Calculating the second derivatives of $V$ is naturally more involved, but we can simplify it by noting that any partial derivative of $F_j$ of order 2 or more involving at least one $y$ variable vanishes.  Hence differentiating the first identity in \eqref{chained} with respect to $y_k$ yields
	\begin{equation}\label{chained2}
		0=\frac{\prtl^2 F_j}{\prtl V^a \prtl x_m }\frac{\prtl V^a}{\prtl y_k} + \frac{\prtl^2 F_j}{\prtl V^a \prtl V^b }\frac{\prtl V^b}{\prtl y_k} \frac{\prtl V^a}{\prtl x_m} + \frac{\prtl F_j}{\prtl V^a } \frac{\prtl^2 V^a}{\prtl y_k \prtl x_m } .
	\end{equation}
	Along $y=x$ the first term is seen to vanish since $ \frac{\prtl^2 F_j}{\prtl V^a \prtl x_m }|_{V=0}=0$ while the second gives the Christoffel symbols.  We can take a similar approach differentiating the second identity in \eqref{chained} in $y_\ell$ and we now have
	\begin{equation}\label{vprtl}
		\frac{\prtl^2 V^j}{\prtl x_m \prtl y_k}\Big|_{y=x} = -\Gamma^j_{km}(x)
		\quad \text{ and } \quad
		\frac{\prtl^2 V^j}{\prtl y_\ell \prtl y_k}\Big|_{y=x} = \Gamma^j_{k\ell}(x)
	\end{equation} 
	The latter identity completes the derivation of \eqref{logexpn}.
	
	The derivation of \eqref{covariantlogexpn} is simplified by the hypothesis that the Christoffel symbols vanish at $x_0$, meaning that  $V^j_{;m}(y) = \frac{\prtl V^j}{\prtl x_m}(y)$ and the identities in \eqref{vprtl} vanish at $y=x=x_0$.  Differentiating \eqref{chained2} with respect to $y_\ell$ in the same way as before and incorporating the observations above gives
	\begin{equation*}
		\frac{\prtl^3 V^j}{\prtl y_\ell \prtl y_k \prtl x_m }(x_0,x_0)= \prtl_m \Gamma_{k\ell}^j(x_0) + S_{k\ell m}^j(x_0) = \frac 13 \big(R_{mk\ell}^j(x_0) +R_{m\ell k}^j(x_0) \big)
	\end{equation*}
	where the last equation follows from the typical formula for the curvature tensor in coordinates (e.g. \cite[Proposition 7.4]{LeeRiemannian}).
\end{proof}

\section{Sasaki Metric and Differential of the Geodesic Flow}\label{S:Sasaki}
\subsection{The Sasaki Metric and Symplectic Structure on the Tangent Bundle}\label{SS:SasakiTM}
 In this section, we review the Sasaki metric $\bg$ on $TM$ drawing on material from \cite[\S9.1]{BlairRiemannianBook}, \cite[\S5.8-9]{BurnsGideaBook}, \cite[Ch.1]{PaternainBook}, \cite[\S II.4]{SakaiRiemannianGeometry}. This metric defines $(TM, \bg)$ as a Riemannian manifold such that the canonical projection $\pi :TM \to M$ is a Riemannian submersion.  It gives us a framework for understanding the differential of the geodesic flow on $TM$ in terms of Jacobi fields.

In what follows, $\theta = (p,V) \in TM$ denotes an arbitrary point expressed in terms of a local trivialization, and when it is desirable to take a coordinate system on $M$ nearby $p$, we shall denote these by $(x_1,\dots,x_\d)$ with $(x_1, \dots, x_d, X^1, \dots, X^d)$ denoting the induced coordinates on $TM$.  The \emph{vertical subspace} of $T_\theta TM$ is defined to be $V_\theta = \ker(d\pi|_\theta)$.  Given a vector field $Y$ on $M$, we define its \emph{vertical lift} $Y^v$ to $TM$ as the unique vector field defined by $Y^v \omega = \omega(Y)\circ \pi$ for any 1-form $\omega$ on $M$, viewed as a function on $TM$ on the left hand side of the identity\footnote{In other words, if $\omega = \omega_j (x)dx_j$ in coordinates, then the function on $TM$ is $\omega_j(x) X^j$, cf. \cite[Ch. 1 (2.4)]{YanoIshihara}.}.  Thus if $Y = Y^j(x) \frac{\prtl }{\prtl x_j}$ then $Y^v = Y^j(x) \frac{\prtl }{\prtl X^j}$. Since $V_\theta=$ span$\{\frac{\prtl }{\prtl X^1},\dots, \frac{\prtl }{\prtl X^d}\}$, the map $\iota_v: T_pM \to T_\theta TM$ defined by  $\iota_v (Y) = Y^v$ is an isomorphism from $T_pM $ to the vertical subspace in $T_\theta TM$.

The Levi-Civita connection $\nabla$ on $(M,g)$ means that for each $\theta \in TTM$, there is a well-defined \emph{horizontal subspace} $H_\theta$ complementary to each vertical subspace in $T_\theta TM$.  First define the \emph{horizontal lift} of a vector field $Y$ on $M$ as the unique vector field $Y^h$ on $TM$ such that $Y^h \omega = \nabla_Y \omega$ for any differential 1-form $\omega$ on $M$, again viewing $\omega$ as a function on $TM$ on the left hand side.  In coordinates, $Y^h = Y^j(x) \frac{\prtl}{\prtl x_j} - Y^j (x) X^k \Gamma_{jk}^\ell(x) \frac{\prtl}{\prtl X^\ell}$.  The horizontal subspace $H_\theta$ is now defined to be the $d$-dimensional subspace spanned by the horizontal lifts of vector fields on $M$, that is, $H_\theta$ is the image of $T_pM$ under  $\iota_h: T_pM \to T_\theta TM$ defined by $\iota_h(Y) = Y^h$.

The above shows that both the horizontal and vertical subspaces are locally defined as the span of smooth vector fields and hence define distributions on $TM$.    Geodesics $\gamma:\RR \to M$ are characterized by the property that the tangents to the curve $t \mapsto (\gamma(t), \dot\gamma(t)) \in TM$ always lie in the horizontal distribution.  More generally, given a smooth curve $(x(t), X(t)) \in TM$, then $X(t) = X^j(t)\frac{\prtl}{\prtl x_j}\big|_{x(t)}$ is parallel along $x(t)$ $(\nabla_t X = 0)$ if and only if $(\dot x(t),\dot X(t))$ lies in the horizontal distribution.

Any vector field on $TTM$ in coordinates $\Xi = Y^j \frac{\prtl}{\prtl x_j}+ Z^j \frac{\prtl}{\prtl X^j}$ decomposes in $T_\theta TM$ as 
\begin{equation}\label{horvertdecomp}
	 \Big( Y^j(\theta) \frac{\prtl}{\prtl x_j}\Big|_\theta - Y^j (\theta) X^k \Gamma_{jk}^\ell(p) \frac{\prtl}{\prtl X^\ell} \Big|_\theta\Big) + 
	 \big( Z^\ell(\theta) + Y^j (\theta) X^k \Gamma_{jk}^\ell(p)\big)  \frac{\prtl}{\prtl X^\ell}\Big|_\theta 
	 \in H_\theta \oplus V_\theta
\end{equation}
Equivalently, the horizontal component  is given by $(d\pi|_\theta( \Xi))^h$ and the vertical component is given by $\Xi - (d\pi|_\theta (\Xi))^h$.  The \emph{connection map} $K_\theta: T_\theta TM \to T_pM$ is defined in the following way: for any vector field $\Xi$, $K_\theta(\Xi) $ is the image in $T_p M$ of the vertical component of $\Xi$ under $\iota_v^{-1}$, where $\iota_v$ is the vertical isomorphism defined above.  In coordinates, 
\begin{equation}\label{connectionmap}
	K_\theta(\Xi) = \big( Z^\ell(\theta) + Y^j (\theta) X^k \Gamma_{jk}^\ell(x)\big)  \frac{\prtl}{\prtl x_\ell}\Big|_x .
\end{equation}
The map $K_\theta$ can be equivalently characterized by the following: take any curve $(x(t), X(t)) \in TTM$, defined on an open interval in $\RR$ containing 0, such that $(\dot x(0), \dot X(0)) = \Xi$, we then have $K_\theta (\Xi) = \nabla_t X|_{t=0} = \nabla_{\dot x(0)} X$ where $X(t) = X^j(t)\frac{\prtl}{\prtl x_j}\big|_{x(t)}$ is treated as a vector field along $x(t)$.  

The \emph{Sasaki metric} $\bg$ on $TM$, introduced by Sasaki in \cite{SasakiTangentBundlesI}, is defined as 
\begin{equation*}
	\langle \Xi_\theta, \Upsilon_\theta \rangle_{\bg(\theta)} = \langle d\pi|_\theta (\Xi_\theta), d\pi|_\theta (\Upsilon_\theta)\rangle_{g(\pi(\theta))} +
	 \langle K_\theta (\Xi_\theta) ,K_\theta (\Upsilon_\theta)\rangle_{g(\pi(\theta))} \quad \text{ for } \Xi_\theta, \Upsilon_\theta \in T_\theta TM.
\end{equation*}
Equivalently, in terms of horizontal and vertical lifts, we have
\begin{equation*}
	\langle X^h + \tilde X^v, Y^h + \tilde Y^v\rangle_{\bg(\theta)} = \langle X,Y \rangle_{g(p)} + \langle \tilde X, \tilde Y\rangle_{g(p)} , \quad p = \pi(\theta), \;X,\tilde X, Y, \tilde Y \in T_p M.
\end{equation*}
In coordinates, we have \cite[(3.2)]{SasakiTangentBundlesI}
\begin{equation}\label{sasakicoord}
	\bar g = g_{jk}(x) dx_j dx_k + g_{jk}(x)\pmb{\delta} X^j \pmb{\delta} X^k, \qquad \pmb{\delta} X^j := dX^j + \Gamma_{\ell m}^j(x) X^\ell dx_m,
\end{equation}
where $\pmb{\delta} X^j$ is the \emph{covariant differential}.  We thus obtain the simpler expression if the Christoffel symbols vanish at a point $x_0$:
\begin{equation*}
	\Gamma_{\ell m}^j(x_0) =0\quad \forall j,\ell,m \implies \bar g|_{x_0} =  g_{jk}(x_0) dx_j dx_k + g_{jk}(x_0)d X^j d X^k.
\end{equation*}

There is an almost complex structure $\mathbf{J}_\theta$ on  $T_\theta TM$ (a $(1,1)$ tensor satisfying $\mathbf{J}_\theta^2 = -I$) introduced by Dombrowski \cite{DombrowskiTgt} characterized by 
\begin{equation}\label{acstructure}
	\mathbf{J}_\theta X^h|_\theta = X^v|_\theta \quad \text{ and } \quad \mathbf{J}_\theta X^v|_\theta = -X^h|_\theta,
\end{equation}
for any vector field $X$ defined near $\pi(\theta)$.  Equivalently, $\mathbf{J}_\theta$ is characterized by $d\pi|_\theta \circ \mathbf{J}_\theta = -K_\theta $ and $K_\theta \circ \mathbf{J}_\theta = d\pi|_\theta$. Consequently, 
\begin{equation}\label{symptgtbundle}
	\nu_\theta(\Xi_\theta, \Upsilon_\theta) = \langle \mathbf{J}_\theta \Xi_\theta, \Upsilon_\theta\rangle_{\bg(\theta)} 
	=  \langle d\pi|_\theta \Xi_\theta, K_\theta(\Upsilon_\theta)\rangle_{g(\pi(\theta))} - \langle K_\theta(\Xi_\theta), d\pi|_\theta \Upsilon_\theta\rangle_{g(\pi(\theta))} 
\end{equation}
defines a symplectic structure on $TM$.  In fact, a routine calculation shows this is the pull back of the canonical 2-form on $T^*M$ under the musical isomorphism from $TM$ to $T^*M$, $(p,V) \mapsto (p,V^\flat)$. 

\subsection{Jacobi Fields and the Differential of the Geodesic Flow}\label{SS:diffgeodflow}
Let $\kappa_t: TM \to TM$ denote the geodesic flow on $TM$, that is, $\kappa_t(p,V) = (\gamma_{p,V}(t), \dot\gamma_{p,V}(t))$, where $\gamma_{p,V}(t)$ is the geodesic determined by $(\gamma_{p,V}(0), \dot\gamma_{p,V}(0))=(p,V)$.  Equivalently, it is the Hamiltonian flow on $TM$ determined by the function on $TM$: $(p,V) \mapsto \frac 12 |V|_{g(p)}^2$. Hence $\kappa_t$ is a symplectomorphism with respect to $\nu$.  In $(x,X)$ coordinates, the Hamiltonian vector field is given by 
\begin{equation*}
X^j \frac{\prtl}{\prtl x_j} - \Gamma_{jk}^\ell (x) X^jX^k\frac{\prtl}{\prtl X^\ell} .
\end{equation*}
Moreover, $\kappa_t$ preserves the level sets of  $\frac 12|V|_{g(p)}^2$, namely the fiber bundles $S^{(r)}M$ defined in \eqref{tgtspheresdef}. The homogeneity $ \frac 12 |cV|_{g(p)}^2 = \frac{c^2}{2} |V|_{g(p)}^2$ also gives the rescaling property for $c>0$
\begin{equation}\label{geodrescale}
	\big(\gamma_{p,cV}(t), \dot\gamma_{p,cV}(t)\big) = \big(\gamma_{p,V}(ct), c\dot\gamma_{p,V}(ct)\big).
\end{equation}
Equivalently, if we define the rescaling map $\tau_c(p,V) = (p,cV)$ for $c>0$, this can be expressed as
\begin{equation}\label{geodrescalemap}
	(\kappa_t \circ \tau_c) (p,V) = (\tau_c \circ \kappa_{ct}) (p,V).
\end{equation}

The differential of $\kappa_t$ can be expressed in terms of Jacobi fields.  Suppose $\theta \in TM$ and $\gamma: \RR \to M$ is the geodesic such that $(\gamma(0),\dot{\gamma}(0))=\theta$. Let $\R_{\dot\gamma(t)}$ denote the endomorphism on $T_{\gamma(t)}M$ formed by contracting the Riemann curvature tensor along $\dot\gamma$: 
\begin{equation}\label{tidalforce}
\R_{\dot\gamma(t)}(V) = R(V, \dot\gamma(t))\dot\gamma(t) .
\end{equation}
Recall that a Jacobi field is any solution to the Jacobi equation
\begin{equation}\label{JacobiEqn}
	\covt^2 J(t) +\R_{\dot\gamma(t)}(J(t))= \covt^2 J(t) + R(J(t),\dot\gamma(t))\dot\gamma (t) =0.
\end{equation}
Given $\Xi \in T_\theta TM$, let $J(t)$ be the Jacobi field along $\gamma(t)$ satisfying $J(0) = d\pi|_\theta (\Xi) $, $\covt J(0) = K_\theta (\Xi)$.  The differential of $\kappa_t$ can be calculated as 
\begin{equation}\label{diffgeodflow}
	d\kappa_t|_\theta (\Xi) = (J(t)^h, (\covt J(t))^v) \in H_{\kappa_t(\theta)} \oplus V_{\kappa_t(\theta)} ;
\end{equation}
proofs can be found in \cite[Prop. 5.9.2]{BurnsGideaBook}, \cite[Lemma 1.40]{PaternainBook}, \cite[Ch. II-Lemma 4.3]{SakaiRiemannianGeometry}.

The symmetries of the Riemann curvature tensor imply that if $J(t)$ is a Jacobi field, then $\langle \covt J(t),\dot\gamma(t)\rangle_{g}$ is constant in $t$ so that $\langle J(t),\dot\gamma(t)\rangle_{g}$ is an affine function of $t$.  The field $J(t)$ is said to be \emph{tangential} if $J(t) = (at+b) \dot\gamma(t)$ for all $t$, where $a,b \in \RR$, or equivalently if its projection onto the normal space of $\gamma$ vanishes. By uniqueness of solutions to \eqref{JacobiEqn}, a Jacobi field is tangential if and only if $J(t_0)$, $\covt J(t_0)$ are both scalar multiples of $\dot\gamma(t_0)$ for some $t_0$.  Moreover, by \eqref{diffgeodflow},
\begin{equation}\label{difftangential}
	d\kappa_t|_\theta (\dot\gamma(t)^h) = \big(\dot\gamma(t), 0) \in H_{\kappa_t(\theta)} \oplus V_{\kappa_t(\theta)}  , 
	\qquad d\kappa_t|_\theta (\dot\gamma(t)^v) = (t\dot\gamma(t),\dot\gamma(t)) \in H_{\kappa_t(\theta)} \oplus V_{\kappa_t(\theta)} .
\end{equation}

On the other hand, a Jacobi field $J$ is said to be \emph{normal} if it satisfies $J(t) \perp \dot\gamma(t)$ for all $t$.  A normal Jacobi field also satisfies $\covt J(t) \perp \dot\gamma(t)$ since otherwise it would have a nontrivial tangential component. Similarly, if  $J(t_0)\perp \dot\gamma(t_0)$ and $\covt J(t_0) \perp \dot\gamma(t_0)$ for some $t_0 \in \RR$, then $J$ is normal.  

We now review the standard method for expressing solutions to the Jacobi equation as a second order ODE.  Begin with any parallel orthonormal frame along $\gamma(t)$ denoted as 
\begin{equation}\label{parallelONframe}
	E_1(t),\dots, E_\d(t) \quad \text{ such that } \quad  E_\d(t) = \frac{\dot\gamma(t)}{| \dot\gamma(t)|_{g}} .
\end{equation}
We now write $J(t) = J^i(t) E_i(t)$ so that the coefficients $(J^1(t),\dots,J^{\d}(t))$ satisfy the linear, second-order ODE system
\begin{equation*}
	\begin{split}
		&\ddot{J}^i(t) + R_{k}^i(t) J^k(t) =0 , \\
		& R_{k}^i(t):= \langle R(E_k(t),\dot\gamma(t))\dot\gamma(t),E_i(t)\rangle_{g(\gamma(t))} = Rm\big(E_k(t),\dot\gamma(t), \dot\gamma(t),E_i(t)\big) 
	\end{split}
\end{equation*}
Introducing $L^i(t) = \dot J^i(t) = \langle\covt J(t), E_i(t)\rangle_g$, the system converts from second to first-order 
\begin{equation}\label{jacobifirstorder}
	\dot J^i (t) = L^i(t), \qquad \dot L^i(t) = -R_{k}^i(t) J^k(t), \qquad i= 1, \dots, \d.
\end{equation}
Since $M$ is compact, the following is finite
\begin{equation}\label{lyapunovdef}
	 \tilde\Lambda = \sup\big\{ \|\R_Y\| : p \in M,\; Y \in S_p^{(1)} M \big\}.
\end{equation}
Here $S_p^{(r)}M$ is the fiber of $S^{(r)} M$ over $p$ and $\|\R_Y\|$ is the norm of the endomorphism $\R_Y$ with respect to the Riemannian metric, cf. \eqref{tidalforce}. A routine application of Gronwall's inequality now implies the following standard result.
\begin{proposition}\label{P:gronwall}
	Let $\theta \in S^{(r)}M$, $\Xi \in T_\theta TM$.  Then with $\tilde\Lambda$ as in \eqref{lyapunovdef},  for all $t \in \RR$ it holds that
	\begin{equation*}
		 \big\langle d\kappa_t|_\theta (\Xi), d\kappa_t|_\theta (\Xi) \big\rangle_{\bg(\kappa_t(\theta))}  \leq 	\big\langle \Xi, \Xi \big\rangle_{\bg(\theta)} e^{(1+r^2\tilde\Lambda) |t|}.
	\end{equation*}
\end{proposition}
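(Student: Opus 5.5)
Since $d\kappa_t|_\theta$ is described by Jacobi fields through \eqref{diffgeodflow}, I will reduce the statement to a linear ODE estimate for the first-order system \eqref{jacobifirstorder} and then apply Gronwall's inequality.

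Fix $\Xi \in T_\theta TM$ and let $J$ be the Jacobi field along $\gamma=\gamma_\theta$ with $J(0)=d\pi|_\theta(\Xi)$ and $\covt J(0)=K_\theta(\Xi)$. By \eqref{diffgeodflow} and the definition of the Sasaki metric (horizontal and vertical lifts are orthogonal and isometric), we have
\begin{equation*}
E(t):=\big\langle d\kappa_t|_\theta (\Xi), d\kappa_t|_\theta (\Xi) \big\rangle_{\bg(\kappa_t(\theta))} = |J(t)|_g^2 + |\covt J(t)|_g^2 ,
\end{equation*}
and $E(0)=\langle \Xi,\Xi\rangle_{\bg(\theta)}$. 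Thus it suffices to show that $E(t)\le E(0)e^{(1+r^2\tilde\Lambda)|t|}$.

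Next I differentiate $E$ using metric compatibility and the Jacobi equation \eqref{JacobiEqn}:
\begin{equation*}
E'(t) = 2\langle J,\covt J\rangle_g + 2\langle \covt J, \covt^2 J\rangle_g = 2\langle J,\covt J\rangle_g - 2\langle \covt J, \R_{\dot\gamma(t)} J\rangle_g .
\end{equation*}
Since $|\dot\gamma(t)|_g = r$ is conserved and $\R_{\dot\gamma}$ is quadratic in $\dot\gamma$, we get $\R_{\dot\gamma(t)} = r^2\R_{Y}$ with $Y=\dot\gamma(t)/r \in S^{(1)}M$. Hence \eqref{lyapunovdef} gives $\|\R_{\dot\gamma(t)}\|\le r^2\tilde\Lambda$. (If $r=0$, the curvature term vanishes.) Cauchy--Schwarz combined with $2ab\le a^2+b^2$ then yields
\begin{equation*}
|E'(t)| \le (1+r^2\tilde\Lambda)\big(|J|_g^2+|\covt J|_g^2\big) = (1+r^2\tilde\Lambda)E(t).
\end{equation*}
Gronwall's inequality, applied forward in $t$ for $t\ge0$ and to $t\mapsto E(-t)$ for $t\le 0$, gives the claim.

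The only point needing a little care is the identification of the Sasaki norm of $d\kappa_t|_\theta(\Xi)$ with $|J|^2+|\covt J|^2$. This follows directly from \eqref{diffgeodflow} together with the lift form of $\bg$, so I expect no real obstacle.
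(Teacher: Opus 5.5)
Your proof is correct and is exactly the routine Gronwall argument the paper invokes without writing out: by \eqref{diffgeodflow} the Sasaki norm squared of $d\kappa_t|_\theta(\Xi)$ is $|J(t)|_g^2+|\covt J(t)|_g^2$, and the Jacobi equation together with $\|\R_{\dot\gamma(t)}\|\le r^2\tilde\Lambda$ gives $|E'|\le(1+r^2\tilde\Lambda)E$. The paper has in mind the first-order system \eqref{jacobifirstorder} in a parallel orthonormal frame, which is the same computation written in components.
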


\subsection{Singular value decompositions}\label{SS:singval}
Let $\gamma: \RR \to M$ be a geodesic of nontrivial speed and $\{E_j(t)\}_{j=1}^\d$ an orthonormal frame as in \eqref{parallelONframe}.  The map 
\begin{equation*}
\RR^{\d-1} \times \RR \in(y_1,\dots,y_{\d-1}, t) \mapsto \exp_{\gamma(t)}(y_1E_1(t)+ \dots+ y_{d-1}E_{d-1}(t)),
\end{equation*}
defines a local diffeomorphism from a neighborhood of $\{0\} \times \RR$ to a tubular neighborhood of the image of $\gamma$ in $M$.  This is a consequence of the inverse function theorem since the differential of the map at any point $(0,t_0)$ maps $\frac{\prtl}{\prtl y_j}|_{(0,t_0)} \mapsto E_j(t_0)$, $\frac{\prtl }{\prtl t}|_{(0,t_0)} \mapsto |\dot\gamma(t)|_gE_\d(t_0)$.  The local coordinates furnished are known as \emph{Fermi} coordinates.  Along $\gamma$, the metric tensor satisfies $g(0,t) = \sum_{j=1}^{d-1} dy_j^2 +|\dot\gamma(t)|_g^2 dt^2$, a consequence of the orthonormality of $E_1,\dots,E_\d$.

Moreover, the Christoffel symbols in Fermi coordinates vanish along $\gamma$: $\Gamma_{ij}^k(0,t)\equiv 0$.    The latter follows by considering the cases $i=\d$ and $i,j\neq\d$ separately, viewing $t$ as the ``$d$-th'' coordinate.  In the first case, $\Gamma_{\d j}^k(0,t)=0$ for any $j$ since the orthonormal frame is parallel.  The second case does not use that $\gamma$ is a geodesic or that the frame is parallel: it follows since $s \mapsto (sv^1,\dots sv^{d-1},t_0)$ in the Fermi coordinates always defines a geodesic through the point $\gamma(t_0)$ which is orthogonal to $\gamma$, at which point the geodesic equations in coordinates implies the more general identity $\Gamma_{ij}^k(sv,t)v^iv^j=0$ for $s$ in a small open interval about 0. 

In the local Fermi coordinates, the vector fields $\frac{\prtl}{\prtl y_1}, \dots, \frac{\prtl}{\prtl y_{\d-1}}, \frac{\prtl}{\prtl t}$ satisfy $\frac{\prtl}{\prtl y_j}\big|_{\gamma(t_0)} =E_j(t_0)$, $\frac{\prtl}{\prtl t}\big|_{\gamma(t_0)} =|\dot\gamma(t_0)|_gE_\d(t_0)$.  Now take horizontal and vertical lifts of them to $(\gamma(t),\dot\gamma(t_0)) \in TM$
\begin{equation}\label{tangentframedef}
\begin{gathered}
 \Eb_j(t_0) := \Big(\frac{\prtl}{\prtl y_j}\Big)^h\Big|_{(\gamma(t_0), \dot{\gamma}(t_0))}, \quad 
  \Fb_j(t_0) := \Big(\frac{\prtl}{\prtl y_j}\Big)^v\Big|_{(\gamma(t_0), \dot{\gamma}(t_0))}, \quad j=1,\dots,d-1\\
  |\dot\gamma(t_0)|_g \Eb_\d(t_0) := \Big(\frac{\prtl}{\prtl t}\Big)^h\Big|_{(\gamma(t_0), \dot{\gamma}(t_0))}, 
  \quad |\dot\gamma(t_0)|_g\Fb_\d(t_0) := \Big(\frac{\prtl}{\prtl t}\Big)^v\Big|_{(\gamma(t_0), \dot{\gamma}(t_0))}.
 \end{gathered}
\end{equation}

\begin{proposition}\label{P:symporthoframe}
Let $\gamma:\RR \to M$ be a geodesic of speed $r=  |\dot\gamma(t)| >0$.
\begin{enumerate} 
\item The frame $(\Eb_1(t), \dots, \Eb_\d(t),\Fb_1(t), \dots, \Fb_\d(t))$ defined in \eqref{tangentframedef} is both $\bg$-orthonormal and $\nu$-symplectic at each $(\gamma(t),\dot\gamma(t)) \in TM$.
\item For any $t \in \RR$, $\Fb_\d(t)$ is normal to the tangent space $T_{(\gamma(t),\dot\gamma(t))} S^{(r)}M$, and 
\begin{equation}\label{tgtspaceSM}
	T_{(\gamma(t),\dot\gamma(t))} S^{(r)}M = \text{span}\{\Eb_1(t), \dots, \Eb_\d(t),\Fb_1(t), \dots, \Fb_{\d-1}(t)\}.
\end{equation}
\end{enumerate}
\end{proposition}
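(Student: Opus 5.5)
The plan is to reduce everything to the point $\gamma(t)$ itself, where Fermi coordinates make the Christoffel symbols vanish. We then read off the Sasaki metric and the symplectic form directly from \eqref{sasakicoord} and \eqref{symptgtbundle}.

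\emph{Orthonormality.} Fix $t_0$ and work in the Fermi coordinates $(y_1,\dots,y_{d-1},t)$ with induced coordinates $(y,t,X)$ on $TM$. Since $\Gamma_{ij}^k(0,t_0)=0$, the horizontal lifts at $(\gamma(t_0),\dot\gamma(t_0))$ are simply $\frac{\prtl}{\prtl y_j}$ and $\frac{\prtl}{\prtl t}$. The vertical lifts are $\frac{\prtl}{\prtl X^j}$. By the simplified form of \eqref{sasakicoord} at a point where the Christoffel symbols vanish,
\[
\bg = g_{jk}(0,t_0)\,dx_jdx_k + g_{jk}(0,t_0)\,dX^jdX^k ,
\]
with $g(0,t_0)=\sum_{j<d} dy_j^2 + r^2dt^2$. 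The rescaling by $r=|\dot\gamma(t_0)|_g$ in \eqref{tangentframedef} then makes $\Eb_1,\dots,\Eb_d,\Fb_1,\dots,\Fb_d$ $\bg$-orthonormal. Coordinate-free, this is just the statement that $\langle X^h+\tilde X^v, Y^h+\tilde Y^v\rangle_{\bg} = \langle X,Y\rangle_g + \langle\tilde X,\tilde Y\rangle_g$, applied to the $g$-orthonormal basis $E_1(t_0),\dots,E_d(t_0)$.

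\emph{Symplecticity.} I would use \eqref{symptgtbundle}, $\nu(\Xi,\Upsilon)=\langle d\pi\,\Xi, K\Upsilon\rangle - \langle K\Xi, d\pi\,\Upsilon\rangle$, together with the basic identities
\[
d\pi(Y^h)=Y,\quad K(Y^h)=0,\quad d\pi(Y^v)=0,\quad K(Y^v)=Y .
\]
These give
\[
\nu(\Eb_i,\Eb_j)=0,\qquad \nu(\Fb_i,\Fb_j)=0,\qquad \nu(\Eb_i,\Fb_j)=\langle E_i,E_j\rangle_g=\delta_{ij}.
\]
So the frame is a symplectic (Darboux) basis. Equivalently, $\mathbf{J}\Eb_j=\Fb_j$ by \eqref{acstructure}, so $\nu(\Eb_i,\Fb_j)=\langle \Fb_i,\Fb_j\rangle_{\bg}$.

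\emph{Part (2).} $S^{(r)}M$ is the level set of $F(p,V)=\frac12|V|_{g(p)}^2$, so its tangent space is $\ker dF$. I would compute $dF$ on lifts. On vertical lifts,
\[
Y^v F = \langle V,Y\rangle_g ,
\]
since $F$ is quadratic in the fiber. On horizontal lifts, $Y^hF=0$, because parallel transport preserves length; concretely $Y^h$ is tangent to curves $(x(s),X(s))$ with $X$ parallel, by the characterization preceding \eqref{horvertdecomp}. At $V=\dot\gamma(t_0)=rE_d(t_0)$ this gives $dF(\Eb_j)=0$ for all $j$, $dF(\Fb_j)=0$ for $j<d$, and $dF(\Fb_d)=r\neq 0$. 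Hence the $2d-1$ listed vectors span $\ker dF$, which has codimension one, and this is \eqref{tgtspaceSM}. Since $\Fb_d$ is $\bg$-orthogonal to all of them, it is normal to $T S^{(r)}M$.

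The only mildly delicate step is justifying $Y^hF=0$. This can also be checked directly in coordinates from $Y^h = Y^j\prtl_{x_j} - Y^jX^k\Gamma_{jk}^\ell\prtl_{X^\ell}$ together with metric compatibility, $\prtl_j g_{k\ell} = \Gamma_{jk}^m g_{m\ell} + \Gamma_{j\ell}^m g_{km}$.
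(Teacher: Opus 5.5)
Your proof is correct and follows essentially the same route as the paper. Orthonormality comes from the vanishing of the Christoffel symbols along $\gamma$ in Fermi coordinates, and the symplectic relations come from \eqref{symptgtbundle}; the paper phrases the latter via $\mathbf{J}_\theta\Eb_j=\Fb_j$, which you also note. For part (2), the paper computes $d\big(\tfrac12 g_{ij}X^iX^j\big)=r^2\,dX^d$ directly in Fermi coordinates and raises it with $\bg$ to obtain $\Fb_d$, whereas you evaluate $dF$ on horizontal and vertical lifts; that is just a coordinate-free version of the same computation.
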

\begin{proof} Let $\theta = (\gamma(t), \dot\gamma(t))\in TM$ for some $t \in \RR$.  For the first claim, the frame is $\bg$-orthonormal due to the coordinate formulae \eqref{horvertdecomp}, \eqref{connectionmap} and the fact that the Christoffel symbols in local Fermi coordinates vanish along $\gamma$.  Moreover, the definition in \eqref{tangentframedef} and the characterization of the almost complex structure $\mathbf{J}$ in \eqref{acstructure} implies that 
\begin{equation*}
	\mathbf{J}_\theta \big( \Eb_j(t) \big)= \Fb_j(t), \qquad \mathbf{J}_\theta \big(\Fb_j(t)\big) =- \Eb_j(t).
\end{equation*}
Consequently, using that $H_\theta,V_\theta$ are $\bg$-orthogonal, we have that for all $j,k$
\begin{equation}\label{symporthoframe}
	\begin{split}
		\nu_\theta\big(\Eb_j(t), \Eb_k(t)\big) &= 0 =\nu_\theta\big(\Fb_j(t), \Fb_k(t)\big)\\
		\nu_\theta\big(\Eb_j(t), \Fb_k(t)\big) &= \bg_\theta\big(\mathbf{J}_\theta \big(\Eb_j(t)\big), \Fb_k(t)\big) = \bg_\theta\big(\Fb_j(t), \Fb_k(t)\big) = \delta_{jk}.
	\end{split}
\end{equation}
This is exactly what it means for the basis to be symplectic with respect to $\nu$.

For the second part of the proposition, we borrow the notation above and let $(x_1,\dots,x_\d) = (y_1,\dots, y_{\d-1}, t)$ denote local Fermi coordinates with $(x,X)$ induced coordinates on $TM$.  The differential of $\frac 12 g_{ij}(x)X^iX^j$ at any $\theta = (\gamma(t),\dot\gamma(t))$ is just 
\begin{equation*}
	d\Big( \frac 12 g_{ij}(x)X^iX^j\Big)\Big|_{\theta}= \frac 12 \frac{\prtl g_{ij}}{\prtl x_k} (x) X^iX^j dx_k|_\theta + g_{ij} (x) X^i dX^j|_\theta = |\dot\gamma(t)|^2 dX^\d|_\theta.
\end{equation*}
Here we have used that $X^i(\theta) = \delta^{i\d}$ in the local coordinates and that the  partials of the metric tensor vanish along $\gamma$ since the Christoffel symbols do.  Hence the kernel of $dX^\d|_\theta$ determines the tangent space to $S^{(r)} M$.
If we now raise $dX^\d|_\theta$ with respect to the Sasaki metric and normalize to obtain a unit vector in $T_\theta TM$, we get exactly $\Fb_\d(t)$ and the rest of the claim follows.
\end{proof}

Now observe that for all $t\in \RR$, 
\begin{equation}\label{sympsubspace}
\mathbf{W}_t := \text{span}\{\Eb_1(t), \dots, \Eb_{\d-1}(t),\Fb_1(t), \dots, \Fb_{\d-1}(t)\} \quad \text{ and } \quad \mathbf{W}_t^\perp =  \text{span}\{\Eb_\d(t),\Fb_\d(t)\}
\end{equation}
both define symplectic subspaces in that the restriction of $\nu$ to $\mathbf{W}_t, \mathbf{W}_t^\perp$ is nondegenerate.  Moreover, the observations above concerning tangential and normal Jacobi fields imply that  $\mathbf{W}_t, \mathbf{W}_t^\perp $ are invariant subspaces for $d\kappa_t$ and hence both $d\kappa_t|_{\mathbf{W}_0}$, $d\kappa_t|_{\mathbf{W}_0^\perp}$ define symplectic maps. 

\begin{proposition}\label{P:SVD}
The restricted map $d\kappa_t|_{\mathbf{W}_0}:\mathbf{W}_0 \to \mathbf{W}_t$ admits a singular value decomposition in the following sense: there exists $t$-dependent bases $\{\Gb_1, \dots, \Gb_{2\d-2} \}$ and $\{\Hb_1, \dots, \Hb_{2d-2} \}$ of $\mathbf{W}_0$ and $\mathbf{W}_t$ respectively which are both $\bg$-orthogonal and $\nu$-symplectic in their tangent spaces such that for some positive real numbers $\sigma_1(t) \geq \sigma_2(t) \geq \dots \geq \sigma_{2\d-2}(t)$
\begin{equation*}
d\kappa_t(\Gb_j) = \sigma_j\Hb_j \qquad j = 1,\dots, 2\d-2.
\end{equation*}
The ordered singular values come in reciprocal pairs $\sigma_1 = \sigma_{2\d-2}^{-1}, \sigma_2 = \sigma_{2\d-3}^{-1}, \dots, \sigma_{\d-1} = \sigma_{\d}^{-1}$.
\end{proposition}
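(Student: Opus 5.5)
Identify $\mathbf{W}_0$ and $\mathbf{W}_t$ with $\RR^{2\d-2}$ via the frames $(\Eb_1,\dots,\Eb_{\d-1},\Fb_1,\dots,\Fb_{\d-1})$ at times $0$ and $t$. By Proposition \ref{P:symporthoframe} these frames are simultaneously $\bg$-orthonormal and $\nu$-symplectic. In these coordinates the Sasaki inner product becomes the Euclidean one and $\nu$ becomes the standard form $\langle \mathbf{J}_0\cdot,\cdot\rangle$ with $\mathbf{J}_0=\begin{pmatrix}0&-I\\ I&0\end{pmatrix}$. Since $\mathbf{W}_0,\mathbf{W}_t$ are $d\kappa_t$-invariant and $\kappa_t$ is a symplectomorphism, the matrix $A$ of $d\kappa_t|_{\mathbf{W}_0}$ lies in $\Spdr$. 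The statement is therefore the classical fact that a real symplectic matrix has a singular value decomposition $A=U\Sigma O^T$ with $U,O\in \Spdr\cap\Odr\cong\Udr$ and $\Sigma$ diagonal, with entries in reciprocal pairs. The columns of $O$ and $U$ then give $\Gb_j$ and $\Hb_j$, after reordering.

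The steps are as follows.

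\textbf{Step 1: reciprocal pairing.} Consider $A^TA$, which is symmetric positive definite and symplectic. From $A^T\mathbf{J}_0A=\mathbf{J}_0$ we get $(A^TA)^{-1}=\mathbf{J}_0^{-1}A^TA\mathbf{J}_0$. Hence if $A^TAv=\sigma^2v$, then $A^TA(\mathbf{J}_0v)=\sigma^{-2}\mathbf{J}_0v$. So $\mathbf{J}_0$ maps the $\sigma^2$-eigenspace onto the $\sigma^{-2}$-eigenspace.

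\textbf{Step 2: an orthonormal symplectic eigenbasis for $A^TA$.}
\begin{itemize}
\item For eigenvalues $\sigma^2>1$, choose a Euclidean-orthonormal basis $\{v_k\}$ of the eigenspace. Eigenspaces with $\sigma^2\ne\tau^{-2}$ are $\nu$-orthogonal, since $\nu(v,w)=\langle \mathbf{J}_0 v,w\rangle$ and $\mathbf{J}_0v$ lies in the $\sigma^{-2}$-eigenspace, which is orthogonal to the $\tau^2$-eigenspace. Within one eigenspace with $\sigma^2\neq1$, $\nu$ vanishes for the same reason, so the space is isotropic. Pair $v_k$ with $\mathbf{J}_0v_k$; this gives an orthonormal and symplectic family.
\item The eigenvalue $1$ is the delicate case. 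Its eigenspace $E_1$ is $\mathbf{J}_0$-invariant, so it is a complex subspace for the complex structure $\mathbf{J}_0$. Choose a Hermitian-orthonormal (unitary) basis $u_k$ over $\CC$. Then $\{u_k,\mathbf{J}_0u_k\}$ is simultaneously orthonormal and symplectic.
\end{itemize}
Collect all these vectors as the basis $\Gb_j$, ordered so that the associated $\sigma_j$ decrease and the pairs $\Gb_j,\Gb_{2\d-1-j}$ correspond to $v$ and $\mathbf{J}_0v$ up to sign. The ordering should be chosen so that the symplectic pairing matches the convention of Proposition \ref{P:symporthoframe}.

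\textbf{Step 3: define the image basis.} Set $\Hb_j=\sigma_j^{-1}d\kappa_t(\Gb_j)$ with $\sigma_j=|d\kappa_t\Gb_j|_{\bg}$.
\begin{itemize}
\item Orthonormality follows from $\langle A\Gb_i,A\Gb_j\rangle=\langle A^TA\Gb_i,\Gb_j\rangle=\sigma_i^2\delta_{ij}$.
\item Symplecticity follows because $\nu(\Hb_i,\Hb_j)=(\sigma_i\sigma_j)^{-1}\nu(\Gb_i,\Gb_j)$. This quantity is nonzero only for paired indices, where $\sigma_i\sigma_j=1$, so the $\nu$-pairings among the $\Hb_j$ equal those among the $\Gb_j$.
\end{itemize}

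The main obstacle is the bookkeeping in Step 2. The symplectic partner of a vector must be its orthonormal partner under $\mathbf{J}_0$, and one must handle both the degenerate eigenvalue $1$ and repeated eigenvalues. I would resolve this with the complex-structure argument described there: on $\mathbf{W}_0$, an orthonormal basis of the form $\{u_k,\mathbf{J}_0u_k\}$ is automatically symplectic.
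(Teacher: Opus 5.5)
Your proposal is correct. It uses the same reduction as the paper: in the frames of Proposition \ref{P:symporthoframe}, the matrix of $d\kappa_t|_{\mathbf{W}_0}$ is symplectic, and by \eqref{diffgeodflow} your matrix $A$ is exactly the fundamental matrix $\Fsc(t)$ of the normal Jacobi system \eqref{jacobifirstorder} that the paper uses. The claim then becomes a symplectic singular value decomposition.

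The difference is in how that key step is handled. The paper simply cites the Euler (Bloch--Messiah) decomposition $\Fsc=\mathscr{O}_2\Sigma\mathscr{O}_1^T$ with $\mathscr{O}_1,\mathscr{O}_2\in\Spdr\cap\Odr$. It then reads off the reciprocal pairs from the spectrum of $\Fsc^T\Fsc$. You instead prove the decomposition from scratch:
\begin{itemize}
\item $\mathbf{J}_0$ conjugates $A^TA$ to its inverse, so it maps each $\sigma^2$-eigenspace onto the $\sigma^{-2}$-eigenspace.
\item For $\sigma>1$, you pair an orthonormal basis $v_k$ of the eigenspace with $\mathbf{J}_0v_k$.
\item You treat the $\mathbf{J}_0$-invariant eigenvalue-$1$ space by Hermitian Gram--Schmidt.
\item You set $\Hb_j=\sigma_j^{-1}A\Gb_j$, which stays symplectic because $\nu$ only pairs indices with $\sigma_i\sigma_j=1$.
\end{itemize}

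What each route buys: yours is self-contained and constructs the bases explicitly, with each $\Gb$ paired to its image under the almost complex structure, just as $\Eb_j\mapsto\Fb_j$. It also makes explicit the equality of multiplicities $\dim\ker(A^TA-\sigma^2I)=\dim\ker(A^TA-\sigma^{-2}I)$, which the ordered identities $\sigma_j=\sigma_{2\d-1-j}^{-1}$ actually require. The paper's route is shorter, at the price of a citation.

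The indexing mismatch you flag is equally present in the paper. In decreasing order the symplectic partner of $\Gb_j$ is $\Gb_{2\d-1-j}$ rather than $\Gb_{j+\d-1}$. In the paper, a symplectic diagonal $\Sigma$ must be $\diag(\sigma_1,\dots,\sigma_{\d-1},\sigma_1^{-1},\dots,\sigma_{\d-1}^{-1})$, so the same reordering is needed there. This is a reindexing, not a gap.
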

\begin{proof}
The main idea behind the proof is to use the frame $E_1(t), \dots E_{\d-1}(t)$ for the normal bundle of $\gamma$ in \eqref{parallelONframe} to reduce the theorem to the level of matrices, where the theorem is known. Consider the fundamental matrix $\Fsc(t)$ of the system \eqref{jacobifirstorder} expressing the Jacobi equation, omitting $J_\d, L_\d$.  
The $j$-th column of $\Fsc(t)$ is thus given by $(J^1(t),\dots, J^{\d-1}(t), L^1(t),\dots,L^{\d-1}(t))$ where 
\begin{equation*}
	\begin{cases}
		J^i(0) = \delta^{ij} \text{ and } L^i(0)=0 & \text{if }j =1,\dots,\d-1,\\
		J^i(0) = 0 \text{ and } L^i(0) = \delta^{ij} & \text{if }j=\d,\dots,2\d-2.
	\end{cases}
\end{equation*}
Since $d\kappa_t|_{\mathbf{W}_0}$ is symplectic, and the frame in Proposition \ref{P:symporthoframe} is symplectic, it follows that\footnote{This introduces our notation for the space of $2(d-1)\times 2(d-1)$ real symplectic matrices.} $\mathscr{F}(t) \in \Spdr$ is a symplectic matrix in that 
\begin{equation}\label{fundsymp}
	\Fsc^T \Jb \Fsc = \Jb \quad \text{ and } \quad \Fsc^{-1} = \Jb \Fsc^T \Jb^{-1} \quad \text{ where } \quad 		\Jb = 
	\begin{bmatrix}
		0 & I_{\d-1} \\
		-I_{\d-1} & 0 
	\end{bmatrix}.
\end{equation}
It is known that symplectic matrices admit a singular value decomposition
\begin{equation}\label{symplecticsvd}
\mathscr{F}(t) = \mathscr{O}_2(t) \Sigma(t) \mathscr{O}_1(t)^T, \quad \text{ where } \quad 
\mathscr{O}_1,\mathscr{O}_2 \in  \Spdr \cap \Odr,
\end{equation}
where \textbf{O} denotes the space of orthogonal matrices satisfying  $\mathscr{O}^T = \mathscr{O}^{-1}$.  The fact that the matrices in the singular value decomposition can be taken to be symplectic matrices is known as a \emph{Bloch-Messiah} decomposition in the physics literature or an \emph{Euler} decomposition in mathematical literature.  The result now follows by using the entries of $\mathscr{O}_1^T, \mathscr{O}_2^T$ to determine the coefficients of $\Gb_j, \Hb_j$ with respect to the frame defining $\mathbf{W}_t$ in \eqref{sympsubspace}.

The final claim that $\sigma_1 = \sigma_{2\d-2}^{-1}, \sigma_2 = \sigma_{2\d-3}^{-1}, \dots, \sigma_{\d-1} = \sigma_{\d}^{-1}$ follows since these are the eigenvalues of  the positive definite symplectic matrix $\Fsc^T \Fsc$.  Whenever a symplectic matrix has a real eigenvalue $\varsigma$, then its reciprocal $\varsigma^{-1}$ is also an eigenvalue, so these identities are a consequence of the decreasing ordering $\sigma_j \geq \sigma_{j+1}$.
\end{proof}

Given Proposition \ref{P:symporthoframe} and the first part of \eqref{difftangential}, we obtain the following corollary.
\begin{corollary}\label{C:singularvalues}
	Let $\sigma_1(t), \dots, \sigma_{2\d-2}(t)$ denote the singular values defined in Proposition \ref{P:SVD}.  Restricting $d\kappa_t$ to the larger domain $T_{(\gamma(0),\dot\gamma(0))} S^{(r)}M $ yields a bijection with $T_{(\gamma(t),\dot\gamma(t))} S^{(r)}M $ with singular values $\{\sigma_1(t), \dots, \sigma_{2\d-2}(t),1\}$.  The last singular value is determined by $d\kappa_t( E_\d(0)) = E_\d(t)$.
\end{corollary}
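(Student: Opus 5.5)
The plan is to decompose $T_\theta S^{(r)}M$, with $\theta=(\gamma(0),\dot\gamma(0))$, into $d\kappa_t$-invariant, mutually $\bg$-orthogonal pieces, and then assemble a singular value decomposition from one for each piece.

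By \eqref{tgtspaceSM}, $T_\theta S^{(r)}M = \mathbf{W}_0 \oplus \text{span}\{\Eb_\d(0)\}$, and this sum is $\bg$-orthogonal by Proposition \ref{P:symporthoframe}(1). The same decomposition holds at time $t$. On $\mathbf{W}_0$ we already have invariance: $d\kappa_t(\mathbf{W}_0)=\mathbf{W}_t$, because normal Jacobi fields stay normal. Proposition \ref{P:SVD} then supplies $\bg$-orthonormal bases $\{\Gb_j\}$ of $\mathbf{W}_0$ and $\{\Hb_j\}$ of $\mathbf{W}_t$ with $d\kappa_t\Gb_j=\sigma_j\Hb_j$. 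For orthonormality I use the \textbf{O} factors $\mathscr O_1,\mathscr O_2$ applied to the orthonormal frame \eqref{sympsubspace}.

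For the remaining direction, $\Eb_\d(0)=\frac1r\dot\gamma(0)^h$. The first identity in \eqref{difftangential}, together with the normalization in \eqref{tangentframedef}, gives
\begin{equation*}
	d\kappa_t(\Eb_\d(0))=\Eb_\d(t).
\end{equation*}
Concretely, the Jacobi field with $J(0)=\dot\gamma(0)$ and $\covt J(0)=0$ is $J(t)=\dot\gamma(t)$, which has $\covt J=0$; dividing by the common speed $r$ gives the identity. So $\Eb_\d(0)$ is a unit vector mapped to a unit vector, contributing the singular value $1$. This is the statement's ``$d\kappa_t(E_\d(0))=E_\d(t)$'', read through the horizontal lift.

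To conclude, I extend the bases by $\Gb_{2\d-1}=\Eb_\d(0)$ and $\Hb_{2\d-1}=\Eb_\d(t)$. This gives orthonormal bases of $T_\theta S^{(r)}M$ and $T_{\kappa_t\theta}S^{(r)}M$ that diagonalize $d\kappa_t$ with diagonal entries $\{\sigma_1,\dots,\sigma_{2\d-2},1\}$, which is exactly a singular value decomposition. Bijectivity between the two tangent spaces follows because all singular values are positive (equivalently, $d\kappa_{-t}$ is the inverse and $\kappa_t$ preserves $S^{(r)}M$).

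The only point needing care is that $d\kappa_t$ does not mix the $\mathbf{W}$ and $\Eb_\d$ components. One direction is the invariance of $\mathbf{W}_0$ above; the other is that $\Eb_\d(0)$ maps into $\text{span}\{\Eb_\d(t)\}$, which the explicit formula gives. No off-diagonal block arises. The vertical tangential direction $\Fb_\d$, which would produce the shear $(t\dot\gamma,\dot\gamma)$ in \eqref{difftangential}, is excluded because it is normal to $S^{(r)}M$.
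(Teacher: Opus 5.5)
Your proposal is correct and follows the same route as the paper, which derives the corollary in one line from the orthogonal splitting $T_\theta S^{(r)}M=\mathbf{W}_0\oplus\text{span}\{\Eb_\d(0)\}$ (Proposition \ref{P:symporthoframe}) together with the first identity in \eqref{difftangential}. You make explicit what the paper leaves implicit: the $\bg$-orthonormality of the bases from Proposition \ref{P:SVD}, the block-diagonal structure of $d\kappa_t$, and the correct reading of \eqref{difftangential} as $d\kappa_t(\dot\gamma(0)^h)=\dot\gamma(t)^h$, which gives $d\kappa_t(\Eb_\d(0))=\Eb_\d(t)$.
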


\subsection{The Sasaki Metric on the Cotangent Bundle}
The musical isomorphism $\xi \mapsto \xi^\#$ which maps covectors to tangent vectors via the Riemannian metric $g$ on $M$ defines a diffeomorphism between the cotangent bundle $T^*M$ and the tangent bundle $TM$.   Consequently, by pulling back the Sasaki metric $\bg$ on $TM$ defined in \S\ref{SS:SasakiTM}, we obtain a metric $\tilde g$ on $T^*M$ for which the musical isomorphism is an isometry.  In \cite{SatoSasakiCotangent}, Sat\^{o} observed the following formula for the metric in coordinates $(x,\xi)$, analogous to \eqref{sasakicoord}:
\begin{equation*}
	\tilde g = g_{jk}(x)dx_j dx_k + g^{jk}(x)\pmb{\delta} \xi^j \pmb{\delta}\xi^k, \qquad \pmb{\delta}\xi^j := d\xi_j - \xi_\ell \Gamma_{jk}^\ell (x)dx_k.
\end{equation*}

It was shown by Tondeur \cite{TondeurSasakiCotangent} that the canonical 2-form and $\tilde g$ define an almost K\"ahler structure on $T^*M$, namely there exists an almost complex structure $\mathbf{J}$ such that $(d\xi\wedge dx)(\Xi,\Upsilon) =  \langle\mathbf J \Xi,\Upsilon\rangle_{\tilde g}$ for vectors $\Xi,\Upsilon \in T(T^*M)$.  Consequently, the Liouville volume form $\frac{1}{d!} (d\xi\wedge dx)^d$ and the Riemannian volume form determined by $\tilde g$ are identical.  Indeed, as in \eqref{symporthoframe} from the proof of Proposition \ref{P:SVD} above, we may take a basis  $\Eb_1, \dots, \Eb_\d, \Fb_1, \dots, \Fb_\d$ which is simultaneously $\tilde g$-orthonormal and symplectic with respect to the canonical 2-form.  Hence both volume forms evaluate to 1 along this ordered basis, which means they are identical.  This means that below, any integral expressed in coordinates $(x,\xi)$ on $T^*M$ can be integrated with respect to the usual Lebesgue measure $dx d\xi$ to achieve the same result as integrating with respect to either volume form on $T^*M$.

\section{Examples}\label{S:examples}
To study instances of where the hypotheses in Theorems \ref{T:mainthmexp} and \ref{T:mainthmpoly} are satisfied, we will use the following variational characterization of singular values of $d\kappa_t$ (see e.g. \cite[Theorem 7.3.8]{HornJohnsonMatrixAnalysis})
\begin{align}
		&\sigma_j(t) = \max_{\{S:\dim S = j\}} \min_{\{X: 0 \neq X \in S\}} \frac{|d\kappa_t(X)|_{\bar g}}{|X|_{\bar g}},\label{singvalvariation}\\
		&\text{ where }\sigma_1 (t)\geq \sigma_2 (t)\geq \cdots \geq \sigma_{2\d-2}(t) \text{ as assumed above.}			\label{singvaldecreasing}
\end{align}
where $S$ denotes subspaces $S$ in the subspace $\W_0$ defined in \eqref{sympsubspace}.
This follows as a consequence of the Courant-Fischer min-max theorem (see e.g. \cite[Theorem 4.2.6]{HornJohnsonMatrixAnalysis}) for eigenvalues applied to the positive definite transformation $(d\kappa_t)^Td\kappa_t$.

Recall in \eqref{geodrescalemap} we observed that if $\tau_r(p,V) = (p,rV)$ is the rescaling map for $r>0$, then the identity there can equivalently be expressed as $\kappa_t = \tau_r \circ \kappa_{rt} \circ \tau_{1/r}$.  Now consider $(p,V) \in S^{(r)}M $ where $r \in [1-\veps_0,1+\veps_0]$ for some $0 < \veps_0 <1$. The chain rule then bounds the   singular values $\tilde\sigma_j(t)$ for $d\kappa_t$ at $(p,V)$ of in terms of the  singular values $\sigma_j(t)$  of $d\kappa_t$ at $(p,\frac Vr) \in S^{(1)}M $ as 
\begin{equation}\label{singvaldilation}
	\tilde{\sigma_j}(t) \approx_{\veps_0} \sigma_j(rt).
\end{equation}
This allows us to limit attention to the unit tangent bundle $S^{(1)}M $ in checking the hypotheses of Theorems \ref{T:mainthmexp} and \ref{T:mainthmpoly} in what follows.  In particular, if the singular values on $S^{(1)}M$ satsify $\log\sigma_j(t) \gtrsim |t|$ or $\sigma_j(t) \gtrsim |t|^k$ for $|t| \geq 1$, then the same holds for the singular values on $S^{(r)}M$ up to a change in the implicit constant. 
\subsection{Hyperbolic and partially hyperbolic geodesic flows}\label{SS:phyp}
Let $\kappa_t$ denote the geodesic flow on $TM$, restricting attention to its action on the unit tangent bundle $S^{(1)}M $, and let $\Xi$ denote the Hamiltonian vector field of $H(p,V) = \frac 12 |V|_{g(p)}^2$ which generates the flow. The geodesic flow is said to be \emph{Anosov} or \emph{hyperbolic} if  there exists $\nu >0$ and a splitting of $S^{(1)}M $ into $\kappa_t$-invariant subspaces $T_{(x,\xi)} S^{(1)}M  = E^f(x,\xi)\oplus E^s(x,\xi) \oplus E^u(x,\xi) $ where $E^f = \text{span }\{\Xi\}$ and 
\begin{equation*}
	\begin{split}
		&\Upsilon \in E^u \implies \begin{cases}
			\big|d\kappa_{t}(\Upsilon )\big|_{\bg} \gtrsim e^{ \nu t}|\Upsilon |_{\bg}, \quad \text{ if }t > 0, \\
			\big|d\kappa_{t}(\Upsilon )\big|_{\bg} \lesssim e^{ \nu t}|\Upsilon |_{\bg}, \quad \text{ if }t < 0, 
		\end{cases}
		\\
		&\Upsilon  \in E^s \implies \begin{cases}
			\big|d\kappa_{t}(\Upsilon )\big|_{\bg} \lesssim e^{ -\nu t}|\Upsilon |_{\bg}, \quad \text{ if }t > 0, \\
			\big|d\kappa_{t}(\Upsilon )\big|_{\bg} \gtrsim e^{- \nu t}|\Upsilon |_{\bg}, \quad \text{ if }t < 0 .
		\end{cases}
	\end{split}
\end{equation*}

The author and Sogge \cite{BlairSoggeToponogov} showed the nonconcentration bound \eqref{KNgain} for Anosov flows. While not explicitly mentioned there, results of Klingenberg \cite{KlingenbergAnosov} and Ma\~n\'e \cite{ManeKlingenbergTheorem} imply that the leading factor in the Hadamard parametrix decays exponentially, see the discussion in \cite[\S5]{BlairSoggeCriticalExp}.

Theorem \ref{T:mainthmexp} allows us to relax this Anosov condition.  The geodesic flow is said to be \emph{partially hyperbolic} if there exists a splitting into nontrivial invariant subspaces
\begin{equation*}
	T_{(x,\xi)} S^{(1)}M = E^f(x,\xi)\oplus E^s(x,\xi) \oplus E^u(x,\xi) \oplus E^c(x,\xi)
\end{equation*}
where $E^f$, $E^s$, $E^u$ satisfy the same properties as before but there exists $0< \mu <\nu$ such that 
\begin{equation*}
	e^{-\mu t} \lesssim \big|d\kappa_t(X)\big|_{\bg} \lesssim e^{\mu t} \text{ for all } X \in E^c, t \in \RR.
\end{equation*}
The subspace $E^c$ thus serves as a ``center'' subspace in which any expansion or contraction is less pronounced relative to $E^s, E^u$.

Partially hyperbolic geodesic flows are thus an instance where the hypotheses of Theorem \ref{T:mainthmexp} are satisfied with $\sup_{T>0} \Theta(T) \lesssim 1$.  Indeed, by making use of \eqref{singvalvariation}, \eqref{singvaldilation}, see that the key hypotheses $\mu(t) \lesssim e^{\Lambda |t|}$ is satisfied by taking $\Lambda$ to be a constant multiple of $\nu$.  Moreover, it follows that $\sigma_1(t)  \gtrsim e^{c|t|}$ where $c$ is a small constant multiple of $\nu$, and hence we can take $\vartheta (t)\approx e^{c|t|}$. Manifolds with such flows were studied extensively by Carniero and Pujals \cite{CarneiroPujals} and we borrow their definition above.  In particular, they construct examples of Riemannian manifolds with partially hyperbolic geodesic flows which possess conjugate points.  Since the methods of \cite{Berard77} and hence \cite{BlairSoggeToponogov} rely crucially on the absence of conjugate points, Theorem \ref{T:mainthmexp} does present a significant relaxation of the hypotheses in previous works.

\subsection{Product manifolds}\label{SS:products}
Let $(M_1, g_1)$, $(M_2,g_2)$ be Riemannian manifolds of dimension $d_1,d_2 \geq 2$ and let $M = M_1 \times M_2$ denote the product manifold with $g = g_1 \oplus g_2$ denoting the product metric.  Given any $ X \in T_pM$, we write $X = X_1+ X_2$, under the natural identification of $TM$ with $TM_1 \oplus TM_2$.  We have the following formulas relating operations on $M$ to those on\footnote{The superscripts $(k)$ denote the corresponding operation on $M_k$, $k=1,2$.} $M_1, M_2$:
\begin{equation}\label{productconnex}
\begin{aligned}
	\nabla_{Y_1 + Y_2}(X_1 + X_2)& = \nabla_{Y_1 }^{(1)}(X_1 ) + \nabla_{Y_2 }^{(2)}(X_2 ), \\
	[X_1+X_2, Y_1+Y_2]& = [X_1,Y_1]^{(1)} + [X_2,Y_2]^{(2)}, \\
	R(X_1+X_2,Y_1+Y_2)(Z_1+Z_2) &= R^{(1)}(X_1,Y_1)(Z_1) +R^{(2)}(X_2,Y_2)(Z_2), \\
	\R_{Y_1+Y_2}(X_1+X_2) &= \R_{Y_1}^{(1)}(X_1) + \R_{Y_2}^{(2)}(X_2),
\end{aligned}
\end{equation}
where the right hand side in each case expresses the decomposition in $TM_1 \oplus TM_2$.
The first of these is in \cite[p.139]{DoCarmoRiemannian} and the second by direct calculation.  The remaining identities follow as a consequence.  As a consequence of the first identity, $\gamma(t) = (\gamma_1(t),\gamma_2(t))$ is a geodesic on $M$ if and only if $\gamma_k(t)$ is a (possibly constant) geodesic on $M_k$, $k=1,2$.  As a consequence of these identities, if $J(t)$ is a Jacobi field on $M$, then $J(t) = J_1(t)+J_2(t)$ where each $J_k$ is a Jacobi field on $M_k$ determined by initial data $J_k(0), \covt^{(k)}J_k(0) \in TM_k$.

\begin{theorem}
	Suppose $(M_1,g_1)$ has nonpositive sectional curvatures.  Then the lower bound in \eqref{varrholowerhyp} can be taken to satisfy $\vartheta(t) \gtrsim |t|^{d_1-1}$ for $|t| \gtrsim 1$.
\end{theorem}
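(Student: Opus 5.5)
The plan is to produce, for every $(p,V)$ in the unit tangent bundle of $M=M_1\times M_2$, a subspace $S\subset\mathbf W_0$ of dimension $d_1-1$ on which $d\kappa_t$ expands every vector by at least $|t|$. The variational characterization \eqref{singvalvariation} then gives $\sigma_j(t)\ge |t|$ for $j=1,\dots,d_1-1$. Since $d_1\le d-1$ (because $d_2\ge 2$), and every factor in \eqref{varrholowerhyp} satisfies $\sigma_j+\sigma_j^{-1}\ge \max(\sigma_j,2)\ge 1$, we get
$$\prod_{j=1}^{d-1}\big(\sigma_j+\sigma_j^{-1}\big)\ \ge\ \prod_{j=1}^{d_1-1}\sigma_j\ \ge\ |t|^{d_1-1},$$
so we may take $\vartheta(t)\approx |t|^{d_1-1}$ for $|t|\gtrsim 1$. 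By \eqref{singvaldilation} this transfers to the shells $S^{(r)}M$ with $r\in[1-\veps_0,1+\veps_0]$, at the cost of changing the implicit constant. Hence it suffices to work on $S^{(1)}M$.

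\textbf{Step 1: choice of the subspace.} Write the geodesic as $\gamma=(\gamma_1,\gamma_2)$ with $\dot\gamma=\dot\gamma_1+\dot\gamma_2$. By \eqref{productconnex}, each $\gamma_k$ is a (possibly constant) geodesic of $M_k$ with constant speed $r_k$, where $r_1^2+r_2^2=1$. Take
$$S=\{W^v: W\in T_{p_1}M_1,\ \langle W,\dot\gamma_1(0)\rangle_{g_1}=0\}.$$
Its dimension is $d_1-1$ if $r_1>0$ and $d_1$ if $r_1=0$. Since $\langle W,\dot\gamma\rangle_g=\langle W,\dot\gamma_1\rangle_{g_1}=0$, these are vertical lifts of vectors normal to $\dot\gamma$, so $S\subset\mathbf W_0$.

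By \eqref{diffgeodflow}, $d\kappa_t(W^v)=(J(t)^h,(\covt J(t))^v)$, where $J$ is the Jacobi field with $J(0)=0$ and $\covt J(0)=W$. By the last two identities in \eqref{productconnex}, $J=J_1$ is a Jacobi field along $\gamma_1$ in $M_1$. The Sasaki norm then gives
$$|d\kappa_t(W^v)|_{\bg}\ \ge\ |J_1(t)|_{g_1}.$$

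\textbf{Step 2: linear growth.} It remains to show $|J_1(t)|\ge |t|\,|W|$. If $r_1=0$, the Jacobi equation reduces to $\covt^2J_1=0$, so $J_1(t)=tW$ after parallel transport, and the bound holds with equality. If $r_1>0$, use nonpositive curvature: with $f=\tfrac12|J_1|^2$,
$$f''=|\covt J_1|^2-Rm(J_1,\dot\gamma_1,\dot\gamma_1,J_1)\ \ge\ |\covt J_1|^2.$$
Wherever $J_1\neq 0$, Cauchy–Schwarz gives $|J_1|\,|J_1|''=|\covt J_1|^2-(|J_1|')^2-Rm(\cdot)\ge 0$, so $|J_1|$ is convex there. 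Also, $f''\ge 0$ with $f(0)=f'(0)=0$ and $f''(0)=|W|^2>0$ shows that $J_1$ does not vanish for $t\neq 0$ (no conjugate points). Therefore $|J_1|$ is convex on $(0,\infty)$, and its right derivative at $0$ is $\lim_{t\to 0^+}|J_1(t)|/t=|W|$. Convexity yields $|J_1(t)|\ge t|W|$ for $t>0$, and the case $t<0$ follows by reversing time.

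\textbf{Main obstacle.} The only delicate point is Step 2: ensuring that the convexity argument is uniform, including the degenerate case $r_1=0$ and the case where $r_1$ is small but positive. The argument above uses nothing depending on $r_1$ beyond the sign of the curvature term, so the bound $\sigma_{d_1-1}(t)\ge|t|$ holds uniformly in $(p,V)$. That uniformity is exactly what \eqref{varrholowerhyp} requires.
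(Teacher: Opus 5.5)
Your proposal is correct and follows the paper's proof. Both arguments take vertical lifts of $M_1$-directions normal to $\dot\gamma_1$, use \eqref{productconnex} to reduce to a Jacobi field $J_1$ on $M_1$, and show $|J_1(t)|\ge|t|$ from convexity of $|J_1|$ under nonpositive curvature, treating the constant-geodesic case separately. Both then conclude with the variational characterization \eqref{singvalvariation}. The only differences are cosmetic: you rule out zeros of $J_1$ via $f=\tfrac12|J_1|^2$ instead of the paper's maximal-interval argument with Taylor's theorem, and you write out the final product inequality explicitly.
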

\begin{proof} Without loss of generality we consider $t >0$ throughout the proof.  Moreover, given \eqref{singvaldilation}, it suffices to consider the differential of $\kappa_t$ restricted to cosphere bundle $S^{(1)}M$.  The notation below will not distinguish between lengths of tangent vectors in the $g_1$ or $g_2$ metric as this should be clear from the context.  
	
	Consider a geodesic $\gamma_1: \RR\to M_1$ of any nontrivial speed.  Let $J_1(t)$ be any Jacobi field satisfying $J_1(0) =0$, $|\covt J_1(0)| = 1$.  We first revisit the usual argument that  $|J_1(t)| \geq t$ for all $t \in (0,\infty)$.  There is a maximal interval $(0,R) \subset \RR$ such that $J_1(t) \neq 0$ for all $t \in (0,R)$.  Using properties of the Levi-Civita connection $\nabla^{(1)}$ we have for all $t\in (0,T)$,
	\begin{equation*}
		\frac{d^2}{dt^2} |J_1|= -\frac{Rm^{(1)}(J_1,\dot\gamma_1,\dot\gamma_1, J_1 )}{|J_1|_{g_1}} + \bigg(\frac{|\covt^{(1)} J_1|_{g_1}^2}{|J_1|_{g_1}} - \frac{\langle\covt^{(1)} J_1,J_1\rangle_{g_1}^2}{|J_1|_{g_1}^3}\bigg) \geq  0,
	\end{equation*}
	as the expression in parenthesis is nonnegative by Cauchy-Schwarz. Despite its appearance, $|J_1(t)|_{g_1}$ is differentiable from the right at $t=0$: since $J_1(t)=tW(t)$ (cf. \cite[Proposition 10.10]{LeeRiemannian}) for some smooth vector field $W(t)$ along $\gamma$, it follows that $\lim\limits_{t\to 0+} \frac{|J_1(t)|_{g_1}-|J_1(0)|_{g_1}}{t} = \lim\limits_{t\to 0+} |W(t)|=1$. Taylor's theorem now implies that for all $t \in [0,R)$,  $|J_1(t)| \geq t$ and hence $R=\infty$.
	
	We now show that  $|J_1(t)| = t$ in the case where $\gamma_1$ is constant: $\gamma_1(t) = p$ for all $t \in \RR$.  In this case, $J_1(t)$ should be treated as a map $J_1:\RR\to T_pM_1$ and the covariant derivative coincides with the ordinary derivative \cite[p. 57-8]{DoCarmoRiemannian}.  Moreover, $\R_{\dot\gamma(t)}^{(1)} \equiv 0$, so the Jacobi equation with initial data  $J_1(0) =0$, $|\covt J_1(0)| = 1$ is solved by $J_1(t) = t\covt J_1(0)$.  It follows that $|J_1(t)| =t$ for $t >0$.  
	
	Now consider any unit speed geodesic in $M$, $\gamma(t) = (\gamma_1(t),\gamma_2(t)) \in M_1 \times M_2$, $|\dot\gamma_1|^2+|\dot\gamma_2|^2 =1$.  We solve the Jacobi equation with initial data $J(0) = 0$, $\covt J (0) \in TM_1$ so that \eqref{productconnex} and uniqueness of solutions imply that $J(t) = J_1(t)$, $J_2(t) \equiv 0$. In particular, we take $\covt J_1(0) \perp \dot\gamma_1$ and $|\covt J_1(0)|=1$.  The preceding bounds give $|J(t)| \geq t$.  Since the initial data $\covt J_1(0)$ can be chosen from a subspace of dimension at least $d_1-1$, the variational characterization \eqref{singvalvariation} implies that the first $d_1-1$ singular values arranged as in \eqref{singvaldecreasing} satisfy $\sigma_j(t)\gtrsim t$ and hence $\vartheta(t) \gtrsim t^{d_1-1}$.
\end{proof}

\begin{remark} Improvements on the universal $L^p$ bounds \eqref{soggeefcn} for product manifolds were considered thoroughly by Huang, Sogge, and Taylor \cite{HuangSoggeTaylor}, following the work of Iosevich and Wyman \cite{IosevichWymanProducts} in the case of spheres. We do not attempt to give a rigorous comparison of their results to ours, instead viewing product manifolds as a interesting case where our hypotheses are satisfied. 
	
\end{remark}
\subsection{Integrable geodesic flows}\label{SS:integrable}
Let $H_1: TM \to \RR$, $H_1(p,V) = \frac 12 |V|_{g(p)}^2$ be the Hamiltonian function generating the geodesic flow.  The geodesic flow on $(M,g)$ is said to be  \emph{integrable} if there exists real-valued functions $H_2,\cdots, H_\d$ on $TM$ such that $\{H_j,H_k\} \equiv 0$ for all $1 \leq j,k \leq \d$ and $dH_1,\dots,dH_\d$ are linearly independent on a dense open set.  Here $\{\cdot,\cdot\}$ denotes the Poisson bracket $\{H_j,H_k\} =\nu(\Xi_{H_j},\Xi_{H_k})$, where $\nu$ is the symplectic form on $TM$ defined in \S\ref{S:Sasaki}.  

The level sets of the \emph{moment map} $(p,V) \mapsto (H_1(p,V),\dots,H_\d(p,V))$ are known to foliate the dense open set where $dH_1,\dots,dH_\d$ are linearly independent.  Each such \emph{regular} level set defines a Lagrangian submanifold with each connected component diffeomorphic to the torus $\TT^\d$, see e.g. \cite[\S5.2]{AbrahamMarsden}, \cite[\S49]{ArnoldClassicalMechanics}.  Given such a component, the Liouville-Arnold theorem furnishes \emph{action-angle coordinates} in a flow-invariant neighborhood $\Omega$ of it.  This is a symplectomorphism $\tilde\kappa (\phi,\I ) :  \TT^\d \times B  \to \Omega$ on an open ball $B \subset \RR^{\d}$ which expresses the pullback $\tilde H:=\tilde\kappa^* H_1$ as a function of $\I $ alone where  $ (\phi,\I ) \in \TT^\d \times B$: $H (\tilde\kappa(\phi,\I ))= \tilde H(\I ) $ for some $\tilde H$ independent of $\phi$.  

In what follows we assume there is a neighborhood $\Omega \subset TM$ as above and action-angle coordinates $ (\phi,\I ) \in \TT^\d \times B$ for which $\tilde H(\I )$ satisfies the following
\begin{equation}\label{levelcurv}
	\text{the level set $\{\I  \in B : 2\tilde H(\I ) =1\}$ has nonvanishing Gaussian curvature in }B.
\end{equation}
This property is not universally satisfied.  However, it is a common hypotheses in the analysis of integrable Hamiltonian systems, in particular KAM theory.  One sufficient condition for \eqref{levelcurv} is the \emph{Kolmogorov nondegeneracy condition} which states that
\begin{equation}\label{kndc}
	\text{the Hessian }\frac{\prtl^2 \tilde H}{\prtl \I^2}\text{ is nonsingular}. 
\end{equation}

The conditions \eqref{levelcurv} and \eqref{kndc} are satisfied in many cases of interest. The work of Kn\"orrer \cite{Knorrer} shows that the geodesic flow on the triaxial ellipsoid satisfies \eqref{kndc}, see also Gomes and Zelditch \cite{GomesZelditch}.  The work of Zung \cite{ZungKolmogorov} showed that \eqref{kndc} is satisfied nearby (but not at!) a hyperbolic singularity of the moment map.  On simple surfaces of revolution, Bleher \cite{BleherSurfaceOfRevolution} introduced a \emph{twist hypothesis} on the geodesic flow, which is in some sense generic and includes ellipsoids of revolution except for the sphere.  As shown by Chabert \cite{chabert2025bounds}, this twist hypothesis implies \eqref{levelcurv}.  We note that the canonical sphere is one case where \eqref{kndc} is not satisfied, see \cite[p.384]{RoyIntegrable}.

The Hamiltonian vector field of $\tilde H$ in action angle coordinates takes the simple form $\Xi_{\tilde H} = \sum_{j=1}^d \frac{\prtl\tilde H}{\prtl \I _j} \frac{\prtl}{\prtl \phi_j}$ so that the corresponding flow takes the form $t \mapsto (\phi + t d\tilde H(\I ), \I )$.  Consequently, the matrix of the differential of the flow in the basis $\frac{\prtl}{\prtl \phi_1}, \dots, \frac{\prtl}{\prtl \phi_\d}, \frac{\prtl}{\prtl \I_1}, \dots, \frac{\prtl}{\prtl \I_\d}$ takes the form 
\begin{equation*}
	\begin{bmatrix}
		I_\d & t\frac{\prtl^2 \tilde H}{\prtl \I^2}\\
		0_\d & I_\d
	\end{bmatrix}.
\end{equation*}
Even though the basis used here is symplectic, it is not necessarily orthogonal.  However, given a vector $\Upsilon = \Upsilon^j \frac{\prtl}{\prtl \phi_j} + \Upsilon^{j+\d}\frac{\prtl}{\prtl \I_j} \in T_{(\phi,\I)}TM$, there are implicit constants so that uniformly in $\Omega$, $ |\Upsilon|_{\bg}^2 \approx \sum_{j=1}^{2\d} (\Upsilon^{j})^2 $.  In other words, the length of $\Upsilon$ in the Sasaki metric is comparable to its length in the Euclidean metric.  This allows us to use \eqref{singvalvariation} to get that the first $d-1$ singular values of the geodesic flow restricted to $\W_0$ satisfy $\sigma_j(t) \approx 1+ |t|$ and hence one can take $\mu(t) \approx 1+ |t|$ and $\vartheta(t) \approx (1+|t|)^{d-1}$.  Indeed, the variational bound can be checked in the Euclidean metric, at which point they follow since the Sasaki metric is comparable.  Recall the significance of these bounds is discussed in Remark \ref{R:polyimprovement}.

\begin{theorem}\label{T:integrable}
	Suppose the geodesic flow on $(M,g)$ integrable.  Suppose $\Omega \subset TM$ is a neighborhood of a regular level set on which action-angle coordinates can be taken and that \eqref{levelcurv} is satisfied.  Then for some $\veps_0>0$ sufficiently small, if $(p,V) \in \Omega \cap \{|V| \in [1-\veps_0,1+\veps_0]\}$, then the upper bound $\mu(t)$ \eqref{muupperhyp} and the lower bound in \eqref{varrholowerhyp} for the differential at $(p,V)$ can be taken to satisfy $\mu(t) \approx |t|$ and $\vartheta(t) \approx |t|^{d-1}$ respectively for $|t| \geq 1$.
\end{theorem}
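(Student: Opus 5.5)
We work in action-angle coordinates and pass to the Sasaki metric only at the end. By \eqref{singvaldilation} it suffices to treat the differential at points of $\Omega$ with $|V|=1$, i.e.\ on the level set $2\tilde H(\I)=1$. We shrink $\Omega$ (and $B$) to a compact flow-invariant neighborhood, which is possible since $\Omega \cong \TT^\d\times B$ with $B$ replaced by a smaller closed ball. On this compact set the Sasaki norm of a vector $\Upsilon$ is comparable to the Euclidean norm of its coefficient vector in the frame $\{\prtl_{\phi_j},\prtl_{\I_j}\}$, uniformly in the base point. Consequently, by \eqref{singvalvariation}, the singular values of $d\kappa_t$ are comparable (up to uniform constants) to the Euclidean singular values of the matrix
\begin{equation*}
A(t)=\begin{bmatrix} I_\d & tH''\\ 0 & I_\d\end{bmatrix},\qquad H'' = \frac{\prtl^2\tilde H}{\prtl\I^2}(\I),
\end{equation*}
restricted to the appropriate subspaces. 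One caveat: the flow maps the fiber at $(\phi,\I)$ to the one at $(\phi+t\,d\tilde H,\I)$, but the comparability constants are uniform, so this causes no trouble.

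The upper bound is immediate: $\|A(t)\| \le 1+|t|\,\|H''\| \lesssim 1+|t|$ uniformly on the compact set, so $\sigma_1(t)\lesssim 1+|t|$ and we may take $\mu(t)\approx |t|$ for $|t|\ge1$. The lower bound $\sigma_1(t)\gtrsim |t|$ also follows once we show that $H''$ is nonzero in a suitable direction, which is part of the next step.

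For the lower bound on $\vartheta$, we need $\sigma_j(t)\gtrsim |t|$ for $j=1,\dots,d-1$; then $\prod_{j\le d-1}(\sigma_j+\sigma_j^{-1})\gtrsim |t|^{d-1}$. By \eqref{singvalvariation} it suffices to exhibit a $(d-1)$-dimensional subspace $S$ of the relevant tangent space on which $|A(t)X|\gtrsim |t||X|$. The natural choice is $S=\{(0,\delta\I): \delta\I\in T_\I\{2\tilde H=1\}\}$, which consists of action directions tangent to the energy level. This subspace is indeed tangent to $S^{(1)}M$. For $X\in S$, $|A(t)X|\ge |t|\,|H''\delta\I|-|\delta\I|$, so we need $|H''\delta\I|\gtrsim|\delta\I|$ on $S$. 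The curvature hypothesis \eqref{levelcurv} supplies this. The second fundamental form of the level set is $\langle H''\delta\I,\delta\I\rangle/|d\tilde H|$ restricted to tangent vectors. Nonvanishing Gaussian curvature means this form is nondegenerate on the $(d-1)$-dimensional tangent space, so the compression $P H''|_S$ is invertible, where $P$ is orthogonal projection onto $S$. Hence $|H''\delta\I|\ge |PH''\delta\I|\gtrsim|\delta\I|$, with constants uniform by compactness. Here we use that $d\tilde H\neq0$ on the level set; this holds because $\tilde H$ is the energy and the flow is nonstationary there.

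The main obstacle is matching this to the paper's precise framework, in which $\sigma_j$ are defined on $\W_0$, the complement of the flow directions $E_\d,F_\d$, rather than on all of $T S^{(1)}M$. The remedy is to apply the variational principle on $T_{(p,V)}S^{(1)}M$, whose singular values are $\{\sigma_1,\dots,\sigma_{2d-2},1\}$ by Corollary \ref{C:singularvalues}. For $|t|\ge C$, the $(d-1)$-dimensional space $S$ forces the top $d-1$ singular values of this list to be $\gtrsim |t|$. The extra singular value equal to $1$ can occupy at most one position and lies below $c|t|$ for $|t|$ large, so the $(d-1)$ largest values among the $\sigma_j$ themselves are $\gtrsim |t|$. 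For $1\le|t|\le C$, the bounds $\vartheta\approx|t|^{d-1}$ and $\mu \approx |t|$ hold trivially up to constants, since all singular values are then $\approx1$ by Proposition \ref{P:gronwall}. Finally we choose $\veps_0$ small enough that the nearby energy levels stay inside the shrunken compact region, so that all constants remain uniform.
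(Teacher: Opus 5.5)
Your proposal is correct and follows the paper's route: action-angle coordinates, the shear form $(\phi,\I)\mapsto(\phi+t\,d\tilde H(\I),\I)$ of the flow, uniform comparability of the Sasaki and Euclidean norms on the flow-invariant region, and the variational characterization \eqref{singvalvariation}. You also supply details the paper leaves implicit. These are the explicit $(d-1)$-dimensional subspace of action directions tangent to $\{2\tilde H=1\}$, where \eqref{levelcurv} enters through nondegeneracy of the compressed Hessian, and the passage from $T S^{(1)}M$ to $\mathbf{W}_0$ via Corollary \ref{C:singularvalues}.
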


\section{The Complex Riccati equation}\label{S:riccati}
\subsection{The equation and its solutions} Given a geodesic $\gamma: \RR \to M$ at any nontrivial speed, let $\Pi_{\dot\gamma(t)}^\perp:T_{\gamma(t)}M \to T_{\gamma(t)}M$ denote the projection onto the normal space of $\gamma$.  As in \eqref{tidalforce}, let $\R_{\dot\gamma(t)}(V) = R(V, \dot\gamma(t))\dot\gamma(t)$ denote the endomorphism on $T_{\gamma(t)}M$ formed by contracting the Riemann curvature tensor along $\dot\gamma$.
In this section, we examine complex solutions to the following Riccati equation, treating $\W_t$ is an endomorphism on the complexified tangent spaces $T_{\gamma(t)}^\CC:=(T_{\gamma(t)}M) \otimes \CC$ (equivalently, $\W_t$ is a (1,1) tensor on $T_{\gamma(t)}^\CC$)
\begin{equation}\label{riccatiraised}
	\covt\W_t + \W_t\circ\Pi_{\dot\gamma(t)}^\perp\circ\W_t  + \Rgt =0.
\end{equation}
The metric allows us to lower an index in $\W_t$ to obtain a $(0,2)$ tensor $\omega = \W^\flat$ satisfying
\begin{equation}\label{riccatiflat}
	\covt\omega_t + \big(\omega_t^\#\circ\Pi_{\dot\gamma(t)}^\perp\circ\omega_t^\#\big)^\flat + Rm(\cdot,\gammadot(t),\gammadot(t),\cdot) =0,
\end{equation}
which is the complex Riccati equation for $\omega_t$ in \eqref{phaseintro}.

The main result of this section is that there exists solutions $\W_t$ to \eqref{riccatiraised} satisfying 
\begin{equation}\label{keyriccati}
	\begin{split}
	\big\langle\W_t(V), W \big\rangle_g = \big\langle W, \W_t(V)\big\rangle_g  \qquad &\text{(symmetric)}\\
	\Im \big\langle \W_t(V),\bar V\big\rangle_g >0  \qquad &\text{(positive definite imaginary part)}
	\end{split}
\end{equation}
for all $V,W \in T_{\gamma(t)}^\CC$.  For brevity, let $\gamma^\perp(t)$ denote the normal space over $\gamma(t)$, that is the collection of vector fields along $\gamma(t)$, $X \in T_{\gamma(t)}^\CC$ such that $\left\langle X, \dot{\gamma}(t)\right\rangle_g=0$.  We will show there exists a solution to $\W_t$ \eqref{riccatiraised} with $\gamma^\perp(t)$, span$\{\dot\gamma(t)\}$ both invariant subspaces
\begin{equation}\label{wtdef}
  \W_t(X) = \begin{cases}
                   \V_t(X), &  X \in \gamma^\perp(t),\\
                   iX, & X \in\text{span} \{ \dot{\gamma}(t) \},
                 \end{cases}
\end{equation}
where\footnote{It is unfortunate that the choice of notation $\V_t$ here is similar to the $\Vsc_t$ used for the parametrix in Theorem \ref{T:Vtthm}.  However, the parametrix does not appear in this section and the tensor notation $\V_t$ is limited to \S\ref{S:riccati}.} $\V_t: \gamma^\perp(t) \to  \gamma^\perp(t)$ satisfies the properties \eqref{keyriccati} (for $V,W \in \gamma^\perp(t)$) and satisfies
\begin{equation}\label{riccati}
  \covt \V_t + \V_t^2 + \Rgt =0.
\end{equation}

Observe that defining $\W_t$ as in \eqref{wtdef} means that its restriction to $\text{span} \{ \dot{\gamma}(t) \}$ satisfies \eqref{riccatiraised}.
Indeed, $(\W_t\circ\Pi_{\dot\gamma(t)}^\perp\circ\W_t )(\dot\gamma(t)) + \Rgt (\dot\gamma(t))=0$ by symmetries of the Riemann curvature tensor.  Moreover, since $\nabla_t\dot\gamma(t)=0$, it follows that
\begin{equation*}
	\big(\covt\W_t\big)(\dot\gamma(t))  = \covt(\W_t(\dot\gamma(t)) )- \W_t(\covt\dot\gamma(t))=  i\covt(\dot\gamma(t)) =0.
\end{equation*}

In the case where $\V_t$ is a real endomorphism on $T_{\gammadot(t)} M$ (not its complexification), Riccati equations of the form \eqref{riccati} have been studied extensively in Riemannian geometry.  For example, the Hessian of a distance function satisfies this equation, see e.g. \cite[Ch. 11]{LeeRiemannian}.  The complex equation in \eqref{riccati} can be solved in nearly the same way by making use of Jacobi fields.

As noted in \S\ref{S:Sasaki}, if a Jacobi field and its covariant derivative both lie in the normal space $\gamma^\perp(t_0)$ for some $t_0$, then this persists for all $t$.  With this in mind, we say that a \emph{Jacobi tensor} on $\gamma^\perp$ is an endomorphism solving the equation
\begin{equation}\label{jacobitensor}
 \covt^2 \J_t + \R_{\gammadot} \circ \J_t=0.
\end{equation}
We define Jacobi tensors by starting with any real orthonormal frame for $\gamma^\perp$ parallel along $\gamma(t)$, $E_1(t),\dots, E_{d-1}(t) \in T_{\gammadot(t)} M$ (cf. \eqref{parallelONframe}), then defining $\J_t(E_i(t))$ by
\begin{equation}\label{Jacobitensorframe}
	\J_t(E_i(t)) \text{ is the Jacobi field with prescribed values } \J_0(E_i(0)), \covt \J_0(E_i(0)).
\end{equation}
If $\J_t$ is invertible for all $t\in \RR$, it is then verified that $\V_t=\covt \J_t \circ \J_t^{-1}$ is a solution to \eqref{riccati}.  Note that complex solutions to \eqref{jacobitensor} exist in $T_{\gamma(t)}^\CC$ since the Jacobi equation \eqref{JacobiEqn} is linear and the associated system \eqref{jacobifirstorder} has real coefficients.

Our next two results were essentially observed by Dahl \cite{DahlLeadingTerm}, who also made use of Jacobi tensors.  However, we do opt for a complete treatment in the interest consistency with our formalism.
\begin{theorem}\label{T:mainriccati}
Suppose $\V_0$ is an endomorphism on $\gamma^\perp(0)$ that is symmetric and has positive definite imaginary part as in \eqref{keyriccati}.  There exists an invertible Jacobi tensor $\J_t$ such that $\V_t=\covt \J_t \circ \J_t^{-1}$ solving \eqref{riccati} with initial data $\V_0$ and satisfies \eqref{keyriccati} for all $t \in \RR$. Consequently, there is a global solution $\W_t$ to the Riccati equation \eqref{riccatiraised} on $\RR$ satisfying \eqref{keyriccati}, \eqref{wtdef} for all $t \in \RR$. 
\end{theorem}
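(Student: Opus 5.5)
We will construct the Jacobi tensor with initial data $\J_0 = I$ on $\gamma^\perp(0)$ and $\covt\J_0 = \V_0$, extended via \eqref{Jacobitensorframe} (complexified). Whenever $\J_t$ is invertible, a direct computation using \eqref{jacobitensor} shows that $\V_t := \covt\J_t\circ\J_t^{-1}$ solves \eqref{riccati}:
\begin{equation*}
\covt\V_t = \covt^2\J_t\,\J_t^{-1} - \covt\J_t\,\J_t^{-1}\,\covt\J_t\,\J_t^{-1} = -\Rgt - \V_t^2 .
\end{equation*}
Its initial data is $\V_0$. The proof therefore reduces to two tasks: showing that $\J_t$ is invertible for every $t\in\RR$, and showing that the properties \eqref{keyriccati} propagate.

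The key tool is the Wronskian. For two Jacobi tensors $\J,\tilde\J$ along $\gamma$, define
\begin{equation*}
\omega(\J,\tilde\J)_t := (\covt\J_t)^T\tilde\J_t - \J_t^T\covt\tilde\J_t ,
\end{equation*}
where ${}^T$ is the bilinear (not Hermitian) transpose with respect to $g$, working in the parallel frame $E_1,\dots,E_{d-1}$. Differentiating and using that $\Rgt$ is $g$-symmetric (a symmetry of $Rm$) shows $\omega$ is constant in $t$. We apply this twice.

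First, with $\tilde\J=\J$: since $\omega(\J,\J)_0 = \V_0^T - \V_0 = 0$ by symmetry of $\V_0$, we get $\J_t^T\covt\J_t = (\covt\J_t)^T\J_t$ for all $t$. Whenever $\J_t$ is invertible this gives $\V_t^T=\V_t$, the symmetry in \eqref{keyriccati}.

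Second, with $\tilde\J = \bar\J$ (the complex conjugate is again a Jacobi tensor because $\Rgt$ is real): at $t=0$,
\begin{equation*}
\omega(\J,\bar\J)_0 = \V_0^T - \bar\V_0 = \V_0 - \bar \V_0 = 2i\,\Im\V_0 .
\end{equation*}
Hence for every $v\in\CC^{d-1}$ and all $t$,
\begin{equation*}
\langle \covt\J_t v, \overline{\J_t v}\rangle - \langle \J_t v,\overline{\covt\J_t v}\rangle = 2i\langle \Im\V_0\, v,\bar v\rangle ,
\end{equation*}
which is nonzero when $v\neq0$.

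Invertibility follows from this identity: if $\J_t v=0$ for some $t$ and $v\neq 0$, the left side vanishes while the right side is nonzero, a contradiction. So $\J_t$ is invertible for every $t$, and $\V_t$ is a global solution of \eqref{riccati}.

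Positivity of the imaginary part also follows. Substitute $v=\J_t^{-1}w$ and write $\covt\J_t v = \V_t w$. The identity becomes
\begin{equation*}
\langle\V_t w,\bar w\rangle - \overline{\langle \V_t w,\bar w\rangle} = 2i\langle\Im\V_0 v,\bar v\rangle ,
\end{equation*}
so $\Im\langle\V_t w,\bar w\rangle = \langle \Im\V_0 v,\bar v\rangle>0$ for $w\neq0$.

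Finally, $\W_t$ is assembled via \eqref{wtdef}. The discussion following \eqref{wtdef} already verifies \eqref{riccatiraised} on span$\{\dot\gamma\}$. On $\gamma^\perp$, the projection $\Pi^\perp$ acts as the identity, and $\covt$ preserves $\gamma^\perp$. The two invariant subspaces are $g$-orthogonal, so \eqref{keyriccati} holds for $\W_t$ on their sum: symmetry is clear, and the imaginary part is $\Im\langle\V_t X,\bar X\rangle+|c|^2|\dot\gamma|^2>0$ for $V = X + c\dot\gamma$.

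The only delicate point is keeping the bilinear transpose distinct from the Hermitian adjoint throughout the Wronskian bookkeeping. The rest is routine.
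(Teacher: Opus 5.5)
Your proof is correct and follows essentially the paper's argument. The conserved bilinear and sesquilinear Wronskians (the paper's \eqref{symplectic1} and \eqref{symplectic2}) give symmetry of $\V_t$, invertibility of $\J_t$, and positivity of $\Im \V_t$ for all $t$. The only differences are minor: you normalize by $\J_0 = I$, $\covt \J_0 = \V_0$ instead of the paper's $\J_0 = i\V_0^{-1}$, $\covt\J_0 = iI$, which is slightly simpler since it avoids inverting $\V_0$. Your substitution $v=\J_t^{-1}w$ also yields positivity of $\Im\V_t$ without first invoking the symmetry of $\V_t$.
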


\begin{proof} 
Given complex, normal Jacobi fields $X(t)$, $Y(t)$ it is verified using symmetries of the Riemann curvature tensor that the following symplectic forms\footnote{The choice of notation $\nu$ here is appropriate since the conservation of these quantities can also be seen as a consequence of the results in \S\ref{S:Sasaki}, using that the symplectic form $\nu$ in \eqref{symptgtbundle} is preserved under the Hamiltonian flow.  
} over $\gamma^\perp$ are constant in $t$
\begin{equation*}
	\begin{split}
		\nu(X,Y)&= \left\langle X(t),\covt Y(t) \right\rangle_g - \left\langle \covt X(t), Y(t) \right\rangle_g , \\ 
		\nu_\CC(X,Y)&= \big\langle X(t),\covt \overline{Y(t)} \big\rangle_g - \big\langle \covt X(t), \overline{Y(t)} \big\rangle_g  . 
	\end{split}
\end{equation*}
Hence the following are constant in $t$ for a Jacobi tensor $\J$
\begin{align}
&\left\langle \J E_k,(\covt \J)E_\ell \right\rangle_g - \left\langle (\covt \J )E_k, \J E_\ell \right\rangle_g  \label{symplectic1},\\
\frac{1}{2i}&\big(\big\langle (\covt \J )E_k, \overline{\J}E_\ell \big\rangle_g-\big\langle \J E_k,\overline{(\covt \J)}E_\ell \big\rangle_g \big)  \label{symplectic2}.
\end{align}

The desired properties for $\V_t$ will follow by choosing initial data $\J_0, \covt \J_0$ such that \eqref{symplectic1} vanishes and that the matrix with $(k,l)$-th entry given by \eqref{symplectic2} is positive definite.  In other words, if
\begin{equation*}
\covt \J_0^T \circ \J_0 = \J_0^T \circ \covt \J_0
\end{equation*}
then $\covt \J_t^T \circ \J_t$ is symmetric for any $t \in \RR$ and if the initial values for $\J$ are such that
\begin{equation}\label{initialriccatiskewsymm}
  \frac{1}{2i}\Big( \overline{\J_0^T} \circ \covt \J_0 - \overline{\covt \J_0^T} \circ \J_0 \Big) \text{ is symmetric positive definite}
\end{equation}
then the same will hold for every $t \in \RR$.  This in turn implies that $\J$ is invertible, for if $\J_tX =0$, for some $X= c^kE_k(t)$, then
\begin{equation*}
0= \langle (\covt\J) X, \overline{\J X} \rangle_g  - \langle \J X , \overline{(\covt \J)X } \rangle_g = c^k \bar{c}^\ell \big(\big\langle \J E_k,\overline{(\covt \J)}E_\ell \big\rangle_g - \big\langle (\covt \J )E_k, \overline{\J}E_\ell \big\rangle_g  \big)
\end{equation*}
and hence $c^1 = \cdots = c^{d-1} =0$ since \eqref{symplectic2} defines a positive definite matrix.

To solve \eqref{riccati} with the desired initial data $\V_0$, take initial data for $\J$ so that $\V_0=\covt \J_0 \circ \J_0^{-1}$.  There is no unique choice of data to achieve this, among the degrees of freedom is that both $(\J_0,\covt\J_0)$ and $(c\J_0,c\covt\J_0)$ will generate the same $\V_0$ for any constant $c\neq 0$.  Since the hypotheses imply that $\V_0$ is invertible, one choice\footnote{This is consistent with what we will take below to achieve $\V_0=iI$, namely $\J_0 = I$, $\covt \J_0 = iI$.} is $\J_0 = i\V_0^{-1}$, $\covt \J_0 = iI$.  

We now have that $\V_t = \covt \J_t \circ \J_t^{-1}$ solves \eqref{riccati} in a neighborhood of $t=0$ and \eqref{symplectic1} implies that $\V_t$ is symmetric in its maximal domain.  The proof is concluded by observing that 
\begin{equation*}
	 \frac{1}{2i}\big( \overline{\J_t^T} \circ \covt \J_t - \overline{\covt \J_t^T} \circ \J_t \big) = 
	 \frac{1}{2i}\big( \overline{\J_t^T} \circ \big(\V_t -\overline{\V_t^T}\big)\circ \J_t \big) =  
	 \overline{\J_t^T} \circ \big( \Im \V_t\big) \circ \J_t.
\end{equation*}
Thus if $\Im\V_0$ is positive definite, then \eqref{initialriccatiskewsymm} is satisfied, meaning that $\J_t^{-1}$ is always invertible and the maximal domain is all of $\RR$.  Hence the left hand side is positive definite for all $t\in \RR$, which implies the same for $\Im \V_t$.
\end{proof}

\begin{corollary}
Let $\V_t$ be the solution to \eqref{riccati} furnished by Theorem \ref{T:mainriccati}.  The ODE 
\begin{equation}\label{atODE}
a'(t) + \frac 12\tr (\V_t) a(t) =0
\end{equation}
is solved by the function
\begin{equation}\label{atsoln}
a(t) = a(0) (\det \J_t)^{-\frac 12},
\end{equation}
where the square root is always given by a branch cut that ensures continuity of $a(t)$.
\end{corollary}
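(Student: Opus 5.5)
The plan is to use Jacobi's formula for the derivative of a determinant. To give $\det \J_t$ a meaning, express $\J_t$ as a matrix in the real parallel orthonormal frame $E_1(t),\dots,E_{d-1}(t)$ for $\gamma^\perp(t)$ from \eqref{Jacobitensorframe}. Write $\J_t(E_k(t)) = J_k^i(t)E_i(t)$ and set $\det\J_t := \det(J_k^i(t))$. This is independent of the choice of parallel orthonormal frame, because a change of frame conjugates the matrix by a constant orthogonal matrix.

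Because the frame is parallel, the matrix of $\covt\J_t$ in this frame is the entrywise derivative $\dot J(t)$. The matrix of $\V_t=\covt\J_t\circ\J_t^{-1}$ is therefore $\dot J(t)J(t)^{-1}$, and $\tr(\V_t) = \tr(\dot J J^{-1})$. Theorem \ref{T:mainriccati} shows $\J_t$ is invertible for every $t\in\RR$, so $D(t):=\det \J_t$ is a smooth nowhere-vanishing complex-valued function on $\RR$. Jacobi's formula then gives
\begin{equation*}
D'(t) = D(t)\,\tr\big(J(t)^{-1}\dot J(t)\big) = D(t)\,\tr(\V_t),
\end{equation*}
using cyclicity of the trace.

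Next, fix the square root. Since $D$ is continuous and never zero on the simply connected interval $\RR$, it has a continuous logarithm $\ell(t)$ with $e^{\ell(t)}=D(t)$. This $\ell$ is unique up to an additive constant in $2\pi i\mathbb{Z}$, and it is smooth with $\ell'(t) = D'(t)/D(t) = \tr(\V_t)$. We define $(\det\J_t)^{-1/2} := e^{-\ell(t)/2}$, normalized by $e^{-\ell(0)/2}=1$. This is the only step needing care: with this normalization the resulting branch choice is continuous, and the formula is consistent at $t=0$.

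With this choice,
\begin{equation*}
\frac{d}{dt}\,e^{-\ell(t)/2} = -\tfrac12\tr(\V_t)\,e^{-\ell(t)/2},
\end{equation*}
so $a(t)=a(0)e^{-\ell(t)/2}$ solves \eqref{atODE}. By uniqueness for linear ODEs this is the solution with the given initial value.

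If one instead takes the initial data $\J_0=i\V_0^{-1}$ from the proof of Theorem \ref{T:mainriccati}, then $\det\J_0\ne 1$. In that case the formula \eqref{atsoln} should be read either with $\J_t$ renormalized by the constant factor $\J_t\J_0^{-1}$, which is again a Jacobi tensor generating the same $\V_t$, or with $a(0)$ absorbing the constant $(\det\J_0)^{1/2}$. Either reading changes nothing in the argument above.
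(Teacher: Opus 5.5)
Your argument is correct and is the same as the paper's. You express $\J_t$ in the parallel orthonormal frame, apply Jacobi's formula to get $\frac{d}{dt}\log\det\J_t=\tr(\V_t)$, and exponentiate using a continuous branch of the logarithm.

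Your closing remark is a fair clarification. The literal formula only gives the value $a(0)$ at $t=0$ when $\det\J_0=1$. That condition holds for the normalization $\J_0=I$, $\covt\J_0=iI$ that the paper actually uses afterwards.
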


\begin{proof}
Applying Jacobi's formula to the matrix function for $\J_t$ determined by the orthonormal frame $E_1(t),\dots, E_{d-1}(t)$ gives that 
\begin{equation*}
\frac{d}{dt} \log(\det \J_t) = \tr(\covt \J_t \circ \J_t^{-1}) = \tr(\V_t).
\end{equation*}
A routine calculation then shows that \eqref{atsoln} is the unique solution to \eqref{atODE}.
\end{proof}

Instead of \eqref{riccatiflat}, the ansatz in \S\ref{SS:ansatz} below makes use of the equations in the following corollary.
\begin{corollary}\label{C:cor4ansatz}
Let $\gamma:t \to \RR$ be a unit speed geodesic and let $c >0$ be a constant.  Suppose $\widetilde\W_{t}$, $\tilde{a}(t)$ are the solutions to \eqref{riccatiraised}, \eqref{atODE} determined by the speed $c$ geodesic $t\mapsto \gamma(c t)$. The equations
\begin{align}
 &c \covt \W_t + (\W_t\circ\Pi_{\dot\gamma(t)}^\perp\circ\W_t )   + c^2 \R_{\dot\gamma(t)}=0, \label{corRiccati}\\
 &c a'(t) + \frac 12\tr (\W_t \circ \Pi_{\dot\gamma(t)}^\perp) a(t) =0, \label{corODE}
\end{align}
are solved by taking $\W_t = \widetilde\W_{t/c}$, $a(t) = \tilde{a}(t/c)$.
\end{corollary}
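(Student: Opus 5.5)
The argument is a direct rescaling computation. Set $\tilde\gamma(s)=\gamma(cs)$, a geodesic of speed $c$, so $\dot{\tilde\gamma}(s)=c\,\dot\gamma(cs)$. By Theorem \ref{T:mainriccati} and the preceding corollary, $\widetilde\W_s$ solves \eqref{riccatiraised} along $\tilde\gamma$ and satisfies \eqref{wtdef}, and $\tilde a(s)$ solves \eqref{atODE} with $\V_s$ replaced by the restriction of $\widetilde\W_s$ to $\tilde\gamma^\perp(s)$. We then substitute $s=t/c$ and record how each term transforms.

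First, covariant derivatives. For a tensor field $A_s$ along $\tilde\gamma$, set $B_t=A_{t/c}$, which is a field along $\gamma$ because $\tilde\gamma(t/c)=\gamma(t)$. The chain rule for covariant differentiation along reparametrized curves (in a parallel frame it is just the ordinary chain rule for the coefficients) gives $\nabla_t B_t=\frac1c(\nabla_s A)_{s=t/c}$. Next, the curvature term and the projection. The curvature endomorphism is quadratic in the velocity, so $\R_{\dot{\tilde\gamma}(s)}=c^2\R_{\dot\gamma(cs)}$. The normal projection depends only on the direction of the velocity, so $\Pi^\perp_{\dot{\tilde\gamma}(s)}=\Pi^\perp_{\dot\gamma(cs)}$.

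With these rules, evaluating \eqref{riccatiraised} for $\widetilde\W$ at $s=t/c$ gives
\[
c\,\nabla_t\W_t+\W_t\circ\Pi^\perp_{\dot\gamma(t)}\circ\W_t+c^2\R_{\dot\gamma(t)}=0,
\]
which is \eqref{corRiccati}. In the same way, $\frac{d}{dt}\tilde a(t/c)=\frac1c\tilde a'(t/c)$, and $\tr(\V_{t/c})=\tr(\widetilde\W_{t/c}\circ\Pi^\perp)$ because $\widetilde\W$ preserves $\tilde\gamma^\perp$. Multiplying \eqref{atODE} by $1$ after the substitution then yields \eqref{corODE}.

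The only point needing care is the tangential component in \eqref{wtdef}. There $\widetilde\W$ acts as $i$ on span$\{\dot\gamma\}$, and the verification given earlier in the paper carries over unchanged because it uses only $\nabla_t\dot\gamma=0$, $\Pi^\perp\dot\gamma=0$, and $\R_{\dot\gamma}\dot\gamma=0$. I expect no real obstacle: the main step is simply stating the reparametrization rule for covariant derivatives correctly.
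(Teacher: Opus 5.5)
Your proposal is correct and is essentially the paper's argument: you reparametrize by $s=t/c$, use that $\R$ is quadratic in the velocity (so $\R_{\dot{\tilde\gamma}(s)}=c^2\R_{\dot\gamma(cs)}$), and use that covariant differentiation in a parallel frame reduces to the ordinary chain rule on the coefficients. The only cosmetic difference is that you rescale the ODE \eqref{atODE} directly, whereas the paper rescales the explicit solution \eqref{atsoln}; your separate check of the tangential component is harmless but unnecessary, since $\widetilde\W_s$ already solves the full equation \eqref{riccatiraised}.
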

\begin{proof}
Recall from \eqref{geodrescale} that $t\mapsto\gamma(ct)$ is the unique geodesic with initial position $\gamma(0)$ and initial tangent vector $c\dot\gamma(0)$.   Moreover the operator in \eqref{tidalforce} determined by $\gamma(c t)$ can be expressed in terms of $\gamma(t)$ as $c^2\R_{\gammadot(c t)}$.  Solutions to the equation \eqref{corRiccati} are obtained by rescaling the ODE's satisfied by the matrix elements $\langle \widetilde{W}_t( E_j(t)),E_k(t)\rangle_g$.  Since \eqref{wtdef} implies that $\tr (\widetilde{\W}_t \circ \Pi_{\dot\gamma(t)}^\perp) = \tr(\widetilde{\V}_t)$, rescaling \eqref{atsoln} gives a solution to \eqref{corODE} as well.
\end{proof}

\subsubsection{Matrices associated to $\J_t$ and $\V_t$}\label{SSS:matrices}
Although Theorem \ref{T:mainriccati} is stated for very general choices of initial data $\V_0$, we are interested in the case where $\V_0 = iI$.  We thus consider solutions given by the Jacobi tensor $\J_t$ with initial data $\J_0 = I$, $\covt \J_0 = iI$, as this will yield solutions consistent with the conventions in \cite{CombescureRobertBook}, \cite{FollandPhaseSpace} and other literature.  

We return to the orthonormal frame $E_1(t), \dots, E_{\d-1}(t)$ in \eqref{parallelONframe}, which spans $\gamma^\perp$.  Recall from the proof of Proposition \ref{P:SVD} that we expressed solutions to the Jacobi equation as $J(t) = J^k(t)E_k(t)$ and by setting $L^k(t) = \dot J^k(t)$, we obtain a system of ODEs with a fundamental matrix $\Fsc(t)$.  Moreover, $\Fsc(t)$ is a symplectic matrix satisfying \eqref{fundsymp}.  The following discussion pertains to any fixed time $t$, so we now omit the $t$-dependence in the notation. We express $\Fsc$ in block form
\begin{equation}\label{blocksymplectic}
	\Fsc =  \begin{bmatrix}
		A & B \\
		C & D
	\end{bmatrix},
\end{equation}
with each block a $(\d-1) \times (\d-1)$ matrix.  The blocks $A,C$ (resp. $B,D$) result from solving the linear system with $J(0)=I$, $L(0)= 0$ (resp. $J(0)=0$, $L(0)= I$). 
The crucial properties \eqref{fundsymp} are known to be equivalent to either one of the following lines (see e.g. \cite[Proposition 4.1]{FollandPhaseSpace})
 \begin{equation}\label{sympblockprop}
 \begin{split}
 	A^T D - C^T B = I, \quad A^TC = C^T A, \quad \text{ and } \quad B^T D = D^T B;\\
 	A D^T -  B C^T= I, \quad AB^T = B A^T, \quad \text{ and } \quad CD^T = DC^T.
 \end{split}
 \end{equation} 
 
 We can now use \eqref{blocksymplectic} and the definition of a Jacobi tensor in \eqref{Jacobitensorframe} to express $\J$ and $\V$ at any time $t$ so that the matrices of $\J, \covt\J, \V$ in the frame $E_1,\dots, E_{d-1}$ are 
\begin{equation}\label{matrixrealize}
	[\J] = A+iB, \qquad [\covt \J] = C+iD, \qquad 	[\V] = (C+iD)(A+iB)^{-1}.
\end{equation}
Moreover, restoring $t$ dependence \eqref{atsoln} takes the form
\begin{equation}\label{atmatrix}
	a(t) = a(0)\detexp^{-\frac 12}(A_t+iB_t)
\end{equation}

The analysis above shows that $[\V]$ is a matrix in the \emph{Siegel Upper Half-Plane}, denoted by $\Gf$, the space of $(\d-1)\times (\d-1)$, complex symmetric matrices with positive definite imaginary part.  Indeed, the use of the orthonormal frame means that the expressions in \eqref{keyriccati} are aligned with those for the usual Euclidean dot product.

\subsection{The Siegel Upper Half-Plane and the Siegel Disk}\label{SS:SiegelPlaneDisk}
In this subsection, we consider the inverse matrix $([\V] + i)^{-1}$, where $[\V]$ is defined in \eqref{matrixrealize}.  We are led to this matrix in \S\ref{S:matrixelements} when we treat the phase space kernel of the parametrix in Theorem \ref{T:Vtthm}.  A crucial development will be to characterize this matrix in terms of the singular values of the symplectic transformation $\Fsc$ which birthed $\V$.   To this end, we use elements of the approach in \cite[\S4.5]{FollandPhaseSpace} and also \cite{CombescureRobertBook}, which entails complexifying $\Fsc$.  

Unless we remark otherwise, we do not need to assume that $\Fsc$ is determined by the Jacobi equation and instead just assume this is an arbitrary symplectic matrix $\Fsc$ satisfying \eqref{fundsymp}, \eqref{sympblockprop}.

We use the following from\footnote{The discrepancy between Folland's definition \cite[p.201]{FollandPhaseSpace} and \eqref{alphaactiondef} is rooted in his convention for the Schr\"odinger representation, which is opposite of what is typically desirable for applications on PDE.  This is addressed in his remark on p. 203.  However, our \eqref{alphaactiondef} is consistent with the convention in \cite{CombescureRobertBook}.} \cite[\S4.5]{FollandPhaseSpace} and \cite[Lemma 21]{CombescureRobertBook}.  Given a block matrix as in \eqref{blocksymplectic},  define the following action on a $(\d-1) \times (\d-1)$ matrix $W$ as 
\begin{equation}\label{alphaactiondef}
	\alpha[\Fsc](W) = (C+DW)(A+BW)^{-1}.
\end{equation}
At this stage, the domain of $\alpha(\Fsc)$ is any matrix $W$ for which $A+BW$ is invertible; we do not assume $\Fsc$ is symplectic here as we will apply this to the complexifications of $\Fsc \in \Spdr$ below.  In this case, an algebraic calculation shows the homomorphism property
\begin{equation}\label{homomorphsiegel}
	\alpha[\Fsc_1\Fsc_2](W) = \alpha[\Fsc_1](\alpha[\Fsc_2](W)),
\end{equation}
provided all expressions here are well-defined.  Next we define the determinant 
\begin{equation*}
	\delta(\Fsc,W) = \det(A+BW) 
\end{equation*}
In this case, we have the following cocycle identity, when defined:
\begin{equation}\label{deltacocycle}
	\delta(\Fsc_1\Fsc_2,W) = \delta(\Fsc_1,\alpha[\Fsc_2](W))\delta(\Fsc_2,W).
\end{equation}

In the special case where $\Fsc$ is symplectic, we have that $\alpha[\Fsc]:\Gf \to \Gf$, and in particular $A+BW$ is well defined whenever $W \in \Gf$.  In some sense this is implicit from the proof of Theorem \ref{T:mainriccati}, however, an explicit algebraic argument is given in \cite[Lemma 21]{CombescureRobertBook}.  While we do not reiterate it at this stage, we use a variation on it in proving Proposition \ref{P:alphadisk} below.

We now define the complexification map from $\RR^{2(\d-1)}$ to $\CC^{2(\d-1)}$ as\footnote{The analysis below concerns $x-iX$ rather than its conjugate, hence its appearance as the first entry here.} $(x,X)\mapsto \frac{1}{\sqrt{2}}(x-iX,x+iX)$, that is, the linear map determined by the unitary matrix
\begin{equation*}
	 \mathscr{W}=\frac{1}{\sqrt{2}} 
	\begin{bmatrix}
		I & -i\\
		I & \phantom{-}i
	\end{bmatrix}.
\end{equation*}
Note that 
\begin{equation}\label{widentities}
	\overline{\mathscr{W}}\mathscr{W}^{-1} = 
	\begin{bmatrix}
		0 & I\\
		I & 0
	\end{bmatrix}
	\quad \text{ and } \quad \overline{\mathscr{W}}\Jb \mathscr{W}^{-1} =- i\Jb.
\end{equation}

Given any symplectic matrix $\Fsc$ in block form as in \eqref{blocksymplectic}, conjugating $\Fsc$ by $\mathscr{W}$ yields\footnote{The factor of $1/2$ in the definition of $Y,Z$ is a slight deviation from \cite[p.67]{CombescureRobertBook}.}
\begin{equation}\label{cplxdympldef}
\begin{split}
	&\Fsc_c := \mathscr{W} \Fsc \mathscr{W}^{-1} = 
	\begin{bmatrix}
		Y & \bar Z\\
		Z & \bar Y
	\end{bmatrix}, 	\\ 
		\text{ where }
			&2Y:= A+D + i(B-C), \qquad 2Z: = A-D +i(B+C).
\end{split}
\end{equation}
 Since $\mathscr{W}$ is unitary, we have that $(\Fsc^T)_c = (\Fsc_c)^*$ and hence $(\Fsc_c)^T = \overline{(\Fsc^T)_c}$. 
 We further conjugate the symplectic flip $\Jb$ in \eqref{fundsymp} by $\W$ to define
\begin{equation*}
	\Ksc := -i\mathscr{W} \Jb \mathscr{W}^{-1} = 
	\begin{bmatrix}
		I & 0 \\
		0 & -I
	\end{bmatrix} .
\end{equation*}
Since $\Fsc$ satisfies the fundamental identities \eqref{fundsymp},  conjugation with $\W$ gives
\begin{equation}\label{fundsympcplx}
	\Ksc = \Fsc_c^* \Ksc \Fsc_c \quad \text{ and } \quad \Fsc_c^{-1} = \Ksc \Fsc_c^* \Ksc.
\end{equation}
The identity $\Fsc_c^{-1} \Fsc_c = I$ now gives
\begin{equation}\label{cplxsympID1}
	Y^* Y - Z^*Z = I , \quad Y^T Z = Z^T Y, \quad Y^* \bar Z = Z^* \bar Y.
\end{equation}
In particular, this implies that the singular values of $Y$ are all nonzero, hence $Y$ is invertible.  

We now introduce the \emph{Siegel disk}, denoted by $\Cf$, as the space of $(\d-1) \times (\d-1)$ complex, symmetric matrices $W$ which satisfy $I-W^*W>0$, or equivalently
\begin{equation*}
	\|W\| = \sup \{|Wz| : z \in \CC^{\d-1}, |z|=1\} <1,
\end{equation*}
where both the adjoint and the norm is taken with respect to the standard Hermitian form on $\CC^{d-1}$. 
Just as $\alpha(\Fsc)$ maps $\Gf$ to itself whenever $\Fsc \in \Spdr$, the next proposition shows that $\alpha(\Fsc_c)$ maps $ \Cf$  to itself.
\begin{proposition}\label{P:alphadisk}
	Let $\Fsc_c$ be the complexification of any matrix $\Fsc \in \Spdr$ as in \eqref{cplxdympldef}.  The action $\alpha[\Fsc_c]$ 
	\begin{equation}\label{alphadisk}
		\alpha[\Fsc_c] (W) = (Z+\bar Y W)(Y + \bar Z W)^{-1}, \qquad W \in \Cf,
	\end{equation}
is well-defined map from $\Cf$ to itself, with $Y + \bar Z W$ always invertible.
\end{proposition}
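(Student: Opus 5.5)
We need to prove: for $W\in\Cf$, $Y+\bar Z W$ is invertible, and $W' = (Z+\bar YW)(Y+\bar ZW)^{-1}$ is symmetric with $\|W'\|<1$.

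\emph{Invertibility.} Write $Y+\bar ZW = Y(I + Y^{-1}\bar Z W)$; this is legitimate since $Y$ is invertible by \eqref{cplxsympID1}. It then suffices to show $\|Y^{-1}\bar Z\|<1$. For this I plan to use the second form of the identities, which comes from $\Fsc_c\Fsc_c^{-1}=I$ with $\Fsc_c^{-1}=\Ksc\Fsc_c^*\Ksc$. Writing this out blockwise gives
\[
YY^*-\bar Z\bar Z^*=I \quad\text{and}\quad Y Z^T=\bar Z\bar Y^{T}.
\]
From the first identity, $\bar Z\bar Z^* = YY^*-I$, hence
\[
(Y^{-1}\bar Z)(Y^{-1}\bar Z)^* = I-(Y^*Y)^{-1}\cdots
\]
More precisely, $Y^{-1}\bar Z\bar Z^*Y^{-*} = I - Y^{-1}Y^{-*}<I$, so $\|Y^{-1}\bar Z\|<1$. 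Since also $\|W\|<1$, we get $\|Y^{-1}\bar ZW\|<1$, and a Neumann series gives the invertibility of $I+Y^{-1}\bar ZW$.

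\emph{Contraction.} Set $U=Y+\bar ZW$ and $V=Z+\bar YW$, so that $\begin{bmatrix}V\\ U\end{bmatrix}$ is, after swapping block rows, $\Fsc_c\begin{bmatrix}I\\W\end{bmatrix}$. Indeed $\Fsc_c\begin{bmatrix}I\\W\end{bmatrix}=\begin{bmatrix}Y+\bar ZW\\ Z+\bar YW\end{bmatrix}=\begin{bmatrix}U\\V\end{bmatrix}$. Using $\Fsc_c^*\Ksc\Fsc_c=\Ksc$ from \eqref{fundsympcplx},
\[
U^*U-V^*V=\begin{bmatrix}I & W^*\end{bmatrix}\Ksc\begin{bmatrix}I\\W\end{bmatrix}=I-W^*W>0.
\]
Multiplying on the left by $U^{-*}$ and on the right by $U^{-1}$ yields $I-W'^*W' = U^{-*}(I-W^*W)U^{-1}>0$, so $\|W'\|<1$.

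\emph{Symmetry.} I need $W'^T=W'$, which is equivalent to $U^TV=V^TU$. Expanding,
\[
U^TV = Y^TZ + Y^T\bar YW + W^T\bar Z^TZ + W^T\bar Z^T\bar YW,
\]
\[
V^TU = Z^TY + Z^T\bar ZW + W^T\bar Y^TY + W^T\bar Y^T\bar ZW.
\]
The pieces are handled by the identities in \eqref{cplxsympID1} and their conjugates. The constant terms match by $Y^TZ=Z^TY$. The quadratic terms match by the conjugate of the same identity, $\bar Z^T\bar Y=\bar Y^T\bar Z$. For the linear terms, I would use the conjugate of $Y^*Y-Z^*Z=I$, namely $Y^T\bar Y-Z^T\bar Z=I$, together with its transpose $\bar Y^TY-\bar Z^TZ=I$. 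With these, the linear terms of the two expansions become $W+Z^T\bar ZW$ and $W^T+W^T\bar Z^TZ$ respectively; the $W$ versus $W^T$ parts then agree since $W$ is symmetric.

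\emph{Main obstacle.} The remaining comparison, $Z^T\bar ZW$ against $W^T\bar Z^TZ$, is where I expect the difficulty. As written these need not be equal, so I suspect I have mis-paired the terms in the bookkeeping. Rather than push the expansion further, I will redo the symmetry step more cleanly: symmetry of $W'$ is equivalent to the column space of $\begin{bmatrix}U\\V\end{bmatrix}$ being Lagrangian for the complex bilinear form $\begin{bmatrix}0&I\\-I&0\end{bmatrix}$. Since $\begin{bmatrix}I\\W\end{bmatrix}$ spans a Lagrangian subspace and $\Fsc_c^T\Jb\Fsc_c=\Jb$ (using \eqref{widentities} to pass from $\Fsc$ to $\Fsc_c$), $\Fsc_c$ maps it to a Lagrangian subspace, and symmetry of $W'$ follows. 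Verifying this transformation rule for $\Jb$ under complexification is the one genuine check required.
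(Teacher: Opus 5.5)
Your proof is correct. The contraction step and your final symmetry argument are the paper's own. The paper also writes $\Fsc_c\begin{bmatrix} I\\ W\end{bmatrix}=\begin{bmatrix} R\\ S\end{bmatrix}$ and obtains $R^*R-S^*S=I-W^*W$ from \eqref{fundsympcplx}. For symmetry it derives $(\Fsc_c)^T\Jb\Fsc_c=\Jb$ from \eqref{widentities} and concludes $S^TR=R^TS$.

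Invertibility is where you genuinely differ. You factor $Y+\bar ZW=Y(I+Y^{-1}\bar ZW)$, derive the row identity $YY^*-\bar Z\bar Z^*=I$ to get $\|Y^{-1}\bar Z\|<1$, and invert by a Neumann series. The paper instead reads invertibility directly off the positivity $R^*R-S^*S>0$: if $Rz=0$ with $z\neq 0$, then $-|Sz|^2>0$, which is impossible. Your route yields an explicit bound on $\|(Y+\bar ZW)^{-1}\|$. Its costs are an extra set of identities and redundancy, since your own identity $U^*U-V^*V=I-W^*W>0$ already forces $\ker U=0$. Separately, the other row identity you wrote, $YZ^T=\bar Z\bar Y^T$, is false in general. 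The $(1,2)$ block of $\Fsc_c\Ksc\Fsc_c^*\Ksc=I$ gives $YZ^*=\bar ZY^T$ instead. You never use it, so nothing breaks.

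The \emph{main obstacle} you flagged in the direct expansion is a bookkeeping slip, not a real difficulty. On each side you substituted into only one of the two linear terms. Keeping all of them, the linear part of $U^TV$ is $W+Z^T\bar ZW+W^T\bar Z^TZ$ and that of $V^TU$ is $W^T+Z^T\bar ZW+W^T\bar Z^TZ$. These agree because $W^T=W$, so the expansion you abandoned already proves symmetry.

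This agreement is no accident, because the expansion and the Lagrangian argument are the same identity: $U^TV-V^TU=\begin{bmatrix} I & W^T\end{bmatrix}(\Fsc_c)^T\Jb\Fsc_c\begin{bmatrix} I\\ W\end{bmatrix}$. The blocks of $(\Fsc_c)^T\Jb\Fsc_c$ are $Y^TZ-Z^TY$, $Y^T\bar Y-Z^T\bar Z$, $\bar Z^TZ-\bar Y^TY$ and $\bar Z^T\bar Y-\bar Y^T\bar Z$. By \eqref{cplxsympID1} and its complex conjugate these equal $0$, $I$, $-I$ and $0$. So the ``one genuine check'' you deferred, $(\Fsc_c)^T\Jb\Fsc_c=\Jb$, holds immediately, without even invoking \eqref{widentities}. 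It then gives $U^TV-V^TU=W-W^T=0$.
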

\begin{proof}
	As noted above, the proof is inspired by \cite[Lemma 21]{CombescureRobertBook} concerning the analogous result for $\Gf$. Given $W \in \Cf$, define the $(\d-1)\times (\d-1) $ matrices $R,S$ by block matrix multiplication
	\begin{equation*}
		\begin{bmatrix}
			R \\
			S 
		\end{bmatrix}
		:=
		\Fsc_c \begin{bmatrix}
			I \\
			W 
		\end{bmatrix}
		=
		\begin{bmatrix}
			Y & \bar Z\\
			Z & \bar Y
		\end{bmatrix}
		\begin{bmatrix}
			I \\
			W 
		\end{bmatrix}
		= 
		\begin{bmatrix}
			Y+\bar Z W \\
			Z + \bar Y W 
		\end{bmatrix}.		
	\end{equation*}
	In this notation, $\alpha[\Fsc_c] (W) = SR^{-1}$, provided $R$ is invertible.  Observe that by \eqref{fundsympcplx}, we have that
	\begin{equation*}
		I-W^*W = 
		\begin{bmatrix}
			I & W^* 
		\end{bmatrix}
		\Ksc
		\begin{bmatrix}
			I \\ W
		\end{bmatrix} 
		=		
		\begin{bmatrix}
			I & W^* 
		\end{bmatrix}
		\Fsc_c^* \Ksc \Fsc_c
		\begin{bmatrix}
			I \\ W
		\end{bmatrix} 
		= 
		\begin{bmatrix}
			R^* & S^*
		\end{bmatrix}
		\Ksc
		\begin{bmatrix}
			R\\ S
		\end{bmatrix}
		= R^* R - S^* S		
	\end{equation*}
	Since the left hand side is a positive definite matrix, so is the right hand side.  Thus if there was $z \neq 0$ with $Rz = 0$, we would have the contradictory
	$
		0 < |Rz|^2 - |Sz|^2 = -|Sz|^2.
	$
	Hence $R = Y+\bar Z W$ is invertible.  Moreover, using that $I-W^*W>0$ once more
	\begin{equation*}
		I-(SR^{-1})^*SR^{-1} = R^{-*}(R^*R - S^*S) R^{-1} = R^{-*}(I-W^*W) R^{-1} >0.
	\end{equation*}
	
	It remains to show $SR^{-1}$ is symmetric. By the first identity in \eqref{widentities} and the definition of $\Ksc$
	\begin{equation*}
		0 
		=
				\begin{bmatrix}
			I & W
		\end{bmatrix} 
		\overline{\mathscr{W}}\mathscr{W}^{-1}\Ksc
				\begin{bmatrix}
		I \\ W
	\end{bmatrix} 
	= -i 
	\begin{bmatrix}
		I & W
	\end{bmatrix} 
	\overline{\mathscr{W}}\Jb\mathscr{W}^{-1}
	\begin{bmatrix}
		I \\ W
	\end{bmatrix} 
		= -i 
	\begin{bmatrix}
		I & W
	\end{bmatrix} 
	\overline{\mathscr{W}}\Fsc^T\Jb\Fsc\mathscr{W}^{-1}
	\begin{bmatrix}
		I \\ W
	\end{bmatrix} . 
	\end{equation*}
	We now use that $(\Fsc_c)^T = \overline{\mathscr{W}} \Fsc^T \overline{\mathscr{W}^{-1}}$ and the second identity in \eqref{widentities} to write.
	\begin{equation*}
		\overline{\mathscr{W}}\Fsc^T\Jb\Fsc\mathscr{W}^{-1} =- i(\Fsc_c)^T \Jb \Fsc_c 
	\end{equation*}
	Substituting this in the preceding line implies that 
	\begin{equation*}
		0 = 
		\begin{bmatrix}
			I & W
		\end{bmatrix} 
		 (\Fsc_c)^T \Jb \Fsc_c 
		\begin{bmatrix}
			I \\ W
		\end{bmatrix} 
		=
		\begin{bmatrix}
		S^T & R^T 
		\end{bmatrix}
		\Jb
		\begin{bmatrix}
		R\\ S
		\end{bmatrix} 
		=
		S^T R - R^TS		.
	\end{equation*}
	We now have $R^{-T}S^T = R^{-T}(S^T R)R^{-1} = R^{-T}(R^TS) R^{-1} = SR^{-1}$ as desired.
\end{proof}

\begin{remark}\label{R:cayley}
Momentarily returning to $[\V]$ as in \S\ref{SSS:matrices}, an algebraic calculation reveals that
	\begin{equation}\label{halftodisk}
		([\V] + i)^{-1} = \frac{1}{2i} \big( I + W\big), \qquad W := ZY^{-1},
	\end{equation}
where $Z,Y$ result from the complexification of the fundamental matrix $\Fsc$ of the ODE system.  By applying Proposition \ref{P:alphadisk} in the special case $\alpha[\Fsc_c] (0)$, we see that $W \in \Cf$.  It also follows as in \cite[Lemma 23]{CombescureRobertBook} since \eqref{cplxsympID1} implies that $Y^*Y >0$ so $Y$ is invertible and
	\begin{equation*}
		I-W^*W = (Y^{-1})^*\big( Y^* Y - Z^* Z\big) Y^{-1} = (Y^{-1})^*Y^{-1} >0.
	\end{equation*}
\end{remark}

We now observe special cases of complexification $\Fsc_c$ and its implications for the action $\alpha[\Fsc_c]$.  Our aim is to calculate how this action behaves when $\Fsc_c$ is expressed in terms of a singular value decomposition, see Corollary \ref{C:alphaactionsvd} below. Any omitted proofs follow by routine calculation.
\begin{proposition}\label{P:diagonalSiegel} 
	Suppose $\Gsc$ is a diagonal matrix in the form \eqref{blocksymplectic}, with  
	\begin{equation}\label{diagonalhyp}
		B=C=0, \quad A = \diag(\sigma_1, \dots, \sigma_{\d-1}) = D^{-1}, \quad \text{ where  } \quad  \sigma_1 \geq \cdots \geq \sigma_{\d-1} \geq 1.
	\end{equation}
	Then 
	\begin{equation*}
		\begin{gathered}
					\Gsc_c=
			\begin{bmatrix}
				Y & \bar Z\\
				Z & \bar Y
			\end{bmatrix}, \quad \text{ where } \\
			2Y = \diag\Big(\sigma_1 + \frac{1}{\sigma_1}, \dots, \sigma_{\d-1} + \frac{1}{\sigma_{\d-1}}\Big), \quad 2Z = \diag\Big(\sigma_1 - \frac{1}{\sigma_1}, \dots, \sigma_{\d-1} - \frac{1}{\sigma_{\d-1}}\Big).			
		\end{gathered}
	\end{equation*}	
	Moreover, taking $W =0$ in $\Cf$, then 
	\begin{equation*}
		\alpha[\Gsc_c]0 = \diag\Big(\frac{\sigma_1 - \sigma_1^{-1}}{\sigma_1 + \sigma_1^{-1}}, \dots, \frac{\sigma_{\d-1} - \sigma_{\d-1}^{-1}}{\sigma_{\d-1} + \sigma_{\d-1}^{-1}}\Big) \quad \text{ and } \quad \delta(\Gsc_c,0) = \prod_{j=1}^{\d-1} \Big(\sigma_j + \frac{1}{\sigma_j} \Big). 
	\end{equation*}
\end{proposition}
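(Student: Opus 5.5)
The plan is a direct substitution into the definitions \eqref{cplxdympldef}, \eqref{alphadisk}, and $\delta(\Fsc,W)=\det(A+BW)$, applied to the complexified matrix.

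First I would compute $\Gsc_c$. With $B=C=0$, the formulas in \eqref{cplxdympldef} reduce to
\begin{equation*}
2Y = A+D, \qquad 2Z = A-D .
\end{equation*}
Substituting $A=\diag(\sigma_j)$ and $D=A^{-1}=\diag(\sigma_j^{-1})$ gives the stated diagonal entries $\sigma_j\pm\sigma_j^{-1}$. These matrices are real, so $\bar Y=Y$ and $\bar Z=Z$. As a consistency check, $\Gsc$ satisfies \eqref{sympblockprop}, since $A^TD=I$ and the other two identities are trivial because $B=C=0$. Hence $\Gsc$ is symplectic, and Proposition \ref{P:alphadisk} applies.

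Next I would evaluate the action at $W=0$. Viewing $\Gsc_c$ as a block matrix with blocks $(Y,\bar Z;Z,\bar Y)$ in the roles of $(A,B;C,D)$ in \eqref{alphaactiondef}, we get
\begin{equation*}
\alpha[\Gsc_c](0)=(Z+\bar Y\cdot 0)(Y+\bar Z\cdot 0)^{-1}=ZY^{-1}.
\end{equation*}
Both factors are diagonal, so this is the diagonal matrix with entries $(\sigma_j-\sigma_j^{-1})/(\sigma_j+\sigma_j^{-1})$; the factors of $\tfrac12$ cancel. The inverse exists because $\sigma_j+\sigma_j^{-1}\ge 2>0$.

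For the determinant, the upper-left and upper-right blocks of $\Gsc_c$ are $Y$ and $\bar Z$, so
\begin{equation*}
\delta(\Gsc_c,0)=\det(Y+\bar Z\cdot 0)=\det Y=\prod_j \tfrac12\bigl(\sigma_j+\sigma_j^{-1}\bigr).
\end{equation*}
This is the one place where I expect trouble. Computed literally with the factor of $\tfrac12$ built into $Y$ in \eqref{cplxdympldef}, the determinant carries an extra constant $2^{-(\d-1)}$ that does not appear in the statement. The footnote to \eqref{cplxdympldef} flags this normalization as a deliberate deviation from \cite{CombescureRobertBook}. I would therefore present the identity with this harmless constant noted explicitly (it is irrelevant to the later $\lesssim$ estimates through $\vartheta$), or else track where the convention differs.

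No other obstacle is expected; the computation consists only of routine diagonal arithmetic.
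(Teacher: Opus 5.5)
Your direct substitution is exactly the routine calculation the paper has in mind (it omits the proof). Your formulas for $\Gsc_c$ and $\alpha[\Gsc_c](0)=ZY^{-1}$ are correct, and your worry about the determinant is justified. The normalization $2Y=A+D+i(B-C)$ is forced both by $\Gsc_c=\mathscr{W}\Gsc\mathscr{W}^{-1}$ and by $Y^*Y-Z^*Z=I$, so $\delta(\Gsc_c,0)=\det Y=2^{1-\d}\prod_{j=1}^{\d-1}(\sigma_j+\sigma_j^{-1})$. The stated identity is therefore off by a factor of $2^{1-\d}$, and the same harmless power of $2$ propagates into \eqref{deltasvd} and the constant in \S\ref{S:matrixelements}, where it does not affect any of the $\lesssim$ estimates.
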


Next we recall properties of $\Spdr \cap \Odr$, where $\Odr$ is the orthogonal group on $\RR^{2(\d-1)}$.  If $\Hsc$ lies in this intersection, then combining this with the fundamental relations \eqref{fundsymp} with $\Hsc^{-1} = \Hsc^T$ gives $\Hsc \Jb = \Jb \Hsc$, and hence it is seen that $\Hsc$ must take the block form 
\begin{equation}\label{symportho}
	\Hsc = 
	\begin{bmatrix}
		 A & B \\
		- B &  A 
	\end{bmatrix},
\end{equation}
 satisfying any one of the three sets of equivalent conditions
\begin{equation}\label{symporthoprop}
	\begin{aligned}
	 &A^T A +  B^T  B = I \text{ and }  A^T B  =  B^T A; \\
	 &A A^T +  B B^T = I \text{ and }  A B^T  =  B A^T;\\
	 &A + iB \in \Udr \text{ (the unitary group on }\CC^{\d-1}).
	\end{aligned}
\end{equation}

\begin{proposition}\label{P:unitarySiegel}
Suppose $\Hsc \in \Spdr \cap \Odr$ is as in \eqref{symportho}, \eqref{symporthoprop}.  Then 
\begin{equation*}
	\Hsc_c=
	\begin{bmatrix}
		 V & 0\\
		0 & \bar{V}
	\end{bmatrix}, \quad \text{ where } \quad  V =  A + i B \in \Udr.
\end{equation*}	
Consequently, if $ W \in \Cf$, then $\alpha[\Hsc_c] (W) = \bar{V}  W  V^{-1}$ and $\delta[\Hsc_c](W) = \det(V)$ so that 
\begin{equation}\label{deltaunitarySiegel}
	|\delta(\Hsc_c, W)| =1 .
\end{equation}  
In particular, taking $ W=0$ in $\Cf$, $\alpha[\Hsc_c](0)=0$.
\end{proposition}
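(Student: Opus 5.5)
The plan is a direct block computation from the definition \eqref{cplxdympldef}, then substitution into \eqref{alphadisk} and the definition of $\delta$.

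\emph{Step 1: the complexification.} For $\Hsc$ in the form \eqref{symportho}, the lower-right block is $D=A$ and the lower-left block is $C=-B$. Plugging these into \eqref{cplxdympldef} gives
\begin{equation*}
2Y = A+A+i(B-(-B)) = 2(A+iB), \qquad 2Z = A-A+i(B+(-B)) = 0.
\end{equation*}
So $Y=V:=A+iB$ and $Z=0$, which yields the stated block-diagonal form of $\Hsc_c$. The third line of \eqref{symporthoprop} says $V\in\Udr$. As a sanity check, the first identity in \eqref{cplxsympID1}, $Y^*Y - Z^*Z = I$, reduces to $V^*V=I$.

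\emph{Step 2: the action and the determinant.} With $Z=0$ and $Y=V$, formula \eqref{alphadisk} becomes
\begin{equation*}
\alpha[\Hsc_c](W)=(0+\bar V W)(V+0)^{-1}=\bar V W V^{-1},
\end{equation*}
and $V$ is invertible because it is unitary. For $\delta$, the upper-left block of $\Hsc_c$ is $V$ and the upper-right block is $\bar Z = 0$. Hence $\delta(\Hsc_c,W)=\det(V+0\cdot W)=\det V$. Since $V$ is unitary, $|\det V|=1$, which is \eqref{deltaunitarySiegel}. Setting $W=0$ gives $\alpha[\Hsc_c](0)=0$.

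\emph{Step 3: consistency check.} One can confirm directly that $\bar V W V^{-1}$ lies in $\Cf$, although Proposition \ref{P:alphadisk} already guarantees it. First, $V^{-1}=V^*=\bar V^T$, so $\bar V W V^{-1} = \bar V W \bar V^T$, which is symmetric whenever $W$ is. Second, conjugation by unitaries preserves the operator norm, so $\|\bar V W V^{-1}\| = \|W\| < 1$.

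The only step needing care is the sign convention $C=-B$ in \eqref{symportho}, which is what makes $Z$ vanish. Everything else is immediate.
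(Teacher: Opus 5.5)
Your proof is correct and is exactly the routine block computation the paper intends; the paper omits this proof, saying only that it follows by routine calculation. Reading off $C=-B$, $D=A$ from \eqref{symportho} gives $Y=V$ and $Z=0$ in \eqref{cplxdympldef}, and the formulas for $\alpha[\Hsc_c]$ and $\delta(\Hsc_c,W)$ then follow by substitution, with your Step 3 serving as a harmless extra check that $\bar V W V^{-1}\in\Cf$.
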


\begin{corollary}\label{C:alphaactionsvd}
	Suppose $\widetilde{\Hsc}, \Hsc \in \Spdr\cap\Odr$ and that $\Gsc$ is a diagonal matrix satisfying \eqref{diagonalhyp}. Then 
	\begin{equation}\label{siegelsvd}
		\alpha[\widetilde{\Hsc}_c\Gsc_c\Hsc_c](0) = \bar V \diag\Big(\frac{\sigma_1 - \sigma_1^{-1}}{\sigma_1 + \sigma_1^{-1}}, \dots, \frac{\sigma_{\d-1} - \sigma_{\d-1}^{-1}}{\sigma_{\d-1} + \sigma_{\d-1}^{-1}}\Big) V^{-1}.
	\end{equation}
	where $V$ is the matrix which results from applying Proposition \ref{P:unitarySiegel} to the first matrix $\widetilde{\Hsc}_c$. Moreover, 
	\begin{equation}\label{deltasvd}
		|\delta(\widetilde{\Hsc}_c\Gsc_c\Hsc_c,0)| = |\delta(\Gsc_c,0)| =  \prod_{j=1}^{\d-1} \Big(\sigma_j + \frac{1}{\sigma_j} \Big). 
	\end{equation}
\end{corollary}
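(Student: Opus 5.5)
The plan is to use the homomorphism property \eqref{homomorphsiegel} for the action $\alpha$ and the cocycle identity \eqref{deltacocycle} for $\delta$. Both are applied to the complexified product. Complexification is a conjugation by $\mathscr{W}$, so $(\widetilde{\Hsc}\Gsc\Hsc)_c = \widetilde{\Hsc}_c\Gsc_c\Hsc_c$. The action $\alpha[\cdot]$ is the one in \eqref{alphadisk}, defined on all of $\Cf$ by Proposition \ref{P:alphadisk}. All intermediate expressions are therefore well-defined, since each factor is the complexification of a symplectic matrix, and $0\in\Cf$ is carried into $\Cf$ at every stage.

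\textbf{Step 1: the action.} By \eqref{homomorphsiegel},
\begin{equation*}
\alpha[\widetilde{\Hsc}_c\Gsc_c\Hsc_c](0)=\alpha[\widetilde{\Hsc}_c]\big(\alpha[\Gsc_c](\alpha[\Hsc_c](0))\big).
\end{equation*}
Proposition \ref{P:unitarySiegel} gives $\alpha[\Hsc_c](0)=0$. Proposition \ref{P:diagonalSiegel} then gives $\alpha[\Gsc_c](0)=D_\sigma$, the diagonal matrix with entries $(\sigma_j-\sigma_j^{-1})/(\sigma_j+\sigma_j^{-1})$. Finally, Proposition \ref{P:unitarySiegel} applied to $\widetilde{\Hsc}_c$ gives $\alpha[\widetilde{\Hsc}_c](D_\sigma)=\bar V D_\sigma V^{-1}$, which is \eqref{siegelsvd}. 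The one point to check is that $D_\sigma\in\Cf$. It is real, diagonal (hence symmetric), and has entries in $[0,1)$, so this holds.

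\textbf{Step 2: the determinant.} Apply \eqref{deltacocycle} twice:
\begin{equation*}
\delta(\widetilde{\Hsc}_c\Gsc_c\Hsc_c,0)=\delta(\widetilde{\Hsc}_c,\alpha[\Gsc_c\Hsc_c](0))\,\delta(\Gsc_c,\alpha[\Hsc_c](0))\,\delta(\Hsc_c,0).
\end{equation*}
The first and last factors have modulus one by \eqref{deltaunitarySiegel}, which holds for any argument in $\Cf$. The middle factor equals $\delta(\Gsc_c,0)$ because $\alpha[\Hsc_c](0)=0$. Proposition \ref{P:diagonalSiegel} shows this is the positive real number $\prod_j(\sigma_j+\sigma_j^{-1})$, which gives \eqref{deltasvd}.

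\textbf{Main obstacle.} The only real issue is verifying that the homomorphism and cocycle identities apply to the complexified matrices $\Fsc_c$ through the formula \eqref{alphadisk}. This requires matching the block conventions: for $\Fsc_c$, the blocks $(A,B,C,D)$ in \eqref{alphaactiondef} are $(Y,\bar Z,Z,\bar Y)$. It also requires checking that each intermediate $A+BW$ is invertible. That invertibility is exactly the content of Proposition \ref{P:alphadisk}, applied in turn to each factor and to each intermediate point of $\Cf$. The algebraic identities \eqref{homomorphsiegel} and \eqref{deltacocycle} themselves hold for arbitrary block matrices wherever they are defined, so nothing beyond this bookkeeping is needed.
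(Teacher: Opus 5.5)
Your proposal is correct and follows the paper's proof: you get \eqref{siegelsvd} from the homomorphism property \eqref{homomorphsiegel} with $\alpha[\Hsc_c](0)=0$ and Propositions \ref{P:diagonalSiegel}, \ref{P:unitarySiegel}, and you get \eqref{deltasvd} from the cocycle identity \eqref{deltacocycle} with $|\delta(\Hsc_c,\cdot)|=|\det V|=1$. One caveat, which you inherit from Proposition \ref{P:diagonalSiegel} and which the paper shares: the upper-left block of $\Gsc_c$ is $Y=\frac12\diag(\sigma_j+\sigma_j^{-1})$, so in fact $\delta(\Gsc_c,0)=\det Y=2^{1-d}\prod_{j=1}^{d-1}(\sigma_j+\sigma_j^{-1})$ (for instance, $\Gsc=I$ gives $\delta=1$, not $2^{d-1}$), hence the constant in \eqref{deltasvd} is off by a factor $2^{d-1}$, which is harmless for the later $\vartheta^{-1/2}$ bounds.
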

\begin{proof}
	By Proposition \ref{P:unitarySiegel}, we have $\alpha[\Hsc_c] (0) =0$.  Consequently, \eqref{siegelsvd} is a consequence of repeated use of the homomorphism property \eqref{homomorphsiegel} for $\alpha$ and Propositions \ref{P:diagonalSiegel}, \ref{P:unitarySiegel}:
	\begin{equation*}
	\alpha[\widetilde{\Hsc}_c\Gsc_c\Hsc_c] (0)= \alpha[\widetilde{\Hsc}_c\Gsc_c](\alpha[\Hsc_c] (0) ) = \alpha[\widetilde{\Hsc}_c\Gsc_c](0 ) =   \alpha[\widetilde{\Hsc}_c]\big(\alpha[\Gsc_c](0)\big) .
	\end{equation*}
	The second identity \eqref{deltasvd} then follows similarly from \eqref{deltacocycle} and the preceding propositions.
\end{proof}
\begin{remark}\label{R:alphaactionsvd}
	This last corollary now allows to calculate $([\V]+i)^{-1}$ as in \eqref{matrixrealize} in terms of the singular value decomposition of the fixed time fundamental matrix $\Fsc$ which determined it.  Indeed, \eqref{symplecticsvd} implies that we can write $\Fsc = \widetilde{\Hsc}\Gsc \Hsc$ where $\widetilde{\Hsc}, \Hsc \in \Spdr\cap\Odr$ while arranging $\Gsc$ to satisfy \eqref{diagonalhyp}. Moreover, \eqref{halftodisk}, \eqref{alphadisk} show that $\alpha[\Fsc_c](0) = ZY^{-1} = 2i(([\V]+i)^{-1}-I)$ and we also have $\delta(\Fsc_c,0) = Y$.
\end{remark}
\subsection{Regularity of solutions}\label{SS:RiccatiReg}
In this section, we examine bounds on solutions to the Riccati equation.  We treat this by proving a general result concerning the matrix $(C+iD)(A+iB)^{-1}$ formed by any $\Fsc \in \Spdr$ in block form \eqref{blocksymplectic}.  

We use the usual matrix norm
\begin{equation}\label{matrixnorm}
	\|E\| = \max_{|z| = 1} |Ez| = \max\{\sqrt{\rho}: \rho \in \spec(E^*E)\} =  \max\{\sqrt{\rho}: \rho \in\spec(EE^*) \},
\end{equation}
where $z$ is real or complex valued depending on whether or not the matrix is. Here $\spec$ denotes the spectrum and the last two identities observe that the matrix norm is the largest singular value. 
 In the case where $E$ is symmetric positive definite, it is diagonalizable by a unitary transformation and $\|E\|$ is just the largest eigenvalue.  Expressing $[\V_t]$ in terms of an orthonormal basis with respect to the metric $g$ as above, the results here imply bounds on $\|\V_t\|$ calculated in terms of the metric. 

Begin by observing that if $\Fsc \in \Spdr$ then $AB^T, CD^T$ are symmetric (cf. \eqref{sympblockprop}), hence
\begin{equation}\label{conjugateidentities}
	\begin{split}
	&(A+iB)(A-iB)^T = AA^T + BB^T = (A-iB)(A+iB)^T,\\
	&(C+iD)(C-iD)^T = CC^T + DD^T = (C-iD)(C+iD)^T.
	\end{split}
\end{equation}
Consequently, algebraic calculation reveals the real and imaginary parts of $(C+iD)(A+iB)^{-1}$
\begin{equation}\label{realimagsiegel}
	\begin{split}
	\Re \big[(C+iD)(A+iB)^{-1} \big]& = (CA^T +DB^T)(AA^T + BB^T)^{-1},\\
	\Im \big[ (C+iD)(A+iB)^{-1} \big] & = (AA^T + BB^T)^{-1}.		
	\end{split}
\end{equation}

\begin{proposition}\label{P:riccatireg}
	Suppose that $\Fsc \in \Spdr$ is in block form \eqref{blocksymplectic} and that 
	$
		\|\Fsc\| \leq \mu.
	$
	The matrix $\tilde\omega = (C+iD)(A+iB)^{-1}=\alpha[\Fsc](iI) $ satisfies the following bounds 
	\begin{gather}
		\|\tilde\omega\| \leq \mu^2\label{siegelbd1}\\
		\big\| \Im  \tilde\omega  \big\| = \|(A+iB)^{-1} \|^2 \leq \mu^2\label{siegelbd2},\\
	\big\| \big(\Im  \tilde\omega  \big)^{-1}\big\| \lesssim \mu^2 \quad\text{ and } \quad(\Im\tilde\omega) y \cdot y \geq \mu^{-2}|y|^2  \text{ for all } y \in \RR^{d-1} \label{siegelbd5}\\
		\left\|\left[ \Im ( \tilde\omega +iI)\right]^{-1} \right\| \leq 1 \label{siegelbd3},\\
		\left\|( \tilde\omega +iI)^{-1} \right\| \leq 1 \label{siegelbd4}.
	\end{gather}	
\end{proposition}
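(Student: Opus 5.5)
The plan is to work directly from the identities \eqref{conjugateidentities} and \eqref{realimagsiegel}, together with the block relations \eqref{sympblockprop}. Throughout, $\|\Fsc\|\le\mu$ gives bounds on the blocks: $\|A\|,\|B\|,\|C\|,\|D\|\le\mu$. It also bounds the combined rows, $\|[A\ B]\|\le\mu$ and $\|[C\ D]\|\le\mu$, since these are submatrices of $\Fsc$. Because $\Fsc^{-1}=\Jb\Fsc^T\Jb^{-1}$ is orthogonally conjugate to $\Fsc^T$, we also have $\|\Fsc^{-1}\|\le\mu$. Set $P=AA^T+BB^T=[A\ B][A\ B]^T$, which is real symmetric and satisfies $P\le\mu^2 I$.

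\emph{Lower bound on $P$ and the bound \eqref{siegelbd2}.} By \eqref{conjugateidentities}, $P=(A+iB)(A+iB)^*$ up to conjugation. Hence $\|(A+iB)^{-1}\|^2=\|P^{-1}\|$, and by \eqref{realimagsiegel} this equals $\|\Im\tilde\omega\|$. So \eqref{siegelbd2} reduces to showing $P\ge\mu^{-2}I$.

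For this, I use the complex vector $v=(A+iB)^*y$ for unit $y$. The relation $AD^T-BC^T=I$ from \eqref{sympblockprop} gives $y=\Re\big[(A+iB)(D-iC)^T\big]y$. More cleanly, the block row $[A\ B]$ is the first block row of $\Fsc$. Right-multiplying $\Fsc$ by $\Fsc^{-1}$ gives $[A\ B]\,\Fsc^{-1}[I;0]=I$, so $1\le\|[A\ B]^Ty\|\,\|\Fsc^{-1}\|$. This yields $y^TPy=|[A\ B]^Ty|^2\ge\mu^{-2}$. Thus $\mu^{-2}I\le P\le\mu^2I$. This gives \eqref{siegelbd2}, since $\|P^{-1}\|\le\mu^2$. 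It also gives \eqref{siegelbd5}, since $\Im\tilde\omega=P^{-1}\ge\mu^{-2}I$ and $\|(\Im\tilde\omega)^{-1}\|=\|P\|\le\mu^2$.

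\emph{The bound \eqref{siegelbd1}.} Write $\tilde\omega=(C+iD)(A+iB)^{-1}$. The factor $C+iD=[C\ D]\,[I;\,iI]$ has norm at most $\sqrt2\,\mu$, which would lose a constant. To avoid this, compute $\tilde\omega^*\tilde\omega$ using \eqref{conjugateidentities}: $(C+iD)^*(C+iD)$ has the same nonzero spectrum as $(C+iD)(C+iD)^*=\overline{CC^T+DD^T}$, whose norm is at most $\|[C\ D]\|^2\le\mu^2$. Combining this with $\|(A+iB)^{-1}\|\le\mu$ gives $\|\tilde\omega\|\le\mu\cdot\mu=\mu^2$.

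\emph{The bounds \eqref{siegelbd3} and \eqref{siegelbd4}.} Since $\Im(\tilde\omega+iI)=\Im\tilde\omega+I\ge I$, inverting gives \eqref{siegelbd3}. For \eqref{siegelbd4}, let $z$ be complex and set $u=(\tilde\omega+i)z$. Because $\tilde\omega$ is symmetric, $\Im\langle(\tilde\omega+i)z,\bar z\rangle$ with the Hermitian pairing equals $(\Im\tilde\omega)z\cdot\bar z+|z|^2\ge|z|^2$. Therefore $|z|^2\le|u||z|$, so $|z|\le|u|$. Alternatively, one can use Remark \ref{R:cayley}: $(\tilde\omega+i)^{-1}=\frac1{2i}(I+W)$ with $\|W\|<1$.

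\emph{Main obstacle.} The only delicate step is the lower bound $P\ge\mu^{-2}I$, and with it the claim $\|\Fsc^{-1}\|=\|\Fsc\|$. That claim follows from $\Fsc^{-1}=\Jb\Fsc^T\Jb^{-1}$ because $\Jb$ is orthogonal. Everything else is routine linear algebra from \eqref{realimagsiegel}.
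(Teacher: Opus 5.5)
Your proof is correct. For \eqref{siegelbd1}, \eqref{siegelbd2}, \eqref{siegelbd5} and \eqref{siegelbd3} it is essentially the paper's argument. The paper gets $\mu^{-2}\le AA^T+BB^T\le\mu^2$ by noting that $\Fsc\Fsc^T$ is positive definite and symplectic, so its spectrum is closed under $\rho\mapsto\rho^{-1}$, and then compressing to the upper-left block. Your identity $[A\ B]\,\Fsc^{-1}[I;0]=I$ is the same compression written out, since $[A\ B]^Ty=\Fsc^T(y,0)^T$ and $|\Fsc^Tw|\ge|w|/\|\Fsc^{-1}\|$.

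The genuine difference is in \eqref{siegelbd4}. The paper obtains it from the Cayley transform $(\tilde\omega+i)^{-1}=\frac{1}{2i}(I+W)$, with $W=ZY^{-1}$ in the Siegel disk. It justifies this through the Bloch--Messiah decomposition and Corollary \ref{C:alphaactionsvd}. Your accretivity argument, $\Im\big((\tilde\omega+i)z\cdot\bar z\big)\ge|z|^2$, is more elementary and more general. It uses only that $\tilde\omega$ is complex symmetric with $\Im\tilde\omega\ge 0$, so it holds at every point of $\Gf$ without using the symplectic structure. The paper's route costs it little extra, because the Siegel-disk machinery is used anyway in \S\ref{S:matrixelements} to express $\Im\psi$ and $|\det Y_t|$ through the singular values.

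Three small fixes:
\begin{itemize}
\item State where the symmetry of $\tilde\omega$ comes from: the first line of \eqref{sympblockprop}, or $\alpha[\Fsc]\colon\Gf\to\Gf$. Without it, $\Im(\tilde\omega z\cdot\bar z)$ is the quadratic form of $(\tilde\omega-\tilde\omega^*)/2i$, not of $\Im\tilde\omega$.
\item Delete the discarded sentence claiming $y=\Re[(A+iB)(D-iC)^T]y$. That real part is $AD^T+BC^T$; to get $AD^T-BC^T=I$ you would need $(D+iC)^T$.
\item The identity $AA^T+BB^T=(A+iB)(A+iB)^*$ holds exactly by \eqref{conjugateidentities}, not merely ``up to conjugation''.
\end{itemize}
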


\begin{proof}
	It is well-known that if $\rho \in \spec(\Fsc)$, then $\rho^{-1}, \bar \rho, \bar \rho^{-1}\in \spec(\Fsc)$, so $\spec(\Fsc) = \spec(\Fsc^{-1})$. Moreover, symplectic matrices are closed under transposition, inversion, and composition operations.  Hence $\spec(\Fsc\Fsc^T) = \spec(\Fsc^{-T}\Fsc^{-1}) \subset (0,\infty)$ as these matrices are positive definite.  This implies that we must have $\mu \geq 1$ and $\|\Fsc^{-1}\| = \|\Fsc^T\| = \|\Fsc\| \leq \mu$.  Moreover,
	\begin{equation*}
		\min \spec(\Fsc\Fsc^T)= \frac{1}{\max \spec(\Fsc\Fsc^T)} \geq \mu^{-2}
	\end{equation*}
	Now observe that
	\begin{equation*}
		\Fsc \Fsc^T =\begin{bmatrix}
			A & B \\
			C & D
		\end{bmatrix}
		\begin{bmatrix}
			A^T & C^T \\
			B^T & D^T
		\end{bmatrix} =
		\begin{bmatrix}
			AA^T + BB^T & AC^T+BD^T \\
			CA^T+DB^T & CC^T + DD^T
		\end{bmatrix}
	\end{equation*}
	Hence 
	\begin{equation*}
		\mu^{-2} \leq \min\spec(\Fsc\Fsc^T) \leq \min\spec (AA^T + BB^T) \leq \max\spec (AA^T + BB^T) \leq \max\spec(\Fsc\Fsc^T) \leq \mu^2
	\end{equation*}
	where the second (respectively fourth) inequality follows by comparing the minimum (resp. maximum) of  $\Fsc\Fsc^T z \cdot z$ over all $|z| = 1$ to the minimum (resp. maximum) obtained by constraining this quantity to vectors whose last $\d-1$ entries vanish. Since $(\Im \tilde\omega)^{-1} = AA^T+BB^T$ by \eqref{realimagsiegel}, the bound  \eqref{siegelbd5} now follows from \eqref{matrixnorm}, with the second half following since this gives $|y|^2 \lesssim \mu^2|(\Im\tilde\omega)^{\frac 12}y|^2$.
	We can also deduce \eqref{siegelbd2} by using that \eqref{conjugateidentities} implies
	\begin{equation*}
		\|(A+iB)^{-1} \|^2 = \left\|(AA^T + BB^T)^{-1}\right\| \leq 
		\frac{1}{\min\spec (AA^T + BB^T)} \leq \mu^2.
	\end{equation*}
	
	To see \eqref{siegelbd1}, observe that \eqref{conjugateidentities} implies we can argue similarly to the above to get that $\max\spec (CC^T + DD^T) \leq \max\spec(\Fsc\Fsc^T)$ and hence $ \|C+iD\| \leq \mu$ again by \eqref{matrixnorm}, at which point \eqref{siegelbd1} follows in combination with the above.
	
To see \eqref{siegelbd3}, we use that 
\begin{equation*}
	\left(\Im( (C+iD)(A+iB)^{-1} +i)\right)^{-1}= \left((AA^T+ BB^T)^{-1}+I\right)^{-1} = (AA^T+ BB^T)(I+AA^T+ BB^T)^{-1}
\end{equation*}
The spectral mapping theorem and $AA^T+ BB^T>0$ now imply
\begin{align*}
	\left\| \left(\Im( (C+iD)(A+iB)^{-1} +i)\right)^{-1} \right\|= \max\left\{ \frac{\rho}{1+\rho}: \rho \in \spec(AA^T+ BB^T)\right\} \leq 1.
\end{align*}

Finally, \eqref{siegelbd4} follows from \eqref{halftodisk}, Corollary \ref{C:alphaactionsvd}, and the ensuing Remark \eqref{R:alphaactionsvd}.  Indeed, taking the singular value decomposition of $\Fsc$ as in \eqref{symplecticsvd}, we see that 
\begin{equation*}
	\big( (C+iD)(A+iB)^{-1} +i\big)^{-1} = \frac{1}{2i} (I+W),
\end{equation*}
where $W$ has singular values which are all bounded above by 1.
\end{proof}

\section{Stationary phase for complex phases}\label{S:statphase}
In this section, we consider the method of stationary phase applied to complex-valued phases. This dates back to the work of Melin and Sj\"ostrand \cite{MelinSjostrand}.  The main difference with the real-valued case is that even under a typical set of hypotheses such as \eqref{phasehyp} below, nearby critical points can ``turn complex'' in a manner so that they affect asymptotics in nontrivial way.   To handle this, \cite{MelinSjostrand} introduced the notion of an \emph{almost analytic} function, which we take to mean a $C^\infty$ function $\tilde{a}: \CC^\d \to \CC$ such that $|\bar\prtl_{z_j} \tilde a(z)| \lesssim_{a,N} |\Im z|^N$ for all $ j \in \{1,\dots,\d\}$ and $N \in \mathbb{N}$.  In this section, but not others, we reserve $z = x+iy$ as a variable in $\CC^\d$ and interpret $\prtl_{z_j} = \frac 12(\prtl_{x_j}-i\prtl_{y_j})$, $\bar\prtl_{z_j} = \frac 12(\prtl_{x_j}+i\prtl_{y_j})$ in the usual way.

Given a Schwartz class function $a$ on $\RR^\d$, it is known that there exists an almost analytic extension $\tilde a$ to all of $\CC^\d$.  While this is proved in \cite{MelinSjostrand}, these can also be defined by taking
\begin{equation*}
	\tilde a(z)  = (2\pi)^{-\d} \int_{\RR^{\d}}e^{i\xi\cdot z} \psi(z,\xi)\hat{a}(\xi)\,d\xi, \qquad \psi(z, \xi) := \prod_{j=1}^\d   \tilde{\psi}(\Im z_j)\tilde{\psi}(\xi_j\,\Im z_j)= \prod_{j=1}^\d   \tilde{\psi}(y_j)\tilde{\psi}(\xi_j\,y_j)
\end{equation*}
where $\hat{a}(\xi)$ is the classical Fourier transform of $a$ and $\tilde{\psi}$ is a bump function on $\RR$ identically one in a neighborhood of the origin (so that $\tilde a|_{\RR^\d} = a$).  This is the higher dimensional version of the formula in \cite[Theorem 3.6]{ZworskiSemiclassicalAnalysis} and the proof there shows the more precise estimate
\begin{equation}\label{aachar}
	|\bar\prtl_{z_k} a(z)| \lesssim_N |\Im z|^N \sum_{|\alpha| \leq N+1 +\lceil \frac{\d+1}{2} \rceil} \|\prtl^\alpha a\|_{L^2}  \quad  \text{ for all } k \in \{1,\dots,\d\} \text{ and } N \in \mathbb{N}.
\end{equation}

We claim that by adapting the proof of \eqref{aachar}, we also have that
\begin{equation}\label{aacharL1}
	\int_{\RR^d} |\bar\prtl_{z_k} \tilde a(x+iy)| \,dx \lesssim |y|^N \big( \|a\|_{L^1} + \|\prtl_{\tilde x_k}^{N+1} a\|_{L^1} \big) \quad  \text{ for all } k \in \{1,\dots,\d\} \text{ and } N \in \mathbb{N}.
\end{equation}
To see this, we set
\begin{equation*}
K_{k,N}(x,y,\tilde x) = (-i)^{N+1}y_k^N \int e^{i\xi\cdot(x-\tilde x+iy)}\tilde{\psi}(y_k)\Big(\frac{\tilde{\psi}'(\xi_k\,y_k)}{(\xi_k\,y_k)^N}\Big)\prod_{j\neq k}\tilde{\psi}(y_j)\tilde{\psi}(\xi_j\,y_j) \,d\xi,
\end{equation*}
and since $\psi$ is identically 1 near the origin, the function in parentheses is $C_c^\infty$. We can now write
\begin{equation}
	\begin{split}
		\bar\prtl_{z_k} \tilde a(x+iy) &=  \int e^{i\xi\cdot(x+iy)}\hat a(\xi) \big(\tilde{\psi}(y_k)\tilde{\psi}'(\xi_k\,y_k)\xi_k + \tilde{\psi}'(y_k)\tilde{\psi}(\xi_k\,y_k)\big)
		\prod_{j\neq k}\tilde{\psi}(y_j)\tilde{\psi}(\xi_j\,y_j) \,d\xi
		\\
		&=\int_{\RR^d} K_{k,N}(x,y,\tilde x) \prtl_{\tilde x_k}^{N+1} a (\tilde x) \,d\tilde x
		+ \int e^{i\xi\cdot(x+iy)}\hat a(\xi)  \tilde{\psi}'(y_k)\tilde{\psi}(\xi_k\,y_k)
		\prod_{j\neq k}\tilde{\psi}(y_j)\tilde{\psi}(\xi_j\,y_j) \,d\xi.
	\end{split}
\end{equation}
By adjusting the arguments we use for the first term, it is routine to see that the $L^1$ norm of the second term is dominated by the right hand side of \eqref{aacharL1}, as $\tilde\psi'(y_k)$ is supported where $|y_k| \approx 1$.  For the first term, integration by parts with respect to $\xi$ shows that
\begin{equation*}
	|K_{k,N}(x,y,\tilde x)| \lesssim |y|^N(1+|x-\tilde x|)^{-d-1},
\end{equation*}
at which point \eqref{aacharL1} follows by Young's inequality.

In addition to \eqref{aacharL1}, we also have
\begin{equation}\label{aaprop}
	\begin{split}
		&\supp(\tilde{a}) \subset \{z: \Im z_j \in \supp(\tilde{\psi}), j=1,\dots \d\}\\
		&\int_{\RR^\d} |\prtl_x^\alpha \tilde{a}(x+iy)|^2\,dx \lesssim_\alpha \|a\|_{L^2}^2 \text{ for any } \alpha \text{ and } y \in \RR^\d.
	\end{split}
\end{equation}

The stationary phase asymptotics in \cite[\S2]{MelinSjostrand} assume that $\phi(w,x): U \to \CC$ is a $C^\infty(U)$ phase function  over some neighborhood $U \subset \RR^n \times \RR^d$ of the origin satisfying 
\begin{equation}\label{phasehyp}
\Im \phi \geq 0, \quad \phi(0,0) = 0, \quad
		d_x \phi(0,0) = 0, \quad \text{ and } \quad d_x^2 \phi(0,0) \text{ is nonsingular}.
\end{equation}
Now let $\tilde\phi$ be an almost analytic extension of $\phi$ (over the support of the amplitude $a$ below).  The last of these properties and the implicit function theorem implies that there is a $C^\infty$ function $w \mapsto Z(w)$ defined in a neighborhood of the origin such that
\begin{equation*}
	\prtl_{z_j} \phi(w,Z(w)) = 0\quad \text{ for all } j \in \{1,\dots,\d\}.
\end{equation*}
The stationary phase asymptotics in \cite{MelinSjostrand} imply that if $a(w,x)$ is a $C^\infty$ function of sufficiently small enough compact support in a neighborhood of $(0,0)$, then
\begin{multline}\label{pureasympexpn}
	\left|\int e^{\frac ih \phi(w,z) } a(w,z) \, dz - (2\pi h)^{\frac d2}\detexp^{-\frac 12} \Big(\frac 1i d_z^2\phi\big(w,Z(w)\big)\Big) e^{\frac ih \phi(w,Z(w))}\sum_{j=0}^{N } h^j 
	A_j(w,D_z) \tilde a\big|_{z=Z(w)}\right|
	\\
	\leq C_{\phi,N} h^{N+1} \| a \|_{C^{J(N)}},
\end{multline}
where the $A_j$ are differential operators of order $2j$ depending on the phase function with $A_0 = 1$. Here $J(N)$ is a sufficiently large order of differentiability depending on $N$.  

By itself, \eqref{pureasympexpn} is useful to us in Appendix \ref{A:wptpdo}, but in the proof of Theorem \ref{T:matrixeltasymp}, we will make use of a more careful version of stationary phase asymptotics.  Namely, since solutions to the complex Riccati equation as in \S\ref{S:riccati} do not yield quadratic phases which satisfy uniform bounds, we use an approach that ensures we have satisfactory asymptotics nonetheless.  Curiously, we do not use the full strength of the hypotheses on $C$ in \eqref{spphasehyp} below.

\begin{theorem}\label{T:spquad}
	Let $C$ be a $\d \times \d$ complex, symmetric matrix with $\Im C>0$ and let $\xi \in \CC^\d$, $b \in\CC$ be fixed.  For $z \in \CC^\d$, let $\phi(z)$ be the complex polynomial
	\begin{equation*}
		\phi(z) =  b + C\xi \cdot z + \frac 12 Cz\cdot z = b-\frac 12 C\xi\cdot\xi + \frac 12 C(z+\xi)\cdot (z+\xi)
	\end{equation*}
	Suppose that for some $0<\veps <\frac 12$,
	\begin{equation}\label{spphasehyp}
			\Im \phi(x) \geq 0 \text{ for all } x \in \RR^d \quad \text{ and } \quad
			\|C^{-1}\| ,
			\|(\Im C)^{-1}\| \lesssim h^{-1+\veps}. 
	\end{equation}
	Given $a \in C_c^\infty(\RR^\d)$, let $\tilde{a}$ be an almost analytic extension to $\CC^\d$ satisfying \eqref{aachar}, \eqref{aaprop}.  Then 
	\begin{multline}\label{asympexpn}
		\left| \int_{\RR^\d}e^{\frac ih \phi(x)} a(x)\,dx - \frac{e^{\frac ih(b-\frac 12 C\xi\cdot\xi)}(2\pi h)^{\frac d2}}{\det^{1/2}(-iC)} \sum_{j=0}^{k-1} \frac{1}{j!} \Big(\frac{h}{2i}\Big)^j \big(C^{-1} D_x \cdot D_x \big)^j \tilde{a}(x-\xi)\Big|_{x=0} \right|\\
		\lesssim_{k,N,\veps}
		h^{\frac{\d}{2} +k} \|C^{-1}\|^{\frac{\d}{2} + k} \sum_{|\alpha| \leq 2k + \lceil \frac{d+1}{2} \rceil} \|\prtl^\alpha a\|_{L^2} + \Big(h^{\frac 12}\big\|C_2^{-\frac 12}\big\|\Big)^N \big( \|a\|_{L^1} + \max_k \|\prtl_{\tilde x_k}^{N+1} a\|_{L^1} \big) .
	\end{multline}	
\end{theorem}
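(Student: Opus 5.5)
We follow the classical Melin--Sj\"ostrand contour-deformation argument, keeping quantitative track of the non-uniform size of $C$. First complete the square. Writing $\phi(x) = b - \frac12 C\xi\cdot\xi + \frac12 C(x+\xi)\cdot(x+\xi)$ reduces the oscillatory factor to $e^{\frac ih(b-\frac12 C\xi\cdot\xi)}$ times $e^{\frac{i}{2h}C(x+\xi)\cdot(x+\xi)}$. The critical point $x=-\xi$ is in general complex. The plan is to shift the contour of integration from $\RR^\d$ to $\RR^\d - \Re\xi - i\Im\xi$, i.e. to substitute $x = w - \xi$ with $w\in\RR^\d$, and to replace $a$ by its almost analytic extension $\tilde a$.

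The shift is carried out by Stokes' theorem on the $(d+1)$-dimensional chain $\{x - s\,i\Im\xi : x\in\RR^\d,\ s\in[0,1]\}$. The real translation by $\Re\xi$ is free, since it is just a change of variables in $\RR^\d$. The imaginary shift produces the identity
\begin{equation*}
\int_{\RR^\d} e^{\frac ih\phi(x)}a(x)\,dx = \int_{\RR^\d} e^{\frac ih\phi(w-\xi)}\tilde a(w-\xi)\,dw + \mathcal{E},
\end{equation*}
where $\mathcal{E}$ is an integral over $x\in\RR^\d$, $s\in[0,1]$ of $e^{\frac ih\phi(x-is\Im\xi)}\,(\Im\xi)_k\,\bar\prtl_{z_k}\tilde a(x-is\Im\xi)$. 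The extension $\tilde a$ is compactly supported in $\Im z$ by \eqref{aaprop}, so no boundary terms arise at infinity in $\Re z$.

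The main obstacle is bounding $\mathcal{E}$. Along the deformation we need $\Im\phi(x - is\Im\xi)$ to be bounded below. Restricted to the real $x$-directions, $\Im\phi$ is a quadratic with Hessian $\Im C$, and it is $\geq 0$ at $s=0$ by \eqref{spphasehyp}. Its minimum on each shifted slice can be computed explicitly. We expect the estimate
\begin{equation*}
\Im\phi(x-is\Im\xi) \geq -c\,(\Im\xi)\cdot \ldots
\end{equation*}
to be compensated by a factor $|\Im\xi|^N$ coming from $|\bar\prtl\tilde a|\lesssim|\Im z|^N$. Concretely, we would use \eqref{aacharL1} and balance $e^{-\frac1h\Im\phi}$ against $|\Im z|^N$. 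Completing the square in $\Im\phi$, this balance produces a Gaussian in the imaginary displacement at scale $h^{1/2}\|(\Im C)^{-1/2}\|$. Integrating $|y|^N e^{-\frac{1}{2h}(\Im C)y\cdot y}$-type weights then yields the second error term $\big(h^{1/2}\|C_2^{-1/2}\|\big)^N(\|a\|_{L^1}+\max_k\|\prtl^{N+1}_{\tilde x_k}a\|_{L^1})$, where $C_2=\Im C$. If the naive deformation loses positivity of $\Im\phi$, the first fallback is to split $\xi$ and deform only within the region $|\Im z|\lesssim 1$ where $\tilde a$ is supported, using the hypothesis $\|(\Im C)^{-1}\|\lesssim h^{-1+\veps}$ to absorb the cross terms.

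The main term is now a genuine Gaussian integral $\int e^{\frac{i}{2h}Cw\cdot w}\tilde a(w-\xi)\,dw$ over $w\in\RR^\d$ with $\Im C>0$. We evaluate it by Plancherel. The Fourier transform of $e^{\frac{i}{2h}Cw\cdot w}$ is $(2\pi h)^{\frac d2}\det^{-1/2}(-iC)\,e^{-\frac{ih}{2}C^{-1}\eta\cdot\eta}$, with the branch fixed by continuity from $C=iI$. Taylor-expanding $e^{-\frac{ih}2 C^{-1}\eta\cdot\eta}$ to order $k$ gives the sum $\sum_{j<k}\frac1{j!}(\frac{h}{2i})^j(C^{-1}D_x\cdot D_x)^j\tilde a(x-\xi)|_{x=0}$. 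The remainder is bounded by $\frac{1}{k!}(h\|C^{-1}\||\eta|^2/2)^k|\widehat{\tilde a(\cdot-\xi)}(\eta)|$, using $|e^{-\frac{ih}2C^{-1}\eta\cdot\eta}|\leq 1$ since $\Im(-C^{-1})\ge0$. Evaluating at a point via $L^1$ in $\eta$ and Cauchy--Schwarz costs $\lceil\frac{d+1}2\rceil$ derivatives in $L^2$, which by \eqref{aaprop} are bounded uniformly in the imaginary shift. The prefactor $|\det^{-1/2}(-iC)|\le \|C^{-1}\|^{d/2}$ then produces the first error term $h^{\frac d2+k}\|C^{-1}\|^{\frac d2+k}\sum_{|\alpha|\le 2k+\lceil\frac{d+1}2\rceil}\|\prtl^\alpha a\|_{L^2}$. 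The role of $\veps$ is only to make both error terms genuinely small, via $h\|C^{-1}\|\lesssim h^{\veps}$.
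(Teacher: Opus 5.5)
Your overall architecture matches the paper's proof. You complete the square, use Stokes' theorem with the almost analytic extension $\tilde a$ to move the contour to the complex critical point $-\xi$, and then treat $\int e^{\frac{i}{2h}Cw\cdot w}\tilde a(w-\xi)\,dw$ as a purely quadratic Gaussian integral. Your Plancherel/Taylor argument is the content of H\"ormander's Lemma~7.7.3, which the paper quotes. Doing the real translation by $\Re\xi$ separately is also harmless: $\phi$ depends only on $z+\xi$, so each slice of your chain carries the same values of $\phi$ as the corresponding slice $x-s\xi$ of the paper's chain.

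The gap is exactly the step you call the main obstacle. You never prove a lower bound for $\Im\phi$ on the deformation, and the mechanism you suggest cannot work. If $\Im\phi$ dipped to $-\delta<0$ somewhere on the chain, the integrand of $\mathcal E$ would carry a factor $e^{\delta/h}$. That cannot be absorbed by the merely polynomial smallness $|\bar\prtl_{z_k}\tilde a|\lesssim|\Im z|^N$. Your fallback of deforming only where $|\Im z|\lesssim 1$ does not address this, since $\tilde a$ is already supported there.

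The missing idea is that positivity \emph{improves} along the deformation, and this follows directly from the hypothesis $\Im\phi\ge 0$ on $\RR^\d$ evaluated at its real minimizer. Write $C=C_1+iC_2$, $\xi=\xi_1+i\xi_2$, $b=b_1+ib_2$. Evaluating at $x_0=-C_2^{-1}\Im(C\xi)$ gives
\[
b_2-\tfrac12\Im(C\xi\cdot\xi)\ \ge\ \tfrac12 Q,\qquad Q:=C_2^{-1}(C_1\xi_2)\cdot(C_1\xi_2)+C_2\xi_2\cdot\xi_2 .
\]
On the slice where $\Im(z+\xi)=(1-s)\xi_2$, minimizing the quadratic $\Im\phi$ over the real part then yields
\[
\Im\phi\ \ge\ \Big(b_2-\tfrac12\Im(C\xi\cdot\xi)-\tfrac12 Q\Big)+\tfrac{s(2-s)}{2}\,Q\ \ge\ \tfrac s2\,\big|C_2^{\frac12}\xi_2\big|^2 .
\]
Only with this inequality does your heuristic ``Gaussian in the imaginary displacement'' become a proof. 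Combine $e^{-\frac{s}{2h}|C_2^{1/2}\xi_2|^2}$ with the factor $(s|\xi_2|)^N$ from \eqref{aacharL1} and with $|\xi_2|\le\|C_2^{-1/2}\|\,|C_2^{1/2}\xi_2|$. This gives $\sup_{s\in[0,1]}(s|\xi_2|)^N e^{-\frac{s}{2h}|C_2^{1/2}\xi_2|^2}\lesssim_N (h^{\frac12}\|C_2^{-\frac12}\|)^N$, which is the second error term in \eqref{asympexpn}. As written, your proposal leaves this inequality open and even anticipates a negative lower bound. So the control of $\mathcal E$, which is the heart of the theorem, is not established.
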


\begin{proof}
	For convenience, we decompose into real and imaginary parts in the following way: $C = C_1 + iC_2, b= b_1 + ib_2, \xi = \xi_1+ i\xi_2$.  
	Consider the real vector $x_0 \in \RR^\d$ defined by  
	\begin{equation*}
	x_0 := -C_2^{-1}(\Im (C\xi)) = -C_2^{-1}(C_1\xi_2+C_2\xi_1)
	\end{equation*}
	Since $0 \leq \Im \phi(x_0) = b_2-\frac 12 C_2x_0 \cdot x_0$, a tedious calculation gives
	\begin{equation*}
		2b_2 \geq  C_2 x_0\cdot x_0 
		= C_2^{-1}(C_1\xi_2 + C_2 \xi_1)\cdot (C_1\xi_2 + C_2 \xi_1) = C_2^{-1}(C_1\xi_2 )\cdot (C_1\xi_2 ) + 2C_1\xi_1\cdot \xi_2 + C_2\xi_1 \cdot \xi_1.
	\end{equation*}
	Using that $\Im(C\xi\cdot\xi) = C_2 \xi_1\cdot \xi_1 - C_2\xi_2\cdot \xi_2 + 2C_1 \xi_1\cdot \xi_2$, we can rewrite this as
	\begin{equation}\label{posimag}
		b_2 - \frac 12 \Im(C\xi\cdot\xi) \geq \frac 12 C_2^{-1}(C_1\xi_2 )\cdot(C_1\xi_2 )+ \frac 12 C_2\xi_2\cdot\xi_2.
	\end{equation}
	
	Let $U$ denote the $\d+1$-chain defined as the image of $\RR^\d \times [0,1]$ under $(x,t) \mapsto x-t\xi$.  The boundary is the $\d$-chain given as the  image of $\RR^\d$ under $x \mapsto x-t\xi$ for $t=0,1$. Along the chain $U$
	\begin{equation*}
		\phi(x-t\xi) = b -\frac 12 C\xi\cdot\xi + \frac 12 C(x+(1-t)\xi) \cdot (x+(1-t)\xi) .
	\end{equation*}
	Using \eqref{posimag} we get that for $t \in [0,1]$ and $x \in \RR^d$,
	\begin{equation}\label{deformcontrol}
		\begin{aligned}
			\Im \phi(x-t\xi) &\geq 
			 \frac t2 (2-t)\big(C_2^{-1}(C_1\xi_2 )\cdot (C_1\xi_2 ) +  C_2\xi_2\cdot \xi_2\big) \\
			 &\phantom{\geq}+ \frac 12 C_2 \big(x+(1-t)\big(\xi_1 +C_2^{-1}(C_1\xi_2)\big)\big)\cdot \big(x+(1-t)\big(\xi_1 +C_2^{-1}(C_1\xi_2)\big)\big)\\
			&  \geq \frac t2  C_2\xi_2\cdot \xi_2 = \frac t2 \big|C_2^\frac 12\xi_2\big|^2.
		\end{aligned}
	\end{equation}
	
	Since $\bar\prtl_{z_j}\phi(z)\equiv 0$ for all $j$, the generalized Stokes' Theorem implies
	\begin{equation}\label{stokesy}
		\int_{\RR^\d}e^{\frac ih \phi(x -\xi )} a(x -\xi)\,dx - \int_{\RR^\d}e^{\frac ih \phi(x)} a(x)\,dx =
		\sum_{k=1}^\d  \int _U e^{\frac ih \phi(z)} \bar{\prtl}_{z_k} \tilde a(z) d\bar{z}_k \wedge dz_1 \wedge dz_2 \wedge \cdots dz_\d.
	\end{equation}
	Note that $|\bar{\prtl}_{z_k} \tilde a(x-t\xi)| \lesssim_N (t|\xi_2|)^N$ and hence for $t \in [0,1]$, \eqref{deformcontrol} shows that 
	\begin{equation*}
		\begin{split}
		\int_{\RR^d} | e^{\frac ih \phi(x-t\xi)} \bar{\prtl}_{z_k} \tilde a(x-t\xi)| \,dx &\leq \Big(\sup_{x\in \RR^d} (t^{\frac 12}|\xi_2|)^Ne^{- \frac 1h \Im\phi(x-t\xi)} \Big) \times (t|\xi_2|)^{-N} \int |\tilde a(x-t\xi)|\,dx  \\
		& \lesssim \Big(h^{\frac 12}\big\|C_2^{-\frac 12}\big\|\Big)^N \big( \|a\|_{L^1} + \|\prtl_{\tilde x_k}^{N+1} a\|_{L^1} \big)
		\end{split}
	\end{equation*}
	where we used \eqref{aacharL1} and that $|\xi_2| \lesssim \|C_2^{-\frac 12}\||C_2^\frac 12\xi_2|$ and $t \leq t^{\frac 12}$. Hence by taking $N$ large enough, the right hand side in \eqref{stokesy} is dominated by the last term on the right hand side in \eqref{asympexpn}.
	
	We conclude by observing that 
	\begin{equation*}
		\int_{\RR^\d}e^{\frac ih \phi(x -\xi )} \tilde a(x -\xi)\,dx = e^{\frac ih(b-\frac 12 C\xi\cdot \xi)} \int_{\RR^{\d}} e^{\frac ih Cx\cdot x} \tilde a(x-\xi)\,dx
	\end{equation*}
	Hence the rest of the proof follows from standard asymptotics for purely quadratic phases as in \cite[Lemma 7.7.3]{HormanderI}, namely
	\begin{multline*}
		\left| \int_{\RR^{\d}} e^{\frac ih Cx\cdot x} \tilde a(x-\xi)\,dx - \frac{ (2\pi h)^{\frac d2}}{\det^{1/2}(-iC)} \sum_{j=0}^{k-1} \frac{1}{j!} \Big(\frac{h}{2i}\Big)^j \big( C^{-1} D_x,D_x \big)^j \tilde{a}(x-\xi)\Big|_{x=0}\right| \\\lesssim_k h^{\frac{\d}{2} +k} \|C^{-1}\|^{\frac{\d}{2} + k} \sum_{|\alpha| \leq 2k + \lceil \frac{d+1}{2} \rceil} \|\prtl^\alpha \tilde a\|_{L^2}
	\end{multline*}
where the implicit constant only depends on $k,d$. The rest of \eqref{asympexpn} now follows from \eqref{aaprop}.
\end{proof}

\section{The Wave Packet Transform}\label{S:wpt}
We begin by recalling our definition from \S\ref{S:Outline} of a wave packet transform adapted a smooth bump function $\tilde\beta \in C_c^\infty(0,\infty)$, along with the expression for its adjoint: 
\begin{equation}\label{wptdef}
	\begin{split}
			\Ssc f (x,\xi)&= 2^{-\frac d2} (\pi h)^{-\frac{3d}{4}}\frac{\tilde\beta\big(|\xi|_{g(x)}\big) }{\detexp^{\frac 14}(g_{jk}(x))}\int_{M} e^{\frac ih(- \xi(\expyno) + \frac i2 d^2(x,y))} \psi(x,y)  f(y)\,dv_g,
		\\
			\Ssc^*G(y) &= 2^{-\frac d2} (\pi h)^{-\frac{3d}{4}}\int_{T^* M} e^{\frac ih(\xi(\expyno) + \frac i2 d^2(x,y))} \frac{ \psi(x,y) \tilde\beta\big(|\xi|_{g(x)}\big) }{\detexp^{\frac 14}(g_{jk}(x))}G(x,\xi) \,dx d\xi.
	\end{split}
\end{equation}
As noted in \S\ref{S:Outline},  \cite[Proposition 3.1]{WunschZworskiFBI} proves the following, with another proof in Appendix \ref{A:wptpdo}.
\begin{proposition}\label{P:wptpdo}
	Suppose $\Ssc$ is a wave packet transform adapted to $\tilde\beta$.  Then $\Ssc \circ \Ssc^*$ is a semiclassical pseudodifferential operator whose symbol admits an asymptotic expansion $\sum_{j\geq 0} h^js_j(x,\xi)$ such that the principal symbol is $s_0(x,\xi) = \tilde\beta^2(|\xi|_{g(x)})$.
\end{proposition}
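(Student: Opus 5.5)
The operator on $M$ here is $\Ssc^*\circ\Ssc$ (as in \S\ref{S:Outline}), and I will treat that composition. The plan is a Kuranishi-type reduction of the kernel to oscillatory form, followed by a real Laplace-method integration in the base variable.

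\textbf{Step 1: the kernel.} By \eqref{wptdef}, $\Ssc^*\Ssc$ has kernel (against $dv_g(z)$)
\begin{equation*}
K(y,z)=2^{-d}(\pi h)^{-\frac{3d}{2}}\int_{T^*M} e^{\frac ih\left(\xi(\expyno)-\xi(\expynoz)\right)}\, e^{-\frac{1}{2h}\left(d^2(x,y)+d^2(x,z)\right)}\,\frac{\psi(x,y)\psi(x,z)\tilde\beta^2(|\xi|_{g(x)})}{\detexp^{\frac12}(g_{jk}(x))}\,dx\,d\xi .
\end{equation*}

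\textbf{Step 2: localization near the diagonal.} The triangle inequality gives $d^2(x,y)+d^2(x,z)\geq \frac12 d^2(y,z)$. Hence:
\begin{itemize}
\item outside any fixed neighborhood of the diagonal, $K$ is $\mathcal{O}(h^\infty)$ in $C^\infty$;
\item outside $d(x,y)+d(x,z)\lesssim h^{\frac12-\veps}$, the $x$-integrand is $\mathcal{O}(h^\infty)$.
\end{itemize}
So I may work in one convex coordinate chart containing $x,y,z$, and insert cutoffs freely.

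\textbf{Step 3: Kuranishi trick.} In the chart write $\expyno=V(x,y)$. By Taylor's formula,
\begin{equation*}
V(x,y)-V(x,z)=\Phi(x,y,z)(y-z), \qquad \Phi(x,y,z)=\int_0^1 \prtl_y V(x,z+s(y-z))\,ds .
\end{equation*}
By \eqref{logexpn}, $\Phi(x,x,x)=I$, so $\Phi$ is invertible near the diagonal. The phase becomes $(y-z)\cdot\Phi^T\xi$. I will then change variables $\eta=\Phi(x,y,z)^T\xi$, with Jacobian $|\det\Phi|^{-1}$. This gives
\begin{equation*}
K(y,z)=\frac{1}{(2\pi h)^d}\int e^{\frac ih (y-z)\cdot\eta}\, b(y,z,\eta;h)\,d\eta,
\end{equation*}
with
\begin{equation*}
b=\frac{2^{d}\pi^d h^d}{2^{d}(\pi h)^{\frac{3d}{2}}}\int e^{-\frac{1}{2h}\left(d^2(x,y)+d^2(x,z)\right)}\,\frac{\psi\psi\,\tilde\beta^2\!\left(|\Phi^{-T}\eta|_{g(x)}\right)}{\detexp^{\frac12}(g(x))\,|\det\Phi|}\,dx .
\end{equation*}

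\textbf{Step 4: the $x$-integral.} This integral has a purely real, nondegenerate Gaussian weight: its Hessian at the minimum is comparable to $g$. The remaining amplitude is smooth and depends on $\eta$ only through a compactly supported factor. Therefore Laplace's method (or the real case of \eqref{pureasympexpn}) gives an expansion $b\sim\sum_j h^j b_j(y,z,\eta)$, uniformly in $S(1)$. The normalization check for the leading term:
\begin{itemize}
\item on the diagonal $y=z$, the Gaussian $e^{-g_{jk}(x-y)^j(x-y)^k/h}$ integrates to $(\pi h)^{\frac d2}\detexp^{-\frac12}(g(y))$;
\item together with the prefactor $(\pi h)^{-\frac d2}$, this yields $b_0(y,y,\eta)=\tilde\beta^2(|\eta|_{g(y)})/\detexp(g(y))$;
\item the factor $\detexp^{-1}(g(y))$ is exactly cancelled once $dv_g(z)=\detexp^{\frac12}g\,dz$ is included, with the remaining half-density handled by the same convention.
\end{itemize}

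\textbf{Step 5: conversion to a standard symbol.} Finally, I will apply the standard reduction of a $(y,z,\eta)$-amplitude to a left (standard-quantized) symbol, $s\sim\sum_\alpha \frac{h^{|\alpha|}}{\alpha!}\prtl_\eta^\alpha D_z^\alpha b|_{z=y}$. This produces the expansion $\sum_j h^j s_j$ with $s_0=\tilde\beta^2(|\xi|_{g(x)})$, using the cancellation from Step 4. Independence of the coordinate chart follows from invariance of the principal symbol.

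\textbf{Expected main obstacle.} The delicate point is bookkeeping. In Step 4 one must verify that the Laplace expansion holds with symbol bounds uniform in $(y,z,\eta)$, even though the minimizing $x$ depends on $(y,z)$. One must also verify that the determinant factors in Step 4 cancel exactly, so that no stray density factor survives in $s_0$. Neither step involves a complex phase, so the almost-analytic machinery of \S\ref{S:statphase} should not be needed here.
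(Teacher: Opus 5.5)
Your Kuranishi route differs from the paper's. Appendix~\ref{A:wptpdo} keeps the complex phase $\xi(\expyno-\expynoz)+\frac i2 d^2(x,y)+\frac i2 d^2(x,z)$ and applies Melin--Sj\"ostrand stationary phase in $x$ at a generally complex critical point $X(y,z,\xi)$. This yields the phase $(y-z)\cdot\xi+r_2$ with $r_2=\mathcal{O}(|y-z|^2)$ complex, which the paper then removes by the homotopy $\Phi_t$ and the bound \eqref{tderivkernel}. Your substitution $\eta=\Phi^T\xi$ makes the oscillation exactly $(y-z)\cdot\eta$, independent of $x$. The $x$-integral is then a real Laplace integral and no almost-analytic machinery is needed; that part is correct.

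\textbf{The gap is in Step 4.} Laplace's method produces the factor $e^{-\frac1h\min_x F}$, where $F(x)=\frac12\big(d^2(x,y)+d^2(x,z)\big)$ and $\min_x F=\frac14 d^2(y,z)$ (attained at the midpoint). So what you actually get is
\[
b(y,z,\eta;h)=e^{-\frac{1}{4h}d^2(y,z)}\,\tilde b(y,z,\eta;h),\qquad \tilde b\sim\sum_{j\geq 0}h^j\tilde b_j \ \text{ in } S(1).
\]
Already for the flat metric, $b=e^{-|y-z|^2/4h}\,\tilde\beta^2(|\eta|)$ up to $\mathcal{O}(h^\infty)$. Hence $b\notin S(1)$: each $y$ or $z$ derivative costs $h^{-\frac12}$. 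Moreover, $b$ has no expansion $\sum_j h^j b_j$ with $h$-independent smooth $b_j$, since it is $\mathcal{O}(h^\infty)$ off the diagonal but not on it. The reduction formula in Step 5 is therefore being applied outside its hypotheses. This damping factor is the counterpart of the paper's $r_2$: your substitution removes the oscillatory part of that correction, but not this part.

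\textbf{The fix} is the same kind of argument the paper uses. Write $\frac14 d^2(y,z)=Q(y,z)(y-z)\cdot(y-z)$ with $Q$ smooth and positive definite, and Taylor-expand $e^{-\frac sh Q(y,z)(y-z)\cdot(y-z)}$ in $s\in[0,1]$ about $s=0$. Since $(y-z)^\alpha e^{\frac ih(y-z)\cdot\eta}=(hD_\eta)^\alpha e^{\frac ih(y-z)\cdot\eta}$, integrating by parts in $\eta$ shows the $k$-th term is $h^k$ times a standard kernel with $S(1)$ amplitude. Step 5 then applies to each such term. The remainder kernel is bounded by $h^{-d}(|y-z|^2/h)^N(1+|y-z|/h)^{-M}$, so by Schur's test it has $L^2$ norm $\mathcal{O}(h^N)$ (compare \eqref{tildeKint}, \eqref{tderivkernel}). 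Since the Gaussian equals $1$ at $z=y$, $s_0$ is unchanged.

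\textbf{Your density count does not close.} Passing from $dv_g(z)$ to $dz$ contributes only $\detexp^{\frac12}(g_{jk}(z))$. Combined with your (correct) $b_0(y,y,\eta)$, this leaves $\tilde\beta^2(|\xi|_{g(y)})\,\detexp^{-\frac12}(g_{jk}(y))$. The paper absorbs this leftover through the extra $\detexp^{\frac12}(g_{jk}(y))$ in \eqref{xint}. It actually comes from the non-invariant factor $\detexp^{-\frac14}(g_{jk}(x))$ in \eqref{wptdef}; without that factor one gets exactly $\tilde\beta^2$. This is harmless for Proposition~\ref{P:invwpt}, which only uses that $s_0$ is elliptic on $\supp\beta$. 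But you should state this rather than assert an exact cancellation.
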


Recall from \S\ref{S:Outline} that the bump function $\tilde\beta$ in \eqref{wptdef} is taken so that $\tilde \beta|_{\supp(\beta)} \equiv 1$ (and hence $\tilde \beta \beta = \beta$) and with $\supp(\tilde\beta) \subset (-1+\veps_0,1+\veps_0)$.  We now prove Proposition \ref{P:invwpt1} claimed there.
\begin{proposition}\label{P:invwpt}
	There exists a semiclassical PDO $P$ such that $\beta^2(h\sqrtl) = (\Ssc^* \circ \Ssc) \circ P + \mathcal{O}(h^\infty)$, that is, for any $N$
	\begin{equation*}
		\big\|\beta^2(h\sqrtl)-(\Ssc^* \circ \Ssc) \circ P\big\|_{L^2(M) \to L^2(M)} \lesssim_N h^N
	\end{equation*}
	The symbol of $p$ can be taken to satisfy $\supp(p) \subset \{(x,\xi) \in T^*M: |\xi|_{g(x)} \in \supp(\beta)\}$ and lies in a uniformly bounded subset of $C^\infty$ for all $h>0$ sufficiently small.
\end{proposition}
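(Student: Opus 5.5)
We will construct $P$ by a standard elliptic parametrix argument in the semiclassical calculus, exploiting that $\Ssc^*\circ\Ssc$ is a semiclassical PDO whose principal symbol is $\tilde\beta^2(|\xi|_{g(x)})$. Proposition \ref{P:wptpdo} is stated for $\Ssc\circ\Ssc^*$. Since the operator acting on $L^2(M)$ is $\Ssc^*\circ\Ssc$, we first note that the proof in Appendix \ref{A:wptpdo} (equivalently \cite[Proposition 3.1]{WunschZworskiFBI}) gives exactly this. Its full symbol is $s\sim\sum_j h^j s_j$ with $s_0=\tilde\beta^2(|\xi|_g)$ and all $s_j\in S(1)$, and the compact support of $\tilde\beta$ makes each $s_j$ supported in $\{|\xi|_g\in\supp\tilde\beta\}$ modulo $\mathcal{O}(h^\infty)$. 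Separately, by the functional calculus for $h\sqrtl$ (Helffer--Sj\"ostrand), $\beta^2(h\sqrtl)$ is a semiclassical PDO with symbol $b\sim\sum_j h^jb_j$, where $b_0=\beta^2(|\xi|_g)$ and every $b_j$ is supported in $\{|\xi|_g\in\supp\beta\}$.

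The key point is that $s_0\equiv1$ on a neighborhood of $\supp(b_j)$, because $\tilde\beta\equiv1$ on $\supp\beta$. This is why $\tilde\beta$, and not $\beta$, appears in $\Ssc$. We solve $s\# p = b$ order by order. First set $p_0=b_0/s_0=b_0$. Next set $p_1=b_1-s_1p_0-(\text{first order terms of }s_0\#p_0)$, and so on. At each stage
\begin{equation*}
p_k=\Big(b_k-\sum_{j+\ell+|\alpha|=k,\ \ell<k}c_{j,\ell,\alpha}\,\prtl_\xi^\alpha s_j\,\prtl_x^\alpha p_\ell\Big)\Big/s_0.
\end{equation*}
By induction every term on the right is supported in $\{|\xi|_g\in\supp\beta\}$: each term contains a factor $\prtl^\alpha p_\ell$ with $\ell<k$, or a $b_k$. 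Division by $s_0=1$ on that set is therefore harmless, and all $p_k$ stay in $S(1)$ with the claimed support. We then Borel sum $p\sim\sum h^kp_k$, multiplying by a cutoff equal to $1$ on $\{|\xi|_g\in\supp\beta\}$ so that the exact support condition holds. With $P=\Op_h(p)$, the composition formula gives $(\Ssc^*\circ\Ssc)\circ P-\beta^2(h\sqrtl)=\Op_h(r)$ with $r=\mathcal{O}(h^\infty)$ in $S(1)$. The Calder\'on--Vaillancourt theorem then yields the $L^2$ bound $\lesssim_N h^N$. Uniform $C^\infty$ bounds on $p$ follow because each $p_k$ is a finite combination of derivatives of $S(1)$ symbols.

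The main obstacle is bookkeeping on a manifold. Symbols of PDOs on $M$ are only invariantly defined modulo $h S(1)$, so we must work in local coordinate charts, apply the composition formula chart by chart with a partition of unity, and verify that the support property of the $p_k$ is preserved by the change of coordinates. Cutoffs in $x$ are harmless. The $\xi$-support condition $\{|\xi|_{g(x)}\in\supp\beta\}$ is invariant under the induced symplectic change of variables on the fiber, and the lower order corrections produced by the change of variables are again derivatives of $p_\ell$, hence supported in the same set. The remaining $\mathcal{O}(h^\infty)$ errors come from off-diagonal cutoffs and from $\psi$ in \eqref{wptdef}. These have smooth kernels with rapidly decaying Gaussian factors $e^{-d^2(x,y)/2h}$ away from the diagonal, so they are absorbed into the remainder.
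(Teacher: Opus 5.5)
Your proposal is correct and follows essentially the same route as the paper. The paper also builds $p\sim\sum h^m p_m$ order by order from the composition calculus, taking $p_0=\beta^2(|\xi|_g)$ and $p_m=a_m-b_m$ (where $a_m$ are the symbols of $\beta^2(h\sqrtl)$ and $b_m$ collects the terms involving $p_0,\dots,p_{m-1}$), and it uses $\tilde\beta\equiv 1$ on $\supp\beta$ to get $\tilde\beta^2 p_m=p_m$, exactly as you do. The remaining differences are cosmetic: the paper cites Strichartz and Taylor rather than Helffer--Sj\"ostrand for the symbol of $\beta^2(h\sqrtl)$ and skips the chart bookkeeping, and your side claim that each $s_j$ is supported in $\{|\xi|_g\in\supp\tilde\beta\}$ is never used, so you can drop it.
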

\begin{proof}
In what follows, we use the notation in Proposition \ref{P:wptpdo} for the symbol of $\Ssc \circ \Ssc^*$. By a result of Strichartz \cite{StrichartzFunctionalCalculus} ,  $\beta^2(h\sqrtl)$ is a PDO.  The approach of Taylor \cite[p.296-7]{TaylorPDO} uses stationary phase to show that its symbol has an asymptotic expansion  $\sum_{j \geq 0} h^j a_j(x,\xi)$ where $a_0(x,\xi) = \beta^2(|\xi|_{g(x)})$.
Moreover, each $a_j$ can be taken so that $\supp(a_j) \subset \supp(a_0)$, in other words, the support of each $a_j $ is contained in the support of the principal symbol. 

We thus define the symbol of $P$ by an asymptotic series $ \sum_{j \geq 0} h^j p_j(x,\xi)$.  By the composition calculus \cite[Theorem 4.12]{ZworskiSemiclassicalAnalysis}, the symbol of $ (\Ssc^* \circ \Ssc) \circ P $ has the formal asymptotic expansion
\begin{equation*}
	\sum_{\ell \geq 0} \frac{i^\ell h^\ell}{\ell!} \big( D_\xi \cdot D_y \big)^\ell \bigg( \sum_{j \geq 0} h^j s_j(x,\xi) \bigg)\bigg(\sum_{k \geq 0} h^k p_k(y,\eta)\bigg)\Bigg|_{(x,\xi) = (y,\eta)}.
\end{equation*}
We rewrite this series as
\begin{equation*}
	\sum_{m\geq 0} h^m \tilde\beta^2 p_m + \sum_{m\geq 1}h^m b_m, \quad \text{ where } \quad b_m = \sum_{k = 0}^{m-1} \sum_{j+\ell = m-k} \big( D_\xi \cdot D_y \big)^\ell (s_j p_k )\Big|_{(x,\xi) = (y,\eta)}.
\end{equation*}
Since $s_0(x,\xi) = \tilde\beta^2(|\xi|_{g(x)})$ and $\tilde \beta \beta = \beta $ we set $p_0 = \beta^2(|\xi|_{g(x)})$ so that the principal symbols of $	\beta^2(h\sqrtl)$ and $(\Ssc^* \circ \Ssc) \circ P $ are identical.  The above shows that the remaining symbols must satisfy $a_m = \tilde\beta^2 p_m + b_m$, where $b_m$ depends only on $p_0,\dots, p_{m-1}$.  We define $p_m$ for $m \geq 1$ recursively by taking $p_m = a_m - b_m$ as this gives $\supp(p_m) \subset \supp(p_0)$ so that $\tilde\beta^2 p_m = p_m$.
\end{proof}

Recall from \eqref{intromainreduction}, that we are led to consider the compositions $P\circ Q_{h,\tilde\gamma}^*$ and $Q_{h,\tilde\gamma}\circ P^*$.  It is convenient to realize these compositions as a single PDO.  For the remainder of this section, we omit the dependence on $h,\tilde\gamma$ and consider their regularity in Fermi coordinates.  We will generally consider symbols $q$ satisfying 
 \begin{equation}\label{mlcutbds}
 	\begin{split}
 		|\prtl_{y'}^{\alpha} \prtl_{y_d}^j \prtl_{\eta'}^\beta\prtl_{\eta_d}^k q(y,\eta)| &\lesssim_{\alpha,j,\beta,k,N}  
 		h^{-\frac 12(|\alpha|+|\beta|)} \big(1+ h^{-\frac 12}|y'| + h^{-\frac 12}|\eta'|  \big)^{-N}, \\
 		\supp(q) &\subset \{(y,\eta): |y| \ll 1, |\eta -e_d| \ll 1 \}.
 	\end{split}
 \end{equation}
\begin{proposition}\label{P:composePDO}
Let $P$ be the semiclassical PDO furnished by Proposition \ref{P:invwpt}.  Suppose $q(y,\eta)$ is a symbol satisfying \eqref{mlcutbds}. If $Q$ is the PDO with symbol $q(y,\eta)$, then the composition $ Q \circ P^*$ is a PDO with symbol $\tilde{q}_1(y,\eta)$ satisfying the same regularity as in \eqref{mlcutbds}.  Moreover, the further composition $ ( P \circ Q^*) \circ (Q \circ P^*)$ is a PDO with symbol $\tilde{q}_2(y,\eta)$ again satisfying \eqref{mlcutbds}.
\end{proposition}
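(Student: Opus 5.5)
We treat the question as one about symbol classes in Fermi coordinates, working with the standard quantization \eqref{standardquantization}. The key observation is that \eqref{mlcutbds} describes a second microlocal class $S_{1/2}$ in which only the transversal variables $(y',\eta')$ are localized at scale $h^{1/2}$. The symbol $p$ of $P$, on the other hand, is an honest $S(1)$ symbol. It is uniformly bounded in $C^\infty$ by Proposition \ref{P:invwpt}, so in Fermi coordinates near $\tilde\gamma$ it is smooth on scale $1$ in every variable.

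For the first composition we would first write $P^*$ in standard quantization. Its symbol is $p^*(y,\eta)=\sum_k \frac{h^k}{k!}(\partial_\eta\cdot D_y)^k\bar p$ modulo $\mathcal{O}(h^\infty)$, and this is again an $S(1)$ symbol supported where $|\eta|_g\in\supp(\beta)$. We would then apply the composition formula in its exact oscillatory integral form:
\begin{equation*}
\tilde q_1(y,\eta)=(2\pi h)^{-d}\iint e^{-\frac ih z\cdot\zeta}\, q(y,\eta+\zeta)\,p^*(y+z,\eta)\,dz\,d\zeta .
\end{equation*}
The asymptotic expansion $\sum_k \frac{h^k}{k!}(\partial_\eta\cdot D_y)^k(q\, p^*)$ is formal, but its terms already show the mechanism. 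Each $\partial_\eta$ falling on $q$ costs at most $h^{-1/2}$, while each $D_y$ falls on $p^*$ and costs $\mathcal{O}(1)$. Hence the $k$-th term is $\mathcal{O}(h^{k/2})$ times a symbol in the class \eqref{mlcutbds}. The same count holds after applying any $\partial_{y'}^\alpha\partial_{\eta'}^\beta$, since Leibniz derivatives landing on $p^*$ cost nothing. The weight $(1+h^{-1/2}|y'|+h^{-1/2}|\eta'|)^{-N}$ is preserved because it comes from the factor $q$, evaluated at the same point $(y,\eta)$.

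The remainder is the main obstacle. We would control the Taylor remainder after $K$ terms by the standard integral formula for the remainder in \cite[Theorem 4.12]{ZworskiSemiclassicalAnalysis}. The cleanest route is to rescale: set $y'=h^{1/2}\tilde y'$ and $\eta'=h^{1/2}\tilde\eta'$, which turns \eqref{mlcutbds} into an $S(1)$ bound in $(\tilde y',y_d,\tilde\eta',\eta_d)$ with decay $\langle(\tilde y',\tilde\eta')\rangle^{-N}$. This rescaling is symplectic, with the effective Planck constant equal to $1$ in the primed variables and $h$ in the $d$-th variable. The conjugated $p^*$ remains in $S(1)$, and is in fact better, since derivatives in the rescaled variables gain $h^{1/2}$. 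Composition of $S(1)$ symbols with at least one factor carrying $\langle\cdot\rangle^{-N}$ decay preserves that decay. We would verify this by inserting the weights $(1+|z|^2)^{-M}$ and $(1+|\zeta|^2)^{-M}$ and integrating by parts in the oscillatory integral, using Peetre's inequality $\langle w+z\rangle^{-N}\lesssim\langle w\rangle^{-N}\langle z\rangle^{N}$. The remainder is then in the weighted class, and undoing the rescaling yields \eqref{mlcutbds} for $\tilde q_1$.

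The support condition needs separate care. The support of $\tilde q_1$ is contained, modulo $\mathcal{O}(h^\infty)$, in the region where $|y|\ll1$ and $|\eta-e_d|\ll1$, because the exact formula only involves $q$ at the fixed base point $y$ in the first slot. Strictly, the $\eta$-support is only essential support, and the paper explicitly permits identifying the two. For the second statement we repeat the argument. Taking adjoints, $P\circ Q^*=(Q\circ P^*)^*$ has symbol $\tilde q_1^*$, whose expansion $\sum\frac{h^k}{k!}(\partial_\eta\cdot D_y)^k\bar{\tilde q}_1$ costs $h^{-1/2}\cdot h^{-1/2}$ per step against the gain $h^k$. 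This stays in \eqref{mlcutbds} by the same rescaling argument, now with both factors in the rescaled $S(1)$ class. Composing two symbols of class \eqref{mlcutbds} then gives $\tilde q_2$ in the same class, because in rescaled variables the composition is an $S(1)\#S(1)$ composition of decaying symbols.
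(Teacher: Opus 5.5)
Your proposal is correct, but it is organized differently from the paper's proof.

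\textbf{The paper's route.} It writes $\tilde q_1$ as an oscillatory integral and first applies stationary phase jointly in the longitudinal pair $(y_d,\xi_d)$, where everything is smooth on scale $1$. This leaves a $(y',\xi')$ integral whose amplitude $b$ carries the weight $(1+h^{-\frac12}|y'+x'|+h^{-\frac12}|\xi'+\eta'|)^{-N}$. The paper then integrates by parts with the $h^{\frac12}$-adapted operator $\Lsc=(1+i\xi'\cdot d_{y'}+iy'\cdot d_{\xi'})/(1+h^{-1}|\xi'|^2+h^{-1}|y'|^2)$ and convolves the two weights by hand. The $\eta_d$-support comes out of the stationary phase step, and the second composition is dismissed as ``very similar.''

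\textbf{Your route.} You make the anisotropic rescaling $y'=h^{\frac12}\tilde y'$, $\eta'=h^{\frac12}\tilde\eta'$. This turns \eqref{mlcutbds} into a standard class $S(m)$ with order function $m=\langle(\tilde y',\tilde\eta')\rangle^{-N}$ and transverse Planck constant $1$. You then invoke the order-function calculus: the exact adjoint and composition formulas preserve $S(m)$, proved by integration by parts together with Peetre's inequality.

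\textbf{Comparison.} The mechanism is the same. In your rescaled variables the paper's $\Lsc$ becomes $(1+i\tilde\xi'\cdot d_{\tilde y'}+i\tilde y'\cdot d_{\tilde\xi'})/(1+|\tilde\xi'|^2+|\tilde y'|^2)$, which is the usual $h=1$ integration by parts. What your packaging buys:
\begin{itemize}
\item It reduces everything to a known calculus.
\item It avoids the separate stationary phase step.
\item It makes the second composition honest. The formal expansion of $\tilde q_1^*$ and of $\tilde q_1^*\#\tilde q_1$ has no gain in the transverse directions, since each step costs $h^{-1}$ against a gain of $h$. Only the exact rescaled calculus closes the argument, and you use exactly that.
\end{itemize}
The paper's version is more hands-on and self-contained.

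\textbf{Two points to make explicit if you write it up.}
\begin{itemize}
\item The calculus you need has mixed Planck constants: $1$ transversally and $h$ in the $(y_d,\eta_d)$ pair. This is not literally the textbook $h$-uniform statement, though the proof goes through verbatim. If you want $h=1$ everywhere inside the integral, rescale only the integration variables $z_d,\zeta_d$ by $h^{\frac12}$, not the base point. Then $\prtl_{y_d}$ and $\prtl_{\eta_d}$ derivatives of the output still cost $\mathcal{O}(1)$.
\item You only assert the essential support in $\eta_d$; it needs a short argument. It follows from nonstationary phase in $z$ in your exact formula. Since $q(y,\eta+\zeta)$ is independent of $z$, all $z$-derivatives land on $p^*(y+z,\eta)$, and each integration by parts gains $h/|\zeta|$ on the region where $|\zeta|$ is bounded below. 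Convergence in $z$ is handled by integrating by parts in $\zeta$, which nets $h^{\frac12}/|z|$ per step.
\end{itemize}
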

\begin{proof} We only consider the case of $ Q \circ P^*$ the other case $ ( P \circ Q^*) \circ (Q \circ P^*)$ follows by a very similar argument.  Routine calculation shows that the Schwartz kernel of $ Q \circ P^*$ takes the form 
\begin{equation*}
\frac{1}{(2\pi h)^\d}\int e^{\frac ih (x-z)\cdot \eta}\tilde{q}_1(x,\eta)\,d\eta \quad \text{ where } \quad \tilde{q}_1(x,\eta) = \frac{1}{(2\pi h)^\d} \iint e^{-\frac ih y\cdot \xi} q(y+x,\xi+\eta) p(y+x,\eta)\,dy d\xi.
\end{equation*}
The criticality of the amplitude \eqref{mlcutbds} means we cannot apply stationary phase in all variables.  But we can apply it in the $y_d,\xi_d$ variables jointly to obtain an amplitude $b(x,\xi',y',\eta)$ such that 
\begin{equation}\label{statphaseind}
	\begin{split}
	&\tilde{q}_1(x,\eta) = \frac{1}{(2\pi h)^{\d-1}} \iint e^{-\frac ih y' \cdot \xi'}b(x,\xi',y',\eta) \,dy' d\xi' + \mathcal{O}(h^\infty),\\
	|\prtl_{x',y'}^{\alpha} \prtl_{x_d}^j \prtl_{\xi',\eta'}^\beta\prtl_{\eta_d}^k &b(x,\xi',y',\eta) | 
	\lesssim_{\alpha,j,\beta,k,N}  h^{-\frac 12(|\alpha|+|\beta|)} \big(1+ h^{-\frac 12}|y'+x'| + h^{-\frac 12}|\xi'+\eta'|  \big)^{-N} .
	\end{split}
\end{equation}
Define the differential operator $\Lsc$ as
\begin{equation*}
\Lsc: = \frac{1+i\xi' \cdot d_{y'} + iy'\cdot d_{\xi'}  }{ 1+h^{-1}|\xi'|^2+h^{-1}|y'|^2 } \quad \text{ so that } \quad \Lsc e^{-\frac ih y' \cdot \xi'} = e^{-\frac ih y' \cdot \xi'}.
\end{equation*}
Hence
\begin{equation*}
	\Lsc^T: = \frac{4ih^{-1}\xi'\cdot y' }{ (1+h^{-1}|\xi'|^2+h^{-1}|y'|^2)^2 } + \frac{1-i\xi' \cdot d_{y'} - iy'\cdot d_{\xi'}  }{ 1+h^{-1}|\xi'|^2+h^{-1}|y'|^2 },
\end{equation*}
where the first term is dominated by $(1+h^{-1}|\xi'|^2+h^{-1}|y'|^2)^{-1} $. Integration by parts with respect to $\Lsc$ sufficiently many times then yields
\begin{equation*}
	\begin{split}
	h^{\frac 12(|\alpha|+|\beta|)}&\Big|\prtl_{x'}^{\alpha} \prtl_{x_d}^j \prtl_{\eta'}^\beta\prtl_{\eta_d}^k \tilde{q}_1(x,\eta)\Big|\\
	&\lesssim_{N} h^{1-\d} \iint \big(1+h^{-\frac 12}|y'|  +h^{-\frac 12}|\xi'| \big)^{-N-2\d} \big(1+ h^{-\frac 12}|y'+x'| + h^{-\frac 12}|\xi'+\eta'|  \big)^{-N} \,dy d\xi\\
	&\lesssim_N \big(1+ h^{-\frac 12}|x'| + h^{-\frac 12}|\eta'|  \big)^{-N} .
	\end{split}
\end{equation*}
Indeed, differentiating with respect to $\xi'\cdot d_{y'}$ or $y'\cdot d_{\xi'}$ in the integration by parts generally leads to losses of $h^{-\frac 12}|\xi'|$ or $h^{-\frac 12}|y'|$, but this is counterbalanced by the bound 
\begin{equation*}
	(h^{-\frac 12}|\xi'| + h^{-\frac 12}|y'|)( 1+h^{-1}|\xi'|^2+h^{-1}|y'|^2 )^{-1} \lesssim  (1+h^{-1}|\xi'|^2+h^{-1}|y'|^2 )^{-\frac 12},
\end{equation*} 
meaning each integration by parts gains a power of the right hand side.

Note that since $|\eta-e_d|^2 = |\eta'|^2 + (\eta_d-1)^2$, the condition that $|\eta-e_d| \ll 1$ can generally be replaced by $|\eta_d-1| \ll 1$.  Indeed, if $|\eta'| \leq h^{\frac 34}$, then making this replacement is just a matter of adjusting the small implicit constants.  Otherwise the symbol is $\mathcal{O}(h^\infty)$ whenever $|\eta'| \geq h^{\frac 34}$.  Consequently, the preservation of the support  $|\eta-e_d| \ll 1$ is just a consequence of how the application of stationary phase in \eqref{statphaseind} yields a symbol supported where $|\eta_d-1| \ll 1$.
\end{proof}

\begin{theorem}\label{T:multipliers} 
Suppose $q$ is a symbol satisfying \eqref{mlcutbds} in a coordinate chart.  Given $N_0 \in \mathbb{Z}$, let $\Mno$ denote the real-valued multiplier 
	\begin{equation}\label{mnodef}
		\Mno (x,\xi) = \big(1+h^{-1}|x'| ^2+ h^{-1}|\xi'|^2 + h^{-1}d^2\big((x,\xi);\supp(q)\big) \big)^{N_0/2}
	\end{equation}
If $Q$ is the semiclassical PDO with symbol $q$, then\footnote{Here we use a slight abuse of notation, treating the function $\Mno$ as the same as the operator it defines.}
\begin{equation}\label{multiplierTTstar}
	\big\|\Mno \circ \Ssc \circ Q \circ Q^* \circ \Ssc^* \circ \Mno \big\|_{ L^2(T^*M ) \to L^2(T^*M ) } \lesssim_{N_0} 1.
\end{equation}
Consequently,
\begin{equation}\label{multbds}
	\big\|\Mno \circ \Ssc \circ Q \|_{ L^2(M ) \to L^2(T^*M ) } = \|Q^* \circ \Ssc^* \circ \Mno \big\|_{ L^2(T^*M ) \to L^2(M ) } \lesssim_{N_0} 1.
\end{equation}
\end{theorem}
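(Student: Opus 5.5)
The plan is to prove \eqref{multiplierTTstar} by a Schur test on the Schwartz kernel of $\Mno \circ \Ssc \circ Q\circ Q^* \circ \Ssc^*\circ \Mno$ on $T^*M\times T^*M$. Then \eqref{multbds} follows immediately from the $TT^*$ identity $\|\Mno\Ssc Q\|^2 = \|\Mno \Ssc Q Q^*\Ssc^*\Mno\|$.

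First, by Proposition \ref{P:composePDO} (its argument applies verbatim to $Q\circ Q^*$), $Q\circ Q^*$ is a PDO whose symbol $\tilde q$ again satisfies \eqref{mlcutbds}, with support contained in a slight enlargement of $\supp(q)$. The kernel of $\Ssc\circ \tilde Q\circ\Ssc^*$ at $((z,\zeta),(x,\xi))$ is
\begin{equation*}
\int\!\!\int g_{z,\zeta}\text{-type Gaussian}\times \tilde Q\text{-kernel}\times g_{x,\xi}\text{-type Gaussian},
\end{equation*}
which we write, in Fermi coordinates, as an oscillatory integral in $(y,w,\eta)$. The Gaussians localize $y$ and $w$ to $h^{1/2}$-neighborhoods of $z$ and $x$, and, after integrating in $y,w$ (a Gaussian integral), $\eta$ to $h^{1/2}$-neighborhoods of $\zeta,\xi$. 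We aim for the kernel bound
\begin{equation*}
|K(z,\zeta;x,\xi)| \lesssim_N h^{-d}\big(1+h^{-\frac12}|z-x|+h^{-\frac12}|\zeta-\xi|\big)^{-N}\,\big(1+h^{-\frac12}(|x'|+|\xi'|)+h^{-\frac12}d((x,\xi);\supp \tilde q)\big)^{-N},
\end{equation*}
and the same with $(x,\xi)$ replaced by $(z,\zeta)$ in the last factor. To get it, we integrate by parts with the operator $\Lsc$-type vector fields from the proof of Proposition \ref{P:composePDO}, now in the combined variables. Each integration gains $(1+h^{-1}|y-x|^2+h^{-1}|\eta-\xi|^2)^{-1/2}$ and costs at most $h^{-1/2}$ derivatives, which the symbol bounds \eqref{mlcutbds} absorb. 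The decay factor $(1+h^{-1/2}|\eta'|+h^{-1/2}|y'|)^{-N}$ of $\tilde q$ is then transferred to $(x',\xi')$ by the triangle inequality, at the cost of powers of the first factor. The $d(\cdot,\supp\tilde q)$ decay follows similarly, since the Gaussian tails are $\exp(-c\,h^{-1}\mathrm{dist}^2)$ away from the support. The Riemannian logarithm phase enters only through \eqref{logexpn}, giving a nondegenerate phase whose Hessian in $(y,\eta)$ is close to the flat one on the tiny $h^{1/2}$ scales involved.

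With the kernel bound in hand, the multipliers are handled by Peetre's inequality $\Mno(z,\zeta)\lesssim \Mno(x,\xi)\,(1+h^{-1/2}|z-x|+h^{-1/2}|\zeta-\xi|)^{|N_0|}$. This shows $\Mno(z,\zeta)K\Mno(x,\xi)$ is bounded by $h^{-d}(1+h^{-1/2}|(z,\zeta)-(x,\xi)|)^{-N+|N_0|}$ times $\Mno(x,\xi)^2$ times the localizing factor to the power $-N$. Choosing $N\ge |N_0|+2d+1$ kills $\Mno^2$ when $N_0>0$; for $N_0\le0$, $\Mno\le1$ trivially. We then integrate over $T^*M$ with Liouville measure (equal to $dx\,d\xi$ in coordinates by \S\ref{S:Sasaki}). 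The $h^{-d}$ is exactly compensated by the $h^{d}$ volume of an $h^{1/2}$ phase-space ball in $2d$ dimensions, so both Schur integrals are $O(1)$. Off the coordinate chart, where $\psi$ cuts off or $(x,\xi)$ is far from $\supp q$, the kernel is $O(h^\infty)$ times rapid decay, and these contributions are harmless.

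The main obstacle is the kernel estimate: specifically, that the decay in $(x',\xi')$ really holds at the anisotropic $h^{1/2}$ scale when $q$ is only $h^{-1/2}$-regular in $(y',\eta')$. Standard stationary phase in all variables is not available there, exactly as in Proposition \ref{P:composePDO}. I would first try to apply stationary phase only in $(y_d,\eta_d)$, as in \eqref{statphaseind}, treat the transverse variables by explicit Gaussian integration, and use the nonstationary $\Lsc$ integrations to extract the decay.
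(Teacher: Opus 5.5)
Your proposal is correct and follows essentially the paper's route. You realize $Q\circ Q^*$ as a PDO in the class \eqref{mlcutbds}, write the kernel of $\Ssc\circ Q\circ Q^*\circ\Ssc^*$ as an oscillatory integral in $(y,w,\eta)$, and integrate by parts with a single $\Lsc$ built from $d_\eta\Psi$, $d_y\Psi$, $d_w\Psi$ to get the $h^{-d}$ kernel bound \eqref{mlcutphkernelbd}. You then conclude by Young/Schur and $TT^*$; your Peetre step is just an explicit version of the paper's ``take $N$ large relative to $N_0$''. Two small adjustments. To absorb $\Mno(x,\xi)^2$ you need $N\ge 2|N_0|+2d+1$, not $|N_0|+2d+1$. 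The ``slight enlargement'' of the support of the composed symbol should be read quantitatively, as rapid decay in $h^{-1/2}d(\cdot\,;\supp q)$, because $\Mno$ is built from the original $\supp q$; the paper also leaves this point implicit.
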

\begin{proof} By the same arguments in Proposition \ref{P:composePDO}, $Q\circ Q^*$ defines a semiclassical PDO with symbol \eqref{mlcutbds}.  
Define $\Phi$ as the phase in the integral defining $\Ssc$
\begin{equation*}
	\Phi(z,\zeta, y) = -\zeta(\exp_z(y)) + \frac i2 d^2(z,y) = -\zeta\cdot(y-z) +\frac 12\big(i G(z) -\widetilde{\Gamma}(z,\zeta)\big)(y-z) \cdot (y-z) +\mathcal{O}(|y-z|^3),
\end{equation*}
where the right hand side expresses the quadratic Taylor expansion and $G(z), \widetilde{\Gamma}(z,\zeta)$ denote the matrices with $j,k$-th entry $g_{jk}(z)$ and $\zeta_\ell \Gamma_{jk}^\ell(z)$ respectively.  	Let $\K((z,\zeta),(x,\xi))$ denote the kernel of $\Ssc \circ Q\circ Q^* \circ \Ssc^*$
\begin{equation}\label{microintegral}
	\begin{gathered}
		\K \big((z,\zeta),(x,\xi)\big)	=	(2\pi h)^{-\d}(\pi h)^{-\frac{3\d}{2}}2^{-\d} \int_{\RR^{3\d}}e^{\frac ih\Psi} q(y,\eta)a(z,\zeta,y)\bar{a}(x,\xi,w)\,d\eta dy dw, \\
		\Psi(z,\zeta,y,w,\eta,x,\xi) := \Phi(z,\zeta,y) +(y-w)\cdot \eta -\bar{\Phi}(x,\xi,w).
	\end{gathered}
\end{equation}

Our main claim is that for any $N \in\mathbb{N}$,
\begin{multline}\label{mlcutphkernelbd}
	\big|\K \big((z,\zeta),(x,\xi)\big)	\big|\lesssim_{N} 
	h^{-d} \Big(1+h^{-\frac 12}|(x',\xi')| + h^{-\frac 12}|(z',\zeta')| + h^{-\frac 12}|(x_\d-z_\d,\xi_\d-\zeta_\d)| \\+h^{-1}d^2\big((x,\xi);\supp(q)\big) +h^{-1}d^2\big((z,\zeta);\supp(q)\big)\Big)^{-N}.
\end{multline}
If this is true, then \eqref{multiplierTTstar} follows by taking $N$ sufficiently large relative to $N_0$, then applying Young's inequality.  It is routine that \eqref{multbds} then follows from \eqref{multiplierTTstar}. 

To see \eqref{mlcutphkernelbd}, we calculate the differentials of the phase $\Psi$
\begin{equation*}
	\begin{aligned}
		d_\eta \Psi &= y-w,\\
		d_y \Psi &= \eta - \zeta + \big(iG(z) -\widetilde{\Gamma}(z,\zeta)\big)(y-z) + \mathcal{O}(|y-z|^2),\\
		d_w \Psi &= \xi- \eta + \big(iG(x) +\widetilde{\Gamma}(x,\xi)\big)(w-x) + \mathcal{O}(|w-x|^2).
	\end{aligned}
\end{equation*}
Considering their real and imaginary parts, it then follows that for $|y-z|, |w-x|$ sufficiently small
\begin{equation}\label{midiffapprox}
	\begin{aligned}
		|d_y \Psi| &\approx|\eta - \zeta |+|y-z|   \text{ and }|\prtl_y^\alpha \Psi| \lesssim_{\alpha} |y-z| \text{ if }|\alpha| \geq 2;\\
		|d_w \Psi| &\approx |\eta-\xi|+|w-x|\text{ and } |\prtl_w^\alpha \Psi| \lesssim_{\alpha} |w-x| \text{ if }|\alpha| \geq 2. 
	\end{aligned}
\end{equation}

We now define
\begin{equation*}
	\Lsc = \frac{1-i d_\eta \Psi  \cdot d_\eta -i d_y\Psi \cdot d_y-i d_w\Psi  \cdot d_w }{1+|h^{-\frac 12}d_\eta \Psi|^2 +  |h^{-\frac 12 } d_y\Psi| ^2+  |h^{-\frac 12} d_w\Psi|^2}, \quad \text{ so that } \quad \Lsc e^{\frac ih \Psi} = e^{\frac ih \Psi}.
\end{equation*}
Using \eqref{mlcutbds} and \eqref{midiffapprox}, integration by parts with respect to $\Lsc$ in \eqref{microintegral} in the same way as in the proof of Proposition \ref{P:composePDO} gives 
\begin{multline*}
	|\K \big((z,\zeta),(x,\xi)\big)| \lesssim_N \\
	h^{-\frac{5d}{2}}\int_{\supp(q)\times \RR} \big( 1+h^{-1}|d_\eta \Psi|^2 + h^{-1} |d_y\Psi| ^2+ h^{-1} |d_w\Psi|^2\big)^{-N}\big(1+ h^{-1}|(y',\eta')|^2 \big)^{-N}\,d\eta dy dw.
\end{multline*}
We can then replace $|d_y \Psi|, |d_w \Psi|$ by their approximations in \eqref{midiffapprox} and it is then routine to see that \eqref{mlcutphkernelbd} follows.  Indeed, by taking $N$ large enough, it can be seen that integration in $\RR^{3\d}$ taken here yields a gain of $h^\frac{3\d}{2}$, leading to the leading factor of $h^{-d}$.
\end{proof}

\section{The Parametrix}\label{S:WPansatz}
In this section, we construct the operator $\Vsc_t$ in Theorem \ref{T:Vtthm}.  We begin with some technical matters in \S\ref{SS:weylquant} concerning the symbol of $h\sqrtl$ as a PDO and consider its Taylor expansion.  In \S\ref{SS:ansatz}, we then consider our ansatz for a single wave packet which approximately solves the half-wave equation.  Finally, in \S\ref{SS:paramcon} we construct $\Vsc_t$ by taking superpositions of these wave packets, then state our main result on error estimates for the parametrix, whose proof is completed in \S\ref{S:matrixelements}.

\subsection{Weyl quantization of $h\sqrtl$}\label{SS:weylquant}
To calculate $h\sqrtl$, we use the semiclassical Weyl quantization.  Thus if $a(y,\eta)$ is a symbol $\Op(a)$ denotes the operator determined by $a$ in the Weyl calculus, that is, the operator with Schwartz kernel given by the oscillatory integral
\begin{equation*}
	\frac{1}{(2\pi h)^d}\int e^{\frac ih (y-z)\cdot\eta}  a\Big(\frac{y+z}{2}, \eta\Big)\,d\eta .
\end{equation*}
Even though previous sections used standard quantization, we prefer Weyl quantization here.
\begin{proposition}\label{P:weyllap}
	In any coordinate chart, the Weyl quantization of $h \sqrtl$ is given by a symbol $b(y,\eta)$ which admits an asymptotic expansion for $|\eta| \approx 1$, $b \sim \sum_{j \geq 0} h^{j}b_j$ with
	\begin{equation*}
		\bigg| \prtl_{y}^\beta\prtl_{\eta}^\alpha \Big( b(y,\eta) -\sum_{j=0}^{N-1} h^{j} b_j(y,\eta) \Big)\bigg|  \lesssim h^N \quad \text{and }\quad |\prtl_y^\beta\prtl_\eta^\alpha b_j | \lesssim_{j,\alpha,\beta} 1 \quad \text{ for }|\eta| \approx 1.
	\end{equation*}
Moreover, we have the explicit formulas for $|\eta| \geq h$
	\begin{equation}\label{leadingweyl}
		b_0(y,\eta) = |\eta|_{g(y)}=\sqrt{g^{jk}(y)\eta_j\eta_k} , \qquad b_1(y,\eta) = 
		\frac 12 g^{jk}(y) \frac{D_{y_j}\big( \sqrt{g(y)} \big)}{\sqrt{g(y)}} \frac{\eta_k}{|\eta|_g} .
	\end{equation}
\end{proposition}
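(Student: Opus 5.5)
The plan is to compute in a fixed coordinate chart. I would first obtain the Weyl symbol of $h^2\Delta_g$ explicitly. Then I would pass to the square root by the semiclassical functional calculus, after conjugating to an operator that is self-adjoint on $L^2(dy)$. This conjugation matters because $\Delta_g$ is self-adjoint with respect to $dv_g=\sqrt{g}\,dy$, not Lebesgue measure, whereas the Weyl kernel above is taken against $dy$.

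\emph{Step 1: the conjugated operator.} Set $\widetilde\Delta := g^{1/4}\Delta_g g^{-1/4}$, where $g=\det(g_{jk})$. This is self-adjoint and nonnegative on $L^2(dy)$, and $h\sqrtl = g^{-1/4}\big(h\sqrt{\widetilde\Delta}\big)g^{1/4}$. Writing $\Delta_g=-g^{-1/2}\prtl_j(g^{1/2}g^{jk}\prtl_k)$, one has $h^2\widetilde\Delta = (hD_j)\,g^{jk}\,(hD_k) + h^2 V(y)$ for a smooth potential $V$. For a symmetric form $hD_j\,g^{jk}\,hD_k$, the Weyl symbol is $g^{jk}(y)\eta_j\eta_k + \mathcal{O}(h^2)$; this is a standard Weyl-calculus computation, and the first-order terms cancel by symmetry. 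So $h^2\widetilde\Delta$ has Weyl symbol $|\eta|_{g(y)}^2 + h^2 r(y,\eta)$, with $r$ a polynomial in $\eta$ with smooth coefficients and no $h^1$ term.

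\emph{Step 2: functional calculus.} Choose $f\in C^\infty$ with $f(\tau)=\sqrt\tau$ for $\tau\approx 1$, and supported away from $0$. By the Helffer--Sj\"ostrand formula, $f(h^2\widetilde\Delta)=\frac{1}{\pi}\int\bar\prtl\tilde f(z)\,(h^2\widetilde\Delta-z)^{-1}\,dL(z)$. Constructing a Weyl parametrix for the resolvent then gives a Weyl symbol $f(|\eta|_g^2) + \mathcal{O}(h^2)$ with a full asymptotic expansion in $h$ (this is the standard result, e.g. Dimassi--Sj\"ostrand or Zworski Ch.~14). The key point is that in the Weyl calculus the $h^1$ correction to the principal term vanishes, because the composition formula has no first-order term for symbols that commute at the principal level. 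The remainder bounds $\prtl^\beta_y\prtl^\alpha_\eta(\cdots)=\mathcal{O}(h^N)$ come from the usual symbol estimates. On $|\eta|\approx 1$, the operators $f(h^2\widetilde\Delta)$ and $h\sqrt{\widetilde\Delta}$ differ by a term whose symbol is $\mathcal{O}(h^\infty)$ there, as one sees by composing with a cutoff $\chi(h^2\widetilde\Delta)$. This yields a symbol $\tilde b\sim |\eta|_g + h^2(\cdots)$ for $h\sqrt{\widetilde\Delta}$ near $|\eta|\approx1$.

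\emph{Step 3: undoing the conjugation.} Compute the Weyl symbol of $g^{-1/4}\circ\Op(\tilde b)\circ g^{1/4}$ using the Moyal product to first order: $a\#c = ac + \frac{h}{2i}\{a,c\}+\mathcal{O}(h^2)$. Composing with the function $g^{-1/4}$ on the left and $g^{1/4}$ on the right, the zeroth-order terms give $|\eta|_g$. The first-order terms give
\[
\tfrac{h}{2i}\Big(\{g^{-1/4},|\eta|_g\}g^{1/4}+g^{-1/4}\{|\eta|_g,g^{1/4}\}\Big) = \tfrac{h}{2i}\cdot 2\,\frac{\prtl_{y_j}g^{1/4}}{g^{1/4}}\,\frac{g^{jk}\eta_k}{|\eta|_g}.
\]
Since $\frac{\prtl_j g^{1/4}}{g^{1/4}}=\frac12\frac{\prtl_j\sqrt g}{\sqrt g}$, this equals $\frac h2\, g^{jk}\frac{D_{y_j}\sqrt g}{\sqrt g}\frac{\eta_k}{|\eta|_g}$, which is exactly $h\,b_1$ in \eqref{leadingweyl}. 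The uniform bounds on the $b_j$ for $|\eta|\approx1$ follow because all coefficients are smooth on the chart and $|\eta|_g$ is smooth away from $\eta=0$.

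\emph{Main obstacle.} The delicate point is Step 2: justifying that the Weyl symbol of $f(h^2\widetilde\Delta)$ has no $h^1$ term and admits a full expansion with uniform remainders. The plan is to cite the Helffer--Sj\"ostrand/Weyl-parametrix result directly, checking only that $h^2\widetilde\Delta$ is a symmetric operator with real principal symbol and vanishing subprincipal symbol. Alternatively, one can use Strichartz and Taylor's stationary phase argument, as was done in the proof of Proposition \ref{P:invwpt}.
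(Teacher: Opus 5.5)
Your proposal is correct, but it takes a different route from the paper.

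\emph{The paper's route.} The paper uses no functional calculus. It takes as known that $\sqrtl$ is a classical pseudodifferential operator with polyhomogeneous symbol $\tilde b\sim\sum_j\tilde b_j$, where $\tilde b_j$ is homogeneous of degree $1-j$. It then pins down $\tilde b_0,\tilde b_1$ from the identity $\Op(\tilde b)\circ\Op(\tilde b)=\Delta_g$ in the classical Weyl calculus; since $\{\tilde b,\tilde b\}=0$, this composition has no first-order correction. The Weyl symbol of $\Delta_g$ with respect to $dy$ is $|\zeta|_g^2+g^{jk}\frac{D_{y_j}\sqrt g}{\sqrt g}\zeta_k$ plus terms of order zero. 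Matching therefore gives $\tilde b_0=|\zeta|_g$ and $2\tilde b_0\tilde b_1=g^{jk}\frac{D_{y_j}\sqrt g}{\sqrt g}\zeta_k$. The semiclassical symbols $b_j(y,\eta)=h^{1-j}\tilde b_j(y,\eta/h)$ then follow by homogeneity, with bounds for all $\eta$ and the explicit formulas on all of $|\eta|\ge h$.

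\emph{Your route.} You first remove the subprincipal term by the half-density conjugation $g^{1/4}\Delta_g g^{-1/4}$, so the Helffer--Sj\"ostrand calculus produces no $h^1$ term. The term $b_1$ then comes entirely from the Moyal bracket with $g^{\pm 1/4}$, and your bracket computation correctly gives $hb_1$.

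\emph{What each buys.} Your approach makes transparent that $b_1$ is purely an artifact of quantizing against Lebesgue measure rather than $dv_g$. It costs more machinery:
\begin{itemize}
\item $g^{1/4}\Delta_g g^{-1/4}$ is only defined on the chart, so you must either work with half-densities on $M$ or extend the metric to $\RR^d$ and use locality of the functional calculus.
\item Your cutoff comparison of $f(h^2\widetilde\Delta)$ with $h\sqrt{\widetilde\Delta}$ requires knowing that $h\sqrt{\widetilde\Delta}$ is, microlocally near $|\eta|\approx 1$, a pseudodifferential operator. That is essentially the classical fact the paper assumes from the outset, though a further cutoff argument in the semiclassical calculus can also supply it.
\item You obtain the explicit formulas only on $|\eta|\approx 1$ rather than on $|\eta|\ge h$. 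This is harmless, since that is the only region where the expansion is asserted or later used.
\end{itemize}
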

\begin{proof}
	In terms of \emph{classical} symbols in the H\"ormander classes $S_{1,0}^m$, it is well-known there is an asymptotic expansion of the symbol $\sigma_{\sqrtl}(y,\zeta)$ of $\sqrtl$:
	\begin{equation*}
				\sigma_{\sqrtl}(y,\zeta) \sim \sum_{j\geq 0} \tilde b_j(y,\zeta), \qquad \tilde b_j \in S_{1,0}^{1-j} \text{ is homogeneous of degree $1-j$ in $\zeta$ for $|\zeta| \gg 1$ },
	\end{equation*}
	where the asymptotic sum means $\sigma_{\sqrtl}(y,\zeta) - \sum_{j= 0}^{N-1} \tilde b_j(y,\zeta) \in S_{1,0}^{1-N}$. Moreover, $\tilde b_0$, $\tilde b_1$ satisfy the right hand sides of \eqref{leadingweyl} for $|\zeta| \geq 1$, a consequence of the Weyl symbolic calculus (see e.g.  \cite[(14.23)]{TaylorII}). In particular this uses that $\Op(\tilde b) \circ \Op(\tilde b) - \Op(\tilde b^2)  $ is a pseudodifferential operator given by a symbol in $S_{1,0}^{0}$ whenever $\tilde b \in S_{1,0}^1$ since the Poisson bracket $\{\tilde b,\tilde b\}$ vanishes\footnote{This property is the reason why we prefer Weyl quantization in this section.}.

	Now set $b_j(y,\eta) = h^{1-j}\tilde b_j(y,\eta/h)$, when $|\eta/h| \geq 1 $ this is just $b_{j}(y,\eta)$.  We change variables $\zeta = \eta/h$ in each Schwartz kernel defined by $b_j$ to get
	\begin{equation*}
		\frac{1}{(2\pi)^d} \int e^{i(x-y)\cdot \zeta} \tilde b_j\Big( \frac{x+y}{2},\zeta \Big) \,d\zeta = 
		 \frac{h^{j-1}}{(2\pi h)^d} \int e^{\frac ih (x-y)\cdot \eta}  b_j\Big( \frac{x+y}{2},\eta \Big) \,d\eta, 
	\end{equation*}
	where the right hand side uses the homogeneity of $b_j$.  As a result, for any $\eta$ we have
	\begin{equation}\label{semiasymp}
		\begin{split}
			\sigma_{h\sqrtl}(y,\eta) \sim \sum_{j \geq 0} h^{j}b_j(y,\eta), \text{ where }  |\prtl_{y}^\beta\prtl_{\eta}^\alpha b_j(y,\eta)| \lesssim_{j,\alpha} h^{1-j-|\alpha|}(1+|\eta/h|)^{1-j-|\alpha|}\\
			\Big| \prtl_{y}^\beta\prtl_{\eta}^\alpha \Big( \sigma_{h\sqrtl}(y,\eta) -\sum_{0 \leq j \leq  N-1} h^{j} b_j(y,\eta) \Big)\Big| 
			\lesssim_{N,\alpha, \beta} h^{1-|\alpha|}(1+|\eta/h|)^{1-N-|\alpha|} .
		\end{split}
	\end{equation}
	The desired properties for $|\eta| \approx 1$ now follow as a special case.
	\end{proof}

We next recall the following formula for symbols $a(y,\eta) = \sum_{\alpha} a_\alpha(y)\eta^\alpha$ which are polynomial in the fiber variables and hence determine differential operators (see \cite[p.79-80]{TaylorII})
\begin{equation}\label{weyldiffop}
	(\Op(a)u)(y) = \sum_{\alpha} \sum_{\beta + \gamma = \alpha}  \binom{\alpha}{\beta} 2^{-|\gamma|} (hD_y)^\gamma a_{\alpha}(y) (hD_y)^\beta u(y).
\end{equation}
We note the following special cases for linear and quadratic polynomials (\cite[(14.34), (14.35)]{TaylorII})
\begin{equation}\label{weylspecialcases}
	\begin{split}
		a(y,\eta) = a^j(y)\eta_j &\implies (\Op(a)u)(y) = a^j(y)(h D_{y_j})u(y) + \frac 12 \big((hD_{y_j})a^j(y)\big)u(y),\\
		a(y,\eta) = a^{jk}(y)\eta_j\eta_k &\implies (\Op(a)u)(y) =(hD_{y_j})\big(a^{jk}(y)(hD_{y_k})u(y)\big) + \frac{h^2}4 \big((D_{y_j}D_{y_k} a^{jk}(y)\big)u(y).
	\end{split}
\end{equation} 

\begin{lemma}\label{L:quadexpn}
Consider normal coordinates centered at any point in $M$ and $b_0(y,\eta) = \sqrt{g^{\ell m}(y)\eta_\ell\eta_m}$ in these coordinates (as defined in \eqref{leadingweyl}).  Define the quadratic Taylor polynomial of the function $(y,\eta) \mapsto b_0(y,\xi+\eta)$ centered at $(y,\eta) =(0,0)$:
\begin{equation*}
	p_2(y,\eta) := \sum_{|\alpha|+|\beta|  \leq 2} \frac{1}{\alpha!\beta!} (\prtl_y^\beta \prtl_\eta^\alpha b_0)(0,\xi) y^\beta \eta^\alpha
\end{equation*}
Then $\Op(p_2)$ is the differential operator
\begin{equation}\label{quadraticquantize}
	|\xi| + \frac{\xi_j}{|\xi|} hD_{y_j}  +  
			\frac{1}{2|\xi|} \Big( \delta_{jk} -  \frac{\xi_j\xi_k}{|\xi|^2}\Big)(hD_{y_j})(hD_{y_k}) 
			+ \frac{1}{12|\xi|} \big( R_{\ell kj m} + R_{ \ell j k m} \big)y_j y_k\xi_\ell \xi_m.
\end{equation}
\end{lemma}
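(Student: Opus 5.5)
The plan is to compute the Taylor coefficients of $(y,\eta)\mapsto b_0(y,\xi+\eta)$ at the origin explicitly, using the standard properties of normal coordinates, and then quantize term by term using \eqref{weyldiffop}, \eqref{weylspecialcases}. In normal coordinates centered at the point we have $g_{jk}(0)=\delta_{jk}$ and $\prtl_y g_{jk}(0)=0$, hence also $g^{jk}(0)=\delta_{jk}$ and $\prtl_y g^{jk}(0)=0$. Since $b_0=\sqrt{g^{\ell m}(y)\eta_\ell\eta_m}$, every Taylor coefficient of order $\le 2$ that involves exactly one $y$-derivative vanishes at $y=0$. This covers both the linear $y$ terms and the mixed $y\eta$ terms, since $\prtl_{y}\prtl_\eta b_0(0,\xi)$ is a combination of $\prtl_y g^{\ell m}(0)$. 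Consequently only three kinds of terms survive in $p_2$.

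\emph{Pure $\eta$ terms.} These are the Taylor expansion of the Euclidean norm $|\xi+\eta|$:
\[
|\xi| + \frac{\xi_j}{|\xi|}\eta_j + \frac{1}{2|\xi|}\Big(\delta_{jk}-\frac{\xi_j\xi_k}{|\xi|^2}\Big)\eta_j\eta_k .
\]

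\emph{Pure $y$ term.} Differentiating twice in $y$ at $y=0$ and using $\prtl_y g^{\ell m}(0)=0$ gives
\[
\prtl_{y_j}\prtl_{y_k} b_0(0,\xi)=\frac{1}{2|\xi|}\,\prtl_{y_j}\prtl_{y_k}g^{\ell m}(0)\,\xi_\ell\xi_m .
\]
I would then use the classical expansion of the metric in normal coordinates, $g_{\ell m}(y)=\delta_{\ell m}-\tfrac13 R_{\ell j k m}y_jy_k+\mathcal{O}(|y|^3)$. Since the quadratic term is small, inverting the matrix gives $g^{\ell m}(y)=\delta_{\ell m}+\tfrac13 R_{\ell jkm}y_jy_k+\mathcal{O}(|y|^3)$. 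Symmetrizing in $j,k$ yields
\[
\prtl_{y_j}\prtl_{y_k}g^{\ell m}(0)=\tfrac13\big(R_{\ell jkm}+R_{\ell kjm}\big).
\]
With the factor $\tfrac12$ from the Taylor formula, the quadratic $y$ term is therefore
\[
\frac{1}{12|\xi|}\big(R_{\ell kjm}+R_{\ell jkm}\big)y_jy_k\xi_\ell\xi_m .
\]

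\emph{Quantization.} The symbol $p_2$ is now a sum of a polynomial in $\eta$ with constant coefficients and a function of $y$ alone, with no mixed terms. By \eqref{weyldiffop} (or \eqref{weylspecialcases}), the constant-coefficient terms quantize with no lower-order corrections, since all derivatives of the coefficients vanish: $\eta_j\mapsto hD_{y_j}$ and $\eta_j\eta_k\mapsto (hD_{y_j})(hD_{y_k})$. A function of $y$ alone quantizes to the corresponding multiplication operator. This gives exactly \eqref{quadraticquantize}.

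The step I expect to be the main obstacle is bookkeeping, not analysis. One must verify that the normal-coordinate expansion of $g_{\ell m}$ is consistent with the paper's curvature convention $R(X,Y)Z=\nabla_X\nabla_YZ-\nabla_Y\nabla_XZ-\nabla_{[X,Y]}Z$, with $Rm$ lowered on the last slot. This determines both the sign and the index placement in $R_{\ell kjm}+R_{\ell jkm}$. I would cross-check this against \eqref{covariantlogexpn} in Proposition \ref{P:covariantlogexpn}, or against the coordinate formula for $R$ in terms of $\prtl\Gamma$, which in normal coordinates gives $\prtl_k\Gamma^\ell_{jm}(0)$ in terms of $R$. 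I would also use the pairwise symmetry $R_{\ell jkm}=R_{km\ell j}$ to match the displayed form. The factor $\tfrac1{12}=\tfrac12\cdot\tfrac1{2}\cdot\tfrac13$ then follows.
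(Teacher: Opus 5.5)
Your proposal is correct and follows essentially the same route as the paper. Both arguments use the vanishing of $\prtl_y g^{\ell m}(0)$ to kill the linear-in-$y$ and mixed $y\eta$ terms, then compute the $\eta$-Hessian and the $y$-Hessian via the normal-coordinate expansion \eqref{cometricnormal}, and finally quantize via \eqref{weylspecialcases}. The only difference is that you derive \eqref{cometricnormal} by inverting the standard expansion of $g_{\ell m}$, which the paper merely notes in a footnote. That derivation is consistent with the paper's curvature convention, and it reproduces the factor $\tfrac{1}{12}$.
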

\begin{proof}
In a normal coordinate system, $g_{\ell m}(0) = \delta_{\ell m} = g^{\ell m}(0)$ and the first partials of the metric coefficients and their inverse vanish at the origin: $\prtl_{y_j}g_{\ell m}(0) =\prtl_{y_j}g^{\ell m}(0)  =0$.  In fact, we have the well-known\footnote{Perhaps the Taylor expansion of $g_{jk}(y)$ is more common to find, but this follows as a consequence.}  Taylor expansion
\begin{equation}\label{cometricnormal}
	g^{\ell m}(y) = \delta_{\ell m} +\frac 16\big( R_{\ell kj m} + R_{ \ell j k m}\big) y_j y_k  +  \mathcal{O}(|y|^3).
\end{equation}
Hence  $\prtl_{y_j}b_0(0,\eta)=0$ for all $\eta$, $j$ and $b_0(0,\xi) = |\xi|_g = |\xi|$. The Taylor expansion of $p_2$ simplifies as
\begin{equation*}
	p_2(y,\eta) = 
	b_0(0,\xi) +  \prtl_{\eta_j} b_0(0,\xi)\eta_j  + \frac 12 \prtl^2_{\eta_j\eta_k}b_0(0,\xi)\eta_j\eta_k  + \frac 12 \prtl^2_{y_jy_k}b_0(0,\xi)y_jy_k .
\end{equation*}
Using the observations above about the metric coefficients, we have 
\begin{equation*}
	\prtl_{\eta_j} b_0(0,\xi) = \frac{\xi_j}{|\xi|}, \quad  \prtl^2_{\eta_j\eta_k}b_0(0,\xi) = \frac{1}{|\xi|} \Big( \delta_{jk} -  \frac{\xi_j\xi_k}{|\xi|^2}\Big), \quad  \prtl^2_{y_jy_k}b_0(0,\xi) = \frac{\prtl_{y_jy_k}^2g^{\ell m}(0)\xi_\ell\xi_m}{2|\xi|}.
\end{equation*}
The expression \eqref{quadraticquantize} follows by using \eqref{weylspecialcases} to quantize the first 2 terms and \eqref{cometricnormal} on the last.
\end{proof}

\begin{remark}
	Below, we will use \eqref{quadraticquantize} in the special case where $\xi$ points in the direction of the $d$-th standard basis vector, that is $\xi_j = |\xi|_g\delta_{jd}$.  In this case, \eqref{quadraticquantize} takes the form
	\begin{equation}\label{xid}
		|\xi| + h D_{y^d} + \frac{1}{2|\xi|} \sum_{j,k = 1}^{d-1} (hD_{y_j})(hD_{y_k}) + \frac{|\xi|}{12} \sum_{j,k = 1}^{d-1} \big( R_{d kj d} + R_{ d j k d}\big)y_jy_k 
	\end{equation}
\end{remark}

\subsection{Ansatz for single wave packets}\label{SS:ansatz}
In contrast to previous sections, we start considering the \emph{bicharacteristic flow} associated to the half wave equation \eqref{halfwaveeqn}, namely, the Hamiltonian flow on $T^*M$ (minus the zero section) defined by $\tilde H: T^*M \to \RR$, $\tilde H (x,\xi) = |\xi|_{g(x)}$.  Hence the flow $\tilde\kappa_t : T^*M \to T^*M$ is generated by the vector field given in local coordinates by
\begin{equation*}
	\frac{g^{jk}(x)\xi_k}{|\xi|_{g(x)}} \frac{\prtl}{\prtl x_j} - \frac{\prtl_{x_\ell}g^{jk}(x)\xi_j\xi_k}{2|\xi|_{g(x)}} \frac{\prtl}{\prtl \xi_\ell}.
\end{equation*}
In what follows, we write $\tilde\kappa_t (x,\xi) = (x_t(x,\xi), \xi_t(x,\xi))$ in local trivializations of $T^*M$ taken for $(x,\xi)$ and $\tilde\kappa_t(x,\xi)$.   However, this subsection treats the case of a single bicharacteristic curve, so $(x_t,\xi_t)$ will denote the image of a single point $(x,\xi)$ under $\tilde\kappa_t$.  In \S\ref{SS:paramcon} below we begin to vary $(x,\xi)$.  

If we had considered the Hamiltonian $\frac 12 |\xi|_{g(x)}^2$, the corresponding flow on $T^*M$ would be identified with the geodesic flow on $TM$ from \S\ref{SS:diffgeodflow} via the musical isomorphism.  Instead, these two flows agree on the cosphere bundle $S^*M = \{(x,\xi) \in T^*M: |\xi|_{g(x)}=1\}$.  The main difference is that since $\tilde H$ is homogeneous of degree 1 in $\xi$, then $x_t(x,\xi)$ and $\xi_t(x,\xi)$ are homogeneous of degree 0 and 1 in $\xi$ respectively.  Unlike the geodesic flow, $x_t$ thus parameterizes a unit speed geodesic in $M$ for any $\xi$.  Moreover, the tangent vector $\dot x_t \in T_{x_t}M$ results from raising the covector\footnote{The Hamiltonian flow preserves $|\xi|_{g(x)}$, so in what follows we write this concisely as $|\xi|_g$.} $\xi/|\xi|_{g}$ under the musical isomorphism.  In this sense, $\tilde\kappa_t$ extends the geodesic flow on the cosphere bundle $S^*M = \{(x,\xi) \in T^*M: |\xi|_{g(x)}=1\}$ to all of $T^*M$ so that it is homogeneous of degree 1 in $\xi$.  As a consequence, we have that for $|\xi|_g \approx 1$, $\|d\tilde\kappa_t\| \lesssim \|d\kappa_t\| \leq \mu(t)$.

We now consider the action of the half-wave operator $hD_t + h\sqrtl$ to a function $u$ of the form
\begin{align}
	u(t,y) &= (\pi h)^{-\frac d4} a(t)e^{\frac ih \phi(t,y)}	\label{udef}\\
\phi(t,y) &= \xi_t(\expy) + \frac 12 \omega_t\big(\expy, \expy \big),\label{phidef}
\end{align}
where $\xi_t(\expy) \in \RR$ is the pairing of $\xi_t $ with the tangent vector $\expy \in T_{x_t}M$.  Moreover,  $\omega_t$ is a $(0,2)$ tensor defined at each tangent space $T_{x_t}M$. Upon raising an index to form a $(1,1)$-tensor $ \omega_t^\#$, it satisfies the Riccati equation considered in Corollary \ref{C:cor4ansatz} with $c = |\xi|_g$,
\begin{equation}\label{riccatiansatz}
 |\xi|_g \covt \omega_t^\# + \omega_t^\# \circ \Pi^\perp \circ \omega_t^\# + |\xi|_g^2 R_{\dot x(t)}=0, \qquad  \omega_0^\# = iI.
\end{equation}
Note that the initial condition $\omega_0^\# = iI$ as a $(1,1)$ tensor on $T_{x_0}M$ means that  as a $(0,2)$ tensor we have in coordinates $(\omega_0)_{jk} = ig_{jk}$ and
\begin{equation}\label{initialdist}
	\omega_t\big(\expy, \expy \big)\big|_{t=0} = id_g^2(x,y)
\end{equation} 
Finally, $a(t)$ is a solution to the ODE also considered in Corollary \ref{C:cor4ansatz}
\begin{equation}\label{amplitudeansatz}
	|\xi|_g a'(t) + \frac{1}{2} \tr(\omega_t^\# \circ \Pi_{\dot x_t})a(t)=0, \qquad a(0)=1.
\end{equation}

Even though $\phi$ is coordinate invariant, it is convenient to perform the calculations where $y$ values are expressed in a normal coordinate chart centered at $x_t$, which we assume is taken in the remainder of the subsection.  In particular, we consider a fixed orthonormal frame $E_1(t),\dots,E_d(t)$ for $T_{x_t}M $ defined in \eqref{parallelONframe}, then take normal coordinates defined by the local diffeomorphism 
\begin{equation*}
	(y_1,\dots,y_d) \mapsto \exp_{x_t}(y_1E_1(t)+ \cdots +y_d E_d(t))
\end{equation*} 
in a neighborhood of the origin. In these coordinates $\expy$ is simply expressed by the vector $y \in \RR^d$, that is, $\expy = \sum_{j=1}^d y_j \frac{\prtl}{\prtl y_j}|_0$.  Moreover, $\dot x_t=E_d(t) = \frac{\prtl}{\prtl y_d}|_0$ and hence the covector $\xi_t$ over $x_t$ is of the form $\xi_t= |\xi|_g\,dy_d|_0$ so that $\xi_t(\expy) = |\xi|_g\, y_d$. Overall, 
\begin{equation}\label{fixtphi}
	\phi(t,y) = |\xi|_gy_d + \frac 12\sum_{j,k=1}^{d-1} (\tilde\omega_t)_{jk}y_jy_k + \frac i2 y_d^2, \quad \text{ where }\quad (\tilde\omega_t)_{jk} = \omega_t (E_j(t),E_k(t)).
\end{equation}

The expression \eqref{fixtphi} introduces the notation we use in what follows: $\omega_t$ denotes the full tensor on $T_{x_t}^\CC M$, while $\tilde\omega_t$ denotes its restriction to the normal space of $\dot x_t$.  As matrices, if the fundamental matrix for the Jacobi equation expressed in terms of $E_1,\dots,E_{d-1}$ takes the form as in \eqref{blocksymplectic}
\begin{equation}\label{blocktildeomega}
	\begin{bmatrix}
		A_s & B_s\\
		C_s & D_s
	\end{bmatrix} \quad \text{ where } \quad s=\frac{t}{|\xi|_g},
\end{equation}
then $\tilde\omega_t = (C_s+iD_s)(A_s+iB_s)^{-1}$ as a $(d-1)\times(d-1) $ matrix.  Recall from Corollary \ref{C:cor4ansatz} that as a function of $s$, the Jacobi equation here is solved along the geodesic $(x_s,|\xi|_g\dot x_s) \in TM$. In block form, the full matrix for $\omega_t$ is
\begin{equation}\label{blockomega}
\omega_t = 	\begin{bmatrix}
		\tilde{\omega}_t & 0\\
		0 & i
	\end{bmatrix}
	=
	\begin{bmatrix}
		(C_s+iD_s)(A_s+iB_s)^{-1} & 0\\
		0 & i
	\end{bmatrix}
\end{equation}

\begin{remark} 
	Since we take $c = |\xi|_g$ in Corollary \ref{C:cor4ansatz}, we have the following  interpretation of $\omega$: it is determined by the geodesic flow on $S^{(r)}M$ with $r=|\xi|_g$, but since $x_t$ is of unit speed, the rescaling $s=t/|\xi|_g$ ensures $\omega_t$ operates on $T_{x_t}^{\CC}M$ and not some other tangent space along the geodesic $x_t$.
\end{remark}

\begin{remark}\label{R:hvsmu}
	Throughout this section, we assume that the norm of the matrix \eqref{blocktildeomega} is bounded above by $\mu(t)$ as per the hypotheses of Theorems \ref{T:mainthmexp} and \ref{T:mainthmpoly}.  We emphasize the observation \eqref{mubddh} that $\mu = \mu(t) \leq h^{-\frac{1}{26}}$ for all $0 \leq t \leq T$.  This property will get used implicitly at many stages of the proof in what follows.  It means that many quantities appearing our error analysis are $\mathcal{O}(h^{\veps})$ for some $\veps >0$ (e.g. the right hand side in \eqref{wtuL2norm} below). In fact, we would have a satisfactory error analysis under the weaker assumption that $\mu(t) \leq h^{\frac 16-\veps}$ as mentioned in Remark \ref{R:baderror} above: our stronger hypotheses are not crucially used until \S\ref{SS:errorestimates}.
\end{remark}

\begin{remark} \label{R:cutoff}
Strictly speaking, in order to ensure that the $\expy$ is well-defined, we should instead consider the product $\psi(x_t,y)u(t,y)$ where $u$ is defined above in \eqref{udef} and $\psi$ is defined as in \eqref{wptdef1}.  But since $\psi$ is identically one in a neighborhood of the diagonal in $M\times M$, it follows from \eqref{siegelbd5} and Remark \ref{R:hvsmu} that
\begin{equation*}
	\|(\prtl_t\psi(x_t,\cdot))u(t,\cdot)\|_{L^2(M)} = \mathcal{O}(h^\infty)\quad \text{ and } \quad \|(1-\psi) (x_t,\cdot) u(t,\cdot)\|_{L^2(M)} = \mathcal{O}(h^\infty).
\end{equation*}
Hence the difference between this approximate solution and the one given by $u$ is inconsequential, and this subsection presents a local error analysis of $u$ for $y$ sufficiently close to $0$ in our coordinates.
\end{remark}

\begin{proposition}\label{P:uL2norm}
	For each $|t| \leq T$, the approximate solution $u(t,y)$ is $L^2$-normalized with respect to Lebesgue measure in $\RR^d$ in that
	\begin{equation}\label{uL2norm}
		\|u(t,\cdot)\|_{L^2(\RR^d)} = 1 
	\end{equation}
	Moreover, for any $k \geq 0$,
	\begin{equation}\label{wtuL2norm}
		\big\||y|^ku(t,\cdot)\big\|_{L^2(\RR^d)} \lesssim (h^{\frac 12}\mu)^{k}
	\end{equation}
\end{proposition}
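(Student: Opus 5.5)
The plan is to compute $|u(t,y)|^2$ explicitly in the normal coordinates centered at $x_t$ fixed above, where \eqref{fixtphi} makes the phase exactly quadratic in $y$. The integral then becomes a Gaussian integral, and both claims reduce to matrix identities and bounds for the blocks in \eqref{blocktildeomega}. Write $s=t/|\xi|_g$ and $\tilde\omega_t=(C_s+iD_s)(A_s+iB_s)^{-1}$. Then \eqref{fixtphi} gives
\begin{equation*}
\Im \phi(t,y) = \tfrac12 (\Im\tilde\omega_t) y'\cdot y' + \tfrac12 y_d^2,
\end{equation*}
and so
\begin{equation*}
|u(t,y)|^2 = (\pi h)^{-\frac d2}|a(t)|^2 \exp\!\big(-h^{-1}(\Im\tilde\omega_t)y'\cdot y' - h^{-1}y_d^2\big).
\end{equation*}
The real part of $\phi$ only contributes a unimodular factor, so it plays no role.

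For \eqref{uL2norm}, I integrate this Gaussian over $\RR^d$. This gives
\begin{equation*}
\|u(t,\cdot)\|_{L^2}^2 = |a(t)|^2 \det{}^{-\frac12}(\Im\tilde\omega_t),
\end{equation*}
because the factor $(\pi h)^{\frac d2}$ produced by the integration cancels the normalization $(\pi h)^{-\frac d2}$. It therefore suffices to show $|a(t)|^2=\det^{\frac12}(\Im\tilde\omega_t)$.

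To see this, note that by Corollary \ref{C:cor4ansatz} with $c=|\xi|_g$ and the initial data $\J_0=I$, $\covt\J_0=iI$ of \S\ref{SSS:matrices}, formula \eqref{atmatrix} gives $a(t)=\det^{-\frac12}(A_s+iB_s)$. Hence $|a(t)|^2=|\det(A_s+iB_s)|^{-1}$. On the other hand, \eqref{realimagsiegel} and \eqref{conjugateidentities} give
\begin{equation*}
\Im\tilde\omega_t=(A_sA_s^T+B_sB_s^T)^{-1}=\big((A_s+iB_s)(A_s+iB_s)^*\big)^{-1},
\end{equation*}
since $A_s,B_s$ are real, so $(A_s-iB_s)^T=(A_s+iB_s)^*$. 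Taking determinants yields $\det(\Im\tilde\omega_t)=|\det(A_s+iB_s)|^{-2}$, and the identity follows. I would also remark that the branch of the square root in \eqref{atsoln} is irrelevant here, since only $|a|$ enters.

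For \eqref{wtuL2norm}, I substitute $y'=h^{\frac12}(\Im\tilde\omega_t)^{-\frac12}z'$ and $y_d=h^{\frac12}z_d$. The Jacobian of this change of variables is exactly cancelled by $|a(t)|^2$, using the identity just proved. This leaves
\begin{equation*}
\pi^{-\frac d2}\int |y|^{2k}e^{-|z|^2}\,dz,
\end{equation*}
in which $|y|\le h^{\frac12}\big(\|(\Im\tilde\omega_t)^{-\frac12}\|+1\big)|z|$. By \eqref{siegelbd5} we have $\|(\Im\tilde\omega_t)^{-1}\|\lesssim\mu^2$, with $\mu$ bounding $\|\Fsc(s)\|$ as in Remark \ref{R:hvsmu}. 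Hence $|y|\lesssim h^{\frac12}\mu|z|$, and the Gaussian moments of $|z|^{2k}$ are finite, which gives \eqref{wtuL2norm}.

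The only point needing care is the time rescaling. The fundamental matrix is evaluated at $s=t/|\xi|_g$ along the speed $|\xi|_g$ geodesic, while the hypothesis \eqref{muupperhyp} is stated in terms of $t$. Since $|\xi|_g\in[1-\veps_0,1+\veps_0]$ on the support of $\tilde\beta$, \eqref{singvaldilation} together with the monotonicity and evenness of $\mu$ shows that the norm of \eqref{blocktildeomega} is $\lesssim\mu(t)$, up to harmless constants. I do not expect a genuine obstacle: the proof is the Gaussian computation combined with the determinant identity linking $a(t)$ to $\Im\tilde\omega_t$.
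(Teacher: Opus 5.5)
Your proposal is correct and follows essentially the paper's argument. In normal coordinates at $x_t$ you evaluate the Gaussian integral, identify $|\det(A_s+iB_s)|^{2}=\det(A_sA_s^T+B_sB_s^T)=(\det\Im\tilde\omega_t)^{-1}$ via \eqref{realimagsiegel} and \eqref{conjugateidentities}, and bound the moments using $\|(\Im\tilde\omega_t)^{-1}\|\lesssim\mu^2$ from \eqref{siegelbd5}. Your bookkeeping is the correct version of the computation: the exponent is $h^{-1}$ rather than $(2h)^{-1}$, and it is $\det\Im\tilde\omega_t$, not its inverse, that equals $|a(t)|^{4}$; the paper's displayed lines contain small slips on both points.
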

\begin{proof} 
Recall from \eqref{atmatrix} that $a(t) = \detexp^{-\frac 12}(A_s+iB_s)$ where $s = t/|\xi|$.  Hence
	\begin{equation*}
		|a(t)|^{-4} = |\det(A_s+iB_s)|^2 = \big|\det(A_s+iB_s)\det(A_s^T-iB_s^T)\big| = \det(A_sA_s^T+B_sB_s^T) = \det(\Im \omega_t),
	\end{equation*}
where the last identity follows from \eqref{realimagsiegel}. We now have that with $y'=(y_1,\dots,y_{d-1})$
	\begin{equation*}
			\int_{\RR^d} |u(t,y)|^2\,dy = 
			(\pi h)^{-\frac{d}{2}} |a(t)|^2 \int_{\RR^d} e^{-\frac{1}{2h}(\Im \tilde\omega_t)y'\cdot y' -\frac{1}{2h}|y_d|^2}\,dy = |a(t)|^2\detexp^{-\frac 12}(\Im \tilde\omega_t) =1.
	\end{equation*}
Moreover, by \eqref{siegelbd5}, $\|(\Im \tilde\omega_t)^{-\frac{1}{2}}\| \lesssim \mu$ and hence \eqref{wtuL2norm} follows from
\begin{equation*}
\begin{split}
	\int_{\RR^d} |y|^{2k}|u(t,y)|^2\,dy &\lesssim 
	\big(1+\|(\Im \tilde\omega_t)^{-\frac{1}{2}}\|\big)^{2k} \int_{\RR^d}\big(|(\Im \tilde\omega_t)^{\frac{1}{2}} y'|^2+|y_d|^2\big)^{k} |u(t,y)|^2\,dy\\ 
	&\lesssim h^k\big(1+\|(\Im \tilde\omega_t)^{-\frac{1}{2}}\|\big)^{2k} \lesssim h^k\mu^{2k}.
	\end{split}
\end{equation*}
\end{proof}

\begin{remark}
	Proposition \ref{P:uL2norm} concerns the $L^2$ norm of $u$ on $\RR^d$. If we instead consider the $L^2$ norm with respect to Riemannian volume on $M$,  \eqref{cometricnormal} implies $\sqrt{\det{g_{jk}(y)}} = 1+\mathcal{O}(|y|^2)$ and hence
	\begin{equation*}
		\|\psi(x_t,\cdot)u(t,\cdot)\|_{L^2(M)}^2 = \int |\psi(x_t,y) u(t,y)|^2\sqrt{\det{g_{jk}(y)}}\,dy = 1+\mathcal{O}(h^{\frac 12}\mu).
	\end{equation*}	
\end{remark}

The rest of this subsection is now dedicated to proving the following.
\begin{theorem}\label{T:halfwaveu}
	For $N_0$ sufficiently large, there exists $N_1$ depending only on $N_0$ such that
	\begin{equation}\label{halfwaveu}
		(hD_t+h\sqrtl) u = \Big(\sum_{|\gamma| \leq N_1}f_\gamma(\omega,h) y^\gamma \Big)u + E(t,y) 
	\end{equation}
	where $\|E(t,\cdot)\|_{L^2} \lesssim h^{N_0}$.  The coefficients $f_\gamma(\omega,h)$ satisfy
	\begin{equation}\label{fcoeffs}
		| f_\gamma(\omega,h)| \lesssim 
		\begin{cases}
			\mu^{2|\gamma|}, 
			& \text{ if } |\gamma| \geq 3, \\
			h \mu^6,  
			& \text{ if } |\gamma| = 2,\\
			h \mu^4,  
			& \text{ if } |\gamma| = 1 ,\\
			h^2\mu^4, & \text{ if } |\gamma| =0.
		\end{cases}
	\end{equation}
\end{theorem}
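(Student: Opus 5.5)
Working at a fixed time $t$ in the normal coordinates centered at $x_t$ with frame $E_1(t),\dots,E_d(t)$, the plan is to compute $hD_t u$ and $h\sqrtl u$ separately, each as a polynomial in $y$ times $u$ plus a negligible remainder, and then to match the low-order coefficients using the Riccati equation \eqref{riccatiansatz} and the amplitude equation \eqref{amplitudeansatz}. The remaining coefficients are bounded by \eqref{fcoeffs}, and the tails are controlled through Proposition \ref{P:uL2norm}.

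\emph{The $h\sqrtl$ term.} By Proposition \ref{P:weyllap}, $h\sqrtl=\Op(b_0+hb_1+h^2r)$ with $r\in S(1)$ near $|\eta|\approx1$. Since $u$ is microlocalized at $(0,\xi_t)$ at scale $h^{1/2}\mu$ (by \eqref{wtuL2norm} in space and the analogous bound in frequency, using $\|\tilde\omega_t\|\le\mu^2$ from \eqref{siegelbd1}), I will Taylor expand $b_0$ about $(0,\xi_t)$ to order $2N_1$. The quadratic part is quantized exactly by \eqref{xid}. Each higher Taylor monomial $y^\beta(\eta-\xi_t)^\alpha$, quantized via \eqref{weyldiffop}, acts on $e^{i\phi/h}$ by producing $(\partial_y\phi-\xi_t)^\alpha$ together with lower-order derivatives of $\phi$. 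In normal coordinates $\partial_y\phi-\xi_t=\omega_t y$ exactly, so each such monomial yields a polynomial whose coefficients are bounded by powers of $\|\omega_t\|\lesssim\mu^2$. This gives the counting $\mu^{2|\gamma|}$ for $|\gamma|\ge3$.

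The Taylor remainder of order $2N_1$ and the term $h^2r$ are treated by the $L^2$ calculus together with \eqref{wtuL2norm}. With $\mu\le h^{-1/26}$ from Remark \ref{R:hvsmu}, they are $O((h^{1/2}\mu^2)^{2N_1})\le h^{N_0}$ once $N_1$ is large. The term $hb_1$ is expanded in the same way. Since the first derivatives of the metric vanish at $0$, $b_1(0,\cdot)=0$, so $hb_1$ contributes only to the coefficients with $|\gamma|\ge1$ (each at least $h\cdot y$), together with an $h^2$ constant coming from its Weyl correction.

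\emph{The $hD_t$ term.} Here $hD_t u=\big(ha'/(ia)+\partial_t\phi\big)u$. The difficulty is that the normal coordinates move with $t$, so $\partial_t\phi$ must be computed covariantly. I would write $\phi(t,y)=\xi_t(\exp_{x_t}^{-1}y)+\tfrac12\omega_t(\exp_{x_t}^{-1}y,\exp_{x_t}^{-1}y)$ and differentiate along $x_t$ using \eqref{covariantlogexpndir} with $W=\dot x_t$, together with $\nabla_t\xi_t=0$. This gives
\begin{equation*}
\partial_t\phi=-|\xi|+\tfrac{|\xi|}{6}\big(R_{dkld}+R_{dlkd}\big)y_ky_l+\tfrac12(\nabla_t\omega_t)(y,y)-\omega_t(\dot x_t,y)+O(|y|^3),
\end{equation*}
with the $O(|y|^3)$ terms carrying factors of $\|\omega_t\|$.

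\emph{Cancellation.} Adding the two pieces, the constant terms cancel: $-|\xi|+|\xi|=0$. At linear order, $-\omega_t(\dot x_t,y)=-iy_d$ cancels against the $iy_d$ produced by $hD_{y_d}$ acting on $\tfrac i2y_d^2$ through the $-1$ coming from $\xi(\nabla\exp^{-1})$; I expect a careful check here to give an exact cancellation. In the quadratic terms in $y'$, the two curvature terms combine to $\tfrac{|\xi|}{2}\cdot\tfrac{2}{6}\cdots$, and together with $\tfrac1{2|\xi|}(\tilde\omega y')^2$ and $\tfrac12\nabla_t\tilde\omega$ this reproduces \eqref{riccatiansatz}. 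The $h$-level constant $\tfrac{h}{2i|\xi|}\tr\tilde\omega+ha'/(ia)$ vanishes by \eqref{amplitudeansatz}.

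What survives at order $|\gamma|\le2$ are cross terms of order $h$ involving $\omega$, arising from $y_d$–$y'$ mixing and from the $b_1$ and Weyl corrections. These carry at most three factors of $\omega$ in the quadratic terms and two in the linear terms, which gives \eqref{fcoeffs}.

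\emph{Main obstacle.} The hardest step will be the $hD_t$ computation. The moving coordinate frame and the non-quadratic structure of $\exp^{-1}$ must be handled precisely enough that the curvature coefficient obtained from \eqref{covariantlogexpn} matches the one in \eqref{xid}, so that the quadratic terms cancel exactly. I would first verify this on the cross term $\tfrac12\omega_t(\exp^{-1}y,\exp^{-1}y)$, where differentiating produces $\omega_t(\nabla\exp^{-1},\cdot)$ and the $O(|y|^2)$ curvature correction only generates terms of cubic and higher order.
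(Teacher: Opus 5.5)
Your overall route is the paper's: compute $\partial_t\phi$ covariantly from \eqref{covariantlogexpndir}, quantize the quadratic Taylor polynomial of $b_0$ via \eqref{xid}, cancel the $O(1)$ and $O(h)$ constants, the $\pm iy_d$ terms and the quadratic terms using \eqref{amplitudeansatz} and \eqref{riccatiansatz}, then Taylor expand everything else about $(0,\xi_t)$. Two steps, however, fail as written.

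\emph{First gap: the term $h^2\Op(r)u$.} It cannot be absorbed into $E$. The $L^2$ calculus only gives $\|h^2\Op(r)u\|_{L^2}\lesssim h^2$, not $h^{N_0}$. The localization of $u$ does not help, because $r(0,\xi_t)\neq 0$ in general (it contains $b_2(0,\xi_t)$). You must Taylor expand this part as well. The paper does this by writing $h\sqrtl=\sum_{j<N_0}h^j\Op(b_j)+h^{N_0}\Op(\tilde b_{N_0})$, discarding only the last term, and expanding each $b_j$; the resulting polynomial contributions are compatible with \eqref{fcoeffs}. Relatedly, to turn the vanishing of the Taylor remainder in $\eta-\xi_t$ into powers of $h^{1/2}$, you must move those factors onto the Gaussian by integration by parts, as in Lemma \ref{L:highordervanbd}. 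The bound \eqref{wtuL2norm} only handles vanishing in $y$.

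\emph{Second gap: the coefficient count for $|\gamma|\le 2$.} Your count does not establish \eqref{fcoeffs}, and it conflicts with the case $|\gamma|=0$. You describe the survivors as terms ``of order $h$'', but the constant coefficient must be $O(h^2\mu^4)$. An $O(h)$ constant would destroy the factor $h^{3/2}$ in \eqref{Vproperties}.

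What is needed is the bookkeeping of Lemma \ref{L:polysymbolgenGaussian}. A monomial $y^\beta(\eta-\xi_t)^\alpha$ applied to the Gaussian produces only $y^\gamma$ with $|\gamma|\equiv|\alpha|+|\beta|\pmod 2$. The coefficient is $h^{\frac12(|\alpha|+|\beta|-|\gamma|)}$ times a homogeneous polynomial of degree $\frac12(|\gamma|+|\alpha|-|\beta|)$ in $\omega$. It is therefore of size $(h^{1/2}\mu)^{|\alpha|+|\beta|-|\gamma|}\mu^{2(|\gamma|-|\beta|)}$.

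\begin{itemize}
\item Parity is what prevents the cubic Taylor terms of $b_0$ from producing a constant.
\item The inequality $h^{1/2}\mu\le 1$ shows the worst cases are $\beta=0$ with $|\alpha|=3$ for $|\gamma|=1$, and $\beta=0$ with $|\alpha|=4$ for $|\gamma|=0,2$.
\item Combined with $b_1(0,\xi_t)=0$, this yields $h^2\mu^4$, $h\mu^4$ and $h\mu^6$ for $|\gamma|=0,1,2$.
\item The same pairing, taking $\beta=0$, is what gives $\mu^{2|\gamma|}$ for $|\gamma|\ge 3$.
\end{itemize}

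Two smaller points. Your claim that the survivors carry ``at most three factors of $\omega$'' is not literally true: higher Taylor terms carry more factors, compensated by extra powers of $h$. Also, the ``$y_d$--$y'$ mixing'' you invoke does not occur. Since $\omega_t$ is block diagonal with constant $dd$-entry $i$, the quadratic cancellation is exact.
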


Our first step is to calculate $\prtl_t \phi$.  Here we cannot use \eqref{fixtphi} as this assumes $x_t$ is fixed.  However, resolved by appealing to the second part of Proposition \ref{P:covariantlogexpn}. 
\begin{lemma}\label{L:firsttderiv}
Let $\phi$ be defined as in \eqref{phidef}.  Then in normal coordinates centered at $x_t$, we have that for any integer $N_1\geq 3$
\begin{equation}\label{firsttderiv}
\prtl_t \phi = - |\xi|_g  - iy_d + \frac{|\xi|_g}{6}(R_{dk\ell d} + R_{d\ell k d})\, y_k y_\ell + \frac 12 (\covt \omega_t)(\expy, \expy) +\widetilde{E}
\end{equation}
where $\widetilde{E} = \sum_{3 \leq |\gamma|\leq N_1} \widetilde{E}_\gamma y^\gamma+ \mathcal{O}\big((1+\|\omega\|)|y|^{N_1+1}\big) $
near $y=0$ where $\widetilde{E}_\gamma $ depends on $\omega,x,\xi$ and satisfies $| \widetilde{E}_\gamma | \lesssim_{\gamma} 1+\|\omega\|$. 
\end{lemma}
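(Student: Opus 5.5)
The plan is to differentiate the coordinate-invariant expression \eqref{phidef} in $t$ with $y$ held fixed as a point of $M$, and only afterwards express everything in normal coordinates centered at $x_t$. Write $V(t,y)=\expy\in T_{x_t}M$, a vector field along the curve $t\mapsto x_t$. Since $\xi_t$ and $\omega_t$ are tensors along $x_t$ and the pairings are scalars, the Leibniz rule for covariant differentiation along $x_t$ gives
\begin{equation*}
\prtl_t\phi = (\covt \xi_t)(V) + \xi_t(\covt V) + \tfrac12(\covt\omega_t)(V,V) + \omega_t(\covt V, V),
\end{equation*}
using the symmetry of $\omega_t$ to combine the two cross terms. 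Because $x_t$ is a unit speed geodesic and $\xi_t$ is the flat of $|\xi|_g\dot x_t$, $\xi_t$ is parallel along $x_t$, so the first term vanishes. The third term is already the term $\frac12(\covt\omega_t)(\expy,\expy)$ in \eqref{firsttderiv}.

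For $\covt V=\nabla_{\dot x_t}\expy$, I would apply \eqref{covariantlogexpndir} of Proposition \ref{P:covariantlogexpn} at $x_0=x_t$ with $W=\dot x_t=E_d(t)=\prtl_{y_d}|_0$. Normal coordinates at $x_t$ make the Christoffel symbols vanish there, as that proposition requires. This gives
\begin{equation*}
\covt V = -\prtl_{y_d} + \tfrac16\big(R^j_{dk\ell}+R^j_{d\ell k}\big)y_ky_\ell\,\prtl_{y_j} + \mathcal{O}(|y|^3).
\end{equation*}
To get the expansion to arbitrary order $N_1$, I would note that the proof there is an implicit function argument, so the remainder is a smooth function of $(x_t,y)$ with Taylor coefficients bounded uniformly over the compact manifold. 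Pairing with $\xi_t=|\xi|_g\,dy_d|_0$ then gives
\begin{equation*}
\xi_t(\covt V) = -|\xi|_g + \tfrac{|\xi|_g}{6}\big(R_{dk\ell d}+R_{d\ell kd}\big)y_ky_\ell + \sum_{3\le|\gamma|\le N_1}c_\gamma y^\gamma + \mathcal{O}(|y|^{N_1+1}),
\end{equation*}
with $|c_\gamma|\lesssim 1$. Lowering the index with $g_{jd}(0)=\delta_{jd}$ matches the curvature conventions in \eqref{firsttderiv}.

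For the last term I would use $V=y$ together with the block form \eqref{blockomega}. Substituting the expansion of $\covt V$,
\begin{equation*}
\omega_t(\covt V,V) = -\omega_t(\prtl_{y_d},y) + \mathcal{O}(\|\omega\||y|^3) = -i y_d + \mathcal{O}(\|\omega\||y|^3),
\end{equation*}
because $\omega_t(\prtl_{y_d},\cdot)=i\,dy_d$. Expanding the cubic and higher parts again to order $N_1$ gives coefficients bounded by $\|\omega\|$. Collecting everything yields \eqref{firsttderiv} with $|\widetilde E_\gamma|\lesssim 1+\|\omega\|$.

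The main obstacle is justifying the higher-order Taylor expansion of $\nabla_W\expy$ with uniform bounds, since Proposition \ref{P:covariantlogexpn} only records the quadratic term. I would handle this by differentiating the implicit relation $F(x,y,V)=0$ from its proof to all orders, which produces coefficients that are smooth functions of $x_t$ and hence uniformly bounded on the compact manifold.
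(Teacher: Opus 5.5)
Your proposal is correct and follows essentially the same route as the paper: parallelism of $\xi_t$, the Leibniz rule with the symmetry of $\omega_t$, \eqref{covariantlogexpndir} at $x_t$ with $W=\prtl_{y_d}|_0$, and the block form \eqref{blockomega} to produce the $-iy_d$ term. Your additional remark, that the implicit-function construction of $\expy$ gives smooth, uniformly bounded Taylor coefficients to any order $N_1$, fills in a point the paper leaves implicit, since its proof only records an $\mathcal{O}(|y|^3)$ remainder.
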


\begin{proof}
Since $x_t$ parameterizes a unit speed geodesic $\nabla_t \dot x_t \equiv 0$, thus since  $\xi_t =|\xi|_{g} (\dot{ x}_t )^\flat$, it follows that $\nabla_t \xi_t \equiv 0$ since covariant differentiation commutes with the musical isomorphism.     Hence
\begin{equation*}
	\prtl_t \phi = \xi_t\big( \covt \expy\big) + \omega_t (\covt\expy, \expy) +	\frac 12 (\covt \omega_t)(\expy, \expy) 
\end{equation*}
Since $\dot{x}_t = \frac{\prtl}{\prtl y^d} |_{y=0}$, \eqref{covariantlogexpndir} in Proposition \ref{P:covariantlogexpn} gives that near $y=0$
\begin{equation*}
 \covt \expy =  \nabla_{\dot x_t} \expy =-\frac{\prtl }{\prtl y^d}\bigg|_0 + \frac{1}{6}  \big( R_{d k\ell }^j + R_{d \ell k }^j \big) y_k y_\ell \big)\frac{\prtl }{\prtl y_j}\bigg|_0+ \mathcal{O}(|y|^3).
\end{equation*}
Consequently, using that $\xi_t = |\xi|_g dy_d|_0$ again, it follows that
\begin{equation*}
	\xi_t\big( \covt \expy\big) + \omega_t (\covt\expy, \expy) = - |\xi|_g  + \frac{|\xi|_g}{6}(R_{dk\ell d} + R_{d\ell k d})\, y_k y_\ell - iy_d + \mathcal{O}(|y|^3).
\end{equation*}
Indeed, since the metric at $y=0$ satisfies $g_{jk}(0) = \delta_{jk}$, we have that $R_{d k\ell }^j = R_{d k\ell j} $.
\end{proof}

We now turn to the symbol $b$ defined in Proposition \ref{P:weyllap}.
We write the image of $u$ under $\Op(b)$, and similarly each $\Op(b_j)$, as
\begin{multline}\label{weylshift}
	\frac{a(t)}{2^d(\pi h)^{\frac{3d}{2}}} \iint e^{\frac ih (x-y)\cdot \eta} b\Big(\frac{x+y}{2},\eta \Big) e^{\frac ih(y\cdot \xi + \frac 12 \omega y\cdot y)}\,dy\,d\eta =\\
	\frac{e^{\frac ih x\cdot \xi}a(t)}{2^d(\pi h)^{\frac{3d}{2}}} \iint e^{\frac ih (x-y)\cdot \eta} b\Big(\frac{x+y}{2},\eta +\xi\Big) e^{\frac{i}{2h} \omega y\cdot y}\,dy\,d\eta.
\end{multline}
In the first integral, we implicitly assume that $q$ is supported where $|\eta| \approx 1$ and that $\xi$ is taken so that $|\xi| \approx 1$ so that the bounds in Proposition \ref{P:weyllap} are satisfied:  this is possible since we ultimately consider $\xi$ with $\beta(|\xi|_g) \neq 0$ and we encounter a phase with no critical points when $|\eta| \not\approx 1$.

Using Lemma \ref{L:quadexpn}, namely \eqref{xid}, we take the quadratic part of the Taylor polynomial of $b_0$ centered at $(0,\xi)$. For now we use this to calculate the leading order contribution of $\Op(b_0)u$, treating the remaining contributions as error terms below
\begin{multline}\label{quadonexp}
	e^{-\frac{i}{2h}\omega y \cdot y}\Op(p_2) (e^{\frac{i}{2h}\omega y \cdot y}) = \frac{h}{2i|\xi|} \big( \delta_{jk} -  \delta_{jd}\delta_{kd} \big) \omega_{jk} \\
	 +|\xi| + iy_d + \frac{1}{2|\xi|} \big( \delta_{jk} -  \delta_{jd}\delta_{kd} \big)  \omega_{j\ell}\omega_{km} y_\ell y_m   + \frac{|\xi|}{12 } \sum_{j,k=1}^{d-1}\big( R_{d k jd} + R_{d j k d}\big)y_k y_j  .
\end{multline}
We now combine \eqref{firsttderiv} and \eqref{quadonexp} to get that 
\begin{multline*}
	\big(hD_t + \Op(p_2)\big) u = \frac{h}{i|\xi|} \Big(|\xi| a'(t) + \frac{1}{2} \tr(\omega_t^\# \circ \Pi_{\dot x_t})a(t)\Big)e^{\frac ih \phi}\\
	+\frac{1}{2|\xi|}\Big(|\xi| (\nabla_t \omega_t)_{j k } y_j y_k  + 
	\frac{|\xi|^2}{2} \big(R_{d j k d} + R_{d k j d}  \big)y_j y_k  +  \big( \delta_{jk} -  \delta_{jd}\delta_{kd} \big) \omega_{j\ell}\omega_{km} y_\ell y_m  \Big)u + \widetilde{E}u
\end{multline*}
We now examine the two expressions in parentheses on the right, the first of these is exactly \eqref{amplitudeansatz}.  For the second one,  we use symmetries of the $(0,4)$ Riemann curvature tensor 
\begin{equation*}
	 R_{d j k d} + R_{d k j d}  = 2R_{j d d k}.
\end{equation*}
Using again that $g_{jk}(0) = \delta_{jk}$, it follows that the second term in parentheses is 
\begin{multline*}
	|\xi|_g \covt \omega_t\big(\expy,\expy\big) + (\omega_t^\# \circ \Pi^\perp \circ \omega_t^\#)^\flat \big(\expy,\expy\big) \\+ |\xi|_g^2 Rm\big(\expy,\dot x_t,\dot x_t,\expy\big) =0 ,
\end{multline*}
which vanishes as this is just \eqref{riccatiansatz} evaluated along $(\expy,\expy)$.

By Proposition \ref{P:uL2norm} and the bound $\|\omega\| \lesssim \mu^2$ from \eqref{siegelbd1}, we have that
\begin{equation*}
	\Big( \int \Big|\widetilde E(t,y) - \sum_{3 \leq |\gamma| \leq N_1} \widetilde E_\gamma(t) y^\gamma \Big|^2 |u(t,y)|^2\,dy\Big)^{\frac 12}\lesssim_{N_1} h^{\frac{N_1+1}{2}}\mu^{N_1+1} (1+\|\omega\|) \lesssim h^{\frac{N_1+1}{2}} \mu^{N_1+3}.
\end{equation*}
Thus by taking $N_1$ sufficiently large, the contribution of this difference can be absorbed into the term $E$ in \eqref{halfwaveu}.  Moreover, by Lemma \ref{L:firsttderiv}, each coefficient satisfies the bound $|\widetilde E_\gamma(t)| \lesssim (1+\|\omega\|)$, which is at least as strong as the right hand side of \eqref{fcoeffs}.  

It remains to show that $\big(h\sqrtl-\Op(p_2) \big)u $ has the form of the right hand side of \eqref{halfwaveu}. 
We use Proposition \ref{P:weyllap}, setting $\tilde{b}_{N_0} := h^{-N_0}(b-\sum_{j=0}^{N_0}h^jb_j)$ so that\footnote{As above, we can implicitly assume kernel of these PDOs is an integral supported near $|\eta|\approx 1$.} $|\prtl_{y,\eta}^\alpha \tilde b_{N_0}| \lesssim_{\alpha,N_0} 1$ and
\begin{equation}\label{weyllapconsequence}
	\big(h\sqrtl-\Op(p_2) \big)u = \big(\Op(b_0)-\Op(p_2) \big)u + \sum_{j=1}^{N_0-1}h^j\Op(b_j)u + h^{N_0}\Op(\tilde{b}_{N_0})u.
\end{equation}
By Proposition \ref{P:uL2norm} and standard $L^2$ bounds for PDO, the last term on the right is $\mathcal{O}(h^{N_0})$ in $L^2$ and hence can be embedded into the error term $E(t,y)$ in \eqref{halfwaveu}.
We then want to take higher order Taylor approximations to the symbols $b_0,\dots,b_{N_0-1}$ centered at $(0,\xi)$, in particular going beyond the quadratic approximation to $b_0$ in Lemma \ref{L:quadexpn}. The next two lemmas allows us to achieve this: the first calculates the effect of applying a polynomial symbol to $u$ while the second bounds the error in the Taylor approximation.  For convenience instead of treating $u$, we consider 
\begin{equation}\label{vdef}
	v(y) =e^{\frac{i}{2h} \omega y\cdot y }= \frac{e^{-i|\xi|_gy_d}}{a(t)}u(t,y)= e^{\frac{i}{2h} \tilde\omega y'\cdot y'-\frac{1}{2h}y_d^2}
\end{equation}

\begin{lemma}\label{L:polysymbolgenGaussian}
Let $a(y,\eta) = y^\beta \eta^\alpha$ be a monomial symbol.  Then with $v$ as in \eqref{vdef},
\begin{equation}\label{polysymbolgenGaussian}
	\big(\Op(a) v\big)(y) = 
	v(y)\sum_{0\leq |\gamma| \leq |\alpha|+|\beta|}
	h^{\frac 12(|\alpha|+|\beta|-|\gamma|)}c_{\alpha,\beta,\gamma}(\omega) y^\gamma 
\end{equation}
where $c_{\alpha,\beta,\gamma }(\omega) \equiv 0$ if $|\gamma|$ and $|\alpha|\pm|\beta|$ have opposite parity. Otherwise, $c_{\alpha,\beta,\gamma}(\omega)$ is homogeneous polynomial of degree $\frac 12(|\gamma|+|\alpha|-|\beta|)$ in the entries of $\omega$ otherwise (possibly the zero polynomial). 
\end{lemma}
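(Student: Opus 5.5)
The plan is a direct computation from the Weyl formula \eqref{weyldiffop}, tracking powers of $h$ and of the entries of $\omega$ by a scaling argument. For the monomial $a(y,\eta)=y^\beta\eta^\alpha$, formula \eqref{weyldiffop} with $a_\alpha(y)=y^\beta$ gives
\begin{equation*}
	\Op(a)v = \sum_{\beta'+\gamma'=\alpha}\binom{\alpha}{\beta'}2^{-|\gamma'|}\big((hD_y)^{\gamma'}y^\beta\big)\,(hD_y)^{\beta'}v .
\end{equation*}
The first factor is $h^{|\gamma'|}$ times a constant times $y^{\beta-\gamma'}$, or zero if $\gamma'\not\leq\beta$. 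So everything reduces to computing $(hD_y)^{\beta'}v$ for the generalized Gaussian $v=e^{\frac{i}{2h}\omega y\cdot y}$.

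To handle $(hD_y)^{\beta'}v$, I would prove by induction on $|\beta'|$ that $(hD_y)^{\beta'}v = v\,P_{\beta'}(y)$. Here $P_{\beta'}$ is a sum of terms $h^{k}\,q(\omega)\,y^{\delta}$, where $q$ is a monomial of degree $m$ in the entries of $\omega$ and $|\delta| = m-k$, $|\beta'| = m+k$. The base step is $hD_{y_j}v = (\omega y)_j v$, which has $m=1$, $k=0$, $|\delta|=1$. For the inductive step, apply $hD_{y_j}$ to $v\,h^k q(\omega) y^\delta$. If the derivative falls on $v$, it multiplies by $(\omega y)_j$, so $m$ and $|\delta|$ each increase by one. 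If it falls on $y^\delta$, it produces $h\,y^{\delta-e_j}$ times a constant, so $k$ increases and $|\delta|$ decreases by one. Both cases preserve the two relations, which closes the induction.

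Next I would combine the factors. A typical term is $h^{|\gamma'|+k}\,q(\omega)\,y^{\beta-\gamma'+\delta}$. Set $\gamma = \beta-\gamma'+\delta$ and write $\ell=|\gamma'|$. Then $|\gamma| = |\beta|-\ell+m-k$, and $|\alpha| = \ell+m+k$. From these two relations:
\begin{itemize}
\item $m = \tfrac12(|\gamma|+|\alpha|-|\beta|)$, so the $\omega$-degree depends only on $\alpha,\beta,\gamma$;
\item $\ell+k = \tfrac12(|\alpha|+|\beta|-|\gamma|)$, which is exactly the claimed power of $h$.
\end{itemize}
Integrality of $m$ forces $|\gamma|\equiv|\alpha|-|\beta| \equiv |\alpha|+|\beta| \pmod 2$. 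Otherwise no terms exist, and $c_{\alpha,\beta,\gamma}\equiv0$. The range $0\le|\gamma|\le|\alpha|+|\beta|$ follows from $\ell,k\ge0$ and $|\gamma|\ge0$. Collecting all terms with the same $\gamma$ gives $c_{\alpha,\beta,\gamma}(\omega)$ homogeneous of the stated degree, possibly zero.

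There is no real obstacle here. The one point needing care is the bookkeeping in the induction, which is best organized with a scaling argument rather than by tracking individual terms. Under $y\mapsto h^{1/2}y$, each $y$ carries weight $h^{1/2}$, each $hD_y$ carries weight $h^{1/2}$, and $\omega$ carries weight $0$; this makes the power of $h$ immediate. Separately, $\omega$-homogeneity follows from the fact that each entry of $\omega$ appears only together with one net factor of $y$ produced by $hD_y$. I would present the proof in this weighted form, with the two invariants $|\beta'|=m+k$ and $|\delta|=m-k$, checked once in the inductive step.
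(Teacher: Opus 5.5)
Your proposal is correct and follows the paper's proof. Both arguments expand $\Op(y^\beta\eta^\alpha)$ via \eqref{weyldiffop}, show by induction that $(hD_y)^{\beta'}v$ is $v$ times a polynomial whose $y^{\delta}$ term carries $h^{\frac12(|\beta'|-|\delta|)}$, and then re-index to read off the power of $h$, the $\omega$-degree and the parity constraint. Your invariants $|\beta'|=m+k$, $|\delta|=m-k$ give the correct intermediate $\omega$-degree $\frac12(|\beta'|+|\delta|)$; the paper's intermediate statement writes $\frac12(|\nu|-|\gamma|)$, which is a slip, since its final count $\frac12(|\alpha|-|\beta|+|\gamma|)$ relies on the corrected value.
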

\begin{proof}
By induction on $|\nu|$, we have that
\begin{equation}\label{diffgenGaussian}
	 \big( h D_y \big)^\nu v(y) 
	= v(y)\sum_{0 \leq |\gamma| \leq |\nu|} h^{\frac 12(|\nu|-|\gamma|)}\tilde{c}_{\nu\gamma}(\omega)y^\gamma
\end{equation}
where $\tilde{c}_{\nu\gamma}$ is a homogeneous polynomial of degree $\frac 12(|\nu|-|\gamma|)$ in the entries of $\omega$ which vanishes whenever $|\nu|\pm|\gamma|$ are odd (hence the polynomial on the right has the same parity as $|\nu|$).  

Next, as a special case of \eqref{weyldiffop}, we observe that for some constants $ b_{\alpha\beta\nu}$ which vanish if $\nu \not\leq \beta + \alpha$ 
\begin{equation*}
 \begin{split}
 	(\Op(a)v)(y) &= 
	\sum_{\nu \leq \alpha} b_{\alpha\beta\nu} h^{|\alpha|-|\nu|}y^{\beta -(\alpha-\nu)} (hD_y)^{\nu } v(y) \\ 
	&=v(y)\sum_{\nu \leq \alpha}  \sum_{0 \leq |\tilde\gamma| \leq |\nu|}  b_{\alpha\beta\nu} h^{|\alpha|-|\nu|+\frac 12(|\nu|-|\tilde\gamma|)}\tilde{c}_{\nu\tilde\gamma}(\omega)y^{\beta-(\alpha-\nu)+\tilde\gamma}.
 \end{split}
\end{equation*}
We then reindex the double sum in terms of the variable $\gamma = \beta-(\alpha-\nu) + \tilde\gamma$.  The exponent of $h$ can then be expressed as $\frac 12(|\alpha|+|\beta|-|\gamma|)$ and the degree of the polynomial $\tilde{c}_{\nu\tilde\gamma}(\omega)$ is $\frac 12(|\alpha|-|\beta|+|\gamma|)$. The expression \eqref{polysymbolgenGaussian} then follows. 
\end{proof}

\begin{lemma}\label{L:highordervanbd}
	Suppose $\tilde a$ is a symbol in $S(1)$. Then with $v$ as in \eqref{vdef},
\begin{equation}\label{highordervanbd}
	\big\|(\Op(z^\beta \eta^\alpha \tilde a)v)(y)\big\|_{L^2} \lesssim h^{\frac 12 (|\beta|+|\alpha|) }  \mu^{|\beta|+3|\alpha|} \|v\|_{L^2}.
\end{equation}
\end{lemma}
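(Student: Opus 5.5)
The idea is to reduce the operator with symbol $y^\beta\eta^\alpha\tilde a$ to an $S(1)$ operator applied to a polynomial multiple of $v$. Then Lemma \ref{L:polysymbolgenGaussian} and the moment bounds \eqref{wtuL2norm} finish the estimate. (The $z$ in \eqref{highordervanbd} is read as the spatial variable $y$ of the symbol.)

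\textbf{Step 1: the $\eta$-powers.} In the Weyl calculus,
\begin{equation*}
\Op(\eta^\alpha \tilde a) = \Op(\tilde a)\circ (hD_y)^\alpha + \sum_{j\geq 1} h^j \Op(r_j) + h^{K}\Op(\tilde r_K).
\end{equation*}
This is the composition formula \cite[Theorem 4.12]{ZworskiSemiclassicalAnalysis} in the Weyl form, applied to $\tilde a \# \eta^\alpha$. Each $r_j$ is a derivative of $\tilde a$ times a monomial $\eta^{\alpha'}$ with $|\alpha'| = |\alpha| - j$, so every $r_j \in S(1)$ locally. By \eqref{diffgenGaussian}, $(hD_y)^\alpha v$ equals $v$ times a sum of terms $h^{\frac12(|\alpha|-|\gamma|)}\tilde c_{\alpha\gamma}(\omega) y^\gamma$. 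Here $\tilde c_{\alpha\gamma}$ has degree $\frac12(|\alpha|-|\gamma|)$ in $\omega$.

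\textbf{Step 2: the $y$-powers.} The $y^\beta$ factor in the symbol is handled the same way: $\Op(y^\beta b) = y^\beta \Op(b) + h\cdot(\text{lower terms})$, where the lower terms have fewer $y$-powers and derivatives of $b$ in $\eta$. A cleaner route is to commute $y^\beta$ to the right via the Weyl formula $\Op(y_j b) = \tfrac12\big(y_j\Op(b) + \Op(b)y_j\big)$ with symmetric corrections. Iterating this, every term becomes
\begin{equation*}
h^{k}\,\Op(c)\, y^{\beta_1}(hD_y)^{\alpha_1} v, \qquad c \in S(1),
\end{equation*}
with $h^k$ compensating exactly the lost powers, i.e.\ $k + |\beta_1| + |\alpha_1| \ge |\beta| + |\alpha|$ in the weighted sense.

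\textbf{Step 3: estimate each term.} By Calder\'on--Vaillancourt, $\|\Op(c)\|_{L^2\to L^2}\lesssim 1$, so it suffices to bound $\|y^{\beta_1} y^\gamma v\|_{L^2}$ times the coefficient. The moment bound \eqref{wtuL2norm} (with $v$ in place of $u$, up to the $|a(t)|$ normalization) gives
\begin{equation*}
\|y^{\beta_1} y^\gamma v\|_{L^2} \lesssim (h^{1/2}\mu)^{|\beta_1|+|\gamma|}\|v\|_{L^2}.
\end{equation*}
The coefficient $\tilde c_{\alpha\gamma}(\omega)$ is $\lesssim \mu^{|\alpha|-|\gamma|}$, using $\|\omega\|\lesssim\mu^2$ from \eqref{siegelbd1}. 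Multiplying, each term is at most
\begin{equation*}
h^{\frac12(|\beta|+|\alpha|)}\,\mu^{|\beta_1|+|\gamma|+|\alpha|-|\gamma|}\cdot(\text{harmless}),
\end{equation*}
which is bounded by $h^{\frac12(|\alpha|+|\beta|)}\mu^{|\beta|+3|\alpha|}$ since $\mu\ge1$. The remainder $h^K\Op(\tilde r_K)$ is $\mathcal O(h^K)$, and because $\mu\le h^{-1/26}$ it is dominated once $K\ge |\alpha|+|\beta|$.

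\textbf{Main obstacle.} The bookkeeping of powers of $h$ against lost $y$- and $\eta$-factors in the commutations is the hard part. One must check that each $h$ gained from a commutator compensates for half a unit of homogeneity at least. Otherwise a term with fewer $y$'s but no compensating $h$ would spoil the $h^{\frac12(|\alpha|+|\beta|)}$ rate. I would organize this with the semiclassical weight: count $y$ and $hD$ each as order $h^{1/2}$, and a bare $h$ as order $h^{1}$. Every commutation trades two of the former for one of the latter, so the total order is preserved.
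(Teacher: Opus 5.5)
Your proposal is essentially the paper's proof, written at the operator level instead of on the oscillatory integral. The exact Weyl identities $\Op(y_jb)=\Op(b)\,y_j+\tfrac{ih}{2}\Op(\partial_{\eta_j}b)$ and $\Op(a)\circ(hD_y)^\alpha=\Op\big(\sum_{\delta\le\alpha}\binom{\alpha}{\delta}(\tfrac{ih}{2})^{|\delta|}\eta^{\alpha-\delta}\partial_y^\delta a\big)$ correspond exactly to two steps in the paper:
\begin{itemize}
\item its binomial expansion of $(\frac{y+z}{2})^{\beta}$, with $(y-z)^{\beta_1}$ traded for $(hD_\eta)^{\beta_1}$;
\item its integration by parts $\eta^\alpha\to(-hD_z)^\alpha$.
\end{itemize}
Both routes end with an $S(1)$ operator applied to $y^{\beta_2}(hD_y)^{\alpha_3}v$, which is then bounded via \eqref{diffgenGaussian} and the Gaussian moments.

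Two minor points on Step 1. First, these expansions are finite, so there is no remainder $\tilde r_K$. Second, your $r_j$ are not in $S(1)$, because they still carry $\eta^{\alpha-\delta}$. They must be fed back into Step 1, as your weight count $2k+|\beta_1|+|\alpha_1|\ge|\alpha|+|\beta|$ presumes. Estimating them directly would only give a factor $h$.

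One step is wrong as written, although the conclusion survives. In $(hD_y)^\alpha v=v\sum_\gamma h^{\frac12(|\alpha|-|\gamma|)}\tilde c_{\alpha\gamma}(\omega)y^\gamma$, the coefficient $\tilde c_{\alpha\gamma}$ has degree $\frac12(|\alpha|+|\gamma|)$ in $\omega$, not $\frac12(|\alpha|-|\gamma|)$. Already $(hD_{y_j})v=(\omega y)_j\,v$ is linear in $\omega$ with $|\gamma|=|\alpha|=1$. The paper's sentence after \eqref{diffgenGaussian} contains the same slip, but Lemma \ref{L:polysymbolgenGaussian} and the exponent in \eqref{highordervanbd} use the correct degree.

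Hence $|\tilde c_{\alpha\gamma}|\lesssim\mu^{|\alpha|+|\gamma|}$. The top term $\gamma=\alpha$ then gives $\|(hD_y)^\alpha v\|_{L^2}\lesssim h^{\frac12|\alpha|}\mu^{3|\alpha|}\|v\|_{L^2}$. This, not slack, is where $\mu^{3|\alpha|}$ comes from. With the corrected count your final $\mu$-exponent is $|\beta_1|+|\alpha_1|+2|\gamma|\le|\beta|+3|\alpha|$, so the lemma still follows.
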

\begin{proof}
Observe that 
\begin{equation}\label{weylhighordervan}
	(\Op(z^\beta \eta^\alpha \tilde a)v)(y)=\frac{1}{(2\pi h)^d} \iint e^{\frac ih (y-z)\cdot\eta } \Big( \frac{y+z}{2} \Big)^\beta \eta^\alpha \tilde a\Big(\frac{y+z}{2}, \eta\Big) e^{-\frac{1}{2h} \omega z \cdot z}\,dzd\eta.
\end{equation}
We then use the following identities to integrate by parts in the integral
\begin{equation*}
	e^{-\frac ih (y-z)\cdot \eta}(hD_\eta)^\beta e^{\frac ih (y-z)\cdot \eta} = (y-z)^\beta,
	 \qquad e^{-\frac ih (y-z)\cdot \eta}(-hD_z)^\alpha e^{\frac ih (y-z)\cdot \eta} = \eta^\alpha.
\end{equation*}
Integration by parts with respect to $z$, results in a sum of integrals of the following type with $\alpha_1 + \alpha_2 + \alpha_3 = \alpha$ and $\alpha_1 \leq \beta$
\begin{equation}\label{weylhighordervansub}
	\frac{h^{|\alpha_1|+|\alpha_2|}}{(2\pi h)^d} \iint e^{\frac ih (y-z)\cdot\eta } \Big( \frac{y+z}{2} \Big)^{\beta-\alpha_1}  \big(\prtl_z^{\alpha_2}\tilde a\big)\Big(\frac{y+z}{2}, \eta\Big) (hD_z)^{\alpha_3} e^{-\frac{1}{2h} \omega z \cdot z}\,dzd\eta ,
\end{equation}
where we have omitted inconsequential constants in the expressions.  We now use the binomial theorem to write
\begin{equation*}
	 \Big( \frac{y+z}{2} \Big)^{\beta-\alpha_1} = \sum_{\beta_1 + \beta_2 = \beta - \alpha_1} \binom{\beta-\alpha_1}{\beta_1} 2^{-|\beta_1|} (y-z)^{\beta_1}z^{\beta_2}.
\end{equation*}
We now use this to integrate by parts in each integral in \eqref{weylhighordervansub} to get a sum of integrals of the form
\begin{equation*}
		\frac{h^{|\alpha_1|+|\alpha_2| +|\beta_1|}}{(2\pi h)^d} \iint e^{\frac ih (y-z)\cdot\eta }  \big(\prtl_\eta^{\beta_1}\prtl_z^{\alpha_2}\tilde a\big)\Big(\frac{y+z}{2}, \eta\Big) z^{\beta_2}(hD_z)^{\alpha_3} e^{-\frac{1}{2h} \omega z \cdot z}\,dzd\eta ,
\end{equation*}
omitting inconsequential constants again.  We now use \eqref{diffgenGaussian} and standard $L^2$ bounds on PDOs to get that the $L^2$ norm of the function defined by this integral is dominated by 
\begin{equation*}
	h^{|\alpha_1|+|\alpha_2| +|\beta_1|+\frac 12 (|\beta_2|+|\alpha_3|)}  \mu^{|\beta_2|+3|\alpha_3|} \|v\|_{L^2} . 
\end{equation*}
We now see that the largest contributions to function defined by \eqref{weylhighordervansub} comes from the term with $\beta_2 = \beta-\alpha_1$, $\beta_1=0$, meaning its $L^2$ norm is bounded by 
\begin{equation*}
		h^{\frac 12 (|\beta|+|\alpha_1|+|\alpha_3|)+|\alpha_2| }  \mu^{|\beta|-|\alpha_1|+3|\alpha_3|} \|v\|_{L^2} .
\end{equation*}
We then see the largest contribution to \eqref{weylhighordervan} comes from the integral \eqref{weylhighordervansub} with $\alpha = \alpha_3$, $\alpha_1 = \alpha_2 =0$, at which point \eqref{highordervanbd} follows.
\end{proof}

Given these lemmas, we conclude the proof of Theorem \ref{T:halfwaveu}.  Returning to \eqref{weyllapconsequence}, we express each $\Op(b_j)u$ in the same way as \eqref{weylshift}. We then take a Taylor approximation of each $b_j$ about $(0,\xi)$ in the integral to sufficiently high order so that the remainder vanishes to order $N_1+1$ at $(0,\xi)$ at which point Lemma \ref{L:highordervanbd} implies that this contribution of the error is $\mathcal{O}(h^{\frac{N_1+1}{2}} \mu^{3(N_1+1)})$ which is $\mathcal{O}(h^{N_0})$ provided $N_1$ is sufficiently large.  More precisely, it suffices to consider the Weyl quantization of the following Taylor polynomial acting on $u$
\begin{multline}\label{bigtaylor}
	\sum_{3 \leq |\alpha|+|\beta| \leq N_1} \frac{1}{\alpha!\beta!}\big(\prtl_{x}^\beta \prtl_{\xi}^\alpha b_0\big)(0,\xi) y^\beta \eta^\alpha + 
	\sum_{1 \leq |\alpha|+|\beta| \leq N_1}  \frac{h}{\alpha!\beta!} \big(\prtl_{x}^\beta \prtl_{\xi}^\alpha b_1\big)(0,\xi) y^\beta \eta^\alpha \\ 
	+ \sum_{j=2}^{N_0}  \sum_{0\leq |\alpha| + |\beta|\leq N_1} \frac{h^j}{\alpha!\beta!}\big(\prtl_{x}^\beta \prtl_{\xi}^\alpha b_j\big)(0,\xi) y^\beta \eta^\alpha.
\end{multline}
Indeed, the difference $h\sqrtl-\Op(p_2)$ cancels out all monomials of degree 2 or smaller in the Taylor expansion for $b_0$. Moreover, $b_1(0,\xi)=0$ by  \eqref{leadingweyl} since the first partials of the metric vanish in our normal coordinate system.  

We now apply Lemma \ref{L:polysymbolgenGaussian}, to each term in \eqref{bigtaylor}, which produces a polynomial of degree $N_1$ in $y$ multiplied by $u$.  The bounds on coefficient $f_\gamma(\omega,h)$ of $y^\gamma$ in this polynomial are determined by a sum over $\alpha,\beta$ satisfying $|\gamma| \leq |\alpha|+|\beta| \leq N_1$
\begin{equation}\label{coeffbds}
	|h^{j+\frac 12(|\alpha|+|\beta|-|\gamma|)}c_{\alpha,\beta,\gamma}(\omega)| 
	\lesssim h^{j+\frac 12(|\alpha|+|\beta|-|\gamma|)}
	\mu^{|\gamma|+|\alpha| - |\beta|} = h^j(h^{\frac 12}\mu)^{|\alpha|+|\beta|-|\gamma|}\mu^{2(|\gamma|-|\beta|)},
\end{equation}
and $c_{\alpha,\beta,\gamma}(\omega)$ vanishes unless $|\alpha|+|\beta|$ has the same parity as $|\gamma|$.  Using \eqref{mubddh}, $h^j(h^{\frac 12}\mu)^{|\alpha|+|\beta|-|\gamma|}\leq 1$.  The first case of \eqref{fcoeffs} with $|\gamma| \geq 3$ then follows, as the quantity is maximized when $|\beta| = 0$. 

The remaining cases of \eqref{fcoeffs} with $0\leq |\gamma|\leq 2$ are then determined by the case $j=0$.   In this case, we first maximize the right hand side of \eqref{coeffbds} over all $\alpha,\beta$ satisfying  $N=|\alpha|+|\beta|$ where $N\geq 3$ and $N$ has the same parity as $|\gamma|$.  This is maximized when $|\beta|=0$, which gives
\begin{equation*}
	(h^{\frac{1}{2}}\mu)^{N-|\gamma|}\mu^{2|\gamma|}.
\end{equation*}
We then maximize this over all $N \geq 3$ of the same parity as $|\gamma|$.  The maximum occurs when $N=3$ in the $|\gamma| =1$ case and when $N=4$ in the $|\gamma|=2,0$ cases, which is exactly what appears in \eqref{fcoeffs}.

The contribution of the terms where $j \geq1$ are estimated similarly and satisfy bounds which are stronger than \eqref{fcoeffs}.  Although we have to consider smaller values of $N=|\alpha|+|\beta|$ (limited to $N \geq 1$ when $j=1$), this is more than compensated by the larger exponent of $h$ (cf. \eqref{mubddh}).

\begin{corollary}\label{C:halfwaveu}
	Let $P$ be a PDO with symbol in $S(1)$.
	For $N_0$ sufficiently large, there exists $N_1$ depending only on $N_0$ such that
	\begin{equation}\label{halfwaveuP}
		P \circ (hD_t+h\sqrtl) u = \Big(\sum_{|\gamma| \leq N_1} \tilde f_\gamma(\omega,h) y^\gamma \Big)u + E(t,y) .
	\end{equation}
	As in Theorem \ref{T:halfwaveu}, $\|E(t,\cdot)\|_{L^2} \lesssim h^{N_0}$ and the coefficients $\tilde f_\gamma(\omega,h)$ satisfy the bounds in \eqref{fcoeffs}.
\end{corollary}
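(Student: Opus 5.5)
We start from Theorem \ref{T:halfwaveu}. It gives $(hD_t+h\sqrtl)u = F u + E$, where $F(t,y)=\sum_{|\gamma|\le N_1} f_\gamma(\omega,h)y^\gamma$ and $\|E(t,\cdot)\|_{L^2}\lesssim h^{N_0}$. Applying $P$ to both sides, the term $PE$ is harmless: $P$ has symbol in $S(1)$, so it is bounded on $L^2$ uniformly in $h$, and $\|PE\|_{L^2}\lesssim h^{N_0}$. The whole task is therefore to rewrite $P(y^\gamma u)$, for each $|\gamma|\le N_1$, in the form (polynomial in $y$)$\cdot u$ plus an $\mathcal O(h^{N_0'})$ error. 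The coefficients of the new polynomial must still obey \eqref{fcoeffs}.

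To do this, we work in the normal coordinates centered at $x_t$. Using Weyl quantization (or the standard quantization, converting via the usual calculus), we write $P(y^\gamma u)=\Op(p\,\#\,y^\gamma)u$. Since $y^\gamma$ is a polynomial, the Moyal product is an exact finite sum:
\[
p\,\#\,y^\gamma=\sum_{\nu\le\gamma}c_\nu h^{|\nu|}(\prtl_\eta^\nu p)\,y^{\gamma-\nu}.
\]
It thus suffices to understand $\Op(\tilde p)(y^\kappa u)$ for $\tilde p\in S(1)$. As in \eqref{weylshift}, we conjugate out the oscillation $e^{\frac ih |\xi|_g y_d}$ and Taylor expand $\tilde p$ around $(0,\xi_t)$, the center of the wave packet, to order $N_2$. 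The Taylor remainder is controlled by Lemma \ref{L:highordervanbd} applied to $y^\kappa v$ (the extra factor $y^\kappa$ only helps, via \eqref{wtuL2norm}). This gives a bound $\mathcal O(h^{(N_2+1)/2}\mu^{3(N_2+1)})$, which is $\mathcal O(h^{N_0})$ for $N_2$ large by \eqref{mubddh}. The Taylor polynomial terms are monomials $y^\beta\eta^\alpha$ with bounded coefficients. Lemma \ref{L:polysymbolgenGaussian}, extended to $y^\kappa v$ (either by the same induction or by absorbing $y^\kappa$ into the symbol as $y^{\beta+\kappa}$ via the exact Weyl product formula), produces $v\sum_{\gamma'}h^{\frac12(|\alpha|+|\beta|-|\gamma'|)}c(\omega)y^{\gamma'}$. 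The coefficients $c(\omega)$ are homogeneous in $\omega$ of the degree specified there.

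The main point is verifying that the resulting coefficients $\tilde f_{\gamma'}$ satisfy \eqref{fcoeffs}. Each original coefficient $f_\gamma y^\gamma$ is multiplied by factors of the form $h^{\frac12(|\alpha|+|\beta|+|\gamma|-|\gamma'|)}\mu^{|\gamma'|-|\gamma|+|\alpha|-|\beta|}$, coming from the homogeneity count as in \eqref{coeffbds}, where now $y^\gamma$ plays the role of extra $y$-powers. When $|\gamma'|\le|\gamma|$, each lost power of $y$ costs a factor $h^{1/2}$ but gains at most a factor $\mu$. Since $h^{1/2}\mu^{c}\ll1$ by \eqref{mubddh}, the terms with $\gamma'$ of lower degree satisfy bounds at least as good as the original. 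This needs a check against the specific thresholds: for example, a $|\gamma|=3$ term with coefficient $\mu^6$ feeding into $|\gamma'|=1$ gives $h\mu^{8}$-type quantities. These must be compared with the required $h\mu^4$. Such cross terms are the expected obstacle. I would first check that they only arise with extra $h$ powers, for instance because odd parity forces $|\alpha|\ge1$ with an $\eta$-derivative of $p$. If they don't, I would accept slightly larger powers of $\mu$, since the later error analysis only uses \eqref{fcoeffs} up to harmless powers of $\mu\le h^{-1/26}$.

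When $|\gamma'|>|\gamma|$ the factor is $\mu^{2(|\gamma'|-|\gamma|)}$ at worst, which is compatible with the $\mu^{2|\gamma'|}$ bound for $|\gamma'|\ge3$. For $|\gamma'|\le 2$, the required smallness comes from the $h$ already present in $f_\gamma$ for $|\gamma|\le2$, or from $h^{1/2}$ factors. Finally, we truncate all polynomials at degree $N_1'$. This can be done because higher-degree terms $y^{\gamma'}u$ are $\mathcal O((h^{1/2}\mu)^{|\gamma'|}\mu^{2|\gamma'|})$ in $L^2$ by \eqref{wtuL2norm}, and can therefore be moved into $E$. Renaming $N_1'$ as $N_1$ gives \eqref{halfwaveuP}.
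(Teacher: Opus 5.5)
\paragraph{Verdict.} Your route is the paper's own. You apply $P$ to each term of \eqref{halfwaveu}, conjugate out the oscillation as in \eqref{weylshift}, Taylor expand the symbol about $(0,\xi)$, and invoke Lemmas \ref{L:polysymbolgenGaussian} and \ref{L:highordervanbd}. Folding $y^\gamma$ into the symbol via the exact Weyl product is a good refinement, because the two lemmas then apply verbatim rather than in ``analogous versions''. Strictly, what you obtain is $\Op\big((\prtl_\eta^\nu p)\,y^{\gamma-\nu}\big)u$, with the monomial inside the symbol, not $\Op(\tilde p)(y^\kappa u)$.

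\paragraph{The gap.} It is in the final step, which is the actual content of the corollary. You do not verify that the new coefficients satisfy \eqref{fcoeffs}. You estimate the $|\gamma|=3\to|\gamma'|=1$ contribution as $h\mu^8$, which would violate the required $h\mu^4$. Your fallback of accepting larger powers of $\mu$ would prove a weaker statement than the one given.

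\paragraph{The fix.} The obstacle is not real. Your own factor $h^{\frac12(|\alpha|+|\beta|+|\gamma|-|\gamma'|)}\mu^{|\gamma'|-|\gamma|+|\alpha|-|\beta|}$ removes it once you note that the $\mu$-exponent is twice the $\omega$-degree of the coefficient in Lemma \ref{L:polysymbolgenGaussian}. A homogeneous polynomial of negative degree vanishes, so for every nonzero term this exponent is a nonnegative even integer.
\begin{itemize}
\item If the degree drops by $L=|\gamma|-|\gamma'|>0$, then $|\alpha|-|\beta|-L=2j\ge 0$. The factor equals $h^{L+|\beta|}(h\mu^2)^j\le h^L$, so each lost power of $y$ costs a full $h$, not the $h^{1/2}\mu$ of your heuristic.
\item If the degree rises by $R=|\gamma'|-|\gamma|\ge 0$, then $|\alpha|+|\beta|-R=2j\ge0$. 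The factor is $(h\mu^2)^j\mu^{2R-2|\beta|}\le \mu^{2R}$.
\end{itemize}

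Since $h\mu^2\le 1$, the list \eqref{fcoeffs} is stable under multiplication by $h^L$ when lowering and by $\mu^{2R}$ when raising. For instance:
\begin{itemize}
\item $\mu^6h^2\le h\mu^4$ and $\mu^6h^3\le h^2\mu^4$;
\item $\mu^{2n}h^{n-2}\le h\mu^6$ for $n\ge 3$;
\item $h\mu^4\cdot\mu^2=h\mu^6$ and $h^2\mu^4\cdot\mu^4\le h\mu^6$;
\item $h\mu^6\cdot\mu^{2(n-2)}\le \mu^{2n}$.
\end{itemize}
The remaining cases are identical. In particular, the term you worried about is $\lesssim h^2\mu^6$. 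With this bookkeeping inserted, your argument gives \eqref{halfwaveuP} with coefficients obeying \eqref{fcoeffs} exactly, as claimed.
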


Since the present section does not use Proposition \ref{P:invwpt}, we allow $P$ to denote different PDOs than the one there as their use is isolated to \S\ref{S:WPansatz}.

The corollary is proved by taking the expansion \eqref{halfwaveu} furnished by Theorem \ref{T:halfwaveu}.  For each term, we write its image under $P$ similarly to  \eqref{weylshift}, then take a Taylor expansion of the symbol of $p$ about $(0,\xi)$ as before.  Arguing similarly to the above, using analogous versions of Lemmas \ref{L:polysymbolgenGaussian} and \ref{L:highordervanbd}, it then follows the image of each term under $P$ is a polynomial in $y$ times $u$ where the coefficients satisfy the bounds in \eqref{fcoeffs}.

\subsubsection{Relation with the classical Gaussian beam ansatz}\label{SSS:classicalbeams}
One classical approach to Gaussian beams (e.g. \cite{RalstonGaussianBeamsSurvey}, \cite{KatchalovKurylevLassasBook}) (in an arbitrary coordinate system) begins with an ansatz on a phase function $\tilde\phi(t,y)$ such that the eikonal quantity $	\prtl_t\tilde\phi +|d_y\tilde\phi|_{g(y)}$ vanishes to second order on $x_t$ (as opposed to vanishing identically).  It is then observed that if $\tilde\phi$ has the form
\begin{equation}\label{classicalansatz}
	\tilde\phi(t,y) = \xi_t\cdot(y-x_t) + W_t(y-x_t)\cdot (y-x_t) 
\end{equation}
the eikonal equation will then vanish at least to first order along $x_t$.  Another approach, taken by Combescure and Robert \cite{CombescureRobertBook}, is to take a quadratic expansion of the symbol of $h \sqrtl$, analogously to Lemma \ref{L:quadexpn}, then use the metaplectic representation to arrive at the same phase function.  Either way, it is necessary and sufficient that the entries of the matrix $W_t$ satisfy
\begin{equation}\label{Weqn}
	\dot{W}_{jk} + W_{j\ell} (\prtl_{\xi_\ell \xi_m} \tilde H) W_{m k} + W_{j\ell } (\prtl_{x_k \xi_\ell} \tilde H) + (\prtl_{x_j\xi_\ell } \tilde H) W_{\ell k} + \prtl_{x_j x_k} \tilde H =0,
\end{equation}
where $\tilde H(x,\xi) = |\xi|_{g(x)}$ as above.
In \cite{KatchalovKurylevLassasBook}, \cite{KatchalovLassas}, it is observed $W$ is not invariant under changes of coordinates, but that the following correction yields an invariant (0,2)-tensor on $T_{x_t}M$
\begin{equation}\label{invarianttensor}
	W_{jk}(t) - \Gamma_{jk}^\ell (x(t)) \xi_\ell (t)
\end{equation}
Moreover, it is seen there that if one raises an index of this tensor, then the resulting $(1,1)$ tensor satisfies the equation \eqref{riccatiansatz}.  In other words, $\omega$ is exactly the tensor in \eqref{invarianttensor}.

While not crucial for our proof, it is interesting to observe that $\phi$ in \eqref{phidef} agrees with \eqref{classicalansatz} up to third order in $y-x_t$.
Indeed, given Proposition \ref{P:covariantlogexpn}, we have in an arbitrary coordinate system
\begin{equation*}
	\phi= (\xi_t)_k (y-x_t)^k + \frac 12  \big( (\xi_t)_k \Gamma_{ij}^k(x_t) +(\omega_t)_{ij} \big) (y-x_t)^i (y-x_t)^j  + \mathcal{O}(|y-x_t|^3),
\end{equation*}
and the matrix expressing the purely quadratic term is exactly $W$.  Hence the main difference in the two approaches is that using $\expy$ is completely coordinate invariant (and vanishes at  $y=x_t$).

\subsection{Construction of the parametrix}\label{SS:paramcon}
In this section, we consider the approximate solutions  
\begin{equation}\label{udefparametrix}
	u(t,y,x,\xi) = (\pi h)^{-\frac d4} \psi(x_t(x,\xi),y)a(t,x,\xi)e^{\frac ih \phi(t,y,x,\xi)} 
\end{equation} 
where $a,\phi$ are as in \eqref{phidef}, \eqref{riccatiansatz}, \eqref{amplitudeansatz}.  In other words, we redefine $u$ as in \eqref{udef}, to include the cutoff discussed in Remark \ref{R:cutoff}, use notation that reflects the dependence on the point $(x,\xi) \in T^*M$.  We use this to define the operator $\Vsc_t$ we need in Theorem \ref{T:Vtthm} satisfying \eqref{Vproperties}, namely
\begin{equation}\label{Vtexpression}
 (\Vsc_t G)(y) = 	(2\pi h)^{-\frac d2 }\int_{T^* M} u(t,y,x,\xi) \frac{  \tilde\beta\big(|\xi|_{g(x)}\big) }{\detexp^{\frac 14}(g_{jk}(x))}G(x,\xi) \,dx d\xi.
\end{equation}
To see that $\Vsc_0 = \Ssc^*$, recall from \eqref{wptdef} that 
\begin{equation*}
	\begin{split}
				(\Ssc^*G)(y) &= 2^{-\frac d2} (\pi h)^{-\frac{3d}{4}}\int_{T^* M} e^{\frac ih(\xi(\expyno) + \frac i2 d^2(x,y))} \frac{ \psi(x,y) \tilde\beta\big(|\xi|_{g(x)}\big) }{\detexp^{\frac 14}(g_{jk}(x))}G(x,\xi) \,dx d\xi\\
				&= (2\pi h)^{-\frac d2 }\int_{T^* M} u(0,y,x,\xi) \frac{  \tilde\beta\big(|\xi|_{g(x)}\big) }{\detexp^{\frac 14}(g_{jk}(x))} G(x,\xi) \,dx d\xi,		
	\end{split}
\end{equation*}
where we used \eqref{initialdist} in the second identity and that $(x_t,\xi_t)|_{t=0} = (x,\xi)$.

We now use Theorem \ref{T:halfwaveu} to write
\begin{equation*}
	((hD_t+h\sqrtl) u)(t,y,x,\xi) = \pbf(t,y,x,\xi)u+ E(t,y,x,\xi) ,
\end{equation*}
where $\pbf(t,y,x,\xi) = \sum_{|\gamma| \leq N_1}f_\gamma(\omega_t(x,\xi),h) y^\gamma$  when $y$ is expressed in normal coordinates at $x_t$.
Alternatively, we can replace any $y_j$ in $y^\gamma$ by $dy_j|_{x_t} (\expy)$ in $\pbf$.  Since $\|E(t,\cdot,x,\xi)\|_{L^2} \lesssim h^{N_0}$ 
\begin{equation}\label{Econtrib}
	\bigg\| (2\pi h)^{-\frac d2 }\int_{T^* M} E(t,y,x,\xi)  \frac{  \tilde\beta\big(|\xi|_{g(x)}\big) }{\detexp^{\frac 14}(g_{jk}(x))}G(x,\xi) \,dx d\xi\bigg\|_{L^2(M)} \lesssim h^{N_0-\frac{d}{2}}\|G\|_{L^2(T^*M)} 
\end{equation}
which follows from Cauchy-Schwarz and Fubini's theorem, using that $\tilde \beta$ is compactly supported.  By taking $N_0$ large enough, the right hand side here yields a stronger contribution than the right hand side of \eqref{Vproperties}.

Now define 
\begin{equation}\label{etdef}
	(\E_tG)(y)	= (2\pi h)^{-\frac d2 }\int_{T^* M}  \pbf(t,y,x,\xi)u(t,y,x,\xi) \frac{  \tilde\beta\big(|\xi|_{g(x)}\big) }{\detexp^{\frac 14}(g_{jk}(x))}G(x,\xi) \,dx d\xi .
\end{equation}
As shown below, Theorem \ref{T:Vtthm} reduces to the following, which we prove in \S\ref{SS:errorestimates}.
\begin{theorem}\label{T:paramerror} 
	Let $\tilde{\tilde\beta} $ be a bump function supported in $(0,\infty)$ such that $\tilde{\tilde\beta} \tilde\beta = \tilde\beta$, that is, $\tilde{\tilde\beta}$ is identically 1 on the support of $\tilde\beta$.  Let $\Sscwt$ be the wave packet transform adapted to $\tilde{\tilde\beta}$.  Its composition with $\E_t$ satisfies
	\begin{equation}\label{paramerror}
		\| \Sscwt \circ \E_t \|_{L^2(T^*M) \to L^2(T^*M)} \lesssim h^{\frac 32} \mu^{2d+9}.
	\end{equation}
Moreover, if $P$ is a PDO with symbol in $S(1)$, then
	\begin{equation}\label{paramerror2}
	\| \Sscwt \circ P \circ \E_t \|_{L^2(T^*M) \to L^2(T^*M)} \lesssim h^{\frac 32} \mu^{2d+9}.
\end{equation}
\end{theorem}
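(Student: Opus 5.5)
The plan is to study the kernel of $\Sscwt\circ\E_t$ as an operator on $L^2(T^*M)$ and bound it with Schur's test. By \eqref{etdef}, this kernel is a superposition over $|\gamma|\le N_1$ of the matrix elements
\begin{equation*}
\K_\gamma(z,\zeta;x,\xi)=f_\gamma(\omega_t(x,\xi),h)\,\big\langle (y^\gamma u)(t,\cdot,x,\xi),\, g_{z,\zeta}\big\rangle ,
\end{equation*}
where $g_{z,\zeta}$ is the Gaussian \eqref{gaussianinvariant} with bump $\tilde{\tilde\beta}$, and the whole expression carries the prefactor $(2\pi h)^{-d/2}\tilde\beta(|\xi|)$. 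For each fixed $(x,\xi)$, I will use normal coordinates centered at $x_t$ and Taylor expand the phase of $\overline{g_{z,\zeta}}$ about $y=0$. The cubic and higher terms, together with the volume density, are absorbed into the amplitude, just as in the proof of Theorem \ref{T:halfwaveu}.

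This leaves a Gaussian integral whose phase has a quadratic form $\frac12(\omega_t - (\text{matrix of }g_{z,\zeta}))$. Its imaginary part is bounded below by roughly $\Im\omega_t+I$, and \eqref{siegelbd3}, \eqref{siegelbd4} guarantee it is uniformly nondegenerate. I will apply Theorem \ref{T:spquad} to this integral. Its hypotheses \eqref{spphasehyp} hold because \eqref{siegelbd5} and \eqref{mubddh} give $\mu^2\le h^{-1/13}$. The result is an explicit leading term
\begin{equation*}
(\text{prefactor})\cdot e^{\frac ih\Phi}\cdot(\text{polynomial in } h^{1/2}\text{ and the shift}),
\end{equation*}
whose modulus should decay like $\exp\!\big(-c h^{-1}\,Q_t\big((z,\zeta)-(x_t,\xi_t)\big)\big)$. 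Here $Q_t$ is a positive quadratic form. Remark \ref{R:alphaactionsvd} and Corollary \ref{C:alphaactionsvd} let me diagonalize $Q_t$ in terms of the singular values of $d\tilde\kappa_t$. For Schur's test I only need the crude bounds that $Q_t$ dominates $\mu^{-2}|\cdot|^2$ and that the determinant prefactor $|\det(\omega_t+i)|^{-1/2}|a(t)|$ is $O(1)$. The factor $y^\gamma$ is converted into derivatives of the Gaussian (Lemma \ref{L:polysymbolgenGaussian}), each costing at most $h^{1/2}\mu$ by \eqref{wtuL2norm}. It will be convenient to also expand $z$ in these coordinates.

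With this in hand, the $L^1$ norm in $(z,\zeta)$ at fixed $(x,\xi)$ is at most
\begin{equation*}
h^{-d}\cdot h^{d}\mu^{2d}\cdot\sup_\gamma |f_\gamma|\,(h^{1/2}\mu)^{|\gamma|}.
\end{equation*}
The same bound holds symmetrically in $(x,\xi)$ at fixed $(z,\zeta)$, using the change of variables $(x,\xi)\mapsto\tilde\kappa_t(x,\xi)$, which preserves Liouville measure, together with uniformity of the Gaussian decay.

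I then insert \eqref{fcoeffs}:
\begin{itemize}
\item for $|\gamma|\ge3$, $\mu^{2|\gamma|}(h^{1/2}\mu)^{|\gamma|}$ is at least as good as $h^{3/2}\mu^9$;
\item for $|\gamma|=2$, $h\mu^6\cdot h\mu^2$ suffices;
\item for $|\gamma|=1$, $h\mu^4\cdot h^{1/2}\mu=h^{3/2}\mu^5$;
\item for $|\gamma|=0$, $h^2\mu^4$ suffices.
\end{itemize}
Multiplying by the volume loss $\mu^{2d}$ gives \eqref{paramerror} with room to spare, up to bookkeeping of the exponent $2d+9$. The large-$|\gamma|$ terms are controlled because $h^{1/2}\mu^3\le h^{1/2-3/26}\ll 1$. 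Contributions from the region where the cutoff $\psi$ is not $1$ are $O(h^\infty)$ by Remark \ref{R:cutoff}.

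For \eqref{paramerror2}, I write $P\circ\E_t$ using Corollary \ref{C:halfwaveu} in place of Theorem \ref{T:halfwaveu}. That is, I replace $\pbf u$ by $P(\pbf u)=\tilde{\pbf}u+E$, with coefficients $\tilde f_\gamma$ obeying the same bounds \eqref{fcoeffs}, and then rerun the argument above verbatim. I expect the main obstacle to be the second step: obtaining a matrix-element bound from Theorem \ref{T:spquad} that is uniform in $\mu$. In particular, the Gaussian decay must really be quadratic in the displacement from $(x_t,\xi_t)$ with constants controlled only polynomially by $\mu$, despite the non-quadratic $\expz$ terms. I would handle those terms by restricting to $|y|\lesssim h^{1/2-\epsilon}\mu$, where they are perturbative, and dismissing the complement by \eqref{wtuL2norm}.
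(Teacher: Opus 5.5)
Your proposal is correct, and its analytic core matches the paper's. The kernel of $\Sscwt\circ\E_t$ is $\sum_\gamma f_\gamma\Gsc_\gamma$ with $\Gsc_\gamma$ as in \eqref{matrixeltgamma}. The matrix elements are expanded via Theorem \ref{T:spquad}, which is Theorem \ref{T:matrixeltasymp}. Everything is controlled by $\Im\psi\gtrsim\mu^{-2}|\upsilon|^2$ and the bounded determinant factor from the Siegel disk analysis.

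The difference is in how the operator norm is extracted. The paper does not run Schur on the kernel. It proves the $L^2$ bound of Lemma \ref{L:gsclemma} in $(z,\zeta)$, which carries the gain $(h^{1/2}\mu)^{|\gamma|}$, and then argues via $T^*T$:
\begin{itemize}
\item the crude decay \eqref{crudemearbcoord} and $\|d\tilde\kappa_{-t}\|\lesssim\mu$ make the $T^*T$ kernel $O(h^\infty)$ unless $|x-\tilde x|+|\xi-\tilde\xi|\lesssim h^{1/2}\mu^2$;
\item Cauchy--Schwarz bounds it by $h^{-d}(h^{1/2}\mu^3)^{2|\gamma|}$ otherwise;
\item Young's inequality then gives $\mu^{2d}(h^{1/2}\mu^3)^{|\gamma|}$.
\end{itemize}
Your direct Schur test gives the same $\mu^{2d}(h^{1/2}\mu^3)^{|\gamma|}$. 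It replaces the almost-orthogonality step and the Lipschitz bound on the backward flow with the Liouville invariance of $\tilde\kappa_t$, so it is a one-step argument. The price is that you need a \emph{pointwise} bound carrying both $(h^{1/2}\mu)^{|\gamma|}$ and the Gaussian decay. The paper's pointwise bound \eqref{crudeme} omits the $|\gamma|$-gain, and the paper only obtains that gain in $L^2$.

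That pointwise gain is the one place where your stated justification does not work as written. Lemma \ref{L:polysymbolgenGaussian} turns derivatives into polynomial factors, not the reverse. Also, \eqref{wtuL2norm} with Cauchy--Schwarz only gives $|\langle y^\gamma u,g_{z,\zeta}\rangle|\lesssim(h^{1/2}\mu)^{|\gamma|}$, with no decay in $(z,\zeta)$.

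The correct reason is the structure of the expansion itself. Its terms are $h^{|\nu|/2-k}\mathbf{P}_{\gamma,k,\nu,h}(z,\eta)e^{\frac ih\psi}$ with $\mathbf{P}_{\gamma,k,\nu,h}$ homogeneous of degree $3k+|\gamma|-|\nu|$. By \eqref{siegelbd4} and \eqref{impsiupsilon}, $|z|+|\eta|\lesssim|\upsilon|\lesssim\mu(\Im\psi)^{1/2}$. Hence each term is bounded by the prefactor times $(h^{1/2}\mu^3)^k(h^{1/2}\mu)^{|\gamma|}e^{-\Im\psi/(2h)}$. This is the computation in the proof of Lemma \ref{L:gsclemma}, done pointwise rather than in $L^2$.

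Two smaller points:
\begin{itemize}
\item Since the quadratic form is $\omega_t+i$, the hypotheses \eqref{spphasehyp} hold with uniform constants by \eqref{siegelbd3} and \eqref{siegelbd4}. You do not need \eqref{siegelbd5} or \eqref{mubddh} there.
\item If you localize to $|y|\lesssim h^{1/2-\epsilon}\mu$ to handle the cubic remainder $r$, expand $e^{ir/h}$ into polynomial amplitudes rather than absorbing it as a symbol, since its derivatives are not uniformly bounded. This is essentially the paper's Taylor expansion of $s\mapsto e^{\frac ih(\phi+sr)}$. There the remainder is controlled by $\Im(\phi+sr)\geq\frac14|y-z|^2+\frac14\mu^{-2}|y|^2$ without any cutoff.
\end{itemize}
Your reduction of \eqref{paramerror2} to the same argument via Corollary \ref{C:halfwaveu} is what the paper intends.
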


The rest of the section will show that \eqref{Vproperties} follows from Theorem \ref{T:paramerror}.  Define the PDO $P := h^{-1} (\Sscwt^* \circ \Sscwt - \tilde{\tilde\beta}^2(h\sqrtl))$, which has a symbol in $S(1)$ by Proposition \ref{P:wptpdo}.  We write 
\begin{equation*}
	\E_t = (I-\Sscwt^* \circ \Sscwt) \circ \E_t + \Sscwt^* \circ \Sscwt \circ \E_t = \big(I-\tilde{\tilde\beta}^2(h\sqrtl)\big) \circ \E_t +hP \circ \E_t + \Sscwt^* \circ \Sscwt \circ \E_t,
\end{equation*} 
hence \eqref{paramerror} and the usual $L^2$ bounds on $\Ssc^*$ \eqref{wptL2bds} imply that the $L^2(T^*M) \to L^2(M)$ norm of the last term is $\mathcal{O}(h^{\frac 32} \mu^{2d+9})$.  Moreover,
\begin{equation}\label{strongerror}
 	\Big\|\big(I-\tilde{\tilde\beta}^2(h\sqrtl)\big) \circ \E_t\Big\|_{L^2(T^*M) \to L^2(M)}  = \mathcal{O}(h^\infty).
\end{equation}
To see this, recall that by the same idea in the proof of Proposition \ref{P:invwpt}, we have that up to $\mathcal{O}(h^\infty)$ error, the support of the symbol of $I-\tilde{\tilde\beta}^2(h\sqrtl)$ is contained in $\overline{\{(x,\xi): 1-\tilde{\tilde\beta}^2(|\xi|_{g(x)}) \neq 0\}}$.
Thus if $\xi $ is such that $\tilde\beta(|\xi|_{g}) \neq 0 $ (cf. the integral \eqref{etdef}), then its distance to the support of $(1-\tilde{\tilde\beta})(|\xi|_{g})$ is uniformly bounded below, so it is routine to see that $\|(I-\tilde{\tilde\beta}^2(h\sqrtl))u(t,\cdot,x,\xi)\|_{L^2(M)} = \mathcal{O}(h^\infty)$ as this is realized by an oscillatory integral without critical points.

It remains to show that
$	\|P \circ \E_t\|_{L^2(T^*M) \to L^2(M)}  \lesssim  h^{\frac 12} \mu^{2d+9}.
$ One might expect that this contribution would be much stronger than needed, exploiting that the symbol of $P$ can be taken to be supported away from where $\tilde\beta(|\xi|_{g(x)}) \neq 0$ as before, but the proof of Proposition \ref{P:wptpdo} in Appendix \ref{A:wptpdo} does not reveal this.  
To see this, we again write $P \circ \E_t = (I-\Sscwt^* \circ \Sscwt) \circ P \circ \E_t + \Sscwt^* \circ \Sscwt \circ P \circ \E_t$, so that \eqref{paramerror2} implies that the second term satisfies a stronger bound than needed.  Moreover, we may replace $\E_t$ by $P\circ\E_t$ in \eqref{strongerror} by reasoning similarly as above: for each $x,\xi$, we can take a Taylor expansion of $P$ about $(x,\xi)$ to sufficiently high order, then argue as in Lemmas \ref{L:polysymbolgenGaussian} and \ref{L:highordervanbd}.  The distorted Gaussians resulting from the former satisfy $\mathcal{O}(h^\infty)$ bounds as in \eqref{strongerror}, while the latter bounds the error resulting from the Taylor expansion.  Hence for $N_0$ large enough
\begin{equation*}
	\|(I-\Sscwt^* \circ \Sscwt) \circ P \circ \E_t \|_{L^2(T^*M) \to L^2(M)}  \lesssim  h\| P^2 \circ \E_t \|_{L^2(T^*M) \to L^2(M)} + \mathcal{O}(h^{N_0}),
\end{equation*}
which reduces us to showing that $\| P^2 \circ \E_t \|_{L^2(T^*M) \to L^2(M)} \lesssim  h^{-\frac 12} \mu^{2d+9}$.  

Iterating this argument $N-1$ more times reduces us to $\| P^N \circ \E_t \|_{L^2(T^*M) \to L^2(M)} \lesssim  h^{\frac 32-N} \mu^{2d+9}$.  However, $\|P\|_{L^2(M) \to L^2(M)} \lesssim 1$ and the $E \equiv 1$ case in \eqref{Econtrib} implies we always have the crude bound $\|\E_t\|_{L^2(T^*M)\to L^2(M)} \lesssim h^{-\frac d2}$.  Hence by taking $N$ sufficiently large, we conclude \eqref{Vproperties}.

\section{Phase Space Kernels}\label{S:matrixelements}
In this section we prove Theorems \ref{T:paramerror} and \ref{tdepreduction}, thus completing the proofs of our main theorems.  Both of them will result from establishing asymptotics on the following integrals, where the integrand is expressed in normal coordinates centered at $x_t$:
\begin{equation}\label{matrixeltgamma}
	\begin{aligned}
		\Gsc_\gamma(z,\zeta,t,x,\xi) &:= 2^{-\d}(\pi h)^{-\frac{3\d}{2}}\detexp^{-\frac 12}(A_t+iB_t)\int_{M} e^{\frac ih \Phi(t,x,y,z,\xi,\zeta)} y^\gamma \mathbf{A}(t,x,y,z,\xi,\zeta)dv_g(y), \\
	\Phi(t,x,y,z,\xi,\zeta) &:=	-\zeta(\expynobz) + \xi_t(\expy ) + \frac i2 d^2(z,y) + \frac 12\omega_t(\expy,\expy),\\
	\mathbf{A} (t,x,y,z,\xi,\zeta) &:= \frac{  \tilde\beta(|\xi|_{g(x)}) \tilde\beta(|\zeta|_{g(z)}) }{\detexp^{\frac 14}(g_{jk}(x))\detexp^{\frac 14}(g_{jk}(z))}\psi(z,y)\psi(x_t,y).
	\end{aligned}
\end{equation}
Here $\detexp^{-\frac 12}(A_t+iB_t) = a(t)$ as in \eqref{amplitudeansatz}.  In the interest of consistency with the construction in \S\ref{SS:ansatz}, we also assume that $(\xi_t)_j = |\xi_t|_g \delta_{jd}$.  In the proof of Theorem \ref{T:matrixeltasymp} below, we will assume that $\mathbf A$ is supported where $|y-z| \ll 1$ and $|y|\ll 1$.  Strictly speaking this is smaller than what is given by our original definition of $\psi$, but the integral over the complement of this region is $\mathcal{O}(h^\infty)$.

The integral $\Gsc_\gamma$ expresses coordinate invariant quantities despite the choice of normal coordinates we use to calculate the integral.  Indeed, the kernel of $\Ssc \circ \Vsc_t$ (mapping functions of $(x,\xi)$ to $(z,\zeta)$) is simply $\Gsc_0$, which is of concern for Theorem \ref{tdepreduction}.  Moreover harmlessly replacing $\tilde\beta(|\zeta|_{g(z)})$ by $\tilde{\tilde\beta}(|\zeta|_{g(z)}) $, the definition in \eqref{etdef} means the integral kernel of $\Sscwt \circ \E_t$ takes the form
\begin{equation*}
	\sum_{|\gamma| \leq N_1}\Gsc_\gamma(z,\zeta;t,x,\xi)f_\gamma(\omega_t(x,\xi),h),
\end{equation*}
where $f_\gamma$ satisfies \eqref{fcoeffs}; this is our primary concern for proving Theorem \ref{T:paramerror}.

\subsection{Asymptotics of the phase space kernels}  We begin by setting the stage for our main result on the asymptotic behavior of \eqref{matrixeltgamma}.  We use Proposition \ref{P:covariantlogexpn} to expand $\expynobz$, using that $d(z,y) = |\expynobz|_g$, but since normal coordinates are taken at $x$, the components of $\expyno$ are simply $y_j \frac{\prtl}{\prtl y_j}|_0$ as in \S\ref{SS:ansatz}.  However, $|\Gamma_{kl}^j(z)| + |g_{k\ell}(z)-\delta_{k\ell}|\lesssim |z|$, so 
\begin{equation*}
	\begin{split}
		\Phi &=-\zeta\cdot(y-z) + |\xi_t|_g y_d - \frac 12\big( \zeta_j\Gamma_{k\ell}^j(z) +i g_{k\ell} (z)\big) (y-z)^k(y-z)^\ell + \frac 12 \omega_t y\cdot y +\mathcal{O}(|y-z|^3)\\
	&=-\zeta\cdot(y-z) + |\xi_t|_g y_d + \frac i2 |y-z|^2 + \frac 12 \omega_t y\cdot y +\mathcal{O}(|y-z|^3 + |z||y-z|^2)\\
	&=\zeta\cdot z + \frac i2|z|^2 - y' \cdot (\zeta'+iz') + y_d(|\xi_t|_g-\zeta_d-iz_d) + \frac 12 (\omega_t+i)y\cdot y +r(x,y,z,\xi,\zeta)
	\end{split}
\end{equation*}
Here $r = r_1 + r_2$ expresses the error term in the preceding line, namely for some $r_{1,\nu}$, $ r_{2,j,\nu}$ uniformly bounded in $C^\infty$,
\begin{equation*}
	r_1 = \sum_{|\nu|=3}(y-z)^\nu r_{1,\nu}(x,y,z,\xi,\zeta), \qquad 	r_2=\sum_{j=1}^\d \sum_{|\nu|=2}z_j(y-z)^\nu r_{2,j,\nu}(x,y,z,\xi,\zeta) .
\end{equation*}
We abbreviate the quadratic part of $\Phi$ as 
\begin{equation*}
	\phi(x,y,z,\xi,\zeta) = \zeta\cdot z + \frac i2|z|^2 - y' \cdot (\zeta'+iz') + y_d(|\xi_t|_g-\zeta_d-iz_d)+\frac 12 (\omega_t+i)y\cdot y, \quad \text{ so that }\quad \Phi = \phi + r.
\end{equation*}
We also define $\eta$ as
\begin{equation*}
	\eta := (\omega_t+i)^{-1}(\zeta'+iz',\zeta_d+iz_d-|\xi_t|_g) = \big( (\tilde\omega_t +i)^{-1}(\zeta'+iz'), \frac i2 (|\xi_t|_g-\zeta_d -iz_d) \big)
\end{equation*}
so that $\eta$ plays the role of ``$\xi$'' in Theorem \ref{T:spquad}.  The theorem leads us to define
\begin{equation}\label{psidef}
 \psi(x,z,\xi,\zeta) =	
	\zeta\cdot z + \frac i2|z|^2 -\frac 12(\tilde\omega_t +i)^{-1}(\zeta'+iz')\cdot(\zeta'+iz')  + \frac i4\big(|\xi_t|_g-\zeta_\d -i z_\d\big)^2
\end{equation}
\begin{theorem}\label{T:matrixeltasymp} 
Under the hypotheses above, the integral \eqref{matrixeltgamma} admits an asymptotic expansion as $h \to 0$
\begin{multline}\label{matrixeltasymp}
	2^{-\frac{\d}{2}}(\pi h)^{-\d}\detexp^{-\frac 12}(A_t+iB_t)\detexp^{-\frac 12}(-i(\omega_t +i))e^{\frac ih \psi}\sum_{k=0}^N \sum_{|\nu| \leq 3k+|\gamma|} h^{\frac{|\nu|}{2}-k} \mathbf{P}_{\gamma,k,\nu,h}(z,\eta) \\+ \mathcal{O}\big(h^{\frac{|\gamma| + 3N-3\d}{2}} \mu^{|\gamma|+N}\big). 
\end{multline}
Here $\mathbf{P}_{\gamma,k,\nu,h}(z,\eta)$ are homogeneous polynomials in $(z,\eta)$ of degree $3k+|\gamma|-|\nu|$, whose coefficients may depend on $(z,\zeta,t,x,\xi)$, but are uniformly bounded in $C^\infty$.
\end{theorem}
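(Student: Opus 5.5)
The plan is to reduce \eqref{matrixeltgamma} to Theorem \ref{T:spquad}. We treat $r = r_1 + r_2$ perturbatively and apply the quadratic stationary phase result to the Gaussian $e^{\frac ih \phi}$.

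\textbf{Step 1: reduce to a polynomial amplitude.} Absorb the Riemannian density $\sqrt{\det g_{jk}(y)}$ into $\mathbf{A}$, and write
\begin{equation*}
	e^{\frac ih r} = \sum_{k=0}^{N} \frac{1}{k!}\Big(\frac{ir}{h}\Big)^k + R_N .
\end{equation*}
In each term $h^{-k} r^k y^\gamma \mathbf{A}$, Taylor expand the smooth coefficients $r_{1,\nu}, r_{2,j,\nu}$ and $\mathbf{A}$ in $y$ about $y=0$ to high order. Since $r_1$ is a sum of cubic monomials in $y-z$ and $r_2$ is a sum of terms $z_j(y-z)^\nu$ with $|\nu|=2$, each such term becomes, up to a remainder vanishing to high order at $y=0$, $h^{-k}$ times a polynomial in $(y,z)$ of total degree $3k+|\gamma|$ (counting $z_j$ alongside $(y-z)^2$ terms). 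The coefficients are uniformly bounded in $C^\infty$.

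\textbf{Step 2: apply Theorem \ref{T:spquad} with $C = \omega_t + i$.} Take $b = \zeta\cdot z + \frac i2|z|^2$ and let $\eta$ play the role of ``$\xi$''. One checks that $C\eta$ reproduces the linear part of $\phi$, so the critical point is $y=-\eta$ and $b-\frac12 C\eta\cdot\eta=\psi$ as in \eqref{psidef}.

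The hypotheses \eqref{spphasehyp} hold with room to spare. By \eqref{siegelbd4}, $\|C^{-1}\|\le 1$. Since $\Im C \ge I$, also $\|(\Im C)^{-1}\|\le 1$. On real $y$,
\begin{equation*}
	\Im\phi \ge \tfrac12|z|^2 - y\cdot z + \tfrac12|y|^2 \ge 0 .
\end{equation*}
The prefactor works out because
\begin{equation*}
	2^{-d}(\pi h)^{-\frac{3d}{2}}(2\pi h)^{\frac d2} = 2^{-\frac d2}(\pi h)^{-d},
\end{equation*}
and the factor $\det^{-\frac12}(-iC)$ is exactly the one in \eqref{matrixeltasymp}.

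Evaluating the almost analytic extension of a polynomial at $y=-\eta$ simply substitutes $y\mapsto -\eta$, producing polynomials in $(z,\eta)$. Each application of $\frac{h}{2i}C^{-1}D\cdot D$ gains $h$, lowers the degree by $2$, and has coefficients bounded since $\|C^{-1}\|\le1$. After $j$ applications in the $k$-th term we get $h^{j-k}$ times a homogeneous polynomial of degree $3k+|\gamma|-2j$. Relabelling with $|\nu|=2j$, and allowing odd $|\nu|$ through the lower-degree pieces of the Taylor expansion in Step 1, gives the stated form $h^{\frac{|\nu|}{2}-k}\mathbf{P}_{\gamma,k,\nu,h}(z,\eta)$.

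\textbf{Step 3 (main obstacle): the remainder $R_N$.} Bounding the remainder terms of Theorem \ref{T:spquad} is routine: with $\|C_2^{-1/2}\|\le 1$ the last term there is $O(h^{N/2})$ times norms of the amplitude, and these norms grow at most polynomially in $h^{-1/2}$ and $\mu$. The difficulty is $R_N$, the Taylor remainder of $e^{\frac ih r}$, together with the truncated Taylor expansions of the coefficients from Step 1. These must be estimated directly, without stationary phase.

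I would use
\begin{equation*}
	|R_N|\lesssim \big(|r|/h\big)^{N+1}
\end{equation*}
(valid since $\Im r$ is dominated by $\Im\phi$ on the support) and the Gaussian localization coming from
\begin{equation*}
	\Im\Phi \ge \tfrac12 d^2(z,y) + \tfrac12(\Im\omega_t)y\cdot y .
\end{equation*}
By \eqref{siegelbd5} this confines $y$ to $|y|\lesssim h^{\frac12}\mu$ and $|y-z|\lesssim h^{\frac12}$, up to rapidly decaying tails. The case where $|z|$ is not small, so that the $r_2$ term is large, must be absorbed by the factor $e^{-|y-z|^2/(2h)}$, and this is the delicate point. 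Then $|r|/h\lesssim h^{\frac12}\mu$ on the effective support, $|y^\gamma|\lesssim(h^{\frac12}\mu)^{|\gamma|}$, and the prefactor is at most $h^{-\frac{3d}{2}}$ times the $y$-volume, using $|a(t)|\le1$.

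Being crude here, trading some powers of $h^{\frac12}$ for powers of $\mu$ via \eqref{mubddh}, should yield $O\big(h^{\frac{|\gamma|+3N-3d}{2}}\mu^{|\gamma|+N}\big)$ after enlarging $N$ in the expansion and reindexing. I expect this bookkeeping, rather than the stationary phase itself, to be where care is needed.
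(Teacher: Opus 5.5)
Your argument follows the paper's proof. Expanding $e^{\frac ih r}$ in powers of $ir/h$ is the same as Taylor expanding $\I(s)=(2\pi h)^{-\frac d2}\int e^{\frac ih(\phi+sr)}y^\gamma\mathbf A\,dy$ in $s$. The terms $\I^{(k)}(0)$ are then handled by Theorem \ref{T:spquad} with $C=\omega_t+i$, with the hypotheses coming from \eqref{siegelbd3} and \eqref{siegelbd4}, and the remainder is estimated directly by Gaussian localization.

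The only organizational difference concerns the amplitude. The paper expands the polynomial factors about the complex critical point, which is $y=\eta$ (not $-\eta$): it writes $y^\gamma(ir)^k\mathbf A=\sum_\nu\widetilde{\mathbf P}_{\gamma,k,\nu,h}(z,\eta)(y-\eta)^\nu\mathbf A_\nu$ and keeps $\mathbf A_\nu$ smooth, so only the terms $j\ge\lceil|\nu|/2\rceil$ of \eqref{asympexpn} survive. Your Taylor expansion of the coefficients at $y=0$ instead produces monomials of degree \emph{above} $3k+|\gamma|$, not below. This is harmless, since the extra bounded factors of $(z,\eta)$ can be moved into the coefficients.

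Step 3 needs repair in three places.

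First, large $|z|$ is not a separate case. On $\supp\mathbf A$ one has $|y|\ll1$ and $|y-z|\ll1$, hence $|z|\ll1$. So $|r|\le C|y-z|^2(|y-z|+|z|)\le\frac14|y-z|^2$, and therefore $\Im(\phi+sr)\ge\frac14|y-z|^2+\frac14\mu^{-2}|y|^2$ for all $s\in[0,1]$. This is what justifies $|e^{\frac ih\phi}R_N|\le\frac{(|r|/h)^{N+1}}{(N+1)!}\sup_{0\le s\le 1}e^{-\frac1h\Im(\phi+sr)}$.

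Second, the bound $|a(t)|\le1$ is not available. In fact $|a(t)|=\det^{\frac14}(\Im\tilde\omega_t)$, which exceeds $1$ when the beam narrows; \eqref{siegelbd2} only gives $|a(t)|\le\mu^{\frac{d-1}{2}}$. Combining these with $|r|\lesssim|y-z|^3+|y-z|^2|y|$, your remainder is $O\big(h^{\frac{|\gamma|+N+1}{2}-d}\mu^{|\gamma|+N+1+\frac{d-1}{2}}\big)$.

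Third, no enlarging of $N$ and reindexing turns this into the printed $O\big(h^{\frac{|\gamma|+3N-3d}{2}}\mu^{|\gamma|+N}\big)$. As the proof of Lemma \ref{L:gsclemma} shows, the $k$-th term of the expansion has size up to $h^{-d}(h^{\frac12}\mu^3)^k(h^{\frac12}\mu)^{|\gamma|}$. For instance, the $k=1$ term contains $h^{-1}r(\eta)$ with $r$ cubic, so where $|z|+|\eta|\approx h^{\frac12}$ it is generically only a factor $h^{\frac12}$ below the leading term, not $h^{\frac32}$. Each order therefore gains $h^{\frac12}$, and the terms with $k>N$ cannot be absorbed into an $h^{\frac{3N}{2}}$-type error.

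The printed exponent traces back to \eqref{INerrorbd}, whose last inequality drops the prefactor $h^{-N}$; tracking it gives $|\I^{(N)}(s)|\lesssim h^{\frac{|\gamma|+N}{2}}\mu^{|\gamma|+N}$. You should therefore state and prove the theorem with your corrected remainder. That is all Lemma \ref{L:gsclemma} actually uses, since there $N$ is taken large and $\mu\le h^{-1/26}$ by \eqref{mubddh}.
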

\begin{proof}
Define
\begin{equation*}
\I(s) = (2\pi h)^{-\frac{\d}{2}} \int e^{\frac ih (\phi + s r)}y^\gamma \mathbf{A}\,dy,
\end{equation*}
so that the integral \eqref{matrixeltgamma} is just
$
	2^{-\frac{\d}{2}}(\pi h)^{-\d}\detexp^{-\frac 12}(A_t+iB_t)\I(1)
$. We get an asymptotic expression for $\I(1)$ via the Taylor expansion 
\begin{equation}\label{kthderivI}
 	\begin{aligned}
 		\I(1) &= \sum_{k=0}^{N-1} \frac{ \I^{(k)}(0)}{k!} + \frac{1}{N!} \I^{(N)}(s_0), \quad \text{ where }0<s_0<1,\\
 		\I^{(k)}(s) &= (2\pi h)^{-\frac{\d}{2}} h^{-k}\int e^{\frac ih (\phi + s r)}y^\gamma (ir)^k \mathbf{A}\,dy.		
 	\end{aligned}
\end{equation}

We begin by claiming that for each $0\leq s \leq 1$, we have the uniform bound
\begin{equation}\label{INerrorbd}
	\big|\I^{(N)}(s)\big| \lesssim h^{-\frac{\d}{2}-N} \int e^{-\frac 1h \Im (\phi+sr)} |y|^{|\gamma|} |r|^N |\mathbf{A}| \,dy 
	\lesssim  h^{\frac{|\gamma|+3N}{2} }\mu^{|\gamma| + N} 
\end{equation}
To see this note that for some uniform constant $C$,
\begin{equation}\label{rbd}
|r| \leq C|y-z|^2\big(|y-z|+|z|\big) \leq 2C|y-z|^3+ C|y-z|^2|y| 
\end{equation}
Using only the first inequality here, we see that
\begin{equation*}
\Im (\phi + s r)  = 
\frac 12 |z-y|^2 + \frac 12 (\Im\omega) y\cdot y + s\Im r 
\geq \frac 12 |z-y|^2 + \frac 12 \mu^{-2} |y|^2 - C|y-z|^2(|y-z|+|z|). 
\end{equation*}
Since we can take $\mathbf{A}$ to be supported in a region where $|y-z| \ll 1$ and $|y|\ll 1$, we have that $|z|\ll1$ and hence by taking these supports which are small enough, we can assume
\begin{equation*}
\Im (\phi + sr)  
\geq 
\frac 14 |z-y|^2 + \frac 14 \mu^{-2} |y|^2 .
\end{equation*}
The bound \eqref{rbd} on $|r|$ above now gives that 
\begin{equation*}
	\int e^{-\frac 1h \Im (\phi+sr)} |y|^{|\gamma|} |r|^N |\mathbf{A}| \,dy \lesssim \sum_{\ell = 0}^{N} \binom{N}{\ell} \int e^{-\frac 1{4h}(|y-z|^2 +\mu^{-2}|y|^2) } |y|^{|\gamma| + \ell} |y-z|^{3N-\ell}  \,dy,
\end{equation*}
where we have absorbed the constant $C$ and the contribution of $\sup |\mathbf{A}|$ into the implicit constant on the right. The bound \eqref{INerrorbd} now follows from an application of Cauchy-Schwarz which bounds each term by the product of integrals in $|y|$ and $|y-z|$ alone.

We now consider $\I^{(k)}(0)$, as in $k$-th term in the expansion \eqref{kthderivI}.  We write the amplitude as
\begin{equation*}
y^\gamma (ir)^k \mathbf{A} = \sum_{|\nu| \leq 3k +|\gamma|} \widetilde{\mathbf{P}}_{\gamma,k,\nu,h}(z,\eta) (y-\eta)^{\nu} \mathbf{A}_\nu(x,y,z,\xi,\eta)
\end{equation*}
where $\widetilde{\mathbf{P}}_{\gamma,k,\nu,h}(z,\eta) $ is a homogeneous polynomial of degree  $3k+|\gamma|-|\nu|$ in $(z,\eta)$.  This yields a sum of oscillatory integrals which can be treated by Theorem \ref{T:spquad}, using that \eqref{siegelbd3}, \eqref{siegelbd4} ensure that the crucial hypotheses in \eqref{spphasehyp} are satisfied. Since each term $(y-\eta)^\nu$ vanishes to order $|\nu|$, the approximating sum in \eqref{asympexpn} can be restricted to $j \geq \lceil \frac{|\nu|}{2} \rceil$.  The result follows by taking the asymptotic expansion in Theorem \ref{T:spquad} out to sufficiently high order.
\end{proof}

\subsection{Singular values and a return to the Siegel disk}  Here we examine how to express the phase function $\psi$ defined in \eqref{psidef} in terms of the singular values of \eqref{blocktildeomega}.  Recall that we are taking conventions consistent with \S\ref{SS:ansatz}, in particular $\omega_t$ has the structure \eqref{blockomega} where $\tilde\omega_t = (C_s+iD_s)(A_s+iB_s)^{-1}$ with $s = t/|\xi|_g$ as in \eqref{blocktildeomega}.  
We now return to the identity in \eqref{halftodisk} and the discussion in Remark \ref{R:cayley} that $(\tilde\omega_t+i)^{-1} = \frac{1}{2i}(I+W_t)$ where $W_t = \alpha[(\Fsc_t)_c](0)$ and $\Fsc_t$ is the block matrix in \eqref{blocktildeomega}. In the notation of \eqref{cplxdympldef}, we have
\begin{equation*}
	W_t = Z_t Y_t^{-1}, \text{ where } 2Z_t = A_t-D_t+i(B_t+C_t) \text{ and } 2Y_t = A_t+D_t+i(B_t-C_t)
\end{equation*}
Hence
\begin{equation*}
	\begin{split}
		\psi &= 	\zeta\cdot z + \frac i2|z|^2 -\frac 12(\tilde\omega_t +i)^{-1}(\xi_t'-\zeta'-iz')\cdot(\xi_t'-\zeta'-iz')  + \frac i4(|\xi_t|_g-\zeta_\d -i z_\d)^2\\
		&= \zeta\cdot z + \frac i2 |z|^2 + \frac i4 (I+W_t)(\xi_t'-\zeta'-iz')\cdot(\xi_t'-\zeta'-iz')  + \frac i4(|\xi_t|_g-\zeta_\d -i z_\d)^2\\
		&= \frac 12\zeta\cdot z +\frac 12 |\xi_t|_g z_d+\frac i4 |z|^2 + \frac i4 |\zeta'|^2 + \frac i4(|\xi_t|_g-\zeta_d)^2+\frac i4 W_t(\zeta'+iz')\cdot(\zeta'+iz')		
	\end{split}
\end{equation*}
By the results in \S\ref{SS:SiegelPlaneDisk}, namely Remark \ref{R:alphaactionsvd}, we have that for some $(d-1)\times (d-1)$ unitary $V_t$,
\begin{equation*}
	W_t = V_t^{-T} \Lambda_t V_t^{-1}, \qquad \Lambda_t = \diag\Big(\frac{\sigma_1 - \sigma_1^{-1}}{\sigma_1 + \sigma_1^{-1}}, \dots, \frac{\sigma_{\d-1} - \sigma_{\d-1}^{-1}}{\sigma_{\d-1} + \sigma_{\d-1}^{-1}}\Big) ,
\end{equation*}
where $\sigma_j = \sigma_j(t/|\xi|)$ denote the first $d-1$ singular values of \eqref{blocktildeomega}, all of which are at least 1. We are now led to set
\begin{equation*}
	\upsilon = (\upsilon', \upsilon_d)=(V^{-1}(\zeta'+iz'),\zeta_\d-|\xi|_g+iz_\d).
\end{equation*}
Note that since $V$ is unitary, $  |z|^2 + |\zeta'|^2 + (|\xi|_g-\zeta_d)^2= |\upsilon|^2$.  Also, since 
\begin{equation}\label{upsilonswitch}
	(\omega_t +i)\eta=(\zeta'+iz',\zeta_d-|\xi|_g+iz_d) =  (V\upsilon',\upsilon_d),
\end{equation}
it follows that the components of $z = \Im (V\upsilon',\upsilon_d)$ can be written as a polynomial function of $\Re\upsilon,\Im\upsilon$ and the matrix entries of $V$.  Moreover, $\eta$ can be written as a polynomial in $\Re\upsilon,\Im\upsilon$ and the matrix entries of $V$, $(\omega+i)^{-1}$, the latter of which are uniformly bounded (cf. \eqref{siegelbd4}).

We now express the imaginary part of $\psi$ in terms of $\upsilon$ 
\begin{equation}\label{impsiupsilon}
	\begin{split}
		\Im \psi &= \frac 14|\upsilon|^2 + \frac 14 \sum_{j=1}^{\d-1} \frac{\sigma_j - \sigma_j^{-1}}{\sigma_j + \sigma_j^{-1}} \big((\Re \upsilon)_j^2 -(\Im \upsilon)_j^2 \big)\\ &=
		\frac 12 \sum_{j=1}^{\d-1} \frac{\sigma_j}{\sigma_j + \sigma_j^{-1}} (\Re \upsilon)_j^2  +
		\frac 12 \sum_{j=1}^{\d-1} \frac{\sigma_j^{-1}}{\sigma_j + \sigma_j^{-1}} (\Im \upsilon)_j^2  + \frac 14|\upsilon_\d|^2.
	\end{split}
\end{equation}
Moreover, with $2Y_t = A_t+D_t+i(B_t-C_t)$ as in \eqref{cplxdympldef},
\begin{align*}
	\det(-i(\omega_t+i))\det(A_t+iB_t) &= 2\det(-i(\tilde\omega_t+i))\det(A_t+iB_t)\\
	&= 2\det\big( (I-i(C_t+iD_t)(A_t+iB_t)^{-1})(A_t+iB_t) \big) \\
	&= 2\det(A_t+iB_t-i(C_t+iD_t)) = 2\det(A_t+D_t +i(B_t-C_t))\\
	&= 2\det(2Y_t) = 2^\d \det Y_t.
\end{align*}
Thus by Corollary \ref{C:alphaactionsvd} and Remark \ref{R:alphaactionsvd}, using that $\det Y_t = \delta((\Fsc_t)_c,0)$
\begin{equation*}
	|\det(-i(\omega_t+i))\det(A_t+iB_t)|^{-\frac 12} = 2^{-\frac{\d}{2}} |\det Y_t|^{-\frac 12} = 2^{-\frac d2}
	\prod_{j=1}^{\d-1} \Big(\sigma_j + \frac{1}{\sigma_j}\Big)^{-\frac 12} \leq 2^{-\frac{\d}{2}} \vartheta(t)^{-\frac 12}
\end{equation*}

We also observe that since $\sigma_j \geq 1$ for $j=1,\dots, d-1$, we have $\frac{\sigma_j^{-1}}{\sigma_j + \sigma_j^{-1}} \geq \frac{1}{2}\sigma_j^{-2}$ so \eqref{impsiupsilon}  and the identity \eqref{upsilonswitch} implies that 
\begin{equation*}
	\Im \psi \geq \mu^{-2}\big( |z'|^2+| \zeta'|^2\big)+ \big( |z_\d|+| |\xi_t|_g-\zeta_\d| \big)^2
\end{equation*}
Incorporating \eqref{varrholowerhyp}, we observe the following crude, but useful bound
\begin{equation*}
	|\Gsc_\gamma(z,\zeta,t,x,\xi)| \lesssim_N h^{-\d}\vartheta^{-\frac 12}\left(1+h^{-\frac 12}\mu^{-1}\big( |z'|+| \zeta'|\big)+ h^{-\frac 12}\big( |z_\d|+| |\xi_t|_g-\zeta_\d|\big) \right)^{-N}.
\end{equation*}
 Since our normal coordinate system is centered at $x_t$ with $(\xi_t)_j = |\xi_t|_{g}\delta_{jd}$, we also have the following coordinate-free characterization of the same bound (where the metric/cometric are taken at $x_t$):
\begin{multline}\label{crudeme}
	|\Gsc_\gamma(z,\zeta,t,x,\xi)| \lesssim_N \\ 
	h^{-\d}\vartheta^{-\frac 12}\left(1+h^{-\frac 12}\mu^{-1}\big( |\expz|_g+| \xi_t-\zeta|_g\big)+ h^{-\frac 12}|\xi_t|_{g}^{-1}\big( |\xi_t(\expz)|+ | \langle\xi_t,\xi-\zeta \rangle_g |\big) \right)^{-N}.
\end{multline}
Moreover, in an arbitrary coordinate system, we can always approximate $|z-x_t| \approx |\expy|_g$ and $|\xi_t-\zeta|\approx |\xi_t-\zeta|_g$, where each left hand side uses standard Euclidean length.  Hence we may replace the right hand side in \eqref{crudeme} by
\begin{equation}\label{crudemearbcoord}
	h^{-\d}\vartheta^{-\frac 12}\left(1+h^{-\frac 12}\mu^{-1}\big( |z-x_t| + |\xi_t-\zeta| \big)+ h^{-\frac 12}|\xi_t|_{g}^{-1}\big( |\xi_t(\expz)|+ | \langle\xi_t,\xi-\zeta \rangle_g |\big) \right)^{-N}.
\end{equation}

We also have the following lemma which will be used in \S\ref{SS:errorestimates} below.
\begin{lemma}\label{L:gsclemma}
	Any $\Gsc_\gamma(z,\zeta,t,x,\xi)$ as in \eqref{matrixeltgamma} satisfies the $L^2$ bound
	\begin{equation}\label{matrixL2} 
		\sup_{x,\xi}\Big(\int |\Gsc_\gamma(z,\zeta,t,x,\xi)|^2 \,dz d\zeta \Big)^{\frac 12} \lesssim_\gamma (h^{\frac 12}\mu)^{|\gamma|} h^{-\frac d2}
	\end{equation}
\end{lemma}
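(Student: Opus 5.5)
The cleanest route avoids the asymptotics of Theorem \ref{T:matrixeltasymp} entirely. Instead it reads $\Gsc_\gamma$ as the wave packet transform of a single weighted Gaussian and then uses the $L^2$ boundedness \eqref{wptL2bds}. For fixed $(t,x,\xi)$, the definition \eqref{matrixeltgamma} shows that, as a function of $(z,\zeta)$, $\Gsc_\gamma$ is
\begin{equation*}
\Gsc_\gamma(z,\zeta,t,x,\xi) = c\, \frac{\tilde\beta(|\xi|_{g(x)})}{\detexp^{\frac 14}(g_{jk}(x))}\,(2\pi h)^{-\frac d2}\,\big(\Ssc\, w_{\gamma}\big)(z,\zeta),
\qquad w_\gamma(y) := y^\gamma\, u(t,y,x,\xi).
\end{equation*}
Here $u$ is the cut-off wave packet \eqref{udefparametrix}, $y^\gamma$ is read as $\big(dy|_{x_t}(\expy)\big)^\gamma$ in normal coordinates at $x_t$, and $c$ is an absolute constant coming from the normalizations $2^{-\frac d2}(\pi h)^{-\frac{3d}4}$ and $(\pi h)^{-\frac d4}$. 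The first step is to check this bookkeeping. Matching the prefactor $2^{-d}(\pi h)^{-\frac{3d}{2}}$ in \eqref{matrixeltgamma} against the product of the prefactor in \eqref{wptdef} and the $(\pi h)^{-\frac d4}$ in \eqref{udef} leaves an overall factor comparable to $h^{-\frac d2}$. This is the source of the $h^{-\frac d2}$ in \eqref{matrixL2}. The factor $\detexp^{-\frac 12}(A_t+iB_t)=a(t)$ is exactly the amplitude of $u$, so it is absorbed into $w_\gamma$.

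Given this identification, \eqref{wptL2bds} yields
\begin{equation*}
\Big(\int |\Gsc_\gamma|^2\,dz\,d\zeta\Big)^{\frac 12} \lesssim h^{-\frac d2}\,\|w_\gamma\|_{L^2(M)},
\end{equation*}
uniformly in $(x,\xi)$, since $\tilde\beta$ and the metric determinant are bounded. It then remains to bound $\|y^\gamma u(t,\cdot)\|_{L^2(M)}$. Proposition \ref{P:uL2norm}, specifically \eqref{wtuL2norm} with $k=|\gamma|$ and the pointwise estimate $|y^\gamma|\le |y|^{|\gamma|}$, gives $\lesssim (h^{\frac12}\mu)^{|\gamma|}$ on $\RR^d$. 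Passing from Lebesgue measure in normal coordinates to $dv_g$ costs only a bounded density, as in the remark after Proposition \ref{P:uL2norm}, since $\psi(x_t,\cdot)$ localizes to a compact normal chart. Together these give \eqref{matrixL2}.

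The main obstacle is confirming that the $(z,\zeta)$-dependence of $\Gsc_\gamma$ really is exactly $\Ssc$ applied to $w_\gamma$. This includes the conjugation and sign conventions in the phase $-\zeta(\expynobz)+\frac i2 d^2(z,y)$, which match \eqref{wptdef}. It also includes the claim that the $x$-dependent factors $\tilde\beta(|\xi|)\detexp^{-\frac14}(g(x))$ are constants in $(z,\zeta)$, while $\tilde\beta(|\zeta|)\detexp^{-\frac14}(g(z))\psi(z,y)$ are precisely the factors built into $\Ssc$. If $\Gsc_\gamma$ has been harmlessly modified, for instance by shrinking the support of $\mathbf A$ or by using $\tilde{\tilde\beta}$, the differences are $\mathcal{O}(h^\infty)$ or are covered by the same argument with $\Sscwt$. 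In that case I would bound the discrepancy separately by the crude pointwise estimate \eqref{crudemearbcoord} integrated over a compact set.
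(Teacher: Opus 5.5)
Your argument is correct, and it takes a genuinely different and much shorter route than the paper.

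\textbf{Your route.} The identification is exact, with your constant $c$ equal to $1$:
$\Gsc_\gamma(\cdot,\cdot,t,x,\xi)=(2\pi h)^{-\frac d2}\,\tilde\beta(|\xi|_{g(x)})\,\detexp^{-\frac14}(g_{jk}(x))\;\Ssc\big(y^\gamma u(t,\cdot,x,\xi)\big)$.
For $\gamma=0$ this is just the paper's own remark that $\Gsc_0$ is the kernel of $\Ssc\circ\Vsc_t$. After that, \eqref{wptL2bds} and \eqref{wtuL2norm} finish the proof.

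\textbf{The paper's route.} It inserts the stationary phase expansion \eqref{matrixeltasymp} from Theorem \ref{T:matrixeltasymp} and changes variables $(z,\zeta)\mapsto\upsilon$. It then uses the explicit form \eqref{impsiupsilon} of $\Im\psi$, so that the Jacobian of the rescaling cancels $\prod_j(\sigma_j+\sigma_j^{-1})^{-1}$. Each term is bounded by $(h^{\frac12}\mu^3)^k(h^{\frac12}\mu)^{|\gamma|}\mu^{-|\nu|}h^{-\frac d2}$, and a separate remainder estimate is added. This needs $h^{\frac12}\mu^3\lesssim1$.

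\textbf{Comparison.} Your argument avoids all of that machinery. It needs no relation between $\mu$ and $h$ beyond what already enters \eqref{wtuL2norm}. It also explains conceptually why no factor $\vartheta^{-1/2}$ survives in the $L^2$ norm: the transform is bounded and the packet is normalized. The paper's determinant cancellation is the same fact verified by hand. What the paper's route buys is the explicit Gaussian structure of $\Gsc_\gamma$ in the $\upsilon$ variables. That structure is needed anyway for the pointwise bounds \eqref{crudeme} and \eqref{crudemearbcoord}, which drive \eqref{TTstarsmall} and Theorem \ref{T:tdepreduction}, so there the lemma costs nothing extra.

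\textbf{Your fallback paragraph.} It is unnecessary. If the cutoffs in $\mathbf A$ are shrunk, the result is still a wave packet transform built with a smaller $\psi$, applied to a wave packet with a smaller cutoff. Your two estimates then apply verbatim. Invoking \eqref{crudemearbcoord} would reintroduce exactly the asymptotics you set out to avoid.
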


\begin{proof}
Consider the $L^2$ norm of any term in \eqref{matrixeltasymp} and change variables $(z,\zeta)$ to $(\upsilon, \bar\upsilon)$
	\begin{multline}\label{upsilswitch}
		h^{\frac{|\nu|}{2} -k}(\sqrt2 \pi h)^{-d}\left( |\det(-i(\omega_t+i))\det(A_t+iB_t)|^{-1} \int | e^{\frac ih \psi} \mathbf{P}_{\gamma,k,\nu,h}(z,\eta)|^2\,dzd\zeta \right)^{\frac 12} \\ =h^{\frac{|\nu|}{2} -k}(2 \pi h)^{-d}\left(\prod_{j=1}^{\d-1} \Big(\sigma_j + \frac{1}{\sigma_j}\Big)^{-1} \times \int  e^{-\frac{2}{h} \Im \psi}\, |\mathbf{P}_{\gamma,k,\nu,h}(\Re\upsilon,\Im\upsilon)|^2\,d(\Re\upsilon) d(\Im\upsilon) \right)^{\frac 12},
	\end{multline}
where we allow $\mathbf{P}_{\gamma,k,\nu,h}(\Re\upsilon,\Im\upsilon)$ to denote the polynomial $\mathbf{P}_{\gamma,k,\nu,h}$ expressed in terms of $\upsilon$ (cf. \eqref{upsilonswitch} and the ensuing observation).  This is harmless as it does not affect that the coefficients of the polynomial remain uniformly bounded.  The expression of $\psi$ in \eqref{impsiupsilon} leads to further change variables $\upsilon_\d \mapsto h^{\frac 12} \upsilon_\d $ and
\begin{equation*}
	 \Re\upsilon_j \mapsto h^{\frac 12} \Big(\frac{\sigma_j}{\sigma_j+ \sigma_j^{-1}}\Big)^{-\frac 12} \Re\upsilon_j, \quad \Im\upsilon_j \mapsto h^{\frac 12} \Big(\frac{\sigma_j^{-1}}{\sigma_j+ \sigma_j^{-1}}\Big)^{-\frac 12} \Im\upsilon_j, \quad j=1,\dots,d-1.
\end{equation*}
Note that the determinant of this transformation cancels the product $\prod_{j=1}^{\d-1} \Big(\sigma_j + \frac{1}{\sigma_j}\Big)^{-1}$ in \eqref{upsilswitch} while gaining $h^{\frac d2}$.  It now follows that since $\mathbf{P}_{\gamma,k,\nu,h}$ is homogeneous of degree $3k+|\gamma|-|\nu|$, the right hand side of \eqref{upsilswitch} is bounded by 
\begin{equation*}
	h^{\frac{|\nu|}{2}-k}(h^{\frac 12} \mu)^{3k+|\gamma| -|\nu|} h^{-\frac d2}= (h^{\frac 12}\mu^3)^k (h^{\frac 12} \mu)^{|\gamma| }\mu^{-|\nu|}h^{-\frac d2} \lesssim (h^{\frac 12} \mu)^{|\gamma| }h^{-\frac d2}.
\end{equation*}  The bound \eqref{matrixL2} now follows by taking $N$ sufficiently large in \eqref{matrixeltasymp}.
\end{proof}

\subsection{Error estimates}\label{SS:errorestimates} We now prove Theorem \ref{T:paramerror} by showing \eqref{paramerror} as the considerations for \eqref{paramerror2} will follow similarly. Given \eqref{etdef}, write can the integral kernel of $\Sscwt \circ \E_t$ (mapping functions of $(x,\xi)$ to $(z,\zeta)$) as 
\begin{equation*}
		\sum_{|\gamma| \leq N_1}\Gsc_\gamma(z,\zeta;t,x,\xi)f_\gamma(\omega_t,h),
\end{equation*}
where $\Gsc_\gamma$ is of the form \eqref{matrixeltgamma}.
It suffices to show that each term here yields an operator which maps $L^2(T^*M) \to L^2(T^*M)$ with norm which is $\mathcal{O}(h^{\frac 32}\mu^{2d+9})$.  We show the details for the cases with $|\gamma|\geq 3$;  the cases with $0 \leq |\gamma| \leq 2$ are treated by similar means and satisfy better bounds as the coefficients in \eqref{fcoeffs} are more favorable in these cases.  

By \eqref{matrixL2} and the coefficient bounds \eqref{fcoeffs} we have
\begin{equation*}
	\Big(\int |\Gsc_\gamma(z,\zeta,t,x,\xi)f_\gamma(\omega_t,h)|^2 \,dz d\zeta \Big)^{\frac 12} \lesssim_\gamma h^{-\frac d2}  (h^{\frac 12}\mu)^{|\gamma|} \mu^{2|\gamma|} = h^{-\frac d2} (h^{\frac 12}\mu^3)^{|\gamma|} \quad \text{ if } |\gamma| \geq 3.
\end{equation*}
Moreover, since $\mu \leq h^{-\frac{1}{26}}$ (cf. \eqref{mubddh}), we have by \eqref{crudemearbcoord}
\begin{equation}\label{TTstarsmall}
	\left| \int \Gsc_\gamma (z,\zeta,t,x,\xi) \,\overline{\Gsc_\gamma (z,\zeta;\tilde x,\tilde \xi) }\,dz d\zeta \right| \lesssim_N h^N \quad \text{ if } |x_t -\tilde x_t|+ |\xi_t - \tilde\xi_t| \gg h^{\frac 12}\mu
\end{equation}
But since $|x-\tilde x| +|\xi-\tilde \xi| \lesssim \mu(|x_t -\tilde x_t|+ |\xi_t - \tilde\xi_t| )$ since $\|d\tilde\kappa_{-t}\| \lesssim \mu(-t)=\mu(t)$, we now see that the bound \eqref{TTstarsmall} holds if $|x-\tilde x| +|\xi-\tilde \xi| \gg h^{\frac 12}\mu^2$.  It now follows that
\begin{equation}\label{crudeyoungs}
	\left| \int\Gsc_\gamma  (z,\zeta,t,x,\xi) \overline{\Gsc_\gamma  (z,\zeta,t,\tilde x,\tilde \xi) }\,dzd\zeta \right|  
	\lesssim
	\begin{cases}
		h^{N} & \text{ if }  |x-\tilde x| +|\xi-\tilde \xi| \gg h^{\frac 12}\mu^2\\
		h^{-d} (h^{\frac 12}\mu^3)^{2|\gamma|} &\text{ if } |x-\tilde x| +|\xi-\tilde \xi| \lesssim h^{\frac 12}\mu^2
	\end{cases}
\end{equation}
Consequently,
\begin{multline*}
	\sup_{\tilde x,\tilde\xi} \int \left| \int \Gsc_\gamma (z,\zeta,t,x,\xi) \overline{ \Gsc_\gamma (z,\zeta;\tilde x,\tilde \xi) }\,dzd\zeta \right|  \,dxd\xi \\
	\lesssim h^N+ \sup_{\tilde x,\tilde\xi} h^{-d}  (h^{\frac 12}\mu^3)^{2|\gamma|} \text{Vol}\Big( |x-\tilde x| +|\xi-\tilde \xi| \lesssim h^{\frac 12}\mu^2\Big) \lesssim 
 (h^{\frac 12}\mu^3)^{2|\gamma|} \mu^{4d}
\end{multline*}
and the same holds if we reverse the roles of $(x,\xi)$, $(\tilde x,\tilde \xi)$.
By Young's inequality applied to  $T^*T$ it now follows that 
\begin{equation*}
\left( \int \left| \int \Gsc_\gamma (z,\zeta,t,x,\xi) G (x,\xi) \,dxd\xi \right|^2 dzd\zeta \right)^{\frac 12}\lesssim \mu^{2d}(h^{\frac 12}\mu^3)^{|\gamma|} 
\|G\|_{L^2_{x,\xi}}.
\end{equation*}
The right hand side here is maximized when $|\gamma| =3$, yielding the desired $\mathcal{O}(h^{\frac 32}\mu^{2d+9})$ bound.

\begin{remark}
	The proof here reveals why we have the right hand sides of $h^{\frac 32}\mu^{2d+9}$ in Theorem \ref{T:paramerror}.  Admittedly the bounds in \eqref{crudeyoungs} are somewhat crude, though improving them appears to be a subtle matter which we do not address in the present work.  Even then, a better approach might be to find approximate solutions $u$ in \S\ref{S:WPansatz} with higher order accuracy as noted in Remark \ref{R:baderror}.
\end{remark}

\subsection{Microlocal Kakeya-Nikodym bounds}
Here we prove Theorem \ref{tdepreduction}, namely
\begin{equation*} 
	\|Q_{h,\tilde\gamma} \circ P^* \circ \Ssc^* \circ (\Ssc \circ \Vsc_t) \circ \Ssc \circ P\circ Q_{h,\tilde\gamma}^*\|_{L^2(M) \to L^2(M)} \lesssim \frac{1}{\sqrt{\vartheta(t)}} .
\end{equation*}
As we have before, we work in Fermi coordinates which flatten the geodesic segment about which $Q_{h,\tilde\gamma}$ is adapted so that its symbol satsfies \eqref{hsymbolhypintro}.  For $N_0\gg2d$ sufficiently large, we use the multipliers $\mathscr{M}_{\pm N_0}$ defined in Theorem \ref{T:multipliers}.  
 Recall from \eqref{mnodef} that $\Mnominus$ is the multiplier operator given by (in a slight abuse of notation, we use $\Mnominus$ to denote both the multiplier and the operator)
\begin{equation*}
	\Mnominus (x,\xi) =  \big(1+h^{-1}|x'| ^2+ h^{-1}|\xi'|^2 + h^{-1}d^2\big((x,\xi);\supp(q_{h,\tilde\gamma})\big) \big)^{-N_0/2}.
\end{equation*}
Given the $L^2$ bounds \eqref{multbds} there, it suffices to show that
\begin{equation}\label{tdepreductionmat}
	\|\Mnominus \circ (\Ssc \circ \Vsc_t) \circ \Mnominus\|_{L^2(T^*M) \to L^2(T^*M)} \lesssim \frac{1}{\sqrt{\vartheta(t)}} .
\end{equation}

To show \eqref{tdepreductionmat}, we use that the kernel of $\Ssc \circ \Vsc_t$ is expressed by $\Gsc_0$.
By Young's inequality, it follows from these bounds in coordinates
\begin{align}
		\int_{T^*M} 	|\Gsc_0(z,\zeta,t,x,\xi)| 	\Mnominus(z,\zeta)\Mnominus(x,\xi)\,dz\,d\zeta \lesssim \vartheta^{-\frac 12}(t)\label{zzeta}\\  
		\int_{T^*M} 	|\Gsc_0(z,\zeta,t,x,\xi)| 	 \Mnominus(z,\zeta)\Mnominus(x,\xi) \,dx\,d\xi \lesssim \vartheta^{-\frac 12}(t)\label{xxi}
\end{align}
Throughout the treatment below, we can assume that $|z'|+|\zeta'| \leq h^{\frac 38}$, $|\zeta-e_d| \ll 1$ and $|x'|+|\xi'| \leq h^{\frac 38}$, $|\xi -e_d| \ll 1$, in particular restricting the domain of integration in \eqref{zzeta} and \eqref{xxi} to these respective regions.  Otherwise we obtain an integral which is $\mathcal{O}(h^{\frac{N_0}{8}-d}\vartheta^{-\frac 12})$ which is stronger than needed if $N_0$ sufficiently large.

The assumption $|z'|+|\zeta'| \leq h^{\frac 38}$ means it also suffices to consider $(x_t,\xi_t)$ such that $x_t $ lies in the Fermi coordinate chart and $|x_t'|+|\xi_t'| \leq h^{\frac 14}$.  Otherwise if $|x_t'|+|\xi_t'| \geq h^{\frac 14}$, then \eqref{mubddh} implies
\begin{equation}\label{crudeme2}
h^{-\frac 14}\leq h^{-\frac 12} \mu^{-1}\big(|x_t'| + |\xi_t'| \big) \lesssim h^{-\frac 12} \mu^{-1}\big(|x_t'-z'| + |\xi_t'-\zeta'|\big) +h^{-\frac 18}\mu^{-1},
\end{equation}
meaning \eqref{crudemearbcoord} is $\mathcal{O}(h^{\frac{N}{4}})$ and hence its contribution to \eqref{zzeta}, \eqref{xxi} similarly satisfies stronger bounds.  Similarly, we can assume that $|(\xi_t)_d-1| \ll 1$ since $\supp(q_{h,\tilde\gamma})$ confines $\zeta$ to $|\zeta_d-1| \ll 1$, so we obtain stronger bounds in the complementary region by taking $N_0$ sufficiently large.

The following gives a coordinate approximation to the second part in parentheses in \eqref{crudemearbcoord}.
\begin{proposition}\label{P:edtransfers}
	Suppose $|x_t'|+|\xi_t'| \leq h^{\frac 14} $, $|z'|+|\zeta'|  \leq h^{\frac 38}$, and $|(\xi_t)_d-1| \ll 1$.  We then have the following bounds
\begin{equation}\label{zxtapprox}
	\left| \xi_t\big(\expz\big)-|\xi_t |_{g(x_t)}(z-x_t)_d\right|  \lesssim h^{\frac 14}|z-x_t| + h^{\frac 14} + \big|(z-x_t)_d\big|^2,
\end{equation}
\begin{equation}\label{zetaxitapprox}
\left| \langle \xi_t, \zeta-\xi_t \rangle_{g(x_t)} -  |\xi_t|_{g(x_t)} \big(\zeta_d -(\xi_t)_d\big)\right| \lesssim  h^{\frac 14} +  h^{\frac 14} |\zeta-\xi_t |.
\end{equation}
\end{proposition}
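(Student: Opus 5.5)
We work throughout in the Fermi coordinates $(y',y_d)$ adapted to $\tilde\gamma$. In these coordinates the Christoffel symbols vanish along $\{y'=0\}$ and $g_{jk}(0,y_d)=\delta_{jk}$. Consequently, for points with $|y'|\lesssim h^{\frac14}$ we have $|\Gamma^\ell_{jk}(y)|\lesssim h^{\frac14}$ and $|g_{jk}(y)-\delta_{jk}|+|g^{jk}(y)-\delta_{jk}|\lesssim h^{\frac12}$. The latter holds because the metric's first derivatives vanish on the axis, so the deviation is $\mathcal{O}(|y'|^2)$; we only need $\mathcal{O}(h^{\frac14})$ in any case. Since $|x_t'|\le h^{\frac14}$, both estimates apply at $x_t$. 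We also use $|\xi_t'|\le h^{\frac14}$, $|\xi_t|_{g}\approx 1$ and $|(\xi_t)_d-1|\ll 1$, so that the covector $\xi_t$ differs from $(\xi_t)_d\,dy_d$ by $\mathcal{O}(h^{\frac14})$.

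For \eqref{zetaxitapprox}, expand
\begin{equation*}
\langle \xi_t,\zeta-\xi_t\rangle_{g(x_t)}=g^{jk}(x_t)(\xi_t)_j(\zeta-\xi_t)_k.
\end{equation*}
Replacing $g^{jk}(x_t)$ by $\delta_{jk}$ costs $\mathcal{O}(h^{\frac14}|\zeta-\xi_t|)$. The Euclidean pairing is $(\xi_t)_d(\zeta-\xi_t)_d+\xi_t'\cdot(\zeta'-\xi_t')$, and the second term is bounded by $h^{\frac14}|\zeta-\xi_t|$. It then remains to compare $(\xi_t)_d$ with $|\xi_t|_{g(x_t)}$. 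Their difference is $\mathcal{O}(h^{\frac14})$: in the Euclidean norm $|\xi_t|=\sqrt{(\xi_t)_d^2+|\xi_t'|^2}=(\xi_t)_d+\mathcal{O}(h^{\frac12})$, and switching to the metric norm costs $\mathcal{O}(h^{\frac14})$. Multiplying this difference by $|\zeta_d-(\xi_t)_d|$ gives an error that is either absorbed into $h^{\frac14}|\zeta-\xi_t|$ or, since all quantities are bounded, into the constant term $h^{\frac14}$. This yields \eqref{zetaxitapprox}.

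For \eqref{zxtapprox}, use \eqref{logexpn} from Proposition \ref{P:covariantlogexpn} centred at $x_t$ with $y=z$:
\begin{equation*}
(\expz)^j=(z-x_t)^j+\tfrac12\Gamma^j_{k\ell}(x_t)(z-x_t)^k(z-x_t)^\ell+\mathcal{O}(|z-x_t|^3).
\end{equation*}
Pair this with $\xi_t=(\xi_t)_d\,dy_d+\xi_t'\cdot dy'$.

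\begin{itemize}
\item The linear term gives $(\xi_t)_d(z-x_t)_d+\mathcal{O}(h^{\frac14}|z-x_t|)$, and replacing $(\xi_t)_d$ by $|\xi_t|_g$ costs another $\mathcal{O}(h^{\frac14}|z-x_t|)$.
\item The Christoffel term is $\mathcal{O}(h^{\frac14}|z-x_t|^2)\lesssim h^{\frac14}|z-x_t|$ on the bounded chart.
\end{itemize}

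The main obstacle is the cubic remainder. It is not small in general, but the right side of \eqref{zxtapprox} only allows $|(z-x_t)_d|^2$ together with $h^{\frac14}$-weighted terms. Since $|z'|\le h^{\frac38}$ and $|x_t'|\le h^{\frac14}$, we have $|(z-x_t)'|\lesssim h^{\frac14}$. So any monomial of order $\ge2$ containing a primed component is $\lesssim h^{\frac14}|z-x_t|$, while pure $d$-direction monomials are $\lesssim |(z-x_t)_d|^2$. This reduces the problem to showing that every quadratic and higher term in $\xi_t(\expz)$ is either of this form or carries a small coefficient.

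To handle this cleanly, compare with the exact Fermi structure. The curve $s\mapsto(x_t',s)$ is not a geodesic, but the axis $\{y'=0\}$ is one, and $x_t$ lies within $h^{\frac14}$ of it. Let $p$ be the point on the axis with the same $d$-coordinate as $x_t$. For the pair $(p,\tilde z)$, with $\tilde z=(0,z_d)$ also on the axis, $\exp_p^{-1}(\tilde z)=(z_d-p_d)\,\partial_{y_d}$ exactly. Smooth dependence of $\exp^{-1}$ on both endpoints then gives that $\exp^{-1}_{x_t}(z)$ differs from $(z-x_t)_d\,\partial_{y_d}$ by $\mathcal{O}(h^{\frac14})$ in a $C^0$ sense. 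This perturbation accounts for the bare $h^{\frac14}$ on the right of \eqref{zxtapprox}. The $|(z-x_t)_d|^2$ term is kept as a safety margin for the Taylor-based route, in case the perturbative comparison is only carried out to second order. I expect making this perturbation argument uniform to be the only delicate point; everything else is bookkeeping of the $\mathcal{O}(h^{\frac14})$ errors.
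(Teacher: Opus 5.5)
Your argument is correct, and for \eqref{zetaxitapprox} it is the paper's. For \eqref{zxtapprox}, however, you have misjudged which step is hard, and you end up with two routes. The Taylor remainder is not an obstacle. On the small chart $|z-x_t|^3\lesssim|z-x_t|^2\le 2|(z-x_t)_d|^2+2|(z-x_t)'|^2$, and $|(z-x_t)'|^2\lesssim h^{1/2}$. So your own observation separating primed components from pure-$d$ components already finishes the expansion.

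The paper does this even more economically. It expands $\expz=(z-x_t)+\mathcal{O}(|z-x_t|^2)$ only to first order, with no separate Christoffel term, and bounds the remainder by $|(z-x_t)_d|^2+|z'|^2+|x_t'|^2$. That is exactly where the $|(z-x_t)_d|^2$ on the right of \eqref{zxtapprox} comes from; it is not a safety margin.

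Your comparison with the axis is a genuinely different argument, and it is also complete. The coordinate components $V(x,y)$ of $\exp_x^{-1}(y)$ have uniformly bounded first derivatives on the neighborhood of the diagonal enforced by the cutoffs. Moreover $V(p,\tilde z)=(0,z_d-(x_t)_d)$ exactly, because the axis is the unit-speed geodesic $\tilde\gamma$. The mean value theorem then gives $V(x_t,z)=(0,(z-x_t)_d)+\mathcal{O}(h^{1/4})$, so there is nothing delicate about uniformity. Pairing with $\xi_t$ and replacing $(\xi_t)_d$ by $|\xi_t|_{g(x_t)}$ bounds the left side of \eqref{zxtapprox} by $\mathcal{O}(h^{1/4})$, with no $|(z-x_t)_d|^2$ term at all.

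The paper's route needs only the universal expansion \eqref{logexpn}. Yours uses the exact geometry along $\tilde\gamma$, and in exchange shows that the quadratic term is an artifact of the crude remainder. Either route suffices on its own, so the second-order Christoffel bookkeeping in your proposal can be dropped.
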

\begin{proof}
	Two key estimates we use are 
	\begin{equation*}
		|g^{jk}(x_t)-\delta^{jk}| \lesssim |x_t'|  \quad \text{ and } \quad \big|\xi_t-|\xi_t|e_d\big| \lesssim |\xi_t'|.
	\end{equation*}
	The first of these follows since $g^{jk}(0,(x_t)_d) = \delta^{jk}$ in the Fermi coordinate system  while the second just uses that
	\begin{equation}\label{edtransfer}
		\big|\xi_t-|\xi_t|e_d\big|^2 = |\xi_t'|^2 +\big(|\xi_t|-(\xi_t)_d\big)^2 =  |\xi_t'|^2 + \bigg[\frac{|\xi_t|^2 -(\xi_t)_\d^2}{ |(\xi_t)_d+|\xi_t|\,| } \bigg]^2 \lesssim |\xi_t'|^2 
	\end{equation}
	The bound \eqref{zxtapprox} then follows by Proposition \ref{P:covariantlogexpn}:
	\begin{equation*}
		\begin{aligned}
					 \big|\xi_t\big(\expz\big)-|\xi_t |_g (z-x_t)_d \big| &\lesssim |(\xi_t-|\xi_t|e_\d)\cdot (z-x_t)|  +|z-x_t|^2 \\
					 &\lesssim |\xi_t'|\big|(z-x_t)_d\big| + \big|(z-x_t)_d\big|^2 + |z'|^2+ |x_t'|^2
		\end{aligned}
	\end{equation*}
	Similarly,
	\begin{equation*}
		\left| \langle \xi_t, \zeta-\xi_t \rangle_{g(x_t)}  -  |\xi_t | \big(\zeta_d -(\xi_t)_d\big)\right| \lesssim |x_t'| + \left| (\xi_t -|\xi_t|e_d)\cdot( \zeta-\xi_t )  \right| \lesssim |x_t'| + |\xi_t'||\zeta-\xi_t| .
	\end{equation*}
\end{proof}

Given \eqref{zxtapprox}, and \eqref{zetaxitapprox}, we now have that in the region where we integrate \eqref{zzeta} or \eqref{xxi}
\begin{equation*}
	|\Gsc_0(z,\zeta,t,x,\xi)| 	\lesssim
	  h^{-\d}\vartheta^{-\frac 12}\big(1+h^{-\frac 12}\big(|z_d-(x_t(x,\xi))_d| + |\zeta_d - (\xi_t(x,\xi))_d|\big)\big)^{-N}
\end{equation*}
To obtain \eqref{zzeta}, we now integrate this bound with respect to $(z',\zeta')$ to get that the left hand side of \eqref{zzeta} is dominated by
\begin{multline*}
			\int 	|\Gsc_0(z,\zeta,t,x,\xi)| 	\big(1+h^{-\frac 12}(|z'|+|\zeta'| )\big)^{-N_0} \,dz\,d\zeta \lesssim \\
			h^{-1}\vartheta^{-\frac 12}\int\big(1+h^{-\frac 12}\big(|z_d-(x_t)_d| + |\zeta_d - (\xi_t)_d|\big)\big)^{-N}dz_dd\zeta_d  \lesssim \vartheta^{-\frac 12}.
\end{multline*}

We now turn to \eqref{xxi}.  As before, left hand side of \eqref{xxi} is dominated by
\begin{multline*}
	\int 	|\Gsc_0(z,\zeta,t,x,\xi)| 	\big(1+h^{-\frac 12}(|x'|+|\xi'| )\big)^{-N_0} \,dx\,d\xi \lesssim \\
	\sup_{x',\xi'} h^{-1}\vartheta^{-\frac 12} \int \big(1+h^{-\frac 12}|z_d-(x_t)_d| + h^{-\frac 12}|\zeta_d - (\xi_t)_d|\big)^{-N}\,dx_d\,d\xi_d 
	\lesssim \vartheta^{-\frac 12}. 
\end{multline*}
The first inequality here follows as before, but for the second, we make a change of variables $(x_d,\xi_d) \mapsto ((x_t(x,\xi))_d, (\xi_t(x,\xi))_d)$ for each $(x',\xi')$ with $|x'|+|\xi'| \leq h^{\frac 38}$.  The second bound follows once we see that
\begin{equation}\label{dmatrixbd}
\det
\begin{bmatrix}
\frac{\prtl (x_t)_d}{\prtl x_d} & \frac{\prtl (x_t)_d}{\prtl \xi_d}\\
\frac{\prtl (\xi_t)_d}{\prtl x_d} & \frac{\prtl (\xi_t)_d}{\prtl \xi_d}\\
\end{bmatrix} 
\gtrsim 1 .
\end{equation}

To see \eqref{dmatrixbd}, we begin by observing the following $2 \times 2$ matrix identity
\begin{equation}\label{detprep1}
\begin{bmatrix}
\xi_t^T\frac{\prtl x_t}{\prtl x}\xi & \xi_t^T\frac{\prtl x_t}{\prtl \xi}\xi \\
\xi_t^T\frac{\prtl \xi_t}{\prtl x}\xi &\xi_t^T\frac{\prtl \xi_t}{\prtl \xi}\xi
\end{bmatrix}
=
\begin{bmatrix}
|\xi|^2 & 0 \\
\xi_t^T\frac{\prtl \xi_t}{\prtl x}\xi & |\xi_t|^2
\end{bmatrix},
\end{equation}
where $\xi,\xi_t$ are treated as column vectors, so their transposes are row vectors.  To see this, first observe that since $x_t,\xi_t$ are homogeneous of degree 0,1 in $\xi$ respectively, it follows that
\begin{equation*}
	\frac{\prtl x_t}{\prtl \xi}\xi=0, \; \frac{\prtl \xi_t}{\prtl \xi}\xi = \xi_t, \;\text{ and }\;\xi = \left(\frac{\prtl x_t}{\prtl x}\right)^T\xi_t.
\end{equation*}
Here the last identity follows from the first two along with the following, a consequence of differential of $(x,\xi) \mapsto (x_t,\xi_t)$ being a symplectic matrix (cf. \eqref{sympblockprop})
\begin{equation*}
	I = \left(\frac{\prtl x_t}{\prtl x}\right)^T\left(\frac{\prtl \xi_t}{\prtl \xi} \right) - \left(\frac{\prtl \xi_t}{\prtl x}\right)^T\left(\frac{\prtl x_t}{\prtl \xi} \right)
\end{equation*}

We now replace $\xi, \xi_t$ in \eqref{detprep1}, by unit vectors $\xi/|\xi|, \xi_t/|\xi_t|$
\begin{equation}\label{detprep2}
	\begin{bmatrix}
		\xi_t^T/|\xi_t|& 0 \\
		0 & \xi_t^T/|\xi_t|
	\end{bmatrix}
	\begin{bmatrix}
		\frac{\prtl x_t}{\prtl x} & \frac{\prtl x_t}{\prtl \xi} \\
		\frac{\prtl \xi_t}{\prtl x} & \frac{\prtl \xi_t}{\prtl \xi} 
	\end{bmatrix}
	\begin{bmatrix}
		\xi/|\xi| & 0 \\
		0 & \xi/|\xi|
	\end{bmatrix}
	=
	\begin{bmatrix}
		|\xi|/|\xi_t| & 0 \\
		\xi_t^T\frac{\prtl \xi_t}{\prtl x}\xi/(|\xi_t||\xi|) & |\xi_t|/|\xi|
	\end{bmatrix}.
\end{equation}
The right hand side here has determinant 1, so \eqref{dmatrixbd} follows once we see that the two matrices are sufficiently close.  Indeed, since  $|\xi'| \leq h^{\frac 38}$ and $ |\xi_t'| \leq h^{\frac 14} $ the argument in \eqref{edtransfer} and the similar bound $|\xi/|\xi| -e_d| \lesssim h^{\frac 14}$ gives that the matrix norm of the difference satisfies
\begin{multline*}
		\left\|\begin{bmatrix}
		\xi_t^T/|\xi_t|& 0 \\
		0 & \xi_t^T/|\xi_t|
	\end{bmatrix}
	\begin{bmatrix}
		\frac{\prtl x_t}{\prtl x} & \frac{\prtl x_t}{\prtl \xi} \\
		\frac{\prtl \xi_t}{\prtl x} & \frac{\prtl \xi_t}{\prtl \xi} 
	\end{bmatrix}
	\begin{bmatrix}
		\xi/|\xi| & 0 \\
		0 & \xi/|\xi|
	\end{bmatrix} - 	
	\begin{bmatrix}
	e_d& 0 \\
	0 & e_d
	\end{bmatrix}
	\begin{bmatrix}
	\frac{\prtl x_t}{\prtl x} & \frac{\prtl x_t}{\prtl \xi} \\
	\frac{\prtl \xi_t}{\prtl x} & \frac{\prtl \xi_t}{\prtl \xi} 
	\end{bmatrix}
	\begin{bmatrix}
	e_d & 0 \\
	0 & e_d
	\end{bmatrix}\right\| 
	\\
	\lesssim 
	h^{\frac 14} \left\|\begin{bmatrix}
	\frac{\prtl x_t}{\prtl x} & \frac{\prtl x_t}{\prtl \xi} \\
	\frac{\prtl \xi_t}{\prtl x} & \frac{\prtl \xi_t}{\prtl \xi} 
	\end{bmatrix}\right\| 
	\lesssim h^{\frac 14}\mu \ll 1.
\end{multline*}

\appendix\section{A proof of Proposition \ref{P:wptpdo}}\label{A:wptpdo}
In this section we provide an alternate proof of Proposition \ref{P:wptpdo}.  Given that our integrals are compactly supported in the fiber variables $\xi$, we are able simplify the proof somewhat.  

	We express the kernel of $\Ssc^* \Ssc$ in a common coordinate system for $x,y,z$ as
\begin{equation}\label{xint}
	\begin{aligned}
		&	\Ssc^* \Ssc(y,z) := (2\pi h)^{-d} \int K(y,z,\xi)\,d\xi\\
		&K(y,z,\xi) := \detexp^{\frac 12}(g_{jk}(y)) \detexp^{\frac 12}(g_{jk}(z)) (\pi h)^{-\frac d2}\int e^{\frac ih \Phi(x,y,z,\xi)} \frac{\tilde\beta^2\big(|\xi|_{g(x)}\big) }{\detexp^{\frac 12}(g_{jk}(x))} \psi(x,y)\psi(x,z)\;dx,\\
		&\Phi(x,y,z,\xi) :=  \xi(\expyno - \expynoz) + \frac i2 d^2(x,y) + \frac i2 d^2(x,z).
	\end{aligned}	
\end{equation}
The factors $ \detexp^{\frac 12}(g_{jk}(y)) \detexp^{\frac 12}(g_{jk}(z)) $ are included so that the coordinate expression is $(\Ssc^* \Ssc f)(y) = \int \Ssc^* \Ssc(y,z)f(z)\,dz$ and Lebesgue measure is used in both $y,z$.

We want to use \eqref{pureasympexpn} to obtain asymptotics for $K(x,y,\xi)$. 
To this end, we use \eqref{logexpn}, and write $d^2(x,y) = |\expyno|_{g(x)}^2$, $d^2(x,z) = |\expynoz|_{g(x)}^2$to obtain from Proposition \ref{P:covariantlogexpn} 
\begin{multline*}
	\Phi = \xi_j(y-z)^j + \frac i2 g_{jk}(x)(y-x)^j (y-x)^k + \frac i2 g_{jk}(x)(z-x)^j (z-x)^k \\ 
	+\frac 12 \xi_\ell \Gamma_{jk}^\ell (x)(y-x)^j (y-x)^k - \frac 12 \xi_\ell \Gamma_{jk}^\ell(x) (z-x)^j (z-x)^k +
	r_1(x,y,z,\xi), \\
	\text{ where } \quad \big| \prtl_{\xi}^\alpha \prtl_{x}^{\beta}r_1 \big| \lesssim_{\alpha, \beta} |x-y|^{\max(0,3-|\beta|)}.
\end{multline*}
Calculating first derivatives in $x$, we obtain
\begin{equation}\label{Phicrit}
	\frac{\prtl \Phi}{\prtl x_k} = ig_{jk}(x)(2x-y-z)^j +  \xi_\ell \Gamma_{jk}^\ell(x) (y-z)^j +\mathcal{O}(|x-y|^2 + |x-z|^2)
\end{equation}
This illustrates the challenge in treating complex phases:  there is a critical point when $x=y=z$, but otherwise one may not exist in the real domain.  We now calculate the second derivatives as 
\begin{equation}\label{Phihessian}
	\frac{\prtl^2 \Phi}{\prtl x_j \prtl x_k} = 2ig_{jk}(x) + \mathcal{O}(|y-z|+ |x-y| + |x-z|),
\end{equation}	
which defines nonsingular Hessian when $x=y=z$.  

We now take an almost analytic extension of the phase and amplitude in \eqref{xint} and in an abuse of notation, we continue to use $x$ to denote the complexified variable and $\frac{\prtl}{\prtl x_j}$ to denote the complex derivative in $j$.  For $y,z$ sufficiently close we let $X(y,z,\xi)$ parameterize the critical locus
\begin{equation*}
	\frac{\prtl \Phi}{\prtl x_j}\big(X(y,z,\xi),y,z,\xi\big) =0, \qquad j = 1,\dots, d.
\end{equation*}
The observation following \eqref{Phicrit} implies that $X|_{y=z} = y$ and hence
\begin{equation}\label{Phitaytay}
	X_k(y,z,\xi) = y_k +\mathcal{O}(|y-z|) = z_k +\mathcal{O}(|y-z|) 
\end{equation}
Consequently, for some functions $r_{2,\alpha}$ in a bounded subset of $C^\infty$
\begin{equation*}
	\Phi(X(y,z,\xi),y,z,\xi) = (y-z) \cdot \xi+ r_2(y,z,\xi), \quad \text{ where }\quad r_2 = \sum_{|\alpha|=2} (y-z)^\alpha r_{2,\alpha}(y,z,\xi).
\end{equation*}

We now have that \eqref{pureasympexpn} implies that
\begin{equation*}
	K(y,z,\xi) \sim \detexp^{\frac 12}(g_{jk}(y)) \detexp^{\frac 12}(g_{jk}(z)) \detexp^{-\frac 12} \Big(\frac{1}{2i} d_x^2 \Phi(X,y,z,\xi)\Big)e^{\frac ih \Phi(X,y,z,\xi)} 
	\Big( \sum_{j \geq 0} h^j \tilde A_j(y,z,\xi) \Big).
\end{equation*}
Here each $\tilde A_j$ lies in a bounded subset of $C^\infty$ and is supported in a small neighborhood of the diagonal $y=z$.  Moreover, 
\begin{equation}\label{A0def}
		\tilde A_0(y,z,\xi) = \frac{\tilde\beta^2\big(|\xi|_{g(x)}\big) }{\detexp^{\frac 12} (g_{jk}(x))} \psi(x,y)\psi(x,z)\bigg|_{x=X(y,z,\xi)}\\
		 = \frac{\tilde\beta^2\big(|\xi|_{g(x)}\big) }{\detexp^{\frac 12} (g_{jk}(x))}\bigg|_{x=y} \psi^2(y,z) + \mathcal{O}(|y-z|^2). 
\end{equation}
where we used \eqref{Phitaytay} in the last identity.  Similarly, by \eqref{Phihessian} (and \eqref{Phitaytay} as before),
\begin{equation*}
	\detexp^{-\frac 12} \Big(\frac{1}{2i} d_x^2 \Phi(X,y,z,\xi)\Big)  = \detexp^{-\frac 12} (g_{jk}(z)) + \mathcal{O}(|y-z|).
\end{equation*}
Now let
\begin{equation*}
	A_j(y,z,\xi) := \detexp^{\frac 12}(g_{jk}(y)) \detexp^{\frac 12}(g_{jk}(z)) \detexp^{-\frac 12} \Big(\frac{1}{2i} d_x^2 \Phi(X,y,z,\xi)\Big)\tilde A_j(y,z,\xi) ,
\end{equation*}
hence
\begin{equation}\label{prinsymbapp}
	A_0(y,z,\xi) = \tilde\beta^2\big(|\xi|_{g(y)}\big) + \mathcal{O}(|y-z|).
\end{equation}

We now use a routine argument (see e.g. \cite[Theorem 3.2.1]{soggefica}) to replace $\Phi(X,y,z,\xi)$ by the usual pseudodifferential phase; it is similar to the one in \eqref{kthderivI}.  Define
\begin{equation*}
\tilde{K}(t,y,z) = \frac{1}{(2\pi h)^d} \int e^{\frac ih \Phi_t(y,z,\xi)}A(t,y,\xi)\,d\xi \quad \text{ where } \quad	\Phi_t(y,z,\xi) := \xi\cdot (z-y) + tr_2(y,z,\xi) ,
\end{equation*}
so that $\tilde K (1,y,z) = (2\pi h)^{-d}\int K(y,z,\xi)\,d\xi $ and $\tilde K(0,y,z)$ is the kernel of a pseudodifferential operator with standard phase.  Note that $\Phi_t$ satisfies bounds
\begin{equation}\label{Phitbds}
	|d_\xi \Phi_t| \approx |y-z|  \quad \text{ and } \quad |\prtl_\xi^\alpha \Phi_t| \lesssim_\alpha |y-z|^2 \text{ if } |\alpha| \geq 2.
\end{equation}

We want to calculate $\tilde K (1,y,z)$ by taking a Taylor expansion in $t$ centered at $0$.  The terms in the Taylor expansion are $\frac{1}{k!}\prtl_t^k\tilde K (0,y,z)$ where
\begin{equation}\label{tildeKint}
	\prtl_t^k \tilde K|_{t=0} =\frac{h^{-k}}{(2\pi h)^d} \int e^{\frac ih (y-z)\cdot \xi}(i r_2(y,z,\xi))^k A(t,y,\xi)\,d\xi = \frac{h^k}{(2\pi h)^d} \int e^{\frac ih (y-z)\cdot \xi} B_k(t,y,\xi)\,d\xi 
\end{equation}
for some smooth, bounded, $B_k$ supported in the same set as $A$ and $B_k$ admitting an asymptotic series $\sum_{j\geq 0}h^j b_{k,j}$.  Indeed, the factor of $(ir_2)^k$ in the middle expression vanishes to order $2k$ along the diagonal $y=z$ and hence a routine integration by parts in $\xi$ establishes the last expression.  

Next, we claim that for any $k \in\mathbb{N}$ and $t \in [0,1]$, 
\begin{equation}\label{tderivkernel}
	|\prtl_t^k \tilde K (t,y,z)| \lesssim_k 	h^{-k}|y-z|^{2k}(1+h^{-2}|y-z|^2)^{-(d+k)}.
\end{equation}
By Young's inequality this kernel gives then rise to an operator on $L^2(M)$ whose norm is $\mathcal{O}(h^k)$.  To see this, define the differential operator
\begin{equation*}
	\mathcal{L} = \frac{1 - ih^{-1}d_\xi \overline{\Phi_t} \cdot d_\xi}{1+h^{-2} |d_\xi \Phi_t|^2}
	\quad \text{ so that } \quad \mathcal{L} e^{\frac ih \Phi_t} = e^{\frac ih \Phi_t} .
\end{equation*}
Using \eqref{Phitbds} we have for $|\alpha| \geq 1$,
\begin{equation*}
	\left| \prtl_\xi ^\alpha\Big( \frac{1 - ih^{-1}d_\xi \Phi_t }{1+h^{-2} |d_\xi \Phi_t|^2} \Big) \right| 
	\lesssim_\alpha
	(1+h^{-2}|y-z|^2)^{-1} |y-z|^2 .
\end{equation*}
The bound \eqref{tderivkernel} now follows by integrating by parts sufficiently many times with respect to $\mathcal L$ in the integral defining $\prtl_t^k \tilde K (t,y,z) $, namely, \eqref{tildeKint} but with the phase $(y-z)\cdot\xi$ replaced by $\Phi_t$.

The preceding means we can apply the Borel lemma to the formal series $\sum_{k \geq 0} \frac{1}{k!} \prtl_t^k \tilde K (0,y,z)$ (and the corresponding asymptotic series for each $B_k$) to obtain a symbol $s\sim \sum_{j\geq 0} h^js_j$ which determines $\Ssc^* \circ \Ssc$ as a semiclassical PDO.  Its principal symbol is given by $A_0$, but given \ref{prinsymbapp}, the integration by parts argument above means that it can be replaced by $\tilde\beta^2(|\xi|_{g(y)})$.

\bibliographystyle{amsalpha}
\bibliography{bibtexdata}
\end{document}